\theoremstyle{nonumberplain}
\newtheorem{definition}{Definition}
\theoremstyle{plain}
\newtheorem{example}{Example}
\newtheorem{question}{Question}
\theoremstyle{plain}
\newtheorem{lemma}{Lemma}[section]
\newtheorem{proposition}{Proposition}[section]
\newtheorem{theorem}{Theorem}[section]
\theoremstyle{plain}
\newtheorem{theorem_prime}{Theorem}[section]
\theoremstyle{nonumberplain}
\theoremstyle{nonumberplain}
\newtheorem{proof}{Proof}
\renewcommand*{\@seccntformat}[1]{\csname the#1\endcsname.\quad}
\newcommand{\N}{\mathbb{N}}
\newcommand{\R}{\mathbb{R}}
\newcommand{\C}{\mathbb{C}}
\newcommand{\CP}{\mathbb{CP}}
\newcommand{\coloneqq}{\mathrel{\mathop:}=}
\newcommand{\eqqcolon}{=\mathrel{\mathop:}}
\providecommand{\abs}[1]{\lvert#1\rvert}
\providecommand{\gabs}[1]{\big\lvert#1\big\rvert}
\providecommand{\Gabs}[1]{\Big\lvert#1\Big\rvert}
\providecommand{\norm}[1]{\lVert#1\rVert}
\DeclareMathOperator{\Reg}{Reg}
\DeclareMathOperator{\Sing}{Sing}
\DeclareMathOperator{\Ker}{Ker}
\DeclareMathOperator{\supp}{supp}
\DeclareMathOperator{\dist}{dist}
\DeclareMathOperator{\rank}{rank}
\DeclareMathOperator{\ebdim}{ebdim}
\DeclareMathOperator{\co}{co}
\DeclareMathOperator{\card}{\#}
\DeclareMathOperator{\MA}{MA}
\DeclareMathOperator{\interior}{int}
\renewcommand{\Re}{\text{Re}\,}
\newcommand{\Lev}{\text{Lev}}
\providecommand{\chiup}{\raisebox{0.3ex}{$\chi$}}
\def\moverlay{\mathpalette\mov@rlay}
\def\mov@rlay#1#2{\leavevmode\vtop{%
   \baselineskip\z@skip \lineskiplimit-\maxdimen
   \ialign{\hfil$\m@th#1##$\hfil\cr#2\crcr}}}
\newcommand{\charfusion}[3][\mathord]{
    #1{\ifx#1\mathop\vphantom{#2}\fi
        \mathpalette\mov@rlay{#2\cr#3}
      }
    \ifx#1\mathop\expandafter\displaylimits\fi}
\newcommand{\cupdot}{\charfusion[\mathbin]{\cup}{\cdot}}
\newcommand{\bigcupdot}{\charfusion[\mathop]{\bigcup}{\cdot}}
\begin{document}

%
%
\title{On defining functions for unbounded pseudoconvex domains}
\author{Tobias Harz$^{(\ast)}$, Nikolay Shcherbina and Giuseppe Tomassini}
\maketitle
\nopagebreak

%
%
\small\noindent{\bf Abstract.} 
We show that every strictly pseudoconvex domain $\Omega$ with smooth boundary in a complex manifold $\mathcal{M}$ admits a global defining function, i.e., a smooth plurisubharmonic function $\varphi \colon U \to \R$ defined on an open neighbourhood $U \subset \mathcal{M}$ of $\overline{\Omega}$ such that $\Omega = \{\varphi < 0\}$, $d\varphi \neq 0$ on $b\Omega$ and $\varphi$ is strictly plurisubharmonic near $b\Omega$. We then introduce the notion of the core $\mathfrak{c}(\Omega)$ of an arbitrary domain $\Omega \subset \mathcal{M}$ as the set of all points where every smooth and bounded from above plurisubharmonic function on $\Omega$ fails to be strictly plurisubharmonic. If $\Omega$ is not relatively compact in $\mathcal{M}$, then in general $\mathfrak{c}(\Omega)$ is nonempty, even in the case when $\mathcal{M}$ is Stein. It is shown that every strictly pseudoconvex domain $\Omega \subset \mathcal{M}$ with smooth boundary admits a global defining function that is strictly plurisubharmonic precisely in the complement of $\mathfrak{c}(\Omega)$. We then investigate properties of the core. Among other results we prove 1-pseudoconcavity of the core, we show that in general the core does not possess an analytic structure, and we investigate Liouville type properties of the core. \normalsize

%
%
\renewcommand{\thefootnote}{}\footnote{2010 \textit{Mathematics Subject Classification.} Primary 32T15, 32U05; Secondary 32C15.}\footnote{\textit{Key words and phrases.} Strictly pseudoconvex domains, plurisubharmonic defining functions, core of a domain, Wermer type sets.}\footnote{$^{(\ast)}$The first author was supported in part by the DFG grant SH 456/1-1.}

%
%
\setcounter{tocdepth}{1}
\tableofcontents

\section{Introduction}
In this paper we are dealing with the question of existence of defining functions for strictly pseudoconvex domains $\Omega$ with smooth boundary $b\Omega$ in a complex manifold $\mathcal{M}$. More precisely, we are interested in the existence of global defining functions, namely, defining functions that are defined in a neighbourhood of the closure $\overline{\Omega}$ (we will also be concerned with the more general situations of strictly $q$-pseudoconvex domains in complex manifolds and strictly hyper-$q$-pseudoconvex domains in complex spaces). In what follows, a real-valued function $\varphi$ will be called a \emph{defining function} for $\Omega$ if it has the following properties: 

\renewcommand{\arraystretch}{1.5}
\begin{tabular}{cl} 
(I)   & $\varphi$ is a smooth function on an open neighbourhood $U \subset \mathcal{M}$ of $b\Omega$. \\
(II)  & $\varphi$ is strictly plurisubharmonic in U. \\
(III) & $\Omega \cap U = \{\varphi < 0\}$ and $d\varphi \neq 0$ on $b\Omega$.
\end{tabular}
\renewcommand{\arraystretch}{1}

\noindent It is well known that defining functions always exist whenever $\Omega \subset \mathcal{M}$ is relatively compact, and there are different proofs available for this fact, see, for example, \cite{FritzscheGrauert02}, \cite{FornaessStensones87}, \cite{Grauert62}, \cite{MorrowRossi75}. In fact, a careful investigation of the corresponding proof shows that the method presented in \cite{Grauert62} still works, with only minor changes, even without assuming relative compactness of $\Omega$. In particular, every  strictly pseudoconvex domain with smooth boundary in a complex manifold admits a defining function.

If $\Omega$ is a relatively compact domain in a Stein manifold $\mathcal{M}$, then in fact more is known. In this case one can choose $\varphi$ to be defined not only near $b\Omega$ but on a neighbourhood of the whole of $\overline{\Omega}$ (see, for example, Lemma 1.3 in \cite{MorrowRossi75}). For arbitrary domains and manifolds this is not longer true in general, as it is illustrated by the following examples.

\begin{example} \label{ex_compactanalytic}
Let $\mathcal{M}$ be the blow-up of $\C^{n+1}$ at the origin, i.e., $\mathcal{M} \coloneqq \{(z,x) \in \C^{n+1} \times \CP^n : z \in l(x)\}$, where $l(x) \subset \C^{n+1}$ denotes the complex line determined by $x \in \CP^n$. Then 
\[ \Omega \coloneqq \big\{(z,x) \in \mathcal{M} : \norm{z} < 1 \big\} \subset\subset \mathcal{M} \]
is a strictly pseudoconvex domain with smooth boundary in $\mathcal{M}$ containing the compact analytic set $E \coloneqq \{0\} \times \CP^n$. Let $\varphi$ be a plurisubharmonic function defined on a neighbourhood of $\overline{\Omega}$ such that $\Omega = \{\varphi < 0\}$. Then $\varphi$ is bounded from above on $E$, hence it is a constant by the maximum principle. In particular, $\varphi$ is not strictly plurisubharmonic at the points of $E$.
\end{example}
 
\begin{example} \label{ex_complexline}
Let $f \colon \C \to \C$ be an entire function and
\[ \Omega:= \big\{(z,w) \in \C^2: \log\abs{w - f(z)} + C_1\big(\abs{z}^2 + \abs{w}^2\big) < C_2 \big\} \subset \C^2, \]
where $C_1$ and $C_2$ are constants and $C_1>0$. For almost all constants $C_2$, $\Omega$ is an unbounded strictly pseudoconvex domain with smooth boundary in $\C^2$ containing the complex line $L \coloneqq \{(z,f(z)) \in \C^2 : z \in \C\}$. Let $\varphi$ be a plurisubharmonic function defined on a neighbourhood of $\overline{\Omega}$ such that $\Omega = \{\varphi < 0\}$. Then $\varphi$ is subharmonic and bounded from above on $L$, hence it is a constant by Liouville's theorem. In particular, $\varphi$ is not strictly plurisubharmonic at the points of $L$.
\end{example}

As the above examples show, we cannot longer expect $\varphi$ to be strictly plurisubharmonic on a neighbourhood of the whole of $\overline{\Omega}$ as soon as $\mathcal{M}$ fails to be Stein, or $\Omega$ fails to be relatively compact in $\mathcal{M}$. Hence we will call a real-valued function $\varphi$ a \textit{global defining function} for $\Omega$ if it has the following properties:

\renewcommand{\arraystretch}{1.5}
\begin{tabular}{cl} 
(I)   & $\varphi$ is a smooth function on an open neighbourhood $U \subset \mathcal{M}$ of $\overline{\Omega}$. \\
(II)  & $\varphi$ is plurisubharmonic in $U$ and strictly plurisubharmonic near $b\Omega$. \\
(III) & $\Omega = \{\varphi < 0\}$ and $d\varphi \neq 0$ on $b\Omega$.
\end{tabular}
\renewcommand{\arraystretch}{1}

\noindent Observe that instead of imposing (I) and (II), it is equivalent to claim that $\varphi$ is a smooth plurisubharmonic function on $\overline{\Omega}$ such that $\varphi$ is strictly plurisubharmonic near $b\Omega$. Moreover, after possibly shrinking $U$ and composing $\varphi$ with a suitable convex function, we can always assume that $\varphi$ is bounded.

As the main result of our article we will prove that every strictly pseudoconvex domain $\Omega$ with smooth boundary in a complex manifold $\mathcal{M}$ admits a global defining function. In view of this result and the examples above, it is then meaningful to consider the set of all points in $\Omega$ where every global defining function for $\Omega$ fails to be strictly plurisubharmonic. We will show that this set coincides with the core of $\Omega$, which we introduce in the following definition.

\begin{definition}
Let $\mathcal{M}$ be a complex manifold and let $\Omega \subset \mathcal{M}$ be a domain. Then the set
\begin{equation*}\begin{split}
\mathfrak{c}(\Omega) \coloneqq \big\{&z \in \Omega : \text{every smooth plurisubharmonic function on $\Omega$ that is} \\& \text{bounded from above fails to be strictly plurisubharmonic in } z \big\}
\end{split}\end{equation*}
will be called the \emph{core} of $\Omega$.
\end{definition}

\noindent \textbf{Remark.} Similar definitions in different settings have also been introduced in \cite{HarveyLawson12}, \cite{HarveyLawson13} and \cite{SlodkowskiTomassini04}.

\noindent As we shall see, every domain $\Omega \subset \mathcal{M}$ admits a smooth and bounded plurisubharmonic function that is strictly plurisubharmonic precisely in $\Omega \setminus \mathfrak{c}(\Omega)$. In the special case of global defining functions we get the following version of our main theorem.

\noindent \textbf{Main Theorem.} \label{thm_maintheorem} {\it Every strictly pseudoconvex domain $\Omega$ with smooth boundary in a complex manifold $\mathcal{M}$ admits a bounded global defining function that is strictly plurisubharmonic outside $\mathfrak{c}(\Omega)$.}

In a sense, the above theorem gives a complete answer to the question of existence of global defining functions. On the other hand, the presence of the core $\mathfrak{c}(\Omega)$ is a phenomenon that deserves further investigation. The major part of our article will be devoted to this topic.

The content of the paper is organized as follows: In Section \ref{sec_deffct} we prove the existence of global defining functions for strictly pseudoconvex domains $\Omega$ in arbitrary complex spaces. In the same context we also prove a theorem that guarantees the existence of smooth plurisubharmonic functions defined in a neighbourhood of $\overline{\Omega}$ which are strictly plurisubharmonic near $b\Omega$ and have arbitrary bounded from below and smooth boundary data. Analogous results are shown for strictly $q$-pseudoconvex domains in complex manifolds and strictly hyper-$q$-pseudoconvex domains in complex spaces. Moreover, we show that every strictly pseudoconvex domain in a complex manifold (not necessarily relatively compact or with smooth boundary) admits a neighbourhood basis consisting of strictly pseudoconvex domains with smooth boundary (a slightly weaker result is also proven in the setting of complex spaces).

In Section \ref{sec_core} we study elementary properties of the core $\mathfrak{c}(\Omega)$ and discuss some examples. As the most notable result we prove that the core is always $1$-pseudoconcave, and we show that this result cannot be further improved in general. In particular, plurisubharmonic functions have the local maximum property on $\mathfrak{c}(\Omega)$.

In Sections \ref{sec_noanalyticcore} and \ref{sec_liouville} we investigate the structure of the core of $\Omega$. We start by observing that in each example of Section \ref{sec_core}, as well as in the Examples \ref{ex_compactanalytic} and \ref{ex_complexline} above, every connected component $Z$ of $\mathfrak{c}(\Omega)$ has the following two properties: 
\begin{itemize}
  \item $Z$ satisfies a Liouville type theorem, i.e., every smooth and bounded from above plurisubharmonic function on $\Omega$ is constant on $Z$. 
  \item $Z$ possesses an analytic structure, i.e., there exists a dense subset of $Z$ that is the union of nonconstant holomorphic discs contained in $Z$ (in fact, in the above examples Z is always a complex manifold).
\end{itemize}
Clearly, every set $Z \subset \Omega$ with the above two properties has to be contained in $\mathfrak{c}(\Omega)$. We then want to know whether it is true in general that every connected component of $\mathfrak{c}(\Omega)$ has a Liouville type property and possesses an analytic structure. We will show that in this general formulation the answers to both of the above questions are negative. In fact, in Section \ref{sec_liouville} we construct examples of strictly pseudoconvex domains $\Omega \subset \C^n$ with smooth boundary such that $\mathfrak{c}(\Omega)$ is connected, but not every smooth and bounded from above plurisubharmonic function on $\Omega$ is constant on $\mathfrak{c}(\Omega)$. However, in these examples it is still the case that $\mathfrak{c}(\Omega)$ is the disjoint union of unbounded connected sets $L_\alpha \subset \mathfrak{c}(\Omega)$, $\alpha \in A$, where each set $L_\alpha$ satisfies a Liouville type theorem. Moreover, in the case when $\dim_\C \mathcal{M} = 2$ we can show that if $Z$ is a connected component of $\mathfrak{c}(\Omega)$ such that there exists a smooth and bounded from above plurisubharmonic function on $\Omega$ that is not constant on $Z$, then there exist uncountably many disjoint complex curves $\gamma_\alpha \subset Z$ such that each $\gamma_\alpha$ does satisfy a Liouville type theorem, see Theorem \ref{thm_foliatedcore} for the precise result. Concerning the analytic structure of $\mathfrak{c}(\Omega)$, we give in Section \ref{sec_noanalyticcore} an example of a strictly pseudoconvex domain with smooth boundary such that $\mathfrak{c}(\Omega)$ contains no analytic variety of positive dimension. However, by construction, $\mathfrak{c}(\Omega)$ will still be the limit in the Hausdorff metric of a sequence of analytic varieties (in fact, it will be a Wermer type set as constructed in \cite{HarzShcherbinaTomassini12}). We also prove a Liouville theorem for Wermer type sets.

Section \ref{sec_geometricproperties} deals with some geometric properties of Wermer type sets. In particular, we prove that the Wermer type sets from \cite{HarzShcherbinaTomassini12} are $(M, 1/2)$-H\"older continuous for some constant $M > 0$. We use these results to give a second proof for the smoothing procedure that is needed in the constructions of Section 4. 

In Section \ref{sec_higherordercore} we introduce a series of stronger notions of the core of a domain $\Omega \subset \mathcal{M}$, by requiring not only failure of strict plurisubharmonicity of smooth and bounded from above plurisubharmonic functions $\varphi$ on $\Omega$, but instead by prescribing an upper bound $k$ for the rank of the complex Hessian of $\varphi$, with $k$ possibly different from $\dim_\C \mathcal{M}-1$. We show that this sharpening of the definition does not lead to better results on pseudoconcavity of the core.

Finally, Section \ref{sec_questions} contains a series of open questions related to the content of this paper.
\section{Existence of global defining functions} \label{sec_deffct}
We prove the existence of global defining functions in a number of different settings. We first consider in \ref{sec_deffctmanifolds} the case of strictly $q$-pseudoconvex domains in complex manifolds. Later on we deal in Section \ref{sec_deffctspaces} with the situation of strictly hyper-$q$-pseudoconvex domains in complex spaces. 

\subsection{Global defining functions in complex manifolds} \label{sec_deffctmanifolds}
In this section we prove the existence of global defining functions in the setting of complex manifolds. Our focus will lie on strictly pseudoconvex domains, but when it is possible we formulate the results in the more general context of strictly $q$-pseudoconvex domains. We also discuss to which extent smoothness of $b\Omega$ is needed in our results. We start by recalling some definitions and by fixing our notation. 

Let $\mathcal{M}$ be a complex manifold of complex dimension $n \coloneqq \dim_\C\mathcal{M}$. The holomorphic tangent space to $\mathcal{M}$ at $z \in \mathcal{M}$ is denoted by $T_z(\mathcal{M})$, and we write $T(\mathcal{M})$ for the holomorphic tangent bundle of $\mathcal{M}$. If $\varphi \colon \mathcal{M} \to \R$ is a smooth function, then we define $(\partial\varphi)_z, (\overline{\partial}\varphi)_z \colon T_z(\mathcal{M}) \to \C$ and $\Lev(\varphi)(z,\,\cdot\,) \colon T_z(\mathcal{M}) \to \R$ in local holomorphic coordinates $h = (z_1, \ldots, z_n)$ by
\begin{gather*} 
 (\partial\varphi)_z(\xi) \coloneqq \sum_{j=1}^n \frac{ \partial(\varphi \circ h^{-1})}{\partial z_j}(h(z)) \xi_j, \quad (\overline{\partial}\varphi)_z(\xi) \coloneqq \sum_{j=1}^n \frac{ \partial(\varphi \circ h^{-1})}{\partial \overline{z}_j}(h(z)) \overline{\xi}_j, \\ 
 \Lev(\varphi)(z,\xi) \coloneqq \sum_{j,k=1}^n \frac{\partial^2 (\varphi \circ h^{-1})}{\partial z_j \partial \overline{z}_k}(h(z)) \xi_j\overline{\xi}_k, 
\end{gather*}
where $\xi = \sum_{j=1}^n \xi_j \, (\partial/\partial z_j)$. Moreover, $(d\varphi)_z(\xi) \coloneqq (\partial \varphi)_z(\xi) + (\overline{\partial}\varphi)_z(\xi) \colon T_z(\mathcal{M}) \to \R$ denotes the real differential of $\varphi$ in $z$. Further, we write $H_z(\varphi)$ for the complex subspace of $T_z(\mathcal{M})$ defined by $H_z(\varphi) \coloneqq \{\xi \in T_z(\mathcal{M}) : (\partial \varphi)_z(\xi) = 0\}$. If $h$ is a hermitian metric on $\mathcal{M}$, then for every $z \in \mathcal{M}$ we denote by $\norm{\,\cdot\,}_{h_z}$ and $\norm{\,\cdot\,}_{h_z^\ast}$ the induced norms on $T_z(\mathcal{M})$ and on the dual space $T_z^\ast(\mathcal{M})$, respectively. If the context is clear, then we sometimes omit the index $z$ and simply write $\norm{\,\cdot\,}_h$ and $\norm{\,\cdot\,}_{h^\ast}$. (Throughout this article the term ``smooth'' always means ``$\mathcal{C}^\infty$-smooth''. Of course, the above definitions of the various differentials and of the Levi form are possible for $\mathcal{C}^1$-smooth and $\mathcal{C}^2$-smooth functions, respectively.) 

An upper semicontinuous function $\varphi \colon \mathcal{M} \to [-\infty, \infty)$ is called plurisubharmonic if for every holomorphic mapping $f \colon G \to \mathcal{M}$ of an open set $G \subset \C$ into $\mathcal{M}$ the composition $\varphi \circ f$ is subharmonic on $G$. It is called strictly plurisubharmonic if for every compactly supported smooth function $\theta \colon \mathcal{M} \to \R$ there exists some number $\varepsilon_{0} > 0$ such that $\varphi + \varepsilon\theta$ is plurisubharmonic whenever $\abs{\varepsilon} \le \varepsilon_0$. If the function $\varphi$ is $\mathcal{C}^2$-smooth, then it is (strictly) plurisubharmonic if and only if $\Lev(\varphi)(z,\,\cdot\,)$ has precisely $n$ (positive) nonnegative eigenvalues for every $z \in \mathcal{M}$. An open set $\Omega \subset \mathcal{M}$ is called strictly pseudoconvex at $z \in b\Omega$ if there exist an open neighbourhood $U_z \subset \mathcal{M}$ of $z$ and a continuous strictly plurisubharmonic function $\varphi_z \colon U_z \to \R$ such that $\Omega \cap U_z = \{\varphi_z < 0\}$. It is called $\mathcal{C}^s$-smooth at $z \in b\Omega$, $s \ge 1$, if there exist an open neighbourhood $U_z \subset \mathcal{M}$ of $z$ and a $\mathcal{C}^s$-smooth function $\tilde{\varphi}_z \colon U_z \to \R$ such that $\Omega \cap U_z = \{\tilde{\varphi_z} < 0\}$ and $d\tilde{\varphi}_z \neq 0$ on $b\Omega \cap U_z$. The open set $\Omega$ is called strictly pseudoconvex or $\mathcal{C}^s$-smooth if it is strictly pseudoconvex or $\mathcal{C}^s$-smooth at each boundary point, respectively. 

Let $q \in \N_0 \coloneqq \{0\} \cup \N$. A $\mathcal{C}^2$-smooth function $\varphi \colon \mathcal{M} \to \R$ is called (strictly) $q$-plurisubharmonic if $\Lev(\varphi)(z,\,\cdot\,)$ has at least $n-q$ (positive) nonnegative eigenvalues for every $z \in \mathcal{M}$. Observe that $q$-plurisubharmonic functions are also $(q+1)$-plurisubharmonic for every $q \in \N_0$. Moreover, the $0$-plurisubharmonic functions are precisely the $\mathcal{C}^2$-smooth plurisubharmonic functions, and a function is $q$-plurisubharmonic for $q \ge n$ if and only if it is $\mathcal{C}^2$-smooth. An open set $\Omega \subset \mathcal{M}$ is called strictly $q$-pseudoconvex at $z \in b\Omega$ if there exist an open neighbourhood $U_z \subset \mathcal{M}$ of $z$ and a strictly $q$-plurisubharmonic function $\varphi_z \colon U_z \to \R$ such that $\Omega \cap U_z = \{\varphi_z < 0\}$. Moreover, the set $\Omega$ is called strictly $q$-pseudoconvex if it is strictly $q$-pseudoconvex at each boundary point. 

Before we start our studies on global defining functions, we want to mention that it is not completely trivial to see that a strictly pseudoconvex domain with $\mathcal{C}^2$-smooth boundary can locally near each boundary point be defined by a $\mathcal{C}^2$-smooth strictly plurisubharmonic function. Since we were not able to find a proof of this fact in the literature, we state here the following proposition.

\begin{proposition} \label{thm_strpscregularity}
Let $\mathcal{M}$ be a complex manifold, let $\Omega \subset \mathcal{M}$ be open and let $b\Omega$ be $\mathcal{C}^s$-smooth at $z_0 \in b\Omega$, $s \ge 2$. Assume that there exist an open neighbourhood $U \subset \mathcal{M}$ of $z_0$ and a strictly plurisubharmonic function $\psi \colon U \to [-\infty, \infty)$ such that $\Omega \cap U = \{\psi < 0\}$. Then, after possibly shrinking $U$, there exists a $\mathcal{C}^s$-smooth strictly plurisubharmonic function $\varphi \colon U \to \R$ such that $\Omega \cap U = \{\varphi < 0\}$ and $d\varphi \neq 0$ on $b\Omega \cap U$.
\end{proposition}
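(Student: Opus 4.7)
The plan is to take a $\mathcal{C}^s$-smooth local defining function $\tilde{\varphi} \colon U \to \R$ for $b\Omega$ near $z_0$ (which exists by the $\mathcal{C}^s$-smoothness assumption together with $d\tilde{\varphi} \neq 0$) and modify it to be strictly plurisubharmonic while preserving its sign set. The natural ansatz is
\[
\varphi(z) \coloneqq e^{C\tilde{\varphi}(z)} - 1
\]
for a sufficiently large constant $C > 0$: the sign condition $\{\varphi < 0\} = \{\tilde{\varphi} < 0\}$ and the non-vanishing of $d\varphi$ on $b\Omega$ are immediate, while a direct computation yields
\[
\Lev(\varphi)(z, \xi) \,=\, e^{C\tilde{\varphi}(z)}\bigl[\,C^2\,\abs{(\partial\tilde{\varphi})_z(\xi)}^2 + C\,\Lev(\tilde{\varphi})(z, \xi)\,\bigr].
\]
Decomposing $\xi$ into its components parallel and transverse to $H_z(\tilde{\varphi})$ and using that $(\partial\tilde{\varphi})_z$ is non-degenerate on any transverse line, a Cauchy--Schwarz argument shows that $\varphi$ is strictly plurisubharmonic on some neighbourhood of $b\Omega$ for $C \gg 1$, provided that the restriction $\Lev(\tilde{\varphi})(z, \,\cdot\,)\big|_{H_z(\tilde{\varphi})}$ is strictly positive definite at every boundary point $z$ close to $z_0$. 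The whole proof is thereby reduced to establishing this positive-definiteness.

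To verify it I would argue by contradiction via a holomorphic disc construction. Suppose $\Lev(\tilde{\varphi})(z^*, \xi^*) = \lambda \le 0$ for some $z^* \in b\Omega$ close to $z_0$ and unit vector $\xi^* \in H_{z^*}(\tilde{\varphi})$, and construct $f \colon \D_r \to U$ by $f(\zeta) \coloneqq z^* + \zeta \xi^* + \zeta^2 \eta$, where $\eta$ is chosen so as to cancel the pure holomorphic quadratic contribution to $\tilde{\varphi} \circ f$ (possible because $(\partial \tilde{\varphi})_{z^*} \neq 0$). Then $\tilde{\varphi}(f(\zeta)) = \lambda \abs{\zeta}^2 + o(\abs{\zeta}^2)$. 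If $\lambda < 0$, the punctured disc $f(\D_r \setminus \{0\})$ lies in $\Omega = \{\psi < 0\}$ while $\psi(z^*) \ge 0$, in immediate contradiction with the sub-mean inequality for the subharmonic function $\psi \circ f$. If $\lambda = 0$, shift inward: pick $\nu$ with $(d\tilde{\varphi})_{z^*}(\nu) > 0$ and set $f_\delta(\zeta) \coloneqq f(\zeta) - \delta\nu$; for $\delta = \delta(r) > 0$ chosen slightly above the threshold imposed by $\tilde{\varphi} \circ f = o(\abs{\zeta}^2)$, the shifted disc $f_\delta(\D_r)$ lies entirely in $\Omega$. Strict plurisubharmonicity of $\psi$ yields $\varepsilon_0 > 0$ with $\psi(z) - \varepsilon_0 \norm{z - z^*}^2$ plurisubharmonic near $z^*$, so that $g(\zeta) \coloneqq \psi(f_\delta(\zeta)) - \varepsilon_0 \norm{f_\delta(\zeta) - z^*}^2$ is subharmonic on $\D_r$; the sub-mean inequality at $\zeta = 0$, combined with $\psi \circ f_\delta < 0$ and the elementary bound $\frac{1}{2\pi} \int_0^{2\pi} \norm{f_\delta(re^{i\theta}) - z^*}^2 \, d\theta \ge r^2$, produces
\[
\psi(z^* - \delta\nu) \,\le\, \varepsilon_0\,\delta^2\norm{\nu}^2 - \varepsilon_0\,r^2.
\]
Since $\delta(r) = o(r)$, the right-hand side is dominated by $-\varepsilon_0 r^2 / 2$ for small $r$, which must then be shown to be incompatible with the behaviour of $\psi$ at $z^*$.

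The main obstacle I anticipate is closing this last step when $\psi$ is only upper semicontinuous. The disc estimate contradicts the finiteness of $\psi$ at $z^*$ directly when $\psi$ has enough regularity there (for instance, locally Lipschitz, so that $\psi(z^* - \delta\nu) = O(\delta) = o(r^2)$), but a strictly plurisubharmonic function may in principle be highly singular. To circumvent this I would carry out a preliminary smoothing: in local holomorphic coordinates near $z_0$, replace $\psi$ by its convolution $\psi \ast \rho_\varepsilon$ with a standard nonnegative radial mollifier. The resulting function is $\mathcal{C}^\infty$-smooth and remains strictly plurisubharmonic (both properties being preserved by convolution with a nonnegative kernel), and it decreases pointwise to $\psi$ as $\varepsilon \to 0$; the disc estimate above then runs rigorously on the smooth approximants, where the sign condition on the shifted disc can be verified up to an $\varepsilon$-correction, and a limiting argument as $\varepsilon \to 0$ yields the desired positive-definiteness of $\Lev(\tilde{\varphi})|_{H(\tilde{\varphi})}$ at every boundary point near $z_0$, completing the proof.
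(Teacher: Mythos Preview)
Your overall strategy---reduce to showing that $\Lev(\tilde{\varphi})(z^*,\,\cdot\,)|_{H_{z^*}(\tilde{\varphi})}$ is positive definite for $z^*\in b\Omega$ near $z_0$, then compose with a convex increasing function---matches the paper's, and your treatment of the case $\lambda<0$ is correct. The difficulty you correctly isolate is the case $\lambda=0$, and here your argument has a genuine gap that the convolution smoothing does not close.

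Your shifted-disc estimate yields $\psi(z^*-\delta\nu)\le -\tfrac{\varepsilon_0}{2}r^2$ with $\delta=\delta(r)=o(r^2)$. By itself this is no contradiction: an upper semicontinuous strictly plurisubharmonic $\psi$ may well drop very steeply as one enters $\Omega$, so that $\psi(z^*-\delta\nu)\le -\tfrac{\varepsilon_0}{2}r^2$ is compatible with $\psi(z^*)\ge 0$. The mollified function $\psi_\varepsilon=\psi\ast\rho_\varepsilon$ is Lipschitz with some constant $K_\varepsilon$, and the comparison $K_\varepsilon\,\delta\ge \tfrac{\varepsilon_0}{2}r^2$ would indeed be a contradiction if $K_\varepsilon$ were uniform in $r$. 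But to run your disc estimate for $\psi_\varepsilon$ you need $\psi_\varepsilon<0$ on the shifted disc $f_\delta(\overline{\D_r})$; since $\psi_\varepsilon\ge\psi$ and $\psi$ may be only barely negative near $z^*$, this forces $\varepsilon$ to shrink with $r$, whence $K_\varepsilon$ is not controlled and the inequality $K_\varepsilon\,\delta\ge \tfrac{\varepsilon_0}{2}r^2$ carries no information. Passing to the limit $\varepsilon\to 0$ in the sub-mean inequality simply reproduces the unsmoothed estimate, so nothing is gained.

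The missing ingredient, which the paper supplies, is a one-sided Hopf-type barrier estimate on the \emph{outside} of $\Omega$: one first shows $\psi\equiv 0$ on $b\Omega\cap U$ (upper semicontinuity from the $\Omega$-side), and then, by comparing $\psi$ with explicit harmonic barriers on exterior half-balls, obtains $\psi(z)\le L\,\dist(z,b\Omega)$ for $z\in U\setminus\Omega$. Combined with the elementary $\tilde{\varphi}(z)\ge l\,\dist(z,b\Omega)$, this gives $\psi\le (L/l)\,\tilde{\varphi}$ on $U\setminus\Omega$. Working now on the \emph{unshifted} disc through $z^*$, on which $\tilde{\varphi}(f(\zeta))=\lambda|\zeta|^2+o(|\zeta|^2)\le o(|\zeta|^2)$, one gets $\psi(f(\zeta))\le (L/l)\,o(|\zeta|^2)$ on the part of the disc outside $\Omega$ and $\psi<0$ on the part inside; hence $\tilde\psi:=\psi-\varepsilon_0\|\cdot-z^*\|^2<0$ on the whole punctured disc while $\tilde\psi(z^*)=0$, contradicting the maximum principle. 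This handles $\lambda<0$ and $\lambda=0$ simultaneously, with no smoothing and no inward shift.
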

\begin{proof}
Observe that the statement is trivial in the case $\dim_\C \mathcal{M} = 1$. Thus we may assume that $n \coloneqq \dim_\C \mathcal{M} \ge 2$. By assumption, we can find an open neighbourhood $U \subset \mathcal{M}$ of $z_0$, a strictly plurisubharmonic function $\psi \colon U \to [-\infty,\infty)$ such that $\Omega \cap U = \{\psi < 0\}$ and a $\mathcal{C}^s$-smooth function $\widetilde{\varphi} \colon U \to \R$ such that $\Omega \cap U = \{\widetilde{\varphi} < 0\}$ and $d\widetilde{\varphi} \neq 0$ on $b\Omega \cap U$. After possibly shrinking $U$, and after introducing suitable holomorphic coordinates around $z_0$, we can assume that $U \subset\subset \C^n$, $z_0 = 0$, $\psi$ and $\widetilde{\varphi}$ are defined in a neighbourhood of $\overline{U}$ and the Taylor expansion of $\widetilde{\varphi}$ around $0$ has the form
\begin{equation} \label{equ_taylorphi}
\widetilde{\varphi}(\xi) = \Re \xi_1 + \Lev(\widetilde{\varphi})(0,\xi) + o(\norm{\xi}^2).
\end{equation}
For every $z \in U$, let $\dist(z,b\Omega) \coloneqq \inf_{z' \in b\Omega \cap U} \norm{z-z'}$ and for $\zeta \in b\Omega \cap U$ let $N_\Omega(\zeta)$ be the outward unit normal vector to $b\Omega$ at $\zeta$.

We claim that $\psi \equiv 0$ on $b\Omega \cap U$. Indeed, for every plurisubharmonic function $u$ defined near some point $\zeta \in \C^n$ and for every $w \in \C^n$ it holds true that $u(\zeta) = \limsup_{t \to 0+} u(\zeta + tw)$, see, for example, Proposition 7.4 in \cite{FornaessStensones87}. In particular, $\psi(\zeta) = \limsup_{t \to 0+} \psi(\zeta - tN_\Omega(\zeta)) \le 0$ for every $\zeta \in b\Omega \cap U$. The fact that $\psi \ge 0$ on $b\Omega \cap U$ is clear by the choice of $\psi$.

As the next step we claim that, after possibly shrinking $U$, there exist numbers $l,L > 0$ such that $\widetilde{\varphi}(z) \ge l \dist(z,b\Omega)$ and $\psi(z) \le L \dist(z,b\Omega)$ for every $z \in U \setminus \Omega$. Clearly, we only need to show the assertion on $\psi$, since the inequality for $\widetilde{\varphi}$ follows immediately from the fact that $d\widetilde{\varphi} \neq 0$ on $b\Omega \cap U$. The proof is similar to that of the Hopf Lemma: First we can assume, after possibly shrinking $U$, that the orthogonal projection $\pi \colon U \to b\Omega \cap U$ along the normal vectors $N_\Omega(\zeta)$ is well defined. In particular, $z = \pi(z) + \dist(z,b\Omega)N_\Omega(\pi(z))$ for every $z \in U \setminus \Omega$. For every $\zeta \in b\Omega \cap U$ and every $r > 0$, let $\zeta_r \coloneqq \zeta -rN_\Omega(\zeta)$. By $\mathcal{C}^2$-smoothness of $b\Omega \cap U$, we can then choose $r>0$ so small that for every $\zeta \in b\Omega \cap B^n(0,r)$ one has
\begin{enumerate}
  \item[(i)] $B^n(\zeta, 4r) \subset U$ and $b\Omega \cap B^n(\zeta, 4r)$ is the graph of a $\mathcal{C}^2$-smooth function over some open subset of $\zeta + T^\R_\zeta(b\Omega)$,
  \item[(ii)] $B^n(\zeta_{2r},2r) \subset \Omega \cap U$ and $\overline{B^n(\zeta_{2r},2r)} \cap b\Omega = \{\zeta\}$,
\end{enumerate}
where $B^n(a,r) \coloneqq \{z \in \C^n : \norm{z-a} < r\}$ and $T^\R_\zeta(b\Omega)$ denotes the real tangent space to $b\Omega$ at $\zeta$. For every $\zeta \in b\Omega \cap B^n(0,r)$ let $G_\zeta \coloneqq B^n(\zeta,r) \setminus \overline{\Omega}$ and let $h_\zeta \colon \overline{G_\zeta} \to \R$ be the function
\[h_\zeta(z) \coloneqq \frac{1}{r^{2n-2}} - \frac{1}{\norm{z-\zeta_r}^{2n-2}}. \]
Observe that $h_\zeta$ is harmonic on $G_\zeta$ and continuous on $\overline{G_\zeta}$, $h_\zeta(\zeta) = 0$, $h_\zeta > 0$ on $bG_\zeta \setminus \{\zeta\}$ and there exists a constant $c > 0$ such that $h_\zeta > c$ on $bG_\zeta \setminus b\Omega$ for every $\zeta \in b\Omega \cap B^n(0,r)$. Choose $C > 0$ so large that $\psi \le C$ on $U$ and set $M \coloneqq C/c$. Then, since $\psi \equiv 0$ on $b\Omega \cap U$, we have $\psi \le Mh_\zeta$ on $bG_\zeta$ for every $\zeta \in b\Omega \cap B^n(0,r)$. By subharmonicity of $\psi$, it follows that $\psi \le Mh_\zeta$ on $\overline{G_\zeta}$. In particular,
\begin{equation*} \begin{split}
\psi(\zeta+tN_\Omega(\zeta)) &\le Mh_\zeta(\zeta+tN_\Omega(\zeta)) = M \Big(\frac{1}{r^{2n-2}} - \frac{1}{(r+t)^{2n-2}} \Big) \\ &\le M(2n-2)\frac{1}{r^{2n-1}} t \eqqcolon L t
\end{split} \end{equation*}
for every $t \in (0,r)$ and $\zeta \in b\Omega \cap B^n(0,r)$. This shows that $\psi(z) \le L\dist(z,b\Omega)$ for every $z \in \{\zeta + tN_\Omega(\zeta) \in \C^n : \zeta \in b\Omega \cap B^n(0,r), t \in [0,r)\}$.

Now assume, to get a contradiction, that there exists $\xi_0 \in H_{0}(\widetilde{\varphi})$ such that $\Lev(\widetilde{\varphi})(0,\xi_0) \le 0$ and $\norm{\xi_0} = 1$. Choose $\varepsilon > 0$ such that $\widetilde{\psi} \coloneqq \psi - \varepsilon \norm{\,\cdot \,}^2$ is still plurisubharmonic on $U$. We claim that $\widetilde{\psi} < 0$ on the punctured complex disc $\Delta_{\xi_0}(0,\delta) \setminus \{0\} \coloneqq \{\lambda\xi_0 : \lambda \in \C, 0 < \abs{\lambda} < \delta\}$, provided $\delta > 0$ is chosen small enough. Since $\widetilde{\psi}$ is subharmonic on $\Delta_{\xi_0}(0,\delta)$, and since $\widetilde{\psi}(0) = 0$, this will be a contradiction to the maximum principle. Indeed, if $\lambda \in \C \setminus\{0\}$ is chosen in such a way that $\lambda\xi_0 \in \overline{\Omega}$, then the statement is trivial. But otherwise we can use $(\ref{equ_taylorphi})$ and the estimates on $\psi$ and $\widetilde{\varphi}$ which where given above to see that
\[ \widetilde{\psi}(\lambda\xi_0) = \big( \frac{\psi}{\widetilde{\varphi}} \widetilde{\varphi} - \varepsilon \norm{\,\cdot\,}^2\big)(\lambda\xi_0) \le \frac{L}{l}o(\abs{\lambda}^2) - \varepsilon\abs{\lambda}^2, \]
which is negative if $0 < \abs{\lambda} << 1$. This shows that $\Lev(\widetilde{\varphi})(0,\,\cdot\,)$ is positive definit on $H_0(\widetilde{\varphi})$.

To conclude the proof of the proposition, choose a smooth function $\chiup \colon \R \to \R$ such that $\chiup(0) = 0$, $\chiup'(0) = 1$ and $\chiup''(0) = k $. It follows then by a standard argument that for $k > 0$ large enough the function $\varphi \coloneqq \chiup \circ \widetilde{\varphi}$ is strictly plurisubharmonic near $0$ as desired (for a version of this argument see, for example, the proof of Lemma \ref{thm_uniformqpsh} below).
\end{proof}

\noindent \textbf{Remarks.} 1) The above definition of strictly pseudoconvex open sets in complex manifolds is the same as the one given in \cite{Narasimhan62}. In particular, the strictly plurisubharmonic functions $\varphi_z$ that define $\Omega$ near a given point $z \in b\Omega$ are assumed to be continuous. Observe that by dropping the assumption on continuity of the local defining functions $\varphi_z$ one obtains a class of sets that is strictly larger than the class of strictly pseudoconvex sets. Indeed, the function $u(z) \coloneqq \sum_{j=1}^\infty 2^{-j}\log\abs{z-1/j}$ is well defined and subharmonic on $\C$ such that $u(0) \neq -\infty$, and thus $\psi(z,w) \coloneqq u(z) + (\abs{z}^2 + \abs{w}^2) - u(0)$ is a strictly plurisubharmonic function on $\C^2$. Consider the open set $\Omega \coloneqq \{\psi < 0\}$. Then $L \coloneqq \{0\} \times \C \subset b\Omega$, since $\{1/j\} \times \C \subset \Omega$ for every $j \in \N$. In particular, since $\psi(z,w) = \abs{w}^2 \not\equiv 0$ on $L$, there exists no neighbourhood of $b\Omega$ on which $\psi$ is continuous. Assume, to get a contradiction, that there exist an open neighbourhood $U \subset\subset \C^2$ of $0 \in b\Omega$ and a continuous strictly plurisubharmonic function $\varphi \colon U \to \R$ such that $\Omega \cap U = \{\varphi < 0\}$. Let then $U' \subset\subset U$ be open such that $0 \in U'$ and let $\lambda \colon \C^2 \to (-\infty,0]$ be smooth such that $\overline{U'} = \{\lambda = 0\}$. After possibly shrinking $U$, we can find $\varepsilon > 0$ such that $\varphi' \coloneqq \varphi + \varepsilon\lambda$ is still plurisubharmonic on $U$. Since $\varphi$ is continuous, we have $\varphi \equiv 0$ on $L \subset b\Omega$, and thus there exists $c > 0$ such that $\varphi' < -c$ near $L \cap bU$. It follows that $\varphi'|_{L \cap U}$ is a nonconstant subharmonic function that attains a maximum at $0 \in L \cap U$, which is a contradiction. Observe, in particular, that the boundary of a sublevel set of a not necessarily continuous strictly plurisubharmonic function may contain non trivial analytic sets.

2) The described above problem cannot happen if $b\Omega$ satisfies some mild regularity assumptions. Namely, the following analogue of Proposition \ref{thm_strpscregularity} holds true in the $\mathcal{C}^0$-smooth category:

\begingroup \leftskip3.5ex 
{\it Let $\mathcal{M}$ be a complex manifold, let $\Omega \subset \mathcal{M}$ be open and let $b\Omega$ be $\mathcal{C}^0$-smooth at $z_0 \in b\Omega$. Assume that there exist an open neighbourhood $U \subset \mathcal{M}$ of $z_0$ and a strictly plurisubharmonic function $\psi \colon U \to [-\infty, \infty)$ such that $\Omega \cap U = \{\psi < 0\}$. Then, after possibly shrinking $U$, there exists a continuous strictly plurisubharmonic function $\varphi \colon U \to \R$ such that $\Omega \cap U = \{\varphi < 0\}$.} 
\par \endgroup

\noindent Indeed, after possibly shrinking $U$, we can assume that $U \subset \C^n$. By continuity of $b\Omega$ at $z_0$, there exists $w \in \C^n \setminus \{0\}$ such that, after possibly further shrinking $U$, $z + tw \in \Omega$ for every $z \in b\Omega \cap U$ and $t \in (0,1)$. Then, by the same argument as above, it follows that $\psi(z) = \limsup_{t \to 0+} \psi(z + tw) \le 0$ for every $z \in b\Omega \cap U$. Thus $\psi \equiv 0$ on $b\Omega \cap U$. The existence of the function $\varphi$ now follows from Theorem 2.5 in \cite{Richberg68}.

Now we begin to prove the existence of global defining functions. We will formulate our results in the general context of strictly $q$-pseudoconvex domains, since the essential part of our proof will be the same in both cases $q = 0$ and $q > 0$. However, at a certain point of our construction a technical problem will occur in the case $q > 0$, which is not present if $q = 0$. This problem is related to the fact that the sum of two $q$-plurisubharmonic functions $\varphi_1, \varphi_2 \colon U \to \R$ on an open set $U \subset \mathcal{M}$ will in general be again $q$-plurisubharmonic only if both $\Lev(\varphi_1)(z,\,\cdot\,)$ and $\Lev(\varphi_2)(z,\,\cdot\,)$ are positive definit on the same $(n-q)$-dimensional subspaces of $T_z(\mathcal{M})$ for every $z \in U$. Thus in the case $q > 0$ we need to keep track of the directions of positivity of the Levi forms of the $q$-plurisubharmonic functions involved in our construction. That is why before stating our theorems on global defining functions we first prove the following lemma which deals with this particular problem of the case $q > 0$. 

\begin{lemma} \label{thm_uniformqpsh}
Let $\mathcal{M}$ be a complex manifold of dimension $n$ equipped with a hermitian metric $h$ and let $\Omega \subset \mathcal{M}$ be a strictly $q$-pseudocon\-vex domain with smooth boundary for some $q \in \{0,1, \ldots, n-1\}$. Then for every smooth function $\varphi \colon V \to \R$ defined on an open neighbourhood $V \subset \mathcal{M}$ of $b\Omega$ such that $\Omega \cap V = \{\varphi < 0\}$ and $d\varphi \neq 0$ on $b\Omega$, there exist a neighbourhood $V' \subset V$ of $b\Omega$ and for each $z \in V'$ an $(n-q)$-dimensional complex subspace $L_z \subset T_z(\mathcal{M})$ such that the following assertion holds true: for every open set $U \subset \subset \mathcal{M}$ there exist a strictly increasing and strictly convex smooth function $\mu \colon \R \to \R$ satisfying $\mu(0) = 0$ and a constant $c > 0$ such that $\Lev(\mu \circ \varphi)(z, \xi) \ge c\norm{\xi}_{h_z}$ for every $z \in V' \cap U$ and $\xi \in L_z$.
\end{lemma}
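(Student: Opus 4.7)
The plan is to borrow positivity from a local strictly $q$-plurisubharmonic defining function for $\Omega$ and transfer it to $\Lev(\mu\circ\varphi)$ via the classical convex-composition trick. By strict $q$-pseudoconvexity of $\Omega$ at each $z_0 \in b\Omega$ together with a $q$-plurisubharmonic analogue of Proposition \ref{thm_strpscregularity} (proved by the same composition argument used there), every $z_0$ admits a neighbourhood $W_{z_0}\subset V$ and a smooth strictly $q$-plurisubharmonic function $\psi_{z_0}$ with $\{\psi_{z_0}<0\}=\Omega\cap W_{z_0}$ and $d\psi_{z_0}\neq 0$ on $b\Omega\cap W_{z_0}$. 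Viewing $\Lev(\psi_{z_0})(z,\,\cdot\,)$ as a self-adjoint operator on $T_z(\mathcal{M})$ via the metric $h$, I let $L_z^{z_0}$ be the span of the eigenvectors corresponding to its top $n-q$ eigenvalues: this is $(n-q)$-dimensional, and $\Lev(\psi_{z_0})(z,\xi)\ge\lambda_{z_0}(z)\norm{\xi}_{h_z}^2$ on $L_z^{z_0}$ for a continuous positive function $\lambda_{z_0}$. Fixing a locally finite subcover $\{W_j\}_{j\in\N}$ of a neighbourhood of $b\Omega$, I would then set $L_z:=L_z^{z_{j(z)}}$ with $j(z):=\min\{j:z\in W_j\}$.

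Next I compare the two defining functions. On each $W_j$, since $\psi_j$ and $\varphi$ both vanish on $b\Omega\cap W_j$ while $d\varphi\neq 0$ there, Hadamard's division lemma produces a smooth $f_j$ with $\psi_j=f_j\varphi$; the condition $d\psi_j\neq 0$ on $b\Omega\cap W_j$ forces $f_j>0$ in a neighbourhood of $b\Omega\cap W_j$, so after shrinking $W_j$ I may assume $f_j>0$ throughout. Twice differentiating $\psi_j=f_j\varphi$ yields
\[
\Lev(\psi_j)(z,\xi) \;=\; f_j(z)\,\Lev(\varphi)(z,\xi) \;+\; 2\Re\bigl\{(\partial f_j)_z(\xi)\,\overline{(\partial\varphi)_z(\xi)}\bigr\} \;+\; \varphi(z)\,\Lev(f_j)(z,\xi).
\]
Solving for $\Lev(\varphi)$, inserting the result into $\Lev(\mu\circ\varphi)=\mu'(\varphi)\Lev(\varphi)+\mu''(\varphi)|(\partial\varphi)_z(\xi)|^2$, and applying Young's inequality with weight $\mu''(\varphi)f_j/\mu'(\varphi)$ to absorb the mixed term into $\mu''(\varphi)|(\partial\varphi)_z(\xi)|^2$, I obtain for $\xi\in L_z=L_z^{z_j}$ the estimate
\[
\Lev(\mu\circ\varphi)(z,\xi) \;\geq\; \frac{\mu'(\varphi(z))}{f_j(z)}\biggl[\lambda_j(z) \;-\; \frac{\mu'(\varphi(z))}{\mu''(\varphi(z))}\cdot\frac{\gnorm{(\partial f_j)_z}_{h^\ast_z}^2}{f_j(z)} \;-\; |\varphi(z)|\cdot\gnorm{\Lev(f_j)(z)}_{\mathrm{op}}\biggr]\norm{\xi}_{h_z}^2.
\]

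To finish, I would define $V'$ as a tube of \emph{variable} thickness around $b\Omega$, cut out on each $W_j$ by the open condition $|\varphi(z)|<\lambda_j(z)/\bigl(2\,\gnorm{\Lev(f_j)(z)}_{\mathrm{op}}\bigr)$, so that the third term inside the bracket is at most $\lambda_j/2$ throughout $V'\cap W_j$. For any given $U\subset\subset\mathcal{M}$, the compact set $\overline{V'\cap U}$ meets only finitely many $W_j$, and on it $\lambda_j$, $f_j$, $\norm{\partial f_j}_{h^\ast}$ and $\norm{\Lev(f_j)}_{\mathrm{op}}$ are all bounded above while $\lambda_j$ and $f_j$ are bounded away from zero. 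Taking $\mu(t):=\alpha(e^{\beta t}-1)/\beta$ gives $\mu'/\mu''\equiv 1/\beta$, so choosing $\beta$ large enough (depending on $U$) forces the middle bracket term to be at most $\lambda_j/4$ on $\overline{V'\cap U}$; the bracket is then bounded below by a positive constant, the prefactor $\mu'(\varphi)/f_j$ is also bounded below there, and an appropriate choice of $\alpha$ yields the required $c=c(U)>0$.

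The main obstacle is coordinating the \emph{universal} choice of $V'$ and of the subspace assignment $z\mapsto L_z$ with the \emph{$U$-dependent} Levi-form estimate: $V'$ must be fixed independently of $U$, yet the natural constants entering the estimate depend on the compact set $\overline{V'\cap U}$. Choosing $V'$ as a variable-width tube rather than a fixed tubular neighbourhood is what renders the error term $|\varphi|\cdot\gnorm{\Lev(f_j)}_{\mathrm{op}}$ uniformly subordinate to $\lambda_j$ irrespective of $U$, after which the remaining slack is absorbed by the convexity parameter $\beta$ of $\mu$.
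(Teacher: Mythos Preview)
Your approach is workable but takes a genuinely different route from the paper. The paper never introduces auxiliary local defining functions $\psi_j$ or a Hadamard factorization $\psi_j=f_j\varphi$; instead it observes directly that, by invariance of the Levi-form signature on the complex tangent space $H_z(\varphi)$ under change of defining function (cf.\ Remark~5 after Theorem~\ref{thm_qdeffct_manifolds}), the form $\Lev(\varphi)(z,\cdot)|_{H_z(\varphi)}$ already has at least $n-1-q$ positive eigenvalues on a closed neighbourhood $V'$ of $b\Omega$. The subspace is then taken to be $L_z=\widetilde L_z\oplus N_z$, with $\widetilde L_z\subset H_z(\varphi)$ an $(n-1-q)$-dimensional positivity subspace for $\Lev(\varphi)$ and $N_z$ the $h$-orthogonal complement of $H_z(\varphi)$. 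On the closed set of unit vectors in $L_z$ where $\Lev(\varphi)\le 0$ one has $|(\partial\varphi)_z(\xi)|$ bounded away from zero, so the $\mu''|\partial\varphi|^2$ term dominates once one takes $\mu(t)=te^{kt}$ with $k=k(U)$ large. This is cleaner than your route in several respects. First, the ``$q$-analogue of Proposition~\ref{thm_strpscregularity}'' you invoke is not delivered by the composition step alone: that step presupposes that $\Lev(\tilde\varphi)|_{H_{z_0}(\tilde\varphi)}$ has $n-1-q$ positive eigenvalues, which is exactly the signature-invariance fact the paper uses directly---so once you have it you may as well bypass the $\psi_j$. Second, the definition of strict $q$-pseudoconvexity only gives $\mathcal C^2$ local defining functions, so your $f_j$ is a priori only $\mathcal C^1$ and $\Lev(f_j)$ is undefined without the extra work just mentioned. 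Third, your description of $V'$ and of $j(z)=\min\{j:z\in W_j\}$ needs care: the threshold defining $V'$ must match the index used to define $L_z$, and since $j(z)$ is discontinuous the set $V'$ as written may fail to be open (this is easily repaired with a continuous minorant of the thresholds, but the paper's approach avoids the issue entirely). What your approach buys is an explicit quantitative bound via Young's inequality rather than a compactness argument, but this is not needed downstream.
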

\begin{proof}
Let $\varphi \colon V \to \R$ be a smooth function defined on an open neighbourhood $V \subset \mathcal{M}$ of $b\Omega$ such that $\Omega \cap V = \{\varphi < 0\}$ and $d\varphi \neq 0$ on $b\Omega$. After possibly shrinking $V$, we can assume that $d\varphi \neq 0$ on $V$, and then for every $z \in V$ we write $N_z$ for the orthogonal complement of $H_z(\varphi)$ in $T_z(\mathcal{M})$ with respect to $h_z$.

For every $l = 1, 2, \ldots, n$, denote by $G_l(\mathcal{M})$ the Grassmann bundle of dimension $l$ over $\mathcal{M}$, i.e., for every $z \in \mathcal{M}$ the fiber $G_l(\mathcal{M})_z = \{L \subset T_z(\mathcal{M}) : (z,L) \in G_l(\mathcal{M})\}$ consists of all complex $l$-dimensional subspaces of $T_z(\mathcal{M})$. Write $\pi = \pi_l \colon G_l(\mathcal{M}) \to \mathcal{M}$ for the canonical projection of $G_l(\mathcal{M})$ onto $\mathcal{M}$. Since $\Omega$ is strictly $q$-pseudoconvex with smooth boundary, there exists a closed in $\mathcal{M}$ neighbourhood $V' \subset V$ of $b\Omega$ and a closed subset $\widetilde{\mathcal{L}} \subset G_{n-1-q}(\mathcal{M})$ such that $\pi(\widetilde{\mathcal{L}}) = V'$ with the following properties: for every $z \in V'$ and every $\widetilde{L} \in \widetilde{\mathcal{L}}_z \coloneqq \{\widetilde{L} \subset T_z(\mathcal{M}) : (z,\widetilde{L}) \in \widetilde{\mathcal{L}}\}$ we have $(\partial \varphi)_z(\,\cdot\,) \equiv 0$ on $\widetilde{L}$ and $\Lev(\varphi)(z,\,\cdot\,) > 0$ on $\widetilde{L} \setminus \{0\}$. Set
\[ \mathcal{L} \coloneqq \big\{(z,L) \in G_{n-q}(\mathcal{M})|_{V'} : L = \widetilde{L} \oplus N_z \text{ for some } \widetilde{L} \in \widetilde{\mathcal{L}}_z\big\}. \]
We claim that $V'$ and any choice of $\{L_z\}_{z \in V'}$ such that $L_z \in \mathcal{L}_z$ for every $z \in V'$ are a neighbourhood of $b\Omega$ and a family of complex subspaces as desired.

Indeed, let $U \subset \subset \mathcal{M}$ be open. Define a map $\tau = \tau_l \colon G_l(\mathcal{M}) \to \mathcal{P}(T(\mathcal{M}))$ from $G_l(\mathcal{M})$ to the set of subsets of $T(\mathcal{M})$ by $\tau((z,L)) \coloneqq \bigcup_{\xi \in L, \norm{\xi}_{h_z} = 1} (z,\xi)$. Let $\widetilde{\mathcal{S}} \coloneqq \tau(\widetilde{\mathcal{L}})$, $\mathcal{S} \coloneqq \tau(\mathcal{L})$ and $\mathcal{S}_0 \coloneqq \{(z,\xi) \in \mathcal{S} : \Lev(\varphi)(z,\xi) \le 0\}$. Observe that, by construction, $\widetilde{\mathcal{S}} = \{(z,\xi) \in \mathcal{S} : (\partial\varphi)_z(\xi) = 0\} \subset \mathcal{S} \setminus \mathcal{S}_0$, and $\mathcal{S}_0$ is closed in $T(\mathcal{M})$. In particular, for every $z \in V'$ we have that $\delta_0(z) \coloneqq \min_{\xi \in \mathcal{S}_{0,z}} \abs{(\partial\varphi)_z(\xi)} > 0$, where $S_{0,z} \coloneqq \{\xi \in T_z(\mathcal{M}) : (z,\xi) \in \mathcal{S}_0\}$. Moreover, since $\mathcal{S} \setminus \mathcal{S}_0$ is an open neighbourhood of $\widetilde{\mathcal{S}}$ in $\mathcal{S}$, one sees easily that it is possible to choose a continuous function $\delta \colon V' \to (0,\infty)$ such that $\delta(z) < \delta_0(z)$ for every $z \in V'$. Let $C \colon V' \to \R$ be a continuous function such that $\Lev(\varphi)(z,\xi) > C(z)$ for every $(z,\xi) \in \mathcal{S}$. Now choose $k > 0$ so large that $C(z) + 2k\delta^2(z) > 0$ on $V' \cap \overline{U}$ and define $\mu \colon \R \to \R$ as $\mu(t) \coloneqq  te^{kt}$. Then $\Lev(\mu \circ \varphi)(z,\xi) = \Lev(\varphi)(z,\xi) + 2k\abs{(\partial\varphi)_z(\xi)}^2 > 0$ for every $z \in V' \cap \overline{U}$ and $\xi \in \mathcal{S}_{0,z}$, and clearly $\Lev(\mu \circ \varphi)(z,\xi) > 0$ for every $z \in V' \cap \overline{U}$ and $\xi \in \mathcal{S} \setminus \mathcal{S}_0$. Since $\mathcal{S}$ is closed in $T(\mathcal{M})$, and since $\mathcal{S} = \tau(\mathcal{L})$, it follows that there exists a constant $c > 0$ such that $\Lev(\mu \circ \varphi)(z,\,\cdot\,) \ge c\norm{\,\cdot\,}^2$ on $L$ for every $z \in V' \cap \overline{U}$ and $L \in \mathcal{L}_z$.
\end{proof}

After these preparations we can now prove the first two theorems of this section.

\begin{theorem} \label{thm_qbdvalue_manifolds}
Let $\mathcal{M}$ be a complex manifold of dimension $n$, let $\Omega \subset \mathcal{M}$ be a strictly $q$-pseu\-do\-con\-vex domain with smooth boundary for some $q \in \{0, 1, \ldots, n-1\}$ and let $f \colon b\Omega \to \R$ be a smooth function that is bounded from below. Then there exists a smooth $q$-plurisubharmonic function $F$ defined on an open neighbourhood of $\overline{\Omega}$ such that $F|_{b\Omega} = f$ and $F$ is strictly $q$-plurisubharmonic near $b\Omega$.
\end{theorem}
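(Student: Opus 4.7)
First I would assemble the building blocks. Since $\Omega$ is strictly $q$-pseudoconvex with smooth boundary, the Grauert-style defining-function construction mentioned in the introduction, together with Proposition \ref{thm_strpscregularity} (and the analogous regularity result in the $q$-plurisubharmonic setting), supplies a smooth function $\rho\colon V\to\R$ on an open neighbourhood $V$ of $b\Omega$ with $\Omega\cap V=\{\rho<0\}$, $d\rho\neq 0$ on $b\Omega$, and strict $q$-plurisubharmonicity on a smaller neighbourhood $V'\subset V$ of $b\Omega$. Lemma \ref{thm_uniformqpsh} then associates to each $z\in V'$ an $(n-q)$-dimensional complex subspace $L_z\subset T_z(\mathcal{M})$ with the uniform Levi-form positivity property. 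Using the hypothesis that $f$ is bounded below by some $m\in\R$, I extend $f$ smoothly to $\tilde f\colon W\to\R$ on an open neighbourhood $W$ of $\overline{\Omega}$ so that $\tilde f\equiv m-1$ on the closed set $K\coloneqq\{z\in\Omega:\dist(z,b\Omega)\geq\eta\}$ for some $\eta>0$. On $K$ the function $F$ will simply equal the constant $m-1$, and hence will be trivially $q$-plurisubharmonic.

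Next I would define the correction $g$. Pick a locally finite open cover $\{U_k\}_{k\in\N}$ of $b\Omega$ by relatively compact subsets of $V'$, a subordinate partition of unity $\{\chi_k\}$, and a smooth cutoff $\psi\colon\mathcal{M}\to[0,1]$ equal to $1$ on $\{\dist(\,\cdot\,,b\Omega)<\eta/2\}$ and supported in $V'\cap\{\dist(\,\cdot\,,b\Omega)<\eta\}$. On each $\overline{U_k}$ let $M_k\geq 1$ bound the norm of $\Lev(\tilde f)$ together with the relevant $\mathcal{C}^2$-norms of the cutoffs, and apply Lemma \ref{thm_uniformqpsh} with $U=U_k$ to obtain a smooth strictly increasing strictly convex function $\mu_k\colon\R\to\R$ with $\mu_k(0)=0$ and $\Lev(\mu_k\circ\rho)(z,\xi)\geq C_k\norm{\xi}_{h_z}^2$ for every $z\in V'\cap U_k$ and $\xi\in L_z$, with $C_k$ chosen much larger than $M_k$. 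Set
\[
g(z)\;\coloneqq\;\psi(z)\sum_{k}\chi_k(z)\,\mu_k\bigl(\rho(z)\bigr)
\]
on $V$, and $g\equiv 0$ on $W\setminus V$. Since $\mu_k(0)=0$, the function $g$ vanishes on $b\Omega$, so $F\coloneqq\tilde f+g$ automatically satisfies $F|_{b\Omega}=f$.

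The main obstacle is the verification that $F$ is $q$-plurisubharmonic on $W$ and strictly $q$-plurisubharmonic near $b\Omega$. Expanding $\Lev(g)$ produces a ``good'' contribution $\sum_k\chi_k\Lev(\mu_k\circ\rho)$, which on $L_z$ is bounded below by $\sum_k\chi_k C_k\norm{\,\cdot\,}_h^2$ and, by strict convexity of each $\mu_k$, also supplies a positive eigenvalue in the complementary normal direction $T_z(\mathcal{M})\setminus H_z(\rho)$; alongside this there are ``bad'' cross terms of the form $\partial\chi_k\,\bar\partial(\mu_k\circ\rho)$, $\partial\psi\,\bar\partial(\mu_k\circ\rho)$, and $\mu_k(\rho)\,\Lev(\chi_k)$ together with $\Lev(\psi)$-terms. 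Because Lemma \ref{thm_uniformqpsh} allows each $\mu_k$ to be chosen independently on the relatively compact slab $V'\cap U_k$, taking $C_k$ sufficiently large relative to $M_k$ and the $\mathcal{C}^2$-norms of the cutoffs on $\overline{U_k}$ makes the good term dominate the bad ones on $L_z$; this yields $n-q$ positive eigenvalues of $\Lev(F)$ throughout the slab $\{\dist(\,\cdot\,,b\Omega)<\eta/2\}$, giving strict $q$-plurisubharmonicity there. On the transition shell $\{\eta/2\leq\dist(\,\cdot\,,b\Omega)\leq\eta\}$ the same dominance yields $\Lev(F)\geq 0$ in the required directions, while on $K$ the function $F$ is constant. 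Tracking the directions of positivity through a partition-of-unity sum is precisely what forces the elaborate construction in the case $q>0$; in the plain plurisubharmonic case $q=0$ the subspaces $L_z$ are all of $T_z(\mathcal{M})$, sums of plurisubharmonic functions are plurisubharmonic, and the argument simplifies considerably.
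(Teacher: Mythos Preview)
Your construction has a genuine gap at the step where you claim that ``taking $C_k$ sufficiently large relative to $M_k$ and the $\mathcal{C}^2$-norms of the cutoffs makes the good term dominate the bad ones on $L_z$.'' The bad cross terms $2\Re\big[(\partial\chi_k)_z(\xi)\,\overline{\mu_k'(\rho)\,(\partial\rho)_z(\xi)}\big]$ scale in the same way as the good term when you enlarge $C_k$ by rescaling $\mu_k$, so rescaling alone cannot create dominance. Moreover, the good contribution $\chi_k\,\Lev(\mu_k\circ\rho)$ is multiplied by $\chi_k$, which vanishes near $b(\supp\chi_k)$ precisely where $|\partial\chi_k|$ is largest; there is no mechanism in your argument that transfers control from the indices $j$ with $\chi_j(z)$ bounded below to the index $k$ whose cross term you are trying to absorb. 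Since the $\mu_k$ must genuinely depend on $k$ (Lemma~\ref{thm_uniformqpsh} only yields the Levi estimate on a relatively compact $U_k$), the cross terms do not cancel via $\sum_k\partial\chi_k=0$. The same difficulty recurs, with the additional $\partial\psi$ and $\Lev(\psi)$ terms, in the transition shell. So as written the proof does not establish $q$-plurisubharmonicity of $F$.

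The paper avoids this problem by never multiplying a $q$-plurisubharmonic function by a cutoff. Instead it places the partition of unity \emph{inside} a convex increasing function: each local piece is $F_j=\tilde\beta\circ\big(g_j+C_j(\varphi_j+\varepsilon_j\lambda_j)\big)$, where $\tilde\beta(t)=e^{-1/t}$ for $t>0$ and $\tilde\beta\equiv 0$ for $t\le 0$, $g_j$ is a smooth extension by zero of $\beta^{-1}(\tilde f\chi_j)$, and $\lambda_j\le 0$ cuts off support. For $C_j$ large the inner function has Levi form positive on the common subspaces $L_z$ throughout $U_j$, so by convexity of $\tilde\beta$ each $F_j$ individually has $\Lev(F_j)(z,\cdot)\ge 0$ on $L_z$ (strictly positive on $\{F_j>0\}$, identically zero on $\{F_j=0\}$). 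The sum $F=\sum_jF_j$ then trivially inherits this, with no cross terms to estimate, and $F|_{b\Omega}=\sum_j\tilde\beta(g_j)=\sum_j\tilde f\chi_j=f$. This packaging via $\tilde\beta$ is the missing idea.
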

\begin{proof}
Since $f$ is bounded from below, we can assume without loss of generality that $f>0$. Let $\widetilde{F} \colon \mathcal{M} \to (0,\infty)$ be a smooth extension of $f$. Choose open sets $U_j' \subset\subset U_j \subset\subset \mathcal{M}$ such that $\{U_j'\}_{j=1}^\infty$ covers $b\Omega$ and $\{U_j\}_{j=1}^\infty$ is locally finite. 

Let $\beta \colon (0,\infty) \to (0,\infty)$ be a strictly increasing and strictly convex smooth function such that $\beta(t) \coloneqq e^{-1/t}$ for small values of $t$, and let $\tilde{\beta} \colon \R \to [0, \infty)$ be the  smooth extension of $\beta$ such that $\tilde{\beta}|_{(-\infty,0]} \equiv 0$. We will construct a family $\{\chiup_j\}_{j=1}^\infty$ of smooth functions $\chiup_j \colon \mathcal{M} \to [0,1]$ such that $\{\chiup_j > 0\} = U_j'$, $\sum_{j=1}^\infty \chiup_j \le 1$ on $\mathcal{M}$, $\sum_{j=1}^\infty \chiup_j \equiv 1$ near $b\Omega$ and such that the trivial extension $g_j \colon \mathcal{M}  \to [0,\infty)$ of the function $\beta^{-1} \circ (\widetilde{F}\chiup_j) \colon U_j' \to \R$ by 0 is smooth on $\mathcal{M}$. For this purpose let $\delta_j \colon \mathcal{M} \to \R$ be smooth such that $U_j' = \{\delta_j > 0\}$ and $\mathcal{M} \setminus \overline{U_j'} = \{\delta_j < 0\}$ and let $\psi_j \coloneqq \tilde{\beta} \circ \tilde{\beta} \circ \delta_j$. Further, let $\theta \colon \mathcal{M} \to [0,\infty)$ be smooth such that $\theta > 0$ on $\mathcal{M} \setminus \bigcup_{j=1}^\infty U_j'$ and such that $\theta \equiv 0$ near $b\Omega$. Then choosing $\chiup_j \coloneqq \psi_j/\big(\theta + \sum_{k=1}^\infty\psi_k\big)$ and writing $\sigma \coloneqq \widetilde{F}/\big(\theta + \sum_{k=1}^\infty\psi_k\big)$ we get for points in $U_j'$ close to $bU_j'$ that $\beta^{-1} \circ(\widetilde{F}\chiup_j) = \beta^{-1}\big(\sigma \cdot(\beta \circ \beta \circ \delta_j)\big) = \big[-\log\big(\sigma \cdot (\beta \circ \beta \circ \delta_j)\big)\big]^{-1} = \big[-\log \sigma - \log(\beta \circ \beta \circ \delta_j)\big]^{-1} = \big[1/(\beta \circ \delta_j) - \log \sigma\big]^{-1} = (\beta \circ \delta_j)/\big(1 - (\beta \circ \delta_j)\log\sigma\big)$. Hence $\beta^{-1} \circ(\widetilde{F}\chi_j)$ extends smoothly to $\mathcal{M}$ by $0$, since $\beta$ extends smoothly to $\tilde{\beta}$. The other properties are clear from the construction.

Fix a hermitian metric $h$ on $\mathcal{M}$. Let $\varphi \colon V \to \R$ be a smooth function defined on an open neighbourhood $V \subset \mathcal{M}$ of $b\Omega$ such that $\Omega \cap V = \{\varphi < 0\}$ and $d\varphi \neq 0$ on $b\Omega$. By Lemma \ref{thm_uniformqpsh}, there exist an open neighbourhood $V' \subset V$ of $b\Omega$ and for every $z \in V'$ an $(n-q)$-dimensional complex subspace $L_z \subset T_z(\mathcal{M})$ with the following properties: for each $j \in \N$ there exist a number $c_j > 0$ and a strictly increasing strictly convex smooth function $\mu_j \colon \R \to \R$ satisfying $\mu_j(0) = 0$ such that the function $\varphi_j \coloneqq \mu_j \circ \varphi$ satisfies $\Lev(\varphi_j)(z,\xi) \ge c_j\norm{\xi}_{h_z}$ for every $z \in V' \cap U_j$ and $\xi \in L_z$. Moreover, without loss of generality, we can assume that $U_j \subset\subset V'$ for every $j \in \N$.

Fix $j \in \N$. Let $\lambda_j \colon \mathcal{M} \to (-\infty,0]$ be smooth such that $\overline{U_j'} = \{\lambda_j =0\}$. Then choose $\varepsilon_j > 0$ so small and $C_j > 0$ so large that $\Lev\big(g_j + C_j(\varphi_j + \varepsilon_j\lambda_j)\big)(z,\,\cdot\,)$ is positive definit on $L_z$ for every $z \in U_j$. Observe that, by construction, $g_j + C_j(\varphi_j + \varepsilon_j\lambda_j) < 0$ on $bU_j \cap \overline{\Omega}$, hence the function $\tilde{\beta} \circ (g_j + C_j(\varphi_j + \varepsilon_j\lambda_j))|_{U_j}$ vanishes near this set and thus its trivial extension by $0$ to the open neighbourhood $\mathcal{U}_j \coloneqq \mathcal{M} \setminus \{z \in bU_j : (g_j + C_j(\varphi_j + \varepsilon_j\lambda_j))(z) \ge 0\}$ of $\overline{\Omega}$ defines a smooth $q$-plurisubharmonic function $F_j \colon \mathcal{U}_j \to [0,\infty)$ such that $F_j|_{b\Omega} = f\chiup_j$ and $F_j \equiv 0$ outside $U_j$. Moreover, $W_j \coloneqq \{F_j > 0\} \subset U_j$ is an open neighbourhood of $b\Omega \cap U_j'$ such that $F_j$ is strictly $q$-plurisubharmonic on $W_j$. In particular, $\Lev(F_j)(z,\,\cdot\,) > 0$ on $L_z \setminus \{0\}$ for every $z \in W_j$ and $\Lev(F_j)(z,\,\cdot\,) \equiv 0$ if $z \notin W_j$.

Set $F \coloneqq \sum_{j=1}^\infty F_j$. Then $F$ is a well defined smooth function on the open neighbourhood $\mathcal{U} \coloneqq \bigcap_{j=1}^\infty \mathcal{U}_j \subset \mathcal{M}$ of $\overline{\Omega}$. By construction, $F|_{b\Omega} = f$. Moreover, $\Lev(F)(z,\,\cdot\,) > 0$ on $L_z \setminus \{0\}$ for every $z \in W \coloneqq \bigcup_{j=1}^\infty W_j \supset b\Omega$ and $\Lev(F)(z,\,\cdot\,) \equiv 0$ if $z \notin W$. Hence $F$ is a function as desired.
\end{proof}

\begin{theorem} \label{thm_qdeffct_manifolds}
Let $\mathcal{M}$ be a complex manifold of dimension $n$ and let $\Omega \subset \mathcal{M}$ be a strictly $q$-pseudo\-con\-vex domain with smooth boundary for some $q \in \{0, 1, \ldots, {n-1}\}$. Then there exists a smooth $q$-plu\-ri\-sub\-har\-monic function $\varphi$ defined on an open neighbourhood of $\overline{\Omega}$ such that $\Omega = \{\varphi < 0\}$, $d\varphi \neq 0$ on $b\Omega$ and $\varphi$ is strictly $q$-plurisubharmonic near $b\Omega$.
\end{theorem}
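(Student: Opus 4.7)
My plan is to construct the global defining function by gluing a local smooth strictly $q$-plurisubharmonic defining function near $b\Omega$ with a constant in the deep interior, via the max operation, and then smoothing. The first step is to obtain a smooth strictly $q$-psh function $\widetilde{\varphi}\colon V\to \R$ on an open neighborhood $V\subset\mathcal{M}$ of $b\Omega$ with $\Omega\cap V=\{\widetilde{\varphi}<0\}$ and $d\widetilde{\varphi}\neq 0$ on $b\Omega$; for $q=0$ this is exactly Proposition~\ref{thm_strpscregularity}, while for $q\ge 1$ one argues as in its proof. After scaling $\widetilde{\varphi}$ by a small positive constant, we may assume that the open set $V_0 := \{\widetilde{\varphi}>-1\}$ is relatively compact in $V$.

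Now define
\[
\psi(z) := \begin{cases} \max\!\bigl(\widetilde{\varphi}(z),\,-1\bigr) & \text{if }z\in V, \\[0.3ex] -1 & \text{if $z$ is in some fixed open neighborhood of $\overline{\Omega}\setminus V_0$.}\end{cases}
\]
Since the two definitions agree on $V\setminus V_0$ (where $\widetilde{\varphi}\le -1$), the function $\psi$ is well defined, continuous, and $q$-psh on a neighborhood of $\overline{\Omega}$ (because the maximum of two $q$-psh functions is $q$-psh). It satisfies $\psi\le 0$ on $\overline{\Omega}$, $\psi=\widetilde{\varphi}$ on $V_0$, and $\psi<0$ throughout $\Omega$. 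Then apply a Richberg-type smoothing theorem in the $q$-psh category to produce a smooth $q$-psh function $\varphi$ on a neighborhood of $\overline{\Omega}$ that coincides with $\widetilde{\varphi}$ on a smaller neighborhood $V_0'\subset\subset V_0$ of $b\Omega$ and is uniformly close to $\psi$ elsewhere. Since $\varphi=\widetilde{\varphi}$ near $b\Omega$, all the defining-function conditions near the boundary (smoothness, strict $q$-psh, $d\varphi\neq 0$, $\Omega\cap V_0'=\{\varphi<0\}$) are inherited from $\widetilde{\varphi}$, while $\varphi<0$ throughout $\Omega$ follows from $\psi<0$ there and the closeness of $\varphi$ to $\psi$.

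The main obstacle is the precise formulation of the Richberg-type smoothing theorem for $q$-psh functions in a form that produces a globally smooth function while exactly preserving the given smooth $\widetilde{\varphi}$ in a neighborhood of $b\Omega$. If such a version is not directly available (which is a genuine issue once $q\ge 1$, since max of two $q$-psh functions already requires some care), I would instead adapt the partition-of-unity construction from the proof of Theorem~\ref{thm_qbdvalue_manifolds} directly: use Lemma~\ref{thm_uniformqpsh} to fix a common $(n-q)$-dimensional distribution $L_z$ along which every summand is $q$-psh, build the defining function out of $\tilde{\beta}$-smoothed cutoff contributions of $\widetilde{\varphi}$ with signs arranged so that the sum is $\le 0$ on $\overline{\Omega}$ and agrees with $\widetilde{\varphi}$ to first order on $b\Omega$, and finally add a small correction $q$-psh function supplied by another application of Theorem~\ref{thm_qbdvalue_manifolds} to guarantee strict negativity in the deep interior of $\Omega$.
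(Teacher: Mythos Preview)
Your proposal has genuine gaps, and neither the primary route nor the fallback closes them.

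First, Proposition~\ref{thm_strpscregularity} is a purely local statement: it produces a strictly plurisubharmonic defining function in a neighbourhood of a single boundary point $z_0$, not on a neighbourhood of all of $b\Omega$. Obtaining a smooth strictly $q$-plurisubharmonic defining function on a full neighbourhood of $b\Omega$ is itself a nontrivial gluing problem when $q\ge 1$, precisely because sums and maxima of $q$-plurisubharmonic functions need not be $q$-plurisubharmonic unless the positive directions of their Levi forms are compatible. Lemma~\ref{thm_uniformqpsh} addresses this compatibility, but only over relatively compact pieces $V'\cap U$, not globally over a neighbourhood of a possibly unbounded $b\Omega$.

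Second, and more fundamentally: $\Omega$ is not assumed relatively compact, so $b\Omega$ may be unbounded. The set $V_0=\{\widetilde\varphi>-1\}$ contains $b\Omega$ and therefore can never be relatively compact in $V$; scaling $\widetilde\varphi$ by a positive constant does not help, since the neighbourhood $V$ may have thickness tending to zero at infinity and no single constant $c>0$ can force $\{\widetilde\varphi>-c\}$ to have closure inside $V$. Consequently the gluing $\psi=\max(\widetilde\varphi,-1)$ is not well defined on a neighbourhood of $\overline\Omega$ in general. The whole point of the theorem---and the reason Theorem~\ref{thm_qbdvalue_manifolds} is proved first---is exactly to handle the unbounded case.

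Third, you are right to be uneasy about Richberg smoothing in the $q$-plurisubharmonic category: the paper itself notes (Remark after Theorem~\ref{thm_nbhbasis_manifolds}, citing \cite{DiederichFornaess85}) that a $q$-plurisubharmonic smoothing of a $q$-plurisubharmonic function is not available in general for $q\ge 1$.

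The paper's proof avoids all of this by simply setting $\varphi:=F-1$, where $F$ is the function from Theorem~\ref{thm_qbdvalue_manifolds} with boundary data $f\equiv 1$ and with the extension $\widetilde F$ chosen so that $\Omega=\{\widetilde F<1\}$ and $\mathcal{M}\setminus\overline\Omega=\{\widetilde F>1\}$. The partition-of-unity construction in that theorem, together with Lemma~\ref{thm_uniformqpsh}, already handles both the noncompactness of $b\Omega$ and the $q$-plurisubharmonic compatibility; one then checks directly from the explicit formulas for the $F_j$ that $F<1$ on $\Omega$ and $F>1$ on $\mathcal U\setminus\overline\Omega$, whence $\Omega=\{\varphi<0\}$. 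No Richberg step and no prior global boundary defining function are required. Your fallback paragraph gestures toward this, but as written it is not a proof: the actual argument is to invoke Theorem~\ref{thm_qbdvalue_manifolds} with $f\equiv 1$ and verify the sign of $F-1$ from its construction.
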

\begin{proof}
Let $\varphi \coloneqq F - 1$, where $F$ is the function from Theorem \ref{thm_qbdvalue_manifolds} corresponding to the boundary values $f \equiv 1$. Then $\varphi$ is a smooth $q$-plurisubharmonic function on an open neighbourhood of $\overline{\Omega}$ that vanishes identically on $b\Omega$ and that is strictly $q$-plurisubharmonic near $b\Omega$. Observe that in the construction of $F$ we can choose $\widetilde{F}$ such that $\Omega = \{\widetilde{F} < 1\}$ and $\mathcal{M} \setminus \overline{\Omega} = \{\widetilde{F} > 1\}$. Moreover, after possibly shrinking $\mathcal{U}$, we can assume that $\sum_{j=1}^\infty \chiup_j \equiv 1$ on $\mathcal{U} \setminus \overline{\Omega}$. For $z \in \mathcal{M}$ let $I(z) \coloneqq \{j \in \N : z \in W_j\}$ and $J(z) \coloneqq \{j \in \N : z \in U_j'\}$. Then
\begin{equation*} \begin {split} F(z) &= \sum_{j \in I(z)} F_j(z) = \sum_{j \in I(z)} (\tilde{\beta} \circ (g_j + C_j(\varphi_j + \varepsilon_j\lambda_j)))(z) \le \sum_{j \in I(z)} (\tilde{\beta} \circ g_j)(z) \\ &= \sum_{j \in I(z)} (\widetilde{F}\chiup_j)(z) \le \widetilde{F}(z) < 1 \;\; \text{ for } z \in \Omega \end{split} \end{equation*}
(here the sum over the empty index set is understood to be zero), and
\begin{equation*} \begin {split} F(z) &\ge \sum_{j \in J(z)} F_j(z) = \sum_{j \in J(z)} (\tilde{\beta} \circ (g_j + C_j(\varphi_j + \varepsilon_j\lambda_j)))(z) \ge \sum_{j \in J(z)} (\tilde{\beta} \circ g_j)(z) \\ &= \sum_{j \in J(z)} (\widetilde{F}\chiup_j)(z) = \widetilde{F}(z) > 1 \;\; \text{ for } z \in \mathcal{U} \setminus \overline{\Omega}. \end{split} \end{equation*}
This shows that $\Omega = \{F < 1\}$, i.e., $\Omega = \{\varphi < 0\}$. Finally, we have $d\varphi \neq 0$ on $b\Omega$, provided that the numbers $C_j$ which appear in the construction of $F$ are chosen large enough (in fact, since $b\Omega$ is smooth, the non-vanishing of $d\varphi$ along $b\Omega$ is automatically satisfied, see, for example, the proof of Proposition 1.5.16 in \cite{HenkinLeiterer84}, which can be adapted easily to the case of $q$-plurisubharmonic functions).
\end{proof}

\noindent \textbf{Remarks.} 1) The assumption in Theorem \ref{thm_qbdvalue_manifolds} that $f$ is bounded from below is crucial. In fact, it was shown in Example 8.2 of \cite{ShcherbinaTomassini99} that there exist an unbounded strictly pseudoconvex domain $\Omega \subset \C^2$ with smooth boundary and a smooth function $f \colon b\Omega \to \R$ that is not bounded from below such that the only plurisubharmonic function $F \colon \Omega \to [-\infty, \infty)$ satisfying $\limsup_{z \to z_0} F(z) \le f(z_0)$ for every $z_0 \in b\Omega$ is the function $F \equiv -\infty$.

\noindent 2) The function $F$ from Theorem \ref{thm_qbdvalue_manifolds} is strictly $q$-plurisubharmonic on the open neighbourhood $W$ of $b\Omega$ and $F$ is a constant on $\Omega \setminus W$. It is clear from the construction that for every open set $\omega \subset \Omega$ such that $\overline{\omega} \subset \Omega$ we can choose $F$ in such a way that $F$ is constant on $\omega$.

\noindent 3) Let $h$ be a hermitian metric on $\mathcal{M}$ and let $\nu, \mu \colon b\Omega \to (0,\infty)$ be positive continuous functions. Then $F$ can be chosen in such a way that $\norm{(dF)_z}_{h^\ast_z} \ge \nu(z)$ for every $z \in b\Omega$ and $\Lev(F)(z,\,\cdot\,) \ge \mu(z)\norm{\,\cdot\,}^2_{h_z}$ on $L_z$ for every $z \in b\Omega$. Indeed, for every $j \in \N$ let $U_j'' \subset\subset U_j'$ be an open set such that $\{U_j''\}_{j=1}^\infty$ still covers $b\Omega$. Now in the construction of $F$ we can choose for every $j \in \N$ the numbers $\varepsilon_j > 0$ so small and $C_j > 0$ so large that $(dF_j)_z(N_\Omega(z)) \ge 0$ for every $z \in b\Omega$, $(dF_j)_z(N_\Omega(z)) \ge \nu(z)$ for every $z \in b\Omega \cap U_j''$ and $\Lev(F_j)(z, \,\cdot\,) \ge \mu(z) \norm{\,\cdot\,}^2_{h_z}$ on $L_z$ for every $z \in b\Omega \cap U_j''$, where $N_\Omega(z)$ denotes the outward unit normal to $b\Omega$ at $z$ with respect to $h$. Then $F$ is a function as desired.
 
\noindent 4) The statements of Theorems \ref{thm_qbdvalue_manifolds} and \ref{thm_qdeffct_manifolds} as well as the above remarks remain true if $\mathcal{C}^\infty$-smoothness is replaced by $\mathcal{C}^s$-smoothness for $s \ge 2$. If $q = 0$ and if for each point $z \in b\Omega$ there exists an open neighbourhood $U \subset \mathcal{M}$ of $z$ and a $\mathcal{C}^1$-smooth strictly plurisubharmonic function $\varphi \colon U \to \R$ such that $\Omega \cap U = \{\varphi < 0\}$ and $d\varphi \neq 0$ on $b\Omega \cap U$ (note that this is a stronger assumption than $\Omega$ being strictly pseudoconvex with $\mathcal{C}^1$-smooth boundary), and if $f \colon b\Omega \to \R$ is $\mathcal{C}^2$-smooth (i.e., for every $z \in b\Omega$ there exists an open neighbourhood $U_z \subset \mathcal{M}$ of $z$ and a $\mathcal{C}^2$-smooth function $F_z \colon U_z \to \R$ such that $F_z$ coincides with $f$ on $b\Omega \cap U_z$), then a statement analoguous to Theorem \ref{thm_qbdvalue_manifolds} holds true with $\mathcal{C}^1$-smooth $F$. Further, if $\Omega$ is just strictly pseudoconvex (with no smoothness assumptions on $b\Omega$) and if $f \colon b\Omega \to \R$ is $\mathcal{C}^2$-smooth, then there always exists a continuous plurisubharmonic function $F$ as in Theorem \ref{thm_qbdvalue_manifolds}. Analoguous generalizations are possible for Theorem \ref{thm_qdeffct_manifolds} (but of course no assertion on the differential of $\varphi$ is imposed if $s=0$). Moreover, when considering the case of possibly nonsmooth boundaries, it is also worth mentioning that we do not need connectedness of the set $\Omega$ in the proofs of Theorem \ref{thm_qbdvalue_manifolds} and Theorem \ref{thm_qdeffct_manifolds}. In particular, every strictly pseudoconvex open set in a complex manifold admits a continuous global defining function.

\noindent 5) Finally, we want to mention without giving the details of the proof that it is possible to weaken the assumptions on smoothness of $b\Omega$ even further. Indeed, in Theorem \ref{thm_qbdvalue_manifolds} it suffices to assume that $\Omega$ can be represented locally near each boundary point as the sublevel set of a $\mathcal{C}^\infty$-smooth strictly $q$-plurisubharmonic function with possibly vanishing differential along $b\Omega$ (or, more general, as the sublevel set of a $\mathcal{C}^s$-smooth strictly $q$-plurisubharmonic function for some $s \ge 2$ or $(q,s) = (0,1)$, but then the function $F$ from Theorem \ref{thm_qbdvalue_manifolds} will only be $\mathcal{C}^s$-smooth in general). Domains of this type were considered, for example, in \cite{HenkinLeiterer84} and \cite{HenkinLeiterer88}. If $q=0$, this is clear. In the case $q > 0$ this is a consequence of the following fact: If $\Omega \subset \mathcal{M}$ is open, $z \in b\Omega$, $U \subset \mathcal{M}$ is an open neighbourhood of $z$ and $\varphi_1, \varphi_2 \colon U \to \R$ are $\mathcal{C}^2$-smooth functions such that $\Omega \cap U = \{\varphi_1 < 0\} = \{\varphi_2 < 0\}$, then for every $\xi \in H_z(\varphi_1) = H_z(\varphi_2)$ (see again Proposition 1.5.16 in \cite{HenkinLeiterer84} for the fact that $(d\varphi_1)_z = 0$ if and only if $(d\varphi_2)_z=0$) we have $\Lev(\varphi_1)(z,\xi) \ge 0$ if and only if $\Lev(\varphi_2)(z,\xi) \ge 0$ (see, for example, the proof of Proposition 15 in \cite{AndreottiGrauert62}). In particular, the sum $\varphi_1 + \varphi_2$ is strictly $q$-plurisubharmonic near $z$ if both $\varphi_1$ and $\varphi_2$ are strictly $q$-plurisubharmonic near $z$ and $(d\varphi_1)_z = (d\varphi_2)_z = 0$. Thus, if $\Sigma(b\Omega)$ denotes the set of points $z \in b\Omega$ such that $b\Omega$ is smooth in $z$, then the function $F = \sum_{j=1}^\infty F_j$ that appears in the proof of Theorem \ref{thm_qbdvalue_manifolds} will be automatically strictly $q$-plurisubharmonic in a neighbourhood of $b\Omega \setminus \Sigma(b\Omega)$, and strict $q$-plurisubharmonicity near the remaining part of $b\Omega$ can be achieved as before. The same weakening of assumptions is possible in Theorem \ref{thm_qdeffct_manifolds}, but then the constructed function $\varphi$ can be guaranteed to have nonvanishing differential only along $\Sigma(b\Omega)$. 

Our next goal is to show that the core is the only obstruction for strict plurisubharmonicity of global defining functions, i.e., we want to construct a global defining function that is strictly plurisubharmonic precisely in the complement of $\mathfrak{c}(\Omega)$ (in particular, we now work in the case $q=0$). This will give a stronger version of the statement of Theorem 2.2, namely, the Main Theorem (see page \pageref{thm_maintheorem} in the Introduction). For this we need an auxiliary lemma which uses the following notion of smooth maximum: Let $\delta > 0$ and let $\chi_\delta \colon \R \to \R$ be a smooth function such that $\chi$ is strictly convex for $\abs{t} < \delta/2$ and $\chi_\delta(t) = \abs{t}$ for $\abs{t} \ge \delta/2$. Then we define a smooth maximum by
\[ \label{def_smoothmax} \widetilde{\max}_\delta(x,y) \coloneqq \frac{x+y + \chi_{\delta}(x-y)}{2}. \]
Observe that the smooth maximum of two smooth (strictly) plurisubharmonic functions is again a smooth (strictly) plurisubharmonic function (see, for example, Corollary 4.14 in \cite{HenkinLeiterer88}). Moreover, $\widetilde{\max}_\delta(x,y) = \max(x,y)$ if $\abs{x-y} \ge \delta$.

\begin{lemma} \label{thm_coreequality}
Let $\Omega$ be a strictly pseudoconvex domain with smooth boundary in a complex manifold $\mathcal{M}$. Let $\mathfrak{c}_\ast(\Omega)$ denote the set of all points in $\Omega$ where every smooth global defining function for $\Omega$ fails to be strictly plurisubharmonic. Then $\mathfrak{c}_\ast(\Omega) = \mathfrak{c}(\Omega)$.
\end{lemma}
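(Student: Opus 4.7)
The plan is to establish two inclusions. The inclusion $\mathfrak{c}(\Omega) \subset \mathfrak{c}_\ast(\Omega)$ is essentially tautological: every smooth global defining function $\varphi$ for $\Omega$ is plurisubharmonic on a neighbourhood of $\overline{\Omega}$ by property~(II) and satisfies $\varphi < 0$ on $\Omega$ by property~(III), so $\varphi|_\Omega$ is a smooth plurisubharmonic function bounded above by $0$; hence at any point $z_0 \in \mathfrak{c}(\Omega)$ no such $\varphi$ can be strictly plurisubharmonic, giving $z_0 \in \mathfrak{c}_\ast(\Omega)$.

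For the reverse inclusion $\mathfrak{c}_\ast(\Omega) \subset \mathfrak{c}(\Omega)$, I fix $z_0 \in \Omega \setminus \mathfrak{c}(\Omega)$ and construct a smooth global defining function for $\Omega$ that is strictly plurisubharmonic at $z_0$. Two inputs enter the construction: first, a smooth plurisubharmonic function $u \colon \Omega \to \R$ that is bounded from above and strictly plurisubharmonic at $z_0$, available by $z_0 \notin \mathfrak{c}(\Omega)$, and normalized so that $u \le 0$; second, a bounded smooth global defining function $\varphi_0 \colon U \to \R$ on an open neighbourhood $U$ of $\overline{\Omega}$, supplied by Theorem~\ref{thm_qdeffct_manifolds} with $q=0$ together with the composition-with-convex-function remark following the definition of global defining function. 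Writing $a \coloneqq -\varphi_0(z_0) > 0$ and choosing $0 < \delta < a/10$ and $\varepsilon > 0$ with $\varepsilon(1+|u(z_0)|) < a/10$, I define $\tilde{u} \coloneqq \varepsilon u - 3\delta$ on $\Omega$ and set
\[ \varphi \coloneqq \begin{cases} \varphi_0 & \text{on } U \setminus \Omega, \\ \widetilde{\max}_\delta(\varphi_0, \tilde{u}) & \text{on } \Omega. \end{cases} \]
Observe that $\tilde{u} \le -3\delta$, that $\tilde{u}$ is strictly plurisubharmonic at $z_0$, and that $\tilde{u}(z_0) - \varphi_0(z_0) > 6\delta$.

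The verification proceeds region by region. On $\{\varphi_0 > -\delta\} \cap \Omega$ one has $\varphi_0 - \tilde{u} \ge \varphi_0 + 3\delta > 2\delta$, so $\widetilde{\max}_\delta(\varphi_0, \tilde{u}) = \varphi_0$; in particular $\varphi = \varphi_0$ in an open neighbourhood of $b\Omega$ in $U$, which gives smoothness of $\varphi$ across $b\Omega$, the condition $d\varphi = d\varphi_0 \neq 0$ on $b\Omega$, and strict plurisubharmonicity of $\varphi$ near $b\Omega$. On a neighbourhood of $z_0$ the opposite inequality $\tilde{u} - \varphi_0 > \delta$ holds by continuity, hence $\varphi = \tilde{u}$ there and is therefore strictly plurisubharmonic at $z_0$. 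Plurisubharmonicity of $\varphi$ on all of $U$ is inherited from the standard fact that $\widetilde{\max}_\delta$ applied to two smooth plurisubharmonic functions is smooth plurisubharmonic. Finally, $\Omega = \{\varphi < 0\}$ reduces to showing $\varphi < 0$ on $\{\varphi_0 \le -\delta\} \cap \Omega$, and there the elementary estimate $\widetilde{\max}_\delta(x,y) \le \max(x,y) + \delta/4$ (which follows from $|t| \le \chi_\delta(t) \le \delta/2$ on $|t|\le \delta/2$) combined with $\max(\varphi_0,\tilde{u}) \le -\delta$ yields $\varphi \le -3\delta/4 < 0$. Since $z_0$ was arbitrary, this gives $z_0 \notin \mathfrak{c}_\ast(\Omega)$.

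The one technical point to flag is that $u$ is defined only on $\Omega$ and in general cannot be extended plurisubharmonically across $b\Omega$; the entire role of the smooth maximum and the $3\delta$ offset in $\tilde{u}$ is to mediate this gluing, arranging that $\tilde{u}$ lies uniformly below $\varphi_0$ in a neighbourhood of $b\Omega$ while still exceeding $\varphi_0$ at $z_0$. The simultaneous smallness of $\varepsilon$ and $\delta$ relative to $a = -\varphi_0(z_0)$ is what makes these two requirements compatible; no smallness of $u$ itself is needed, only finiteness of $u(z_0)$ and the strict inequality $\varphi_0(z_0) < 0$.
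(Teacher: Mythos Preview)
Your proof is correct and follows essentially the same approach as the paper: both arguments glue a bounded-from-above smooth plurisubharmonic function (strictly plurisubharmonic at the given point) to a global defining function via the smooth maximum $\widetilde{\max}_\delta$, arranging constants so that the glued function agrees with the defining function near $b\Omega$ and with the other function near $z_0$. The paper's version is terser---it scales the defining function by $C_2$ and shifts $\varphi_1$ by $C_1$ with $\delta=1$ fixed, rather than scaling $u$ by $\varepsilon$ and shifting by $3\delta$---but the mechanism is identical, and your more explicit verification of $\Omega = \{\varphi<0\}$ via the estimate $\widetilde{\max}_\delta(x,y)\le\max(x,y)+\delta/4$ spells out a detail the paper leaves implicit.
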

\begin{proof}
It is obvious that $\mathfrak{c}(\Omega) \subset \mathfrak{c}_\ast(\Omega)$. Assume, to get a contradiction, that $\mathfrak{c}_\ast(\Omega) \setminus \mathfrak{c}(\Omega) \neq \varnothing$, i.e., there exist $p \in \mathfrak{c}_\ast(\Omega)$ and a bounded from above smooth plurisubharmonic function $\varphi_1$ on $\Omega$ such that $\varphi_1$ is strictly plurisubharmonic at $p$. Let $\varphi_2 \colon \overline{\Omega} \to \R$ be a smooth global defining function for $\Omega$. Choose constants $C_1, C_2 > 0$ such that $\varphi_1 - C_1 < C_2\varphi_2 - 1$ near $b\Omega$ and $C_2 \varphi_2(p) < \varphi_1(p) - C_1 - 1$. Then $\widetilde{\max}_1(\varphi_1 - C_1, C_2\varphi_2)$ is a smooth global defining function for $\Omega$ that is strictly plurisubharmonic in $p$, which contradicts the fact that $p \in \mathfrak{c}_\ast(\Omega)$.
\end{proof}

\noindent \textbf{Main Theorem.} Every strictly pseudoconvex domain $\Omega$ with smooth boundary in a complex manifold $ \mathcal{M}$ admits a bounded global defining function that is strictly plurisubharmonic outside $\mathfrak{c}(\Omega)$.
\begin{proof}
By the previous lemma, for every $p \in \Omega \setminus \mathfrak{c}(\Omega)$ there exists a smooth global defining function $\psi_p$ for $\Omega$ that is strictly plurisubharmonic on an open neighbourhood $V_p \subset\subset \Omega \setminus \mathfrak{c}(\Omega)$ of $p$. Let $\{p_j\}_{j=1}^\infty$ be a sequence of points $p_j \in \Omega$ such that $\bigcup_{j=1}^\infty V_{p_j} = \Omega \setminus \mathfrak{c}(\Omega)$. Without loss of generality we can assume that each set $V_{p_j}$ is contained in some coordinate patch of $\mathcal{M}$. Choose a sequence $\{\delta_j\}_{j=1}^\infty$ of positive numbers $\delta_j$ such that $\delta_j\psi_{p_j} > -1/2$ on $V_{p_j}$ for every $j \in \N$. Moreover, let $\{\varepsilon_j\}_{j=1}^\infty$ be a second sequence of suitably chosen positive numbers $\varepsilon_j$ and define $\varphi_1 \coloneqq \sum_{j=1}^\infty \varepsilon_j \widetilde{\max}_{1/2}(\delta_j\psi_{p_j}, -1)$. If $\{\varepsilon_j\}_{j=1}^\infty$ converges to zero fast enough, then $\varphi_1$ is a smooth plurisubharmonic function on $\overline{\Omega}$ such that $\varphi_1$ is strictly plurisubharmonic outside $\mathfrak{c}(\Omega)$, $b\Omega = \{\varphi_1 = 0\}$ and $0 \ge \varphi_1 > -1$. By construction, $\varphi_1$ has nonvanishing differential along $b\Omega$, hence a smooth extension $\varphi$ of $\varphi_1$ to a small enough open neighbourhood $\mathcal{U} \subset \mathcal{M}$ of $\overline{\Omega}$ will be a global defining function as desired.
\end{proof}

\noindent \textbf{Remarks.} 1) In the same way as described in the remarks after Theorem \ref{thm_qdeffct_manifolds}, we can prescribe along $b\Omega$ the size of the differential and the Levi form of the global defining function constructed in the Main Theorem.

\noindent \label{def_coreweaklysmooth} 2) As for the case of $\mathcal{C}^\infty$-smooth functions, we can define the sets
\begin{equation*}\begin{split}
\mathfrak{c}^s(\Omega) \coloneqq \big\{&z \in \Omega :\text{every $\mathcal{C}^s$-smooth plurisubharmonic function on $\Omega$ that is} \\& \,\text{bounded from above fails to be strictly plurisubharmonic in } z \big\}
\end{split}\end{equation*}
for every $s \in \N_0^\infty \coloneqq \{0\} \cup \N \cup \{\infty\}$. Then statements analoguous to Lemma \ref{thm_coreequality} and the Main Theorem hold for every $s \in \N_0^\infty$. Observe, however, that it is not clear whether in general $\mathfrak{c}^{s_1}(\Omega) = \mathfrak{c}^{s_2}(\Omega)$ for $s_1 \neq s_2$.

\noindent 3) One can also define yet another version of the core as
\begin{equation*}\begin{split}
\tilde{\mathfrak{c}}(\Omega) \coloneqq \big\{&z \in \Omega :\text{every plurisubharmonic function on $\Omega$ that is bounded} \\& \text{from above and not identically $-\infty$ on any connected compo-} \\& \text{nent of } \Omega \text{ fails to be strictly plurisubharmonic in } z \big\}.
\end{split}\end{equation*}
Observe that this definition leads to a weaker notion, i.e., in general we have $\tilde{\mathfrak{c}}(\Omega) \subsetneq \mathfrak{c}(\Omega)$. For example, the function $\varphi(z,w) \coloneqq \log\abs{w-f(z)} + C_1(\abs{z}^2 + \abs{w}^2)$ is strictly plurisubharmonic and bounded from above on the domain $\Omega$ from Example \ref{ex_complexline}. Hence in this case we have $\tilde{\mathfrak{c}}(\Omega) = \varnothing$, but $\mathfrak{c}(\Omega) \neq \varnothing$. We do not know if there exists a complex manifold $\mathcal{M}$ and a strictly pseudoconvex domain $\Omega \subset \mathcal{M}$ such that $\tilde{\mathfrak{c}}(\Omega) \neq \varnothing$.

\noindent 4) A result analoguous to the Main Theorem holds also true if $b\Omega$ is only smooth in the weaker sense as it is described in Remark 5 after Theorem \ref{thm_qdeffct_manifolds}. Indeed, to extend $\varphi_1$ from $\overline{\Omega}$ to an open neighbourhood of $\overline{\Omega}$ let then $\varphi_2$ be a global defining function for $\Omega$ as constructed in Theorem \ref{thm_qdeffct_manifolds}. In particular, $\varphi_2$ is defined on an open neighbourhood $U$ of $\overline{\Omega}$, $\varphi_2 \ge -1$ and $\varphi_2$ is strictly plurisubharmonic on $\{\varphi_2 > -1\}$. Then
\[\varphi(z) \coloneqq \left\{ \begin{array}{c@{\,,\quad}l} 2\varphi_2(z) & z \in U \setminus \overline{\Omega} \\ \widetilde{\max}_{1/4}(\varphi_1(z) - 1/2, 2\varphi_2(z)) & z \in \overline{\Omega} \cap \{\varphi_2 > -1\} \\ \varphi_1(z) -1/2 & z \in \overline{\Omega} \cap \{\varphi_2 = -1\} \end{array} \right.\]
is a function as desired.

Following \cite{SlodkowskiTomassini04}, we introduce the following notion of minimal functions for a domain $\Omega \subset \mathcal{M}$.
\begin{definition} \label{def_minimal}
Let $\mathcal{M}$ be a complex manifold and let $\Omega \subset \mathcal{M}$ be a domain. A smooth and bounded from above plurisubharmonic function $\varphi \colon \Omega \to \R$ will be called {\it minimal} if $\varphi$ is strictly plurisubharmonic outside $\mathfrak{c}(\Omega)$. 
\end{definition}

\noindent Our Main Theorem can then be rephrased as follows: every strictly pseudoconvex domain with smooth boundary in a complex manifold admits a bounded minimal global defining function. Moreover, by using similar arguments as in the proof of the Main Theorem, it also follows that every domain in a complex manifold admits a bounded minimal function.

As in the case of plurisubharmonic functions, it now would also be possible to introduce for every domain $\Omega$ in a complex manifold $\mathcal{M}$ the core $\mathfrak{c}(\Omega,q)$ with respect to the class of $q$-plurisubharmonic functions, namely,
\begin{equation*}\begin{split}
\mathfrak{c}(\Omega,q) \coloneqq \big\{&z \in \Omega :\text{every smooth $q$-plurisubharmonic function on $\Omega$ that is} \\& \text{bounded from above fails to be strictly $q$-plurisubharmonic in } z \big\}.
\end{split}\end{equation*}

However, we do not know whether this definition is meaningful, in the sense that we do not have any examples of domains $\Omega \subset \mathcal{M}$ such that $\mathfrak{c}(\Omega, q) \neq \varnothing$ for $q > 0$. Indeed, for domains in Stein manifolds the set $\mathfrak{c}(\Omega, q)$ is always empty for every $q > 0$ as it is shown in the following proposition.

\begin{proposition} \label{thm_emptycore}
Every Stein manifold $\mathcal{M}$ admits a bounded smooth $1$-plurisubhar\-monic function. In particular, $\mathfrak{c}(\Omega,q) = \varnothing$ for every $\Omega \subset \mathcal{M}$ and every $q > 0$.
\end{proposition}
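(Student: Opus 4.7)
The plan is to use a standard Stein-theoretic trick: compose a smooth strictly plurisubharmonic exhaustion of $\mathcal{M}$ with a bounded, smooth, strictly increasing function on $\R$, and show that the resulting function, while no longer strictly plurisubharmonic, is nonetheless strictly $1$-plurisubharmonic. The second assertion then follows from the observation that strict $1$-plurisubharmonicity implies strict $q$-plurisubharmonicity for every $q \ge 1$.

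More concretely, I would first choose a smooth strictly plurisubharmonic exhaustion function $\rho \colon \mathcal{M} \to \R$, which exists by the definition of Stein manifold. Then I would pick a smooth, bounded, strictly increasing function $\chi \colon \R \to \R$; the simplest explicit choice is $\chi(t) \coloneqq -e^{-t}$, which maps $\R$ onto $(-1,0)$ and satisfies $\chi'(t) = e^{-t} > 0$ and $\chi''(t) = -e^{-t} < 0$. I would then set $\psi \coloneqq \chi \circ \rho$; clearly $\psi$ is smooth and bounded, so it remains only to verify its $1$-plurisubharmonicity.

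The key computation is the chain rule for the Levi form,
\[
\Lev(\psi)(z,\xi) = \chi'(\rho(z))\,\Lev(\rho)(z,\xi) + \chi''(\rho(z))\,\abs{(\partial\rho)_z(\xi)}^2.
\]
Since $\chi'(\rho(z)) > 0$ and $\rho$ is strictly plurisubharmonic, the first term is a positive definite Hermitian form on $T_z(\mathcal{M})$. The second term is a Hermitian form of rank at most $1$, because $\xi \mapsto \abs{(\partial\rho)_z(\xi)}^2$ is the squared modulus of a single linear functional. I would then invoke the elementary linear-algebra fact that adding a Hermitian form of rank at most $1$ to a positive definite Hermitian form on an $n$-dimensional complex vector space produces a Hermitian form with at least $n-1$ strictly positive eigenvalues; this follows immediately from Courant--Fischer by restricting to the $(n-1)$-dimensional kernel of the rank-one perturbation, on which the sum coincides with the original positive definite form. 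Hence $\Lev(\psi)(z,\cdot)$ has at least $n-1$ positive eigenvalues at every $z \in \mathcal{M}$, so $\psi$ is a bounded smooth strictly $1$-plurisubharmonic function on $\mathcal{M}$.

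For the ``in particular'' part, given any domain $\Omega \subset \mathcal{M}$ and any $q \ge 1$, the restriction $\psi|_\Omega$ is bounded from above, smooth, and strictly $1$-plurisubharmonic, hence strictly $q$-plurisubharmonic since $n-q \le n-1$. Therefore for every $z \in \Omega$ the function $\psi|_\Omega$ witnesses $z \notin \mathfrak{c}(\Omega,q)$, giving $\mathfrak{c}(\Omega,q) = \varnothing$. I do not anticipate any real obstacle in this argument; the only mildly subtle point is the eigenvalue-counting lemma for a rank-one Hermitian perturbation, but this is a one-line consequence of the min-max principle.
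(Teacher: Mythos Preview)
Your proof is correct and follows essentially the same approach as the paper: compose a smooth strictly plurisubharmonic function with a bounded strictly increasing function and verify via the chain rule that the Levi form remains positive on the $(n-1)$-dimensional kernel of $\partial\rho$. The only cosmetic differences are your choice of $\chi(t)=-e^{-t}$ versus the paper's $\chi(t)=-1/(1+t)$ (after first making the exhaustion nonnegative), and your phrasing of the eigenvalue count via Courant--Fischer rather than the paper's direct observation that $\Lev(\psi)(z,\cdot)>0$ on $H_z(\rho)$.
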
 
\begin{proof}
Let $\psi \colon \mathcal{M} \to \R$ be  a smooth strictly plurisubharmonic function. After replacing $\psi$ by $e^\psi$ if necessary, we can assume without loss of generality that $\psi \ge 0$. Define $\chiup \colon (-1, \infty) \to \R$ as $\chiup(t) \coloneqq -1/(1+t)$ and consider the bounded smooth function $\varphi \coloneqq \chiup \circ \psi$. Then
\[ \Lev(\varphi)(z,\xi) = \chiup''(\psi(z))\abs{(\partial \psi)_z(\xi)}^2 + \chiup'(\psi(z))\Lev(\psi)(z,\xi) \]
for every $z \in \mathcal{M}$ and $\xi \in T_z(\mathcal{M})$. In particular, $\Lev(\varphi)(z,\,\cdot\,) > 0$ on the at least $(\dim_\C \mathcal{M}-1)$-dimensional subspace $H_z(\psi) = \{\xi \in \C^n : (\partial\psi)_z(\xi) = 0\}$.
\end{proof}

One might expect that at least compact analytic subsets $A \subset \Omega$ of pure dimension $q+1$ are always contained in $\mathfrak{c}(\Omega,q)$. However, this is not necessarily the case as it is shown by the following example.

\begin{example}
As in Example \ref{ex_compactanalytic}, let $\mathcal{M} \coloneqq \{(z,x) \in \C^3 \times \CP^2 : z_ix_j = z_jx_i, \,i,j= 0,1,2\}$ be the blow-up of $\C^3$ at the origin. For every $j = 0,1,2$, define mappings $h_j \colon U_j \to \C^3$ on the dense open subsets $U_j \coloneqq \{(z,x) \in \mathcal{M} : x_j \neq 0\}$ as 
\[ h_j(z,x) \coloneqq \Big(\frac{x_0}{x_j}, \ldots, \frac{x_{j-1}}{x_j}, z_j, \frac{x_{j+1}}{x_j}, \ldots, \frac{x_2}{x_j}\Big). \]
Each map $h_j$ is a homeomorphism with inverse 
\begin{equation*} \begin{split}  h_j^{-1}(w_0,w_1,w_2) \coloneqq \big((w_jw_0, \ldots, w_jw_{j-1}, w_j, &w_jw_{j+1}, \ldots, w_jw_2), \\ &[w_0 : \ldots : w_{j-1} : 1 : w_{j+1} : \ldots : w_2]\big) \end{split} \end{equation*}
and the tupel $\{(U_j,h_j) : j = 0,1,2\}$ defines a complex structure on $\mathcal{M}$. For every $j = 0,1,2$, define a smooth function $\varphi_j \colon \mathcal{M} \to \R$ as
\[ \varphi_j(z,x) \coloneqq -\frac{\abs{x_j}^2}{\abs{z_j}^2\abs{x_j}^2 + \abs{x_0}^2 + \abs{x_1}^2 + \abs{x_2}^2}. \]
Then
\[ (\varphi_j \circ h_j^{-1})(w_0,w_1,w_2) = - \frac{1}{1 + \abs{w_0}^2 + \abs{w_1}^2 + \abs{w_2}^2}, \]
and as in the proof of Proposition \ref{thm_emptycore} we see that this function is strictly $1$-plurisubhar\-monic on $\C^3$. Hence $\varphi_j$ is $1$-plurisubharmonic on $\mathcal{M}$ and strictly $1$-plurisubharmonic on $U_j$. Now let $\Omega \subset\subset \mathcal{M}$ be the strictly pseudoconvex domain with smooth boundary defined by
\[ \Omega \coloneqq \big\{(z,x) \in \mathcal{M} : \norm{z} < 1 \big\}. \]
Then the above computations show that for every $(z,x) \in \Omega$ there exists a smooth $1$-plurisubharmonic function on $\Omega$ that is bounded from above and that is strictly $1$-plurisubharmonic near $(z,x)$, i.e., $\mathfrak{c}(\Omega,1) = \varnothing$. In particular, the pure $2$-dimensional compact analytic set $\{0\} \times \CP^2 \subset\Omega$ is not contained in $\mathfrak{c}(\Omega,1)$.
\end{example}

Observe also that in general no analogue of the Main Theorem holds true in the case of strictly $q$-pseudoconvex domains $\Omega$ if $q>0$, i.e., in general it is not possible to have a global defining function for $\Omega$ as in Theorem \ref{thm_qdeffct_manifolds} that is strictly $q$-plurisubharmonic outside $\mathfrak{c}(\Omega, q)$. Indeed, the domain $\Omega$ from the last example satisfies $\mathfrak{c}(\Omega,1) = \varnothing$, but there exists no smooth strictly $1$-plurisubharmonic function on $\Omega$, since there exists no such function on $\CP^2$.

We now give one more application of the constructions that were carried out in the proof of Theorem \ref{thm_qbdvalue_manifolds}.

\begin{theorem} \label{thm_nbhbasis_manifolds}
Let $\mathcal{M}$ be a complex manifold and let $\Omega \subset \mathcal{M}$ be a strictly pseudoconvex open set (not necessarily relatively compact or with smooth boundary). Let $U \subset \mathcal{M}$ be an arbitrary open neighbourhood of $b \Omega$. Then the following assertions hold true:
\begin{enumerate}
  \item[(1)] There exists a strictly pseudoconvex open set $\Omega' \subset \mathcal{M}$ with smooth boundary such that $\Omega \setminus U \subset \Omega'$, $\overline{\Omega'} \subset \Omega$ and $\mathfrak{c}(\Omega') = \mathfrak{c}(\Omega)$.
  \item[(2)] There exists a strictly pseudoconvex open set $\Omega'' \subset \mathcal{M}$ with smooth boundary such that $\overline{\Omega} \subset \Omega''$, $\overline{\Omega''} \subset \Omega \cup U$ and $\mathfrak{c}(\Omega'') = \mathfrak{c}(\Omega)$.
\end{enumerate}
In particular, $\overline{\Omega}$ admits a neighbourhood basis consisting of strictly pseudoconvex open sets with smooth boundary. Moreover, if $\Omega$ is a domain, then one can also choose $\Omega'$ and $\Omega''$ to be domains.
\end{theorem}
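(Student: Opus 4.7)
I would prove both parts by constructing a bounded smooth plurisubharmonic function on a neighbourhood of $\overline{\Omega}$ and extracting $\Omega'$, $\Omega''$ as regular sublevel sets via Sard's theorem. The starting point is the continuous global defining function $\tilde{\varphi}$ for $\Omega$ furnished by Remark~4 following Theorem~\ref{thm_qdeffct_manifolds}; Richberg's approximation theorem allows it to be replaced on a neighbourhood of $b\Omega$ contained in $U$ by a smooth strictly plurisubharmonic $\hat{\varphi}$ with $|\hat{\varphi} - \tilde{\varphi}|$ arbitrarily small. One then chooses $\delta_1 > 0$ so that $W \coloneqq \{-\delta_1 < \hat{\varphi} < \delta_1\}$ is a tubular neighbourhood of $b\Omega$ with $\overline{W} \subset \Omega \cup U$ and $W \cap \mathfrak{c}(\Omega) = \varnothing$; the latter is possible because $\mathfrak{c}(\Omega)$ is locally disjoint from $b\Omega$ (near each $p \in b\Omega$ the smooth local $\hat{\varphi}$, combined via the smooth-max trick of Lemma~\ref{thm_coreequality} with a constant, produces a smooth bounded-above plurisubharmonic function on $\Omega$ that is strictly plurisubharmonic near $p$).

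For part (1), let $\tilde{\beta}$ be the smooth convex non-decreasing function from the proof of Theorem~\ref{thm_qbdvalue_manifolds} (vanishing on $(-\infty, 0]$), rescaled so that $\tilde{\beta}(\delta) \ge 2$ for a chosen $\delta \in (0, \delta_1)$, and set
\[
F(z) \coloneqq \begin{cases} \tilde{\beta}\bigl(\hat{\varphi}(z) + \delta\bigr), & z \in W, \\ 0, & z \in \overline{\Omega} \setminus W. \end{cases}
\]
Then $F$ is smooth plurisubharmonic on a neighbourhood of $\overline{\Omega}$, strictly plurisubharmonic on the neighbourhood $\{\hat{\varphi} > -\delta\} \supset b\Omega$, vanishes on $\overline{\Omega} \setminus W$, and satisfies $F \ge 2$ on $b\Omega$. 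A regular value $c \in (0, 1)$ of $F$ (existing by Sard's theorem) yields $\Omega' \coloneqq \{F < c\}$, which has smooth boundary, is strictly pseudoconvex, and satisfies $\Omega \setminus U \subset \Omega \setminus W \subset \Omega'$, $\overline{\Omega'} \subset \Omega$, and $\mathfrak{c}(\Omega) \subset \Omega'$. For the equality $\mathfrak{c}(\Omega') = \mathfrak{c}(\Omega)$, the inclusion $\mathfrak{c}(\Omega') \subset \mathfrak{c}(\Omega)$ is immediate by restriction; for the reverse, given $z \in \mathfrak{c}(\Omega) \subset \Omega \setminus W$ and any smooth bounded-above plurisubharmonic $\psi' \colon \Omega' \to \R$ with $\psi' \le M$, extend $\psi'$ to $\Omega$ by
\[
\psi(w) \coloneqq \begin{cases} \widetilde{\max}_\delta\bigl(\psi'(w),\, CF(w) - K\bigr), & w \in \Omega', \\ CF(w) - K, & w \in \Omega \setminus \Omega', \end{cases}
\]
with constants $K > \delta - \psi'(z)$ and $C > (K + M + \delta)/c$. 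The choice of $C$ gives $CF - K > \psi' + \delta$ in a neighbourhood of $b\Omega'$ inside $\Omega'$, so the pieces glue smoothly, and the choice of $K$ together with $F \equiv 0$ near $z$ gives $CF - K = -K < \psi' - \delta$ near $z$, so $\psi = \psi'$ there. Hence $\psi$ is smooth, bounded above, plurisubharmonic on $\Omega$, and agrees with $\psi'$ near $z$; $z \in \mathfrak{c}(\Omega)$ forces $\psi$, and therefore $\psi'$, to fail strict plurisubharmonicity at $z$.

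For part (2), set
\[
G(z) \coloneqq \begin{cases} \tilde{\beta}\bigl(\hat{\varphi}(z) - \delta\bigr) - 1, & z \in W, \\ -1, & z \in \overline{\Omega} \setminus W. \end{cases}
\]
Then $G \equiv -1$ on $\overline{\Omega}$ and $G > -1$ precisely on $\{\hat{\varphi} > \delta\} \cap W \subset W \setminus \overline{\Omega}$, and a regular value $c \in (-1, 0)$ of $G$ gives $\Omega'' \coloneqq \{G < c\}$ with $\overline{\Omega} \subset \Omega''$, $\overline{\Omega''} \subset \Omega \cup U$, smooth boundary, and strict pseudoconvexity. The inclusion $\mathfrak{c}(\Omega) \subset \mathfrak{c}(\Omega'')$ follows by restriction. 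For the reverse $\mathfrak{c}(\Omega'') \subset \mathfrak{c}(\Omega)$, one first checks $\mathfrak{c}(\Omega'') \subset \Omega \setminus W$: on $W \cap \Omega''$ the smooth strictly plurisubharmonic $\hat{\varphi}$ combined via Lemma~\ref{thm_coreequality} with a smooth global defining function of $\Omega''$ (furnished by applying the Main Theorem to the smooth strictly pseudoconvex $\Omega''$) yields at every such point a smooth global defining function of $\Omega''$ that is strictly plurisubharmonic there. Then for $z \in \mathfrak{c}(\Omega'') \cap (\Omega \setminus W)$ and any smooth bounded-above plurisubharmonic $\psi \colon \Omega \to \R$ one constructs a smooth bounded-above plurisubharmonic $\tilde{\psi}$ on $\Omega''$ that agrees with $\psi$ in a neighbourhood of $z$, via a smooth-max construction exploiting the plateau property $G \equiv -1$ on $\overline{\Omega}$ together with the smooth global defining function of $\Omega''$ from the Main Theorem. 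Then $z \in \mathfrak{c}(\Omega'')$ forces $\tilde{\psi}$, and hence $\psi$, to fail strict plurisubharmonicity at $z$.

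In the domain case, one chooses $W$ thin enough so that $\Omega \setminus \overline{W}$ (resp.\ $\overline{\Omega}$) lies in a single connected component of $\Omega'$ (resp.\ $\Omega''$), and passes to that component. The main technical obstacle I foresee is the extension step in part (2): manufacturing a smooth bounded-above plurisubharmonic function on $\Omega''$ that agrees with a prescribed bounded-above plurisubharmonic function on $\Omega$ in a neighbourhood of a given interior point $z$ requires the recursive use of the Main Theorem on $\Omega''$ and a careful application of the smooth-max machinery to reconcile boundedness, plurisubharmonicity, and the gluing condition across $b\Omega$; the rest of the argument reduces to standard smooth-max manipulations.
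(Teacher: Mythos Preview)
Your part~(1) is the paper's argument: continuous global defining function (Remark~4 after Theorem~\ref{thm_qdeffct_manifolds}), Richberg smoothing where it is strictly plurisubharmonic, Sard to pick a regular sublevel set, and the smooth-max trick of Lemma~\ref{thm_coreequality} for the core equality. The paper packages the first two steps slightly differently---it uses Remark~2 to arrange $\varphi\equiv-1$ on a prescribed $\omega$ with $\Omega\setminus U\subset\omega\subset\overline{\omega}\subset\Omega$, then smooths on $\{\varphi>-1\}$---but the content is the same.

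For part~(2) you diverge from the paper: you build $\Omega''$ directly as a sublevel set of your $G$, whereas the paper first produces an intermediate strictly pseudoconvex $\widetilde{\Omega}''$ (not smooth) via the bump trick $\phi=\varphi+\sum\varepsilon_j\eta_j$ of \cite{Tomassini83}, verifies $\mathfrak{c}(\widetilde{\Omega}'')=\mathfrak{c}(\Omega)$, and then applies part~(1) to $\widetilde{\Omega}''$. Your direct route has a gap you have not addressed: $G$ is defined only on $W\cup\Omega$, and for $\Omega''=\{G<c\}$ to have smooth strictly pseudoconvex boundary $\{G=c\}$ you need $\overline{\{G<c\}}$ to stay inside the domain of $G$, i.e.\ the sublevel set must not escape through the outer edge of $W$. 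This requires $\{|\hat{\varphi}|<\delta_1\}$ to have closure inside the domain $V$ of $\hat{\varphi}$; when $b\Omega$ is noncompact there is no reason a \emph{uniform} $\delta_1$ with this property exists after a naive Richberg smoothing. The paper's two-step approach sidesteps this entirely, since the bump trick works with the continuous $\varphi$ on a full neighbourhood of $\overline{\Omega\cup U}$ and pushes the zero level outward without any level-set containment issue.

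Your extension argument for $\mathfrak{c}(\Omega'')\subset\mathfrak{c}(\Omega)$ is also underspecified. The plateau $G\equiv-1$ on $\overline{\Omega}$ does not separate a point $z\in\Omega\setminus W$ from $b\Omega$, and a global defining function of $\Omega''$ is negative on all of $\overline{\Omega}$, so neither furnishes the ``large near $b\Omega$, small near $z$'' barrier you need for the smooth-max gluing of $\psi$ across $b\Omega$. What actually works is your function $F$ from part~(1) (or the paper's $\varphi_2$): it is smooth plurisubharmonic on $\Omega''$, vanishes near $z$, and is $\ge 2$ near $b\Omega$, so $\widetilde{\max}_\delta(\psi,\,CF-K)$ with $C$ large and $K$ large glues to $CF-K$ on $\Omega''\setminus\Omega$ and equals $\psi$ near $z$.
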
 
\begin{proof}
Fix an open neighbourhood $U \subset \mathcal{M}$ of $b\Omega$.

\noindent (1) We first show the existence of the strictly pseudoconvex set $\Omega'$. Let $\omega \subset \Omega$ be an arbitrary but fixed open set such that $\Omega \setminus U \subset \omega$ and $\overline{\omega} \subset \Omega$. By Theorem \ref{thm_qdeffct_manifolds} and the related Remarks 2 and 4, there exists a continuous plurisubharmonic function $\varphi$ defined near $\overline{\Omega}$ such that $\Omega = \{\varphi < 0\}$, $\varphi \ge -1$, $\varphi \equiv -1$ on $\omega$ and $\varphi$ is strictly plurisubharmonic on $\{\varphi > -1\}$. Applying Richberg's smoothing procedure (see, for example, Theorem I.5.21 in \cite{DemaillyXX}), we can then find a continuous plurisubharmonic function $\widetilde{\varphi}$ defined near $\overline{\Omega}$ such that $\widetilde{\varphi} \ge \varphi$, $\widetilde{\varphi} \equiv -1$ on $\omega$, $\widetilde{\varphi}$ is smooth and strictly plurisubharmonic on $\{\widetilde{\varphi} > -1\}$, and $\abs{\widetilde{\varphi} - \varphi} < 1/2$. Let $c \in (-1,-1/2)$ be a regular value of $\widetilde{\varphi}$ and set $\Omega' \coloneqq \{\widetilde{\varphi} < c\}$. Then $\Omega'$ is a strictly pseudoconvex open set such that $\Omega \setminus U \subset \Omega'$ and $\overline{\Omega'} \subset \Omega$.

It remains to show that $\mathfrak{c}(\Omega') = \mathfrak{c}(\Omega)$. Since $\Omega' \subset \Omega$, it follows immediately that $\mathfrak{c}(\Omega') \subset \mathfrak{c}(\Omega)$. On the other hand, observe that for small enough $\delta > 0$ the function $\varphi_2 \coloneqq \widetilde{\max}_\delta(\widetilde{\varphi}-c,-(c+1)/2)$ is smooth plurisubharmonic and bounded from above on $\Omega$, strictly plurisubharmonic near $\Omega \setminus \Omega'$ and $\Omega' = \{\varphi_2 < 0\}$. In particular, this shows that $\mathfrak{c}(\Omega) \subset \Omega'$. By repeating the same arguments as in the proof of Lemma \ref{thm_coreequality}, it now follows easily that $\mathfrak{c}(\Omega) \subset \mathfrak{c}(\Omega')$.

(2) We now show the existence of the strictly pseudoconvex set $\Omega''$. After possibly shrinking $U$, let $\varphi$ be a continuous plurisubharmonic function defined on a neighbourhood of $\overline{\Omega \cup U}$ such that $\Omega = \{\varphi < 0\}$, $\varphi \ge -1$, $\varphi > -1/2$ on $U$ and $\varphi$ is strictly plurisubharmonic on $\{\varphi > -1\}$. Without loss of generality we can assume that $\varphi > 0$ outside $\overline{\Omega}$ (in fact, the function $\varphi$ from Theorem \ref{thm_qdeffct_manifolds} has this property by construction).

We claim that there exists a strictly pseudoconvex open set $\widetilde{\Omega}'' \subset \mathcal{M}$ (not necessarily with smooth boundary) such that $\overline{\Omega} \subset \widetilde{\Omega}'' \subset \Omega \cup U$. The proof is essentially the same as in Lemma 2 of \cite{Tomassini83}: Choose a locally finite open covering $\{U_j\}_{j=1}^\infty$ of $b\Omega$ by open sets $U_j \subset\subset U$. For every $j \in \N$, let $\eta_j \colon \mathcal{M} \to (-\infty, 0]$ be a smooth function such that $\{\eta_j < 0\} = U_j$. Set $\phi \coloneqq \varphi + \sum_{j=1}^\infty \varepsilon_j\eta_j$ with positive constants $\varepsilon_j$, $j \in \N$. Clearly, $\phi > 0$ on $b(\Omega \cup U)$, $\phi < 0$ on $\overline{\Omega}$ and if the numbers $\varepsilon_j$ are chosen small enough, then $\phi$ is still strictly plurisubharmonic on $U$. Set $\widetilde{\Omega}'' \coloneqq \{\phi < 0\}$.

Note that, by a suitable choice of the numbers $\varepsilon_j$, we can also guarantee that $\mathfrak{c}(\widetilde{\Omega}'') = \mathfrak{c}(\Omega)$. Indeed, since $\Omega \subset \widetilde{\Omega}''$, it is immediately clear that $\mathfrak{c}(\Omega) \subset \mathfrak{c}(\widetilde{\Omega}'')$. Further, observe that in the construction of $\phi$ we can choose the numbers $\varepsilon_j$ so small that $\phi > -1/2$ on $U$. Thus we can use the same smoothing procedure as in part $(1)$ (choosing $c = -1/2$) to obtain a smooth and bounded from above plurisubharmonic function $\phi_2 \colon \widetilde{\Omega}'' \to [-1/4,\infty)$ such that $\phi_2 > 0$ on $U$ and $\phi_2$ is strictly plurisubharmonic on $\{\phi_2 > -1/4\}$. Then, as before, the same argument as in the proof of Lemma \ref{thm_coreequality} shows that $\mathfrak{c}(\widetilde{\Omega}'') \subset \mathfrak{c}(\Omega)$.

Now we can apply part 1 of the theorem to the strictly pseudoconvex set $\widetilde{\Omega}''$ and an open neighbourhood of $b\widetilde{\Omega}''$ that does not intersect $\Omega$ to obtain a set $\Omega''$ as desired. This completes the proof of $(2)$.

The last two properties claimed in the theorem are obvious by the construction.
\end{proof}

\noindent \textbf{Remark.} A similar result holds true for strictly $q$-pseudoconvex open sets (or domains) $\Omega \subset \mathcal{M}$. However, in the case $q > 0$ the regularity of $b\Omega'$ and $b\Omega''$ will in general be only as good as the regularity of $b\Omega$, since it is not always possible to make a $q$-plurisubharmonic smoothing of a $q$-plurisubharmonic function, see \cite{DiederichFornaess85}. Moreover, in the general situation we do not make any claim about the cores of the sets $\Omega'$ and $\Omega''$.

At the end of this section we want to prove again, but in a different way, the existence of global defining functions for strictly pseudoconvex domains $\Omega$ with $\mathcal{C}^\infty$-smooth boundary. We first prove the existence of defining functions for $\Omega$ that have prescribed differentials along the boundary of $\Omega$. In a next step we use this result to construct a global defining function for $\Omega$.

\begin{lemma} \label{thm_deffctgradnorm}
Let $\Omega$ be a strictly pseudoconvex domain with smooth boundary in a complex manifold $\mathcal{M}$. Let $h$ be a hermitian metric on $\mathcal{M}$ and let $f \colon b\Omega \to (0,\infty)$ be a smooth positive function. Then there exists an open neighbourhood $V \subset \mathcal{M}$ of $b\Omega$ and a smooth strictly plurisubharmonic function $\psi \colon V \to \R$ such that $\Omega \cap V = \{\psi<0\}$ and $\norm{d \psi}_{h^\ast} = f$ on $b\Omega$.
\end{lemma}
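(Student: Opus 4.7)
The plan is to start from any smooth strictly plurisubharmonic defining function $\tilde{\psi}$ for $\Omega$ on some open neighbourhood $V_0 \subset \mathcal{M}$ of $b\Omega$ (the existence of $\tilde\psi$ follows from Proposition \ref{thm_strpscregularity} applied at each boundary point together with the Grauert-type patching mentioned in the introduction, or alternatively by restricting to $V_0$ the function provided by Theorem \ref{thm_qdeffct_manifolds} in the case $q=0$). We may assume $\Omega \cap V_0 = \{\tilde\psi < 0\}$, $d\tilde\psi \neq 0$ on $b\Omega$, and $\Lev(\tilde\psi)(z,\xi) \ge c_0(z)\norm{\xi}_{h_z}^2$ on $V_0$ for some continuous positive function $c_0$. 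The quotient $f/\norm{d\tilde\psi}_{h^\ast}$ is then a smooth positive function on $b\Omega$ and admits a smooth positive extension $g \colon V_0 \to (0,\infty)$.

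The naive attempt $\psi := g\tilde\psi$ already has the correct zero set and correct boundary gradient, since on $\{\tilde\psi = 0\}$ one has $d\psi = g\, d\tilde\psi$, hence $\norm{d\psi}_{h^\ast} = g\cdot\norm{d\tilde\psi}_{h^\ast} = f$. What can go wrong is strict plurisubharmonicity, because of the cross term $2\Re[(\partial g)_z(\xi)\overline{(\partial\tilde\psi)_z(\xi)}]$ appearing in $\Lev(g\tilde\psi)$. To deal with this, my plan is first to inflate the Levi form of $\tilde\psi$ in the normal direction by passing to $\tilde\psi + C\tilde\psi^2$ for a suitable smooth positive function $C$; this operation leaves both the value and the differential of $\tilde\psi$ unchanged on $b\Omega$, but replaces the Levi form there by $\Lev(\tilde\psi) + 2C\abs{\partial\tilde\psi}^2$. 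Accordingly, I set
\[
 \psi \;:=\; g\bigl(\tilde\psi + C\tilde\psi^2\bigr),
\]
and then the boundary conditions $\psi = 0$ and $\norm{d\psi}_{h^\ast} = f$ on $b\Omega$ still hold by the same computation as above.

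A direct computation of the Levi form on $b\Omega$ gives
\[
\Lev(\psi)(z,\xi) \;=\; g\Lev(\tilde\psi)(z,\xi) + 2gC\,\abs{(\partial\tilde\psi)_z(\xi)}^2 + 2\Re\bigl[(\partial g)_z(\xi)\,\overline{(\partial\tilde\psi)_z(\xi)}\bigr].
\]
Applying the elementary inequality $2\Re(ab) \ge -\varepsilon\abs{a}^2 - \varepsilon^{-1}\abs{b}^2$ with $\varepsilon = gC$ and using the local bounds $\Lev(\tilde\psi) \ge c_0\norm{\,\cdot\,}_h^2$ and $\abs{(\partial g)_z(\xi)}^2 \le c_1(z)\norm{\xi}_{h_z}^2$, one arrives at
\[
\Lev(\psi)(z,\xi) \;\ge\; g(z)c_0(z)\norm{\xi}_{h_z}^2 - \frac{c_1(z)}{g(z)\,C(z)}\norm{\xi}_{h_z}^2,
\]
which is strictly positive on $b\Omega$ as soon as $C(z) > c_1(z)/(g(z)^2 c_0(z))$. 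By continuity of the Levi form, strict plurisubharmonicity then propagates to an open neighbourhood $V \subset V_0$ of $b\Omega$; shrinking $V$ further I can also guarantee $1 + C\tilde\psi > 0$, so that $\Omega \cap V = \{\tilde\psi < 0\} \cap V = \{\psi < 0\}$.

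The one technical point I expect to require care is the choice of $C$: since $b\Omega$ need not be compact, $C$ cannot be taken constant, but the pointwise lower bound $c_1/(g^2 c_0)$ is a continuous positive function on $b\Omega$ and is easily dominated by a smooth positive function on $V_0$ via standard smooth approximation. In spirit this is the same device as the exponential composition $\mu(t) = te^{kt}$ used in Lemma \ref{thm_uniformqpsh}, but here rendered as a multiplicative correction with varying weight $C$ rather than a global convex reparametrization, which is what allows the freedom to prescribe $\norm{d\psi}_{h^\ast}$ pointwise on $b\Omega$.
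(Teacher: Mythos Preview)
Your proof is correct and follows essentially the same ansatz as the paper: both write the new defining function as $F\rho + Q\rho^2$ (in your notation $g\tilde\psi + gC\tilde\psi^2$), observe that on $b\Omega$ this has the right value and differential, compute the Levi form there, and absorb the mixed term $2\Re[(\partial F)(\xi)\overline{(\partial\rho)(\xi)}]$ by choosing the coefficient of $\rho^2$ large. The only noteworthy difference is in how the bad cross term is handled: the paper starts from an arbitrary smooth defining function $\rho$ (not assumed plurisubharmonic), so it only knows $\Lev(\rho)>0$ on the complex tangent space $H_z(\rho)$ and must run a separate argument on the closed set $K_0$ of unit vectors where the first two terms fail to be positive; you instead start from a strictly plurisubharmonic $\tilde\psi$, which gives the uniform lower bound $\Lev(\tilde\psi)\ge c_0\norm{\cdot}_h^2$ and lets you dispatch the cross term by a single application of Young's inequality. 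Your route is a bit more streamlined, at the modest cost of invoking Theorem \ref{thm_qdeffct_manifolds} (or Proposition \ref{thm_strpscregularity} plus patching) up front to obtain $\tilde\psi$.
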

\begin{proof}
Let $\rho \colon V \to \R$ be a smooth function on an open neighbourhood $V \subset \mathcal{M}$ of $b\Omega$ such that $\Omega \cap V = \{\rho < 0\}$ and $d\rho \neq 0$ on $b\Omega$. Let $q \colon b\Omega \to (0,\infty)$ be a positive smooth function that we consider to be fixed, but that will be further specified later on. Choose smooth extensions $F \colon V \to (0,\infty)$ of $f / \norm{d\rho}_{h^\ast} \colon b\Omega \to (0,\infty)$ and $Q \colon V \to (0,\infty)$ of $q \colon b\Omega \to (0,\infty)$, respectively, and define $\psi \colon V \to \R$ as 
\[ \psi(z) \coloneqq F(z)\rho(z) + Q(z)\rho(z)^2. \]
Then $\psi$ is smooth, $\norm{d\psi(z)}_{h^\ast} = f(z)$ for every $z \in b\Omega$ and, after possibly shrinking $V$, $\Omega \cap V = \{\psi<0\}$. By smoothness of $\psi$, it only remains to show that $\psi$ is strictly plurisubharmonic at every point $z \in b\Omega$. We claim that this is always the case, provided that the function $q$ is chosen large enough (observe that the Levi form of $F \cdot \rho$ in $z \in b\Omega$ is automatically positive definit on the complex tangent space $T_z^\C(b\Omega)$ of $b\Omega$ in $z$, since $\Omega$ is strictly pseudoconvex).

Indeed, a straightforward calculation shows that for every $z \in b\Omega$ and $\xi \in T_z(\mathcal{M})$
\begin{equation} \label{equ_deffctgrad} \Lev(\psi)(z,\xi) = F(z)\Lev(\rho)(z,\xi) + 2\Re[(\partial \rho)_z(\xi) \cdot (\overline{\partial}F)_z(\xi)] + 2q(z)\abs{(\partial \rho)_z(\xi)}^2. \end{equation}
Since $T_z^\C(b\Omega) = H_z(\rho)$, we have $\Lev(\psi)(z, \,\cdot\,) = F(z)\Lev(\rho)(z, \,\cdot\,)$ on $T_z^\C(b\Omega)$, and by strict pseudoconvexity of $\Omega$ we know that $\Lev(\rho)(z, \,\cdot\,)$ is positive definit on $T_z^\C(b\Omega)$ for every $z \in b\Omega$. Let $K \coloneqq \{(z,\xi) \in T(\mathcal{M})|_{b\Omega} : \norm{\xi}_{h_z} = 1\}$ and define $K_0 \subset K$ to be the subset $K_0 \coloneqq \{(z,\xi) \in K : F(z)\Lev(\rho)(z,\xi) + 2\Re[(\partial \rho)_z(\xi) \cdot (\overline{\partial}F)_z(\xi)] \le 0 \}$. Since $\rho$ and $F$ are smooth, we can choose a smooth function $C \colon b\Omega \to \R$ such that $F(z)\Lev(\rho)(z,\xi) + 2\Re[(\partial \rho)_z(\xi) \cdot (\overline{\partial}F)_z(\xi)] > C(z)$ for every $(z,\xi) \in K$. Moreover, observe that, by construction, $(\partial \rho)_z(\xi) \neq 0$ for every $(z,\xi) \in K_0$. Hence we can further choose a positive smooth function $\varepsilon \colon b\Omega \to (0,\infty)$ such that $\abs{(\partial \rho)_z(\xi)}^2 > \varepsilon(z)$ for every $(z,\xi) \in K_0$. Now assume that $q \colon b\Omega \to (0,\infty)$ is chosen so large that $C + 2q\varepsilon > 0$ on $b\Omega$. Then we conclude from $(\ref{equ_deffctgrad})$ and the choice of $C$ that $\Lev(\psi)(z,\xi) > 0$ on $K_0$. But it is clear from the choice of $K_0$ that $\Lev(\psi)(z,\xi) > 0$ on $K \setminus K_0$. Thus $\psi$ is strictly plurisubharmonic at every point $z \in b\Omega$ as claimed.
\end{proof}

\setcounter{theorem_prime}{1}
\begin{theorem_prime} \label{thm_manifolds_deffct_prime}
Let $\mathcal{M}$ be a complex manifold and let $\Omega \subset \mathcal{M}$ be a strictly pseudoconvex domain with smooth boundary. Then there exists a smooth plurisubharmonic function $\varphi$ defined on an open neighbourhood of $\overline{\Omega}$ such that $\Omega = \{\varphi < 0\}$, $d\varphi \neq 0$ on $b\Omega$ and $\varphi$ is strictly plurisubharmonic near $b\Omega$.
\end{theorem_prime}
\begin{proof}
As in Theorem 5 of \cite{SimioniucTomassini08} we can choose a countable locally finite covering $\{U_j\}_{j=1}^\infty$ of $b\Omega$ by open subsets $U_j \subset\subset \mathcal{M}$ such that there exist biholomorphisms $\phi_j \colon U_j \to U_j'$ onto open subsets $U_j' \subset \C^n$, strictly convex bounded domains $G_j' \subset \subset U_j'$ with smooth boundaries and a smooth partition of unity $\{\theta_j\}_{j=1}^\infty$ on $b\Omega$ subordinated to $\{b\Omega \cap U_j\}_{j=1}^\infty$ such that $G_j' \subset \phi_j(\Omega \cap U_j)$ and $\supp \theta_j' \subset \subset bG_j' \cap \phi_j(b\Omega \cap U_j)$, where $\theta_j' \coloneqq \theta_j \circ \phi_j^{-1}$ on $\phi_j(b\Omega \cap U_j)$. Moreover, by strict pseudoconvexity of $b\Omega$, we can assume that $\overline{\Omega} \cap \bigcup_{j=1}^\infty U_j$ is contained in a one-sided neighbourhood $U \subset \mathcal{M}$ of $b\Omega$ that is filled with analytic discs attached to $b\Omega$. For every $j \in \N$, let $g_j' \colon bG_j' \to [0,1]$ be the smooth extension of $\theta_j' \colon bG_j' \cap \phi_j(b\Omega \cap U_j) \to [0,1]$ by $0$, let $S_j' \coloneqq \supp \theta_j' = \supp g_j' \subset bG_j'$ and let $Z_j' \coloneqq bG_j' \setminus S_j'$. Let $f_j' \colon \overline{G_j'} \to (-\infty,1]$ be the strictly plurisubharmonic solution of the following Dirichlet problem for the complex Monge-Amp\`ere equation,
\[ \left\{\begin{array}{l} f_j'|_{bG_j'} = g_j' \\ \MA[f_j'] \equiv 1 \end{array} \right.. \]
The existence and uniqueness as well as smoothness of $f_j'$ is guaranteed by Theorem 1.1 in \cite{CaffarelliKohnNirenbergSpruck85}. Observe that, by strict convexity of $bG_j'$, the set $D_j' \coloneqq \{z' \in \overline{G_j'} : \text{there exists a complex line } L_{z'} \ni z' \text{ such that } L_{z'} \cap S_j' = \varnothing \}$ is an open neighbourhood of $Z_j'$ in $\overline{G_j'}$. By the maximum principle, we have $f_j' \le 0$ on $D_j'$. Hence the function $\tilde{f}_j' \coloneqq \max(0,f_j')$ satisfies $\tilde{f}_j' \equiv 0$ on $D_j'$. For every $j \in \N$, let $X_j \subset b\Omega$ be an open set such that $X_j \subset\subset \{\theta_j > 0\}$ and such that $\{X_j\}_{j=1}^\infty$ covers $b\Omega$. Further, let $W_j'$, $j \in \N$, be an open neighbourhood of $X_j' \coloneqq \phi_j(X_j) \subset\subset \{\theta_j' > 0\}$ in $\overline{G_j'}$ such that $f_j' > c_j > 0$ on $W_j'$ for some $c_j > 0$. Then $\tilde{f}_j'$ is strictly plurisubharmonic on a relatively open neighbourhood of $\overline{W_j'}$ in $\overline{G_j'}$.

Fix $j \in \N$. Without loss of generality we can assume that $0 \in G_j'$. In particular, there exists $\varepsilon_{j,1} > 0$ such that $G_{j,\varepsilon_j}' \coloneqq (1+\varepsilon_j)G_j'^{(-\varepsilon_j^2)}$ satisfies $G_j' \subset\subset G_{j,\varepsilon_j}' \subset\subset U_j'$ for every positive $\varepsilon_j \le \varepsilon_{j,1}$, where $G_j'^{(-\varepsilon_j^2)} \coloneqq G_j' \setminus \bigcup_{z' \in bG_j'} B^n(z',\varepsilon_j^2)$. Define a smooth plurisubharmonic function $\tilde{f}_{j,\varepsilon_j}' \colon G_{j,\varepsilon_j}' \to [0,1]$ by $\tilde{f}_{j,\varepsilon_j}'(z') \coloneqq \big(\tilde{f}_j' \ast \delta_{\varepsilon_j^2}\big)(z'/(1+\varepsilon_j))$, where for $\gamma>0$ we denote by $\delta_\gamma$ some fixed smooth nonnegative function depending only on $\norm{z}$ such that $\supp \delta_\gamma = \overline{B^n(0,\gamma)}$ and $\int_{\C^n} \delta_\gamma = 1$. It follows from the constructions of $G_{j,\varepsilon_j}'$ and $\tilde{f}_{j,\varepsilon_j}'$ that there exists $\varepsilon_{j,2} > 0$ such that for every positive $\varepsilon_j \le \varepsilon_{j,2}$ the set $D_{j,\varepsilon_j}' \coloneqq (1+\varepsilon_j)D_j'^{(-\varepsilon_j^2)} \subset \C^n$ is an open neighbourhood of $bG_j' \setminus \phi_j(b\Omega \cap U_j)$ and $\tilde{f}_{j,\varepsilon_j}' \equiv 0$ on $D_{j,\varepsilon_j}'$. In particular, the trivial extension of $\tilde{f}_{j,\varepsilon_j}' \circ \phi_j \colon \overline{G}_j \to [0,1]$ by $0$ defines a smooth plurisubharmonic function $F_{j,\varepsilon_j} \colon \overline{\Omega} \to [0,1]$, where $G_j \coloneqq \phi_j^{-1}(G_j')$. Moreover, there exists $\varepsilon_{j,3} > 0$ such that for every $\varepsilon_j \le \varepsilon_{j,3}$ the function $\tilde{f}_{j,\varepsilon_j}'$ is strictly plurisubharmonic on $W_j'$, and hence the function $F_{j,\varepsilon_j}$ is strictly plurisubharmonic on $W_j \coloneqq \phi_j^{-1}(W_j')$. Finally, for $\varepsilon_j \to 0$ the function $\tilde{f}_{j,\varepsilon_j}'|_{bG_j'}$ converges uniformly to $g_j'$, i.e., $F_{j,\varepsilon_j}|_{b\Omega}$ converges uniformly to $\theta_j$. 

For every $j \in \N$, let $\varepsilon_{j,0} \coloneqq \min\{\varepsilon_{j,1}, \varepsilon_{j,2}, \varepsilon_{j,3}\}$. Consider the sets $e$ and $d$ of sequences of nonnegative numbers defined by $e \coloneqq \big\{\varepsilon = \{\varepsilon_j\}_{j=1}^\infty : 0 < \varepsilon_j \le \varepsilon_{j,0}\big\}$ and $d \coloneqq \big\{\delta = \{\delta_j\}_{j=1}^\infty : 0 < \delta_j \le 1/2\big\}$. For every $(\varepsilon, \delta) \in e \times d$, define a function $F_{\varepsilon, \delta} \colon \overline{\Omega} \to [0,1]$ as $F_{\varepsilon, \delta} \coloneqq \sum_{j=1}^\infty F_{j,\varepsilon_j}(1 - \delta_j)$. Since $\supp F_{j,\varepsilon_j} \subset\subset U_j$ for every $j \in \N$, and since $\{U_j\}_{j=1}^\infty$ is locally finite, each of the functions $F_{\varepsilon,\delta}$ is a well defined smooth and plurisubharmonic function such that $\supp F_{\varepsilon, \delta} \subset U$. Moreover, $F_{\varepsilon,\delta}$ is strictly plurisubharmonic on $W \coloneqq \bigcup_{j=1}^\infty W_j$. By construction, each set $W_j$ is an open neighbourhood of $X_j$ in $\overline{\Omega}$, and since $\{X_j\}_{j=1}^\infty$ covers $b\Omega$, it follows that $W$ is an open neighbourhood of $b\Omega$ in $\overline{\Omega}$. Moreover, we claim that the following assertion holds true: for every continuous function $k \colon b\Omega \to (0,\infty)$ we can chose $(\varepsilon, \delta) \in e \times d$ such that
\begin{equation} \label{equ_bdestimate} 1-k < F_{\varepsilon, \delta} < 1 \text{ on } b\Omega. \end{equation}
Indeed, for every $\delta \in d$ define functions $\delta_{min}, \delta_{max} \colon b\Omega \to (0, 1/2]$ as $\delta_{min}(z) \coloneqq \min\{\delta_j : z \in U_j\}$ and $\delta_{max} \coloneqq \max\{\delta_j : z \in U_j\}$, respectively, and for every $\varepsilon \in e$ let $F_\varepsilon \coloneqq \sum_{j=1}^\infty F_{j,\varepsilon_j}$. Since $\{U_j\}_{j=1}^\infty$ is locally finite, and since $k$ is continuous, we can choose $\delta \in d$ so small that $1-k/2 < 1-\delta_{max}$. Then, since for $\varepsilon_j \to 0$ the function $F_{j,\varepsilon_j}|_{b\Omega \cap U_j}$ converges uniformly to $\theta_j|_{b\Omega \cap U_j}$ for every $j \in \N$, and since $F_{j,\varepsilon_j}|_{b\Omega \setminus U_j} = \theta_j|_{b\Omega \setminus U_j} \equiv 0$ for every $j \in \N$ and $\varepsilon_j > 0$, we can choose $\varepsilon \in e$ so small that $(1-k)/(1-k/2) < F_\varepsilon < 1/(1-\delta_{min})$ on $b\Omega$. Now observe that, by definition of $F_{\varepsilon, \delta}$, we have $(1 - \delta_{\max})F_\varepsilon \le F_{\varepsilon,\delta} \le (1-\delta_{min})F_\varepsilon$, hence it follows that $1-k < F_{\varepsilon, \delta} < 1$ on $b\Omega$ as claimed. Finally, note that the inequality $F_{\varepsilon, \delta} < 1$ on $b\Omega$ implies that $F_{\varepsilon, \delta} < 1$ on $\Omega$, since $\supp F_{\varepsilon, \delta} \subset U$, $U$ is filled by analytic discs attached to $b\Omega$, and $F_{\varepsilon, \delta}$ is smooth and plurisubharmonic on $\overline{\Omega}$.

Now define a continuous function $\nu \colon \overline{\Omega} \to (0,\infty)$ as $\nu \coloneqq \sup_{(\varepsilon,\delta) \in e \times d} \norm{dF_{\varepsilon, \delta}}_{h^\ast}$ and observe that indeed $\nu(z) < \infty$ for every $z \in \overline{\Omega}$. Let $\psi \colon V \to \R$ be a smooth strictly plurisubharmonic function on an open neighbourhood $V \subset \mathcal{M}$ of $b\Omega$ such that $\Omega \cap V = \{\psi < 0\}$ and $\norm{d\psi}_{h^\ast} > 1 + \nu$. The existence of such a function $\psi$ follows immediately from Lemma \ref{thm_deffctgradnorm}. Let $k \colon b\Omega \to (0,\infty)$ be a sufficiently small continuous function and let $(\varepsilon, \delta) \in e \times d$ be chosen in such a way that $(\ref{equ_bdestimate})$ holds true. Since $\norm{d\psi}_{h^\ast} > 1 + \norm{d F_{\varepsilon,\delta}}_{h^\ast}$ on $b\Omega$, it is easy to see that we have $\psi < F_{\varepsilon,\delta} - 1$ on $b(V \cap W) \cap \Omega$, provided that $k$ is chosen small enough. Hence the function $\tilde{\varphi} \colon \overline{\Omega} \cup V \to \R$ defined by
\[\tilde{\varphi}(z) \coloneqq \left\{ \begin{array}{c@{\,,\quad}l} \psi(z) & z \in V \setminus \Omega \\ \max(\psi(z), F_{\varepsilon,\delta}(z)-1) & z \in (V \cap W) \cap \Omega \\ F_{\varepsilon,\delta}(z)-1 & z \in \Omega \setminus (V \cap W) \end{array} \right.\]
is a continuous plurisubharmonic function such that $\tilde{\varphi} = \psi$ near $b\Omega$ and $\Omega = \{\tilde{\varphi}  < 0\}$. That is why $\tilde{\varphi}$ has all the properties that we seek, except, possibly, for smoothness in points of the set $A \coloneqq \{z \in V \cap W \cap \Omega : \psi(z) = F_{\varepsilon, \delta}(z) \}$. But both $\psi$ and $F_{\varepsilon, \delta}$ are strictly plurisubharmonic on $V \cap W \cap \Omega$. Hence, if $\omega$ is an arbitrary fixed neighbourhood of $A$ such that $\overline{\omega} \subset \Omega$, we can apply Richberg's smoothing method to obtain from $\tilde{\varphi}$ a smooth plurisubharmonic function $\varphi \colon V \cup \overline{\Omega}$ such that $\varphi = \tilde{\varphi}$ outside $\omega$ and such that still $\Omega = \{\varphi < 0\}$ (see, for example, Theorem I.5.21 in \cite{DemaillyXX} for a version of Richberg's smoothing procedure that is strong enough for our purpose). Then $\varphi$ is a function as desired.
\end{proof}

\subsection{Global defining functions in complex spaces} \label{sec_deffctspaces}
In this section we extend the above results to the setting of complex spaces. However, at least at the following two points our results are weaker when compared to the case of complex manifolds. First, we are not able to establish a general existence theorem for global defining functions of smoothly strictly $q$-pseudoconvex domains if $q > 0$. Instead, we have to restrict ourselves to the case of strictly hyper-$q$-pseudoconvex domains. Secondly, if $\Omega$ is a smoothly strictly pseudoconvex domain (i.e., $q=0$) in an arbitrary complex space, a subtle technical problem concerning the regularity of the desired function arises, when one tries to construct a smoothly global defining function that is smoothly strictly plurisubharmonic outside $\mathfrak{c}(\Omega)$. We start by gathering the necessary definitions and results.

Let $X = (X, \mathcal{O}_X)$ be a complex space (all complex spaces are assumed to be reduced and paracompact). A holomorphic chart for $X$ is a tupel $(U, \tau, A, G)$ where $U \subset X$ is open, $A$ is an analytic subset of a domain $G \subset \C^n$  and $\tau \colon U \xrightarrow{\sim} A$ is biholomorphic. For every $x \in X$, let $T_x(X)$ denote the Zariski tangent space of $X$ at $x$, i.e., $T_x(X) \coloneqq (\mathfrak{m}_x/\mathfrak{m}_x^2)^\ast$ where $\mathfrak{m}_x \subset \mathcal{O}_x$ is the maximal ideal of germs of holomorphic functions that vanish in $x$. If $f \colon X \to Y$ is a holomorphic map between complex spaces $X$ and $Y$, then we write $f_\ast = f_{\ast,x} \colon T_x(X) \to T_{f(x)}(Y)$ for the induced differential map. Let $\varphi \colon X \to \R$ be a smooth function, let $(U, \tau ,A, G)$ be a holomorphic chart for $X$ around $x$ and let $\hat{\varphi} \colon G \to \R$ be a smooth function such that $\varphi = \hat{\varphi} \circ \tau$ on $U$ (see below for the definition of smooth functions on complex spaces). Then we can define functionals $(d\varphi)_x \colon T_x(X) \to \R$ and $(\partial \varphi)_x, (\overline{\partial} \varphi)_x \colon T_x(X) \to \C$ by setting
\begin{gather*}
(\partial \varphi)_x(\xi) \coloneqq (\partial \hat{\varphi})_{\tau(x)}(\tau_\ast\xi), \quad (\overline{\partial} \varphi)_x(\xi) \coloneqq (\overline{\partial} \hat{\varphi})_{\tau(x)}(\tau_\ast\xi), \\ (d\varphi)_x(\xi) \coloneqq (\partial \varphi)_x(\xi) + (\overline{\partial} \varphi)_x(\xi)
\end{gather*}
for every $\xi \in T_x(X)$. Indeed, by part 1 of the Proposition in \cite{Vajaitu93}, this definition is independent of the smooth extension $\hat{\varphi}$, and by assertion (1) in Section 1 of \cite{Grauert62} it is also independent of the holomorphic chart $(U, \tau ,A, G)$. In particular, $H_x(\varphi) \coloneqq \{ \xi \in T_x(X) : (\partial \varphi)_x(\xi) = 0\}$ is a well defined subspace of $T_x(X)$. In the same way we want to define $\Lev(\varphi)(x,\,\cdot\,) \colon T_x(X) \to \R$ as
\begin{equation} \label{equ_defLeviform} \Lev(\varphi)(x,\xi) \coloneqq \Lev(\hat{\varphi})\big(\tau(x),\tau_\ast\xi\big). \end{equation}
However, as it is shown by Example 1 in \cite{Vajaitu93}, the number $\Lev(\varphi)(x,\xi)$ defined in this way will in general depend on the choice of the smooth extension $\hat{\varphi}$. In fact, in order for $(\ref{equ_defLeviform})$ to be well defined, we need to require that $X$ is locally irreducible at $x$, see part 2 of the Proposition in \cite{Vajaitu93}. (We do not know if the functionals $(\partial \varphi)_x, (\overline{\partial} \varphi)_x$ and $(d\varphi)_x$ can be well defined in general if $\varphi$ is only assumed to be $\mathcal{C}^1$-smooth. We also do not know whether on locally irreducible complex spaces the Levi form $\Lev(\varphi)(x,\,\cdot\,)$ can be well defined for arbitrary $\mathcal{C}^2$-smooth functions $\varphi$.)

A function $\varphi \colon X \to \R$ is called smooth or (strictly) plurisubharmonic, if for every $x \in X$ there exist a holomorphic chart $(U, \tau, A, G)$ around $x$ and a smooth or (strictly) plurisubharmonic function $\hat{\varphi} \colon G \to \R$ such that $\varphi|_U = \hat{\varphi} \circ \tau$, respectively. Observe that it is not clear from the definition whether a smooth and (strictly) plurisubharmonic function $\varphi \colon X \to \R$ does admit local extensions $\hat{\varphi}$ as above that are both smooth and (strictly) plurisubharmonic at the same time. In fact, this is not true in general, see, for example, Warning 1.5 in \cite{Smith86} and Example 2 in \cite{Vajaitu93}. If around each point $x \in X$ the function $\varphi$ admits holomorphic charts and local extensions $\hat{\varphi}$ that are smooth and (strictly) plurisubharmonic, then $\varphi$ will be called smoothly (strictly) plurisubharmonic. A domain $\Omega \subset X$ is called strictly pseudoconvex if for every $x \in b\Omega$ there exist an open neighbourhood $U_x \subset X$ of $x$ and a continuous strictly plurisubharmonic function $\varphi_x \colon U_x \to \R$ such that $\Omega \cap U_x = \{\varphi_x < 0\}$. The domain $\Omega$ will be called smoothly strictly pseudoconvex if for every $x \in b\Omega$ the function $\varphi_x \colon U_x \to \R$ can be chosen to be smoothly strictly plurisubharmonic. (In the same way we can define the notions of $\mathcal{C}^s$-smoothly (strictly) plurisubharmonic functions and $\mathcal{C}^s$-smoothly strictly pseudoconvex domains for every $s \in \N_0^\infty$. Note that a function $\varphi \colon X \to \R$ is $\mathcal{C}^0$-smooth and (strictly) plurisubharmonic if and only if it is $\mathcal{C}^0$-smoothly (strictly) plurisubharmonic, see Theorem 2.4 in \cite{Richberg68}.)

Let $q \in \N_0$. A function $\varphi \colon X \to \R$ is called (strictly) $q$-plurisubharmonic, if for every $x \in X$ there exist a holomorphic chart $(U, \tau, A, G)$ around $x$ and a $\mathcal{C}^2$-smooth (strictly) $q$-plurisubharmonic function $\hat{\varphi} \colon G \to \R$ such that $\varphi|_U = \hat{\varphi} \circ \tau$. It is called smoothly (strictly) plurisubharmonic if around each point $x \in X$ the function $\varphi$ admits holomorphic charts and local extensions $\hat{\varphi}$ that are smooth and (strictly) $q$-plurisubharmonic. A domain $\Omega \subset X$ is called (smoothly) strictly $q$-pseudoconvex if for every $x \in b\Omega$ there exist an open neighbourhood $U_x \subset X$ of $x$ and a (smoothly) strictly $q$-plurisubharmonic function $\varphi_x \colon U_x \to \R$ such that $\Omega \cap U_x = \{\varphi_x < 0\}$. (Analoguous definitions are possible for $\mathcal{C}^s$-smoothly (strictly) $q$-plurisubharmonic functions and $\mathcal{C}^s$-smoothly strictly $q$-pseudoconvex domains, whenever $s > 2$.)

Finally, we will say that the boundary $b\Omega$ is smooth in $x \in b\Omega$, if there exists a smooth function $\varphi \colon U \to \R$ defined on an open neighbourhood $U \subset X$ of $x$ such that $\Omega \cap U = \{\varphi < 0\}$ and $(d\varphi)_x \neq 0$. Observe that $b\Omega$ is smooth in $x \in b\Omega$ if and only if in every small enough minimal holomorphic chart around $x$ (i.e., every small enough chart $(U, \tau, A, G)$ around $x$ such that $G \subset \C^{\ebdim_ x X}$, where $\ebdim_ xX = \dim_\C T_x(X)$ denotes the embedding dimension of $X$ at $x$) $\Omega$ is the intersection of $X$  with a smoothly bounded subdomain of the ambient $\C^n$. A function $f \colon b\Omega \to \R$ will be called smooth if $f$ is the restriction of a smooth function defined on an open neighbourhood $U \subset X$ of $b\Omega$. In the case when $X$ is a manifold and $b\Omega$ is smooth this definition coincides with the usual one. (Again, analoguous definitions of $\mathcal{C}^s$-smooth boundaries and $\mathcal{C}^s$-smooth functions can be given for every $s \in \N_0^\infty$.)

Now we can formulate the main results of this section which generalize Theorem \ref{thm_qbdvalue_manifolds} and Theorem \ref{thm_qdeffct_manifolds} to the case of smoothly strictly pseudoconvex domains in complex spaces.

\begin{theorem} \label{thm_bdvalue_spaces}
Let $X$ be a complex space, let $\Omega \subset X$ be a smoothly strictly pseudoconvex domain and let $f \colon b\Omega \to \R$ be a smooth function that is bounded from below. Then there exists a smoothly plurisubharmonic function $F$ defined on an open neighbourhood of $\overline{\Omega}$ such that $F|_{b\Omega} = f$ and $F$ is smoothly strictly plurisubharmonic near $b\Omega$.
\end{theorem}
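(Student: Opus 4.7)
The plan is to repeat the proof of Theorem \ref{thm_qbdvalue_manifolds} in the $q=0$ case, with the essential modification that all constructions are carried out in the ambient domains $G_j \subset \C^{n_j}$ of a suitable system of holomorphic charts and then pulled back to $X$; the only genuinely new issue, and the main obstacle, is checking that the resulting sum is smoothly (not merely continuously) plurisubharmonic in the sense defined before the theorem.

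After adding a positive constant we assume $f > 0$, and we extend $f$ to a smooth positive function $\tilde F$ on a neighborhood of $b\Omega$ in $X$. We choose a countable locally finite cover of $b\Omega$ by open sets $U_j' \subset\subset U_j \subset\subset V_j$, where $(V_j, \tau_j, A_j, G_j)$ is a holomorphic chart on which $\Omega \cap V_j$ admits a smoothly strictly plurisubharmonic defining function $\rho_j = \hat\rho_j \circ \tau_j$ with $\hat\rho_j : G_j \to \R$ smooth and strictly plurisubharmonic (available since $\Omega$ is smoothly strictly pseudoconvex). Exactly as in the first part of the proof of Theorem \ref{thm_qbdvalue_manifolds}, we construct smooth functions $\chi_j$ with $\{\chi_j > 0\} = U_j'$, $\sum_j \chi_j \le 1$ and $\sum_j \chi_j \equiv 1$ near $b\Omega$, and such that $g_j \coloneqq \beta^{-1} \circ (\tilde F\chi_j)$ extends by $0$ to a smooth function on $X$.

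For each $j$ we pass to the chart. After shrinking $V_j$ we extend $\tilde F|_{V_j}$, $\chi_j|_{V_j}$, $g_j|_{V_j}$ and a smooth function $\lambda_j : X \to (-\infty,0]$ with $\overline{U_j'} = \{\lambda_j = 0\}$ to smooth functions $\hat{\tilde F}_j, \hat\chi_j, \hat g_j, \hat\lambda_j$ on $G_j$, where $\hat g_j$ inherits its smooth-extension-by-zero property from the same $\tilde\beta$-trick used in the manifold case. Setting
\[ \hat F_j \coloneqq \tilde\beta \circ \bigl( \hat g_j + C_j(\hat\rho_j + \varepsilon_j \hat\lambda_j) \bigr) \]
with $\varepsilon_j > 0$ small and $C_j > 0$ large, we obtain a smooth plurisubharmonic function on $G_j$ that is strictly plurisubharmonic on a neighborhood of $\tau_j(b\Omega \cap U_j')$ in $G_j$, vanishes identically near the image of $bU_j$ in $G_j$, and restricts on $\tau_j(b\Omega \cap V_j)$ to $(f\chi_j) \circ \tau_j^{-1}$ (note that for $q=0$ Lemma \ref{thm_uniformqpsh} is not needed, since $\hat\rho_j$ is already strictly plurisubharmonic on all of $G_j$). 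Pulling back by $\tau_j$ and extending trivially by $0$ produces functions $F_j$ on a neighborhood of $\overline\Omega$ in $X$; we set $F \coloneqq \sum_j F_j$, which is well defined and smooth by local finiteness, and satisfies $F|_{b\Omega} = \sum_j f\chi_j = f$.

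The hard part is verifying that $F$ is smoothly plurisubharmonic on a neighborhood of $\overline\Omega$ and smoothly strictly plurisubharmonic near $b\Omega$ in the \emph{common-extension} sense of the definition. At any $x_0$ fix some $j_0$ with $x_0 \in V_{j_0}$, and let $J(x_0) \coloneqq \{j : x_0 \in V_j\}$, which is finite. In the chart $(V_{j_0}, \tau_{j_0}, G_{j_0})$ the extension $\hat F_{j_0}$ is already defined on $G_{j_0}$. For every other $j \in J(x_0)$ the biholomorphism of analytic sets $\tau_j \circ \tau_{j_0}^{-1}$ extends, in a neighborhood of $\tau_{j_0}(x_0)$ in the ambient $G_{j_0}$, to a holomorphic map $\Phi_{j,j_0}$ into $G_j$ (standard local extension of holomorphic maps between analytic sets), so $\hat F_j \circ \Phi_{j,j_0}$ is a smooth plurisubharmonic extension of $F_j$ in the chart $G_{j_0}$. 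Summing over $J(x_0)$ produces a single smooth plurisubharmonic extension of $F$ in $G_{j_0}$; if moreover $x_0 \in b\Omega$, then $\sum_j \chi_j(x_0) = 1$ forces $x_0 \in U_{j_0}'$ for some choice of $j_0$, and the corresponding $\hat F_{j_0}$ is itself strictly plurisubharmonic near $\tau_{j_0}(x_0)$, so the summed extension is smoothly strictly plurisubharmonic there. This chart-compatibility step is the reason the whole construction must be organized inside the $G_j$ from the start rather than intrinsically on $X$.
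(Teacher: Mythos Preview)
Your argument is correct and follows the same overall scheme as the paper: the paper deduces Theorem~\ref{thm_bdvalue_spaces} as the $q=0$ case of Theorem~\ref{thm_bdvalue_spaces_prime}$\,'$, whose proof uses the identical cover, $\beta$-trick, and local pieces $F_j=\tilde\beta\circ(g_j+C_j(\varphi_j+\varepsilon_j\lambda_j))$, and then simply invokes closure of (strictly) hyper-$0$-plurisubharmonic functions under addition to conclude that $F=\sum_j F_j$ is smoothly plurisubharmonic and smoothly strictly plurisubharmonic near $b\Omega$. Your ``hard part''---extending each $F_j$ into a single ambient chart $G_{j_0}$ via a holomorphic extension $\Phi_{j,j_0}$ of the transition map and then summing the pullbacks $\hat F_j\circ\Phi_{j,j_0}$---is precisely the mechanism behind that closure property; so you have supplied a proof of the step the paper only cites, rather than taken a different route.
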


\begin{theorem} \label{thm_deffct_spaces}
Let $X$ be a complex space and let $\Omega \subset X$ be a smoothly strictly pseudoconvex domain. Then there exists a smoothly plurisubharmonic function $\varphi$ defined on an open neighbourhood of $\overline{\Omega}$ such that $\Omega = \{\varphi < 0\}$ and $\varphi$ is smoothly strictly plurisubharmonic near $b\Omega$.
\end{theorem}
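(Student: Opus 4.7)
The plan is to mirror the proof of Theorem~\ref{thm_qdeffct_manifolds}: apply Theorem~\ref{thm_bdvalue_spaces} to the constant boundary data $f \equiv 1$, and then set $\varphi := F - 1$, where $F$ denotes the resulting smoothly plurisubharmonic function on an open neighbourhood $\mathcal{U}$ of $\overline{\Omega}$. Then $\varphi$ is automatically smoothly plurisubharmonic on $\mathcal{U}$, vanishes identically on $b\Omega$, and is smoothly strictly plurisubharmonic near $b\Omega$. The only non-trivial property to verify is that $\Omega = \{\varphi < 0\}$, equivalently that $\Omega = \{F < 1\}$ near $\overline{\Omega}$.

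To achieve this, the construction of $F$ in Theorem~\ref{thm_bdvalue_spaces} should be arranged, in complete analogy to the manifold case, starting from a smooth extension $\widetilde{F}$ of $f \equiv 1$ satisfying $\Omega = \{\widetilde{F} < 1\}$ and $\mathcal{U} \setminus \overline{\Omega} = \{\widetilde{F} > 1\}$. One then builds a partition of unity $\{\chi_j\}$ with $\sum_j \chi_j \equiv 1$ on $\mathcal{U} \setminus \overline{\Omega}$, local smoothly strictly plurisubharmonic defining functions $\varphi_j$ available by smooth strict pseudoconvexity, cutoffs $\lambda_j \leq 0$ supported in the pieces $\overline{U_j'}$, and patches $F_j := \tilde{\beta} \circ (g_j + C_j(\varphi_j + \varepsilon_j \lambda_j))$ just as before. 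The two chains of inequalities
\[
F(z) \leq \sum_{j \in I(z)} \tilde{\beta}(g_j(z)) = \sum_{j \in I(z)} \widetilde{F}(z) \chiup_j(z) \leq \widetilde{F}(z) < 1 \text{ on } \Omega,
\]
\[
F(z) \geq \sum_{j \in J(z)} \tilde{\beta}(g_j(z)) = \sum_{j \in J(z)} \widetilde{F}(z) \chiup_j(z) = \widetilde{F}(z) > 1 \text{ on } \mathcal{U} \setminus \overline{\Omega},
\]
then follow from the sign of $\varphi_j + \varepsilon_j \lambda_j$ on $\Omega$ versus its exterior and the monotonicity of $\tilde{\beta}$, exactly as in the manifold proof of Theorem~\ref{thm_qdeffct_manifolds}.

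The main obstacle is of a technical nature and is already flagged in the opening paragraph of this subsection: on a complex space $X$ the class of smoothly plurisubharmonic functions is more restrictive than that of smooth plurisubharmonic functions (cf.\ Warning~1.5 in \cite{Smith86} and Example~2 in \cite{Vajaitu93}). Accordingly, we must check that every operation performed in the construction---addition, composition with the strictly increasing smooth convex function $\tilde{\beta}$, multiplication by smooth real cutoffs, and trivial extension by zero across a level set---preserves \emph{smooth} plurisubharmonicity, not merely smoothness together with plurisubharmonicity. Since all of these operations can be effected at the level of the ambient smooth plurisubharmonic representatives $\hat{F}_j$ on the open sets $G_j \subset \C^{n_j}$ provided by holomorphic charts $(U_j, \tau_j, A_j, G_j)$, this can be arranged by fixing such charts at the outset and carrying out the construction coherently on each chart; the details should run parallel to the proof of Theorem~\ref{thm_qdeffct_manifolds} once such a chart-wise bookkeeping has been set up. Note that, in contrast with the manifold statement, no assertion is made about $d\varphi$ on $b\Omega$, which reflects the fact that the regularity of the boundary is not hypothesized near possible singular points of $X$.
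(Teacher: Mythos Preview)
Your proposal is correct and follows essentially the same approach as the paper: apply the boundary-value result (Theorem~\ref{thm_bdvalue_spaces}) with $f\equiv 1$, set $\varphi=F-1$, and arrange the construction so that the extension $\widetilde{F}$ satisfies $\Omega=\{\widetilde{F}<1\}$, whence the two inequality chains force $\Omega=\{F<1\}$. The paper actually packages both Theorems~\ref{thm_bdvalue_spaces} and~\ref{thm_deffct_spaces} as the $q=0$ case of the hyper-$q$ results (Theorems~\ref{thm_bdvalue_spaces_prime}$\,'$ and~\ref{thm_deffct_spaces_prime}$\,'$), but since hyper-$0$-plurisubharmonic coincides with smoothly plurisubharmonic, this is the same argument you give; your remark about checking smooth plurisubharmonicity at the level of ambient representatives is exactly the point that makes the hyper-$q$ framework (and hence the $q=0$ case) go through on complex spaces.
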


We would also like to prove results analoguous to Theorem \ref{thm_qbdvalue_manifolds} and Theorem \ref{thm_qdeffct_manifolds} for the case of smoothly strictly $q$-pseudoconvex domains in complex spaces. However, we do not know whether this is possible in general if $q > 0$. The problem is essentially the following: If $\Omega$ is a domain in a complex manifold $\mathcal{M}$, $z \in b\Omega$, $U \subset \mathcal{M}$ is an open neighbourhood of $z$ and $\varphi_1, \varphi_2 \colon U \to \R$ are smooth functions such that $\Omega \cap U = \{\varphi_1 < 0\} = \{\varphi_2 < 0\}$, then for every $\xi \in H_z(\varphi_1)$ we have $\Lev(\varphi_1)(z,\xi) \ge 0$ if and only if $\Lev(\varphi_2)(z,\xi) \ge 0$ (see Remark 5 after Theorem \ref{thm_qdeffct_manifolds}). Thus when adding $\varphi_2$ to $\varphi_1$ we do not loose positivity of the Levi form on $H_z(\varphi_1)$, and if $H_z(\varphi_1) \neq T_z(\mathcal{M})$, then a possible loss of positivity in the direction normal to $H_z(\varphi_1)$ (with respect to some hermitian metric $h$ on $\mathcal{M}$) can be reacquired by composing the sum $\varphi_1 + \varphi_2$ with a smooth strictly increasing and strictly convex function $\chiup \colon \R \to \R$. However, this is not longer true in general in the setting of complex spaces as it is shown by the following example.

\begin{example}
Let $X \coloneqq \{(z,w) \in \C^2 : z^3 = w^2\}$ and let $\Omega \coloneqq X \setminus \{0\}$. Consider the smoothly $1$-plurisubharmonic functions $\varphi_1, \varphi_2 \colon X \to \R$ which are defined as the restrictions to $X$ of the functions $\hat{\varphi}_1(z,w) \coloneqq \abs{z+w}^2 - 2\abs{z-w}^2$ and $\hat{\varphi}_2(z,w) \coloneqq \abs{z-w}^2 - 2\abs{z+w}^2$ on $\C^2$, respectively. One easily verifies that in a small open neighbourhood $U \subset X$ of $0 \in X$ it holds true that $\Omega \cap U = \{x \in U : \varphi_1(x) < 0\} = \{x \in U : \varphi_2(x) < 0\}$ and hence $\Omega$ is a smoothly strictly $1$-pseudoconvex domain. Since $T_0(X) \simeq \C^2$, and since $X$ is locally irreducible, the Levi form at the origin of every smooth extension of $\varphi_1 + \varphi_2$ to an open neighbourhood of $0 \in \C^2$ coincides with the Levi form of $\hat{\varphi}_1 + \hat{\varphi}_2$ in $0$. However, $\Lev(\hat{\varphi}_1 + \hat{\varphi}_2)(0, \,\cdot\,)$ is negative definit on $\C^2 \simeq H_0(\varphi_1)$.
\end{example}

One might argue that the above example is of a rather pathological nature. On the other hand, observe that typically no assumptions about the smoothness of $b\Omega$ are made in the definition of smoothly strictly $q$-pseudoconvex domains in complex spaces (see, for example, \cite{AndreottiGrauert62}). Anyway, even if $b\Omega$ is assumed to be smooth at $x \in b\Omega$ we do not know whether the Levi forms at $x$ of two local defining functions for $\Omega$ around $x$ can be compared as it is done in the manifold case.

As a consequence of the described above problem, in the case $q > 0$ we prove generalizations of Theorem \ref{thm_qbdvalue_manifolds} and Theorem \ref{thm_qdeffct_manifolds} only for hyper-$q$-pseudoconvex domains instead of smoothly strictly $q$-pseudoconvex domains. Before stating the precise results we collect all necessary definitions.

First we remind the definition of hermitian metrics on complex spaces. Let $\pi \colon T(X) \to X$ be the Zariski tangent linear space, i.e., the underlying set of $T(X)$ is simply the disjoint union $\bigcup_{x \in X} T_x(X)$ (see, for example, \cite{Fischer76} for more details). A hermitian metric $h$ on $X$ is a smooth mapping $h \colon T(X) \times_\pi T(X) \to \C$ such that $h|_{T_x(X) \times T_x(X)}$ is a hermitian metric on $T_x(X)$ for every $x \in X$. If $h$ is a hermitian metric on $X$, then for every $x \in X$ we denote by $\norm{\,\cdot\,}_{h_x}$ and $\norm{\,\cdot\,}_{h_x^\ast}$ the induced norms on $T_x(X)$ and $T_x^\ast(X)$, respectively. If the context is clear, then we sometimes omit the index $x$ and simply write $\norm{\,\cdot\,}_h$ or $\norm{\,\cdot\,}_{h^\ast}$.

Let $X$ be a complex space endowed with a hermitian metric $h$. A smooth function $\varphi \colon X \to \R$ is called hyper-$q$-plurisubharmonic (respectively strictly hyper-$q$-plurisubharmonic) if for every complex subspace $Y \subset X$ and every $y \in Y$, every holomorphic chart $(U, \tau, A, G)$ for $Y$ around $y$ and every hermitian metric $\hat{h}$ on $G \subset \C^n$ which satisfies $h|_U = \tau^\ast\hat{h}$ there exist an open neighbourhood $G' \subset G$ of $\tau(y)$ and a smooth function $\hat{\varphi} \colon G' \to \R$ such that $\varphi = \hat{\varphi} \circ \tau$ on $U' \coloneqq \tau^{-1}(G') \subset U$ with the following property: for every $z \in G'$ the trace with respect to $\hat{h}$ of the restriction of the Levi form $\Lev(\hat{\varphi})(z,\,\cdot\,)$ to any $(q+1)$-dimensional subspace of $\C^n$ is nonnegative (respectively positive), i.e., for every $\hat{h}$-orthonormal collection of vectors $e_1, e_2, \ldots, e_{q+1} \subset \C^n$ we have that $\sum_{j=1}^{q+1} \Lev(\hat{\varphi})(z, e_j) \ge 0$ (respectively $> 0$). Observe that these definitions depend on the given hermitian metric $h$ and that in general the Levi form of $\hat{\varphi}$ is not uniquely determined by $\varphi$ (it is if $X$ is locally irreducible). Moreover, it is clear from the definition that every (strictly) hyper-$q$-plurisubharmonic function is (strictly) $q$-plurisubharmonic. The main advantage of the set of (strictly) hyper-$q$-plurisubharmonic functions over the set of all (strictly) $q$-plurisubharmonic functions is that the former set is closed under addition. (In the case of not necessarily strictly hyper-$q$-plurisubharmonic functions we need here that $\varphi$ has an extension $\hat{\varphi}$ as described above with respect to every extension $\hat{h}$ of $h$. Then the additional assumption on the complex subspaces $Y \subset X$ is imposed in order to guarantee that restrictions of hyper-$q$-plurisubharmonic functions to complex subspaces are again hyper-$q$-plurisubharmonic; we will not need this property in our constructions. Moreover, for not necessarily strictly hyper-$q$-plurisubharmonic functions it is also not clear if requiring the existence of the extension $\hat{\varphi}$ only with respect to one fixed chart of $Y$, instead of requiring it with respect to every chart of $Y$, yields an equivalent definition. All these complications do not arise in the case of strictly hyper-$q$-plurisubharmonic functions, and thus there is a less technical but equivalent definition in this situation, see, for example, Proposition 2.2 in \cite{FraboniNapier10}. In particular, it follows easily from the above remarks that for every strictly hyper-$q$-plurisubharmonic function $\varphi \colon X \to \R$ and every compactly supported smooth function $\theta  \colon X \to \R$ there exists $\varepsilon_0 > 0$ such that $\varphi + \varepsilon\theta$ is strictly hyper-$q$-plurisubharmonic for every $\abs{\varepsilon} \le \varepsilon_0$.) Finally, note that (strictly) hyper-$0$-plurisubharmonic just means smoothly (strictly) plurisubharmonic. The notion of hyper-q-plurisubharmonicity was first introduced in the context of complex manifolds by Grauert and Riemenschneider in \cite{GrauertRiemenschneider70}, the above definition for complex spaces is taken from \cite{FraboniNapier10} (actually Grauert and Riemenschneider use the term hyper-$(q+1)$-convex functions instead of strictly hyper-$q$-plurisubharmonic functions, but since we prefered the term of strict $q$-plurisubharmonicity over $(q+1)$-convexity before, we stick to this convention).

A domain $\Omega \subset X = (X,h)$ will be called strictly hyper-$q$-pseudoconvex if for every $x \in b\Omega$ there exist an open neighbourhood $U_x \subset X$ of $x$ and a strictly hyper-$q$-plurisubharmonic function $\varphi_x \colon U_x \to \R$ such that $\Omega \cap U_x = \{\varphi_x < 0\}$. Observe that $\Omega$ is strictly hyper-$0$-pseudoconvex if and only if it is smoothly strictly pseudoconvex. (Again analoguous definitions would be possible in the $\mathcal{C}^s$-smooth categories for every $s \ge 2$.)

Now we turn to the generalizations of Theorem \ref{thm_qbdvalue_manifolds} and Theorem \ref{thm_qdeffct_manifolds} to hyper-$q$-pseudoconvex domains in complex spaces. Note that for $q=0$ this will include the case of smoothly strictly pseudoconvex domains. Hence Theorems \ref{thm_bdvalue_spaces} and \ref{thm_deffct_spaces} are special cases of the next two theorems, and thus it suffices to prove only these more general results. \par

\setcounter{theorem_prime}{3} 
\begin{theorem_prime} \label{thm_bdvalue_spaces_prime}
Let $X$ be a complex space, let $\Omega \subset X$ be a strictly hyper-$q$-pseudo\-con\-vex domain and let $f \colon b\Omega \to \R$ be a smooth function that is bounded from below. Then there exists a hyper-$q$-plurisubharmonic function $F$ defined on an open neighbourhood of $\overline{\Omega}$ such that $F|_{b\Omega} = f$ and $F$ is strictly hyper-$q$-plurisubharmonic near $b\Omega$.
\end{theorem_prime}


\noindent \textbf{Proof.}
Since $f$ is bounded from below, we can assume without loss of generality that $f>0$. Let $\widetilde{F} \colon X \to (0,\infty)$ be a smooth extension of $f$. Let $\{U_j''\}_{j=1}^\infty$ be a locally finite covering of $b\Omega$ by open sets $U_j'' \subset\subset X$ such that for every $j \in \N$ there exists a strictly hyper-$q$-plurisubharmonic function $\varphi_j \colon U_j'' \to \R$ such that $\Omega \cap U_j'' = \{\varphi_j < 0\}$. Moreover, let $U_j' \subset\subset U_j \subset\subset U_j''$ be open sets such that $\{U_j'\}_{j=1}^\infty$ still covers $b\Omega$.

Let $\{\chiup_j\}_{j=1}^\infty$ be a family of smooth functions $\chiup_j \colon X \to [0,\infty)$ such that $\{\chiup_j > 0\} = U_j'$ for every $j \in \N$, $\sum_{j=1}^\infty \chiup_j \le 1$ on $X$ and $\sum_{j=1}^\infty \chiup_j \equiv 1$ near $b\Omega$. Let $\beta \colon (0,\infty) \to (0,\infty)$ be a strictly increasing and strictly convex smooth function such that $\beta(t) = e^{-1/t}$ for small values of $t$, and let $\tilde{\beta} \colon \R \to [0,\infty)$ be the smooth extension of $\beta$ such that $\tilde{\beta}|_{(-\infty,0]} \equiv 0$. As in the proof of Theorem \ref{thm_qbdvalue_manifolds}, by a proper choice of $\{\chiup_j\}_{j=1}^\infty$, we can guarantee that for every $j \in \N$ the trivial extension $g_j \colon X \to [0, \infty)$ of the function $\beta^{-1} \circ(\widetilde{F}\chiup_j) \colon U_j' \to (0,\infty)$ by $0$ is smooth on $X$. 

Let $\lambda_j \colon X \to (-\infty, 0]$ be smooth such that $\overline{U_j'} = \{\lambda_j = 0\}$. Then choose $\varepsilon_j > 0$ so small and $C_j > 0$ so large that $g_j + C_j(\varphi_j + \varepsilon_j\lambda_j)$ is still strictly hyper-$q$-plurisubharmonic on $U_j$. Observe that, by construction, $g_j + C_j(\varphi_j + \varepsilon_j\lambda_j) < 0$ on $bU_j \cap \overline{\Omega}$, hence the function $\tilde{\beta} \circ (g_j + C_j(\varphi_j + \varepsilon_j\lambda_j))|_{U_j}$ vanishes near this set and thus its trivial extension by $0$ to the open neighbourhood $\mathcal{U}_j \coloneqq X \setminus \{x \in bU_j : (g_j + C_j(\varphi_j + \varepsilon_j\lambda_j))(x) \ge 0\}$ of $\overline{\Omega}$ defines a hyper-$q$-plurisubharmonic function $F_j \colon \mathcal{U}_j \to [0,\infty)$ such that $F_j|_{b\Omega} = f\chiup_j$ and $F_j \equiv 0$ outside $U_j$. Moreover, $W_j \coloneqq \{F_j > 0\} \subset U_j$ is an open neighbourhood of $b\Omega \cap U_j'$ such that $F_j$ is strictly hyper-$q$-plurisubharmonic on $W_j$. Hence $F \coloneqq \sum_{j=1}^\infty F_j$ is hyper-$q$-plurisubharmonic on the open neighbourhood $\mathcal{U} \coloneqq \bigcap_{j=1}^\infty \mathcal{U}_j \subset X$ of $\,\overline{\Omega}$ such that $F|_{b\Omega} = f$ and $F$ is strictly hyper-$q$-plurisubharmonic on $W \coloneqq \bigcup_{j=1}^\infty W_j \supset b\Omega$. \hfill $\Box$ \par

\begin{theorem_prime} \label{thm_deffct_spaces_prime}
Let $X$ be a complex space and let $\Omega \subset X$ be a strictly hyper-$q$-pseudo\-con\-vex domain. Then there exists a hyper-$q$-plu\-ri\-sub\-har\-monic function $\varphi$ defined on an open neighbourhood of $\overline{\Omega}$ such that $\Omega = \{\varphi < 0\}$ and $\varphi$ is strictly hyper-$q$-plurisubharmonic near $b\Omega$.
\end{theorem_prime}

\noindent \textbf{Proof.}
Let $\varphi \coloneqq F - 1$ where $F$ is the function from Theorem $\ref{thm_bdvalue_spaces_prime}$ $\,^\prime$ corresponding to the boundary values $f \equiv 1$. Then $\varphi$ is a hyper-$q$-plurisubharmonic function on an open neighbourhood of $\overline{\Omega}$ that vanishes identically on $b\Omega$ and that is strictly hyper-$q$-plurisubharmonic near $b\Omega$. As before, in the construction of $F$ we can choose $\widetilde{F}$ such that $\Omega = \{\widetilde{F} < 1\}$, and this choice implies that $\Omega = \{F < 1\}$, i.e., $\Omega = \{\varphi < 0\}$. \hfill $\Box$ \par

\noindent \textbf{Remarks.} 1) In the last two theorems strict hyper-$q$-convexity is understood with respect to an arbitrary but fixed hermitian metric on $X$.

\noindent 2) Similar to what we had above, the function $F$ from Theorem \ref{thm_bdvalue_spaces_prime}$\,^\prime$ is strictly hyper-$q$-plurisubharmonic on the open neighbourhood $W$ of $b\Omega$ and it is constant on $\Omega \setminus W$. One sees immediately from our construction that for every open set $\omega \subset \Omega$ such that $\overline{\omega} \subset \Omega$ we can choose $F$ in such a way that it will be constant on $\omega$.

\noindent 3) The statements of Theorems \ref{thm_bdvalue_spaces_prime}$\,^\prime$ and \ref{thm_deffct_spaces_prime}$\,^\prime$ remain true if $\mathcal{C}^\infty$-smoothness is replaced by $\mathcal{C}^s$-smoothness for any $s \ge 2$. Also for $\mathcal{C}^s$-smoothly strictly pseudoconvex domains with $s \in \{0, 1\}$ the proofs still work, but in Theorem \ref{thm_bdvalue_spaces_prime}$\,^\prime$ we have to assume that the function $f \colon b\Omega \to \R$ is at least $\mathcal{C}^2$-smooth. In particular, every strictly pseudoconvex domain in a complex space admits a continuous global defining function.

It is also possible to generalize Theorem \ref{thm_nbhbasis_manifolds} to the case of complex spaces. The precise statement is contained in the following theorem.

\begin{theorem}
Let $X$ be a complex space and let $\Omega \subset X$ be a strictly pseudoconvex domain. Let $U \subset X$ be an arbitrary open neighbourhood of $b \Omega$. Then the following assertions hold true:
\begin{enumerate}
  \item[(1)] There exists a smoothly strictly pseudoconvex domain $\Omega' \subset X$ such that $\Omega \setminus U \subset \Omega'$ and $\overline{\Omega'} \subset \Omega$.
  \item[(2)] There exists a smoothly strictly pseudoconvex domain $\Omega'' \subset X$ such that $\overline{\Omega} \subset \Omega''$ and $\overline{\Omega''} \subset \Omega \cup U$.
\end{enumerate}
In particular, $\overline{\Omega}$ admits a neighbourhood basis consisting of smoothly strictly pseudoconvex domains.
\end{theorem}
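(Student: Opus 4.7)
The plan is to run the proof of Theorem \ref{thm_nbhbasis_manifolds} inside the complex-space category, substituting each manifold-level ingredient by its complex-space counterpart. The tools available are the continuous version of Theorem $\ref{thm_deffct_spaces_prime}\,'$ (see Remarks 2 and 3 following it), a Richberg-style smoothing for complex spaces \cite{Richberg68}, and the bump-function perturbation argument from Lemma 2 of \cite{Tomassini83}.

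For assertion (1), choose a connected open set $\omega \subset \Omega$ with $\Omega \setminus U \subset \omega$ and $\overline{\omega} \subset \Omega$ (connectedness of $\omega$ is achieved by linking the components of $\Omega \setminus U$ via paths in the connected space $\Omega$). Apply Theorem $\ref{thm_deffct_spaces_prime}\,'$ with $q=0$, together with Remarks 2 and 3, to produce a continuous plurisubharmonic function $\varphi$ on a neighbourhood of $\overline{\Omega}$ such that $\Omega = \{\varphi < 0\}$, $\varphi \ge -1$, $\varphi \equiv -1$ on $\omega$, and $\varphi$ is smoothly strictly plurisubharmonic on $\{\varphi > -1\}$. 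Apply Richberg's approximation, carried out chart-by-chart in the local ambient $\C^n$, to obtain a continuous plurisubharmonic $\widetilde{\varphi}$ with $\widetilde{\varphi} \ge \varphi$, $\widetilde{\varphi} \equiv -1$ on $\omega$, $\lvert \widetilde{\varphi} - \varphi \rvert < 1/2$ and smoothly strictly plurisubharmonic on $\{\widetilde{\varphi} > -1\}$. Fix $c \in (-1, -1/2)$ and let $\Omega'$ be the connected component of $\{\widetilde{\varphi} < c\}$ containing $\omega$. Then $\Omega \setminus U \subset \omega \subset \Omega'$, $\overline{\Omega'} \subset \{\widetilde{\varphi} \le c\} \subset \{\varphi \le c + 1/2\} \subset \Omega$, and since $b\Omega' \subset \{\widetilde{\varphi} = c\} \subset \{\widetilde{\varphi} > -1\}$, the function $\widetilde{\varphi} - c$ is a smoothly strictly plurisubharmonic local defining function at each point of $b\Omega'$.

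For assertion (2), first produce an intermediate strictly pseudoconvex domain $\widetilde{\Omega}''$ with $\overline{\Omega} \subset \widetilde{\Omega}'' \subset \Omega \cup U$, following Lemma 2 of \cite{Tomassini83}: starting from a continuous plurisubharmonic $\varphi$ (defined near $\overline{\Omega \cup U}$, with $\Omega = \{\varphi < 0\}$, $\varphi > 0$ outside $\overline{\Omega}$, and smoothly strictly plurisubharmonic on a neighbourhood of $b\Omega$), pick a locally finite cover $\{U_j\}$ of $b\Omega$ by open sets $U_j \subset\subset U$, choose smooth $\eta_j \le 0$ with $\{\eta_j < 0\} = U_j$, and set $\phi \coloneqq \varphi + \sum_j \varepsilon_j \eta_j$ with the $\varepsilon_j > 0$ small enough that strict plurisubharmonicity of $\phi$ near $b\widetilde\Omega'':=\{\phi=0\}$ is preserved. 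Passing to the connected component of $\{\phi < 0\}$ containing the connected set $\overline{\Omega}$ produces the required domain $\widetilde{\Omega}''$. Now apply (1) to $\widetilde{\Omega}''$ relative to an open neighbourhood of $b\widetilde{\Omega}''$ disjoint from $\overline{\Omega}$ to extract $\Omega''$.

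The main obstacle is that the Richberg smoothing must respect the intrinsic definition of smoothly strictly plurisubharmonic on $X$, namely the existence of smooth strictly plurisubharmonic local extensions through holomorphic charts (this is the subtle regularity issue flagged in the paragraph introducing this section). The resolution is to perform the smoothing at the level of ambient extensions: on each chart $(U_\alpha, \tau_\alpha, A_\alpha, G_\alpha)$, lift $\varphi$ to a smoothly strictly plurisubharmonic function on $G_\alpha$ where it is available, convolve with a standard mollifier in the ambient $\C^n$, and glue the local smoothings via Richberg's patching; both convolution and patching preserve smoothness and strict plurisubharmonicity ambiently, so the restriction to $A_\alpha$ provides the required local extensions of $\widetilde{\varphi}$. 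On the set $\{\varphi = -1\} = \omega$ no smoothing is needed since $\varphi$ is already smooth and constant there, and on the overlap with $\{\varphi > -1\}$ the Richberg patching is standard.
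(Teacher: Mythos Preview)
Your proposal is correct and takes essentially the same approach as the paper, whose proof consists of a single sentence stating that the result follows by adjusting the proof of Theorem \ref{thm_nbhbasis_manifolds} to the complex-space setting (pointing to the proof of Theorem \ref{thm_bdvalue_spaces_prime}$\,^\prime$ for comparison). Your write-up supplies exactly the details the paper omits---the continuous global defining function from the complex-space version of the defining-function theorem, Richberg smoothing performed via ambient charts so as to remain in the smoothly strictly plurisubharmonic class, and the Tomassini bump argument for the outer approximation---so there is nothing to correct.
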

\begin{proof}
This follows easily by adjusting the proof of Theorem \ref{thm_nbhbasis_manifolds} to the situation of complex spaces, compare for example with the proof of Theorem \ref{thm_bdvalue_spaces_prime}$\,^\prime$ above.
\end{proof}

\noindent \textbf{Remark.} A similar result holds true for strictly hyper-$q$-pseudoconvex domains $\Omega \subset X$. However, in the case $q > 0$ the regularity of $b\Omega'$ and $b\Omega''$ will in general be only as good as the regularity of $b\Omega$, since we do not know whether it is always possible to make a hyper-$q$-plurisubharmonic smoothing of a hyper-$q$-plurisubharmonic function.

As in the case of manifolds, we can now introduce the notion of the core of a smoothly strictly pseudoconvex domain in an arbitrary complex space.

\begin{definition} \label{def_corespaces}
Let $X$ be a complex space and let $\Omega \subset X$ be a domain. Then the set
\begin{equation*} \begin{split}
\mathfrak{c}(\Omega) \coloneqq \big\{ z \in & \;\Omega : \text{every smoothly plurisubharmonic function on $\Omega$ that is bounded} \\ &\text{from above fails to be smoothly strictly plurisubharmonic in } z \big\}
\end{split} \end{equation*}
will be called the \emph{core} of $\Omega$.
\end{definition}

Since the construction of smooth maximum still works in arbitrary complex spaces, one easily sees that a statement analoguous to Lemma \ref{thm_coreequality} can be proved for smoothly strictly pseudoconvex domains in complex spaces. However, a subtle technical problem concerning the regularity of the global defining function occurs if one tries to extend the Main Theorem to the setting of complex spaces. In fact, in the proof of the Main Theorem we construct the smooth plurisubharmonic function $\varphi_1$ as a limit of a series of smooth plurisubharmonic functions. Correspondingly, in the case of complex spaces the function $\varphi_1$ would be a limit of a series of smoothly plurisubharmonic functions. Observe though that when taking limits in the class of smoothly plurisubharmonic functions defined on a complex space, it is not clear in general in which cases the limit function will again be smoothly plurisubharmonic. Indeed, if we try to repeat the proof of the Main Theorem for complex spaces, then one can easily choose the sequence $\{\varepsilon_j\}_{j=1}^\infty$ in such a way that the function $\varphi_1$ is smooth and plurisubharmonic (here we need the equivalence of weakly plurisubharmonic and plurisubharmonic functions on complex spaces, see Theorem 5.3.1 in \cite{FornaessNarasimhan80}), but it is not clear whether $\varphi_1$ will also be smoothly plurisubharmonic. The problem is that given a point $x \in X$ and a sequence $\{\Psi_j\}$ of smoothly plurisubharmonic functions on $X$, then after a local embedding of $X$ into some $\C^n$ each $\Psi_j$ extends as a smooth plurisubharmonic function onto some neighbourhood $\hat{U}_j \subset \C^n$ of $x$, but it is not clear whether one can guarantee that also $\bigcap_{j=1}^\infty \hat{U}_j$ will contain some neighbourhood of $x$. For the function $\varphi_1$, we only know how to avoid this problem away from $\mathfrak{c}(\Omega)$, namely, we can at least show that $\varphi_1$ is smoothly strictly plurisubharmonic outside $\mathfrak{c}(\Omega)$. We sketch briefly the corresponding argument: By construction, $\varphi_1 = \sum_{j=1}^\infty \varepsilon_j\Psi_j$ for some smoothly plurisubharmonic functions $\Psi_j$ on $\Omega$. Fix arbitrary $x \in \Omega \setminus \mathfrak{c}(\Omega)$. After a local embedding of the complex space X, we can find smooth extensions $\hat{\Psi}_j$ of the functions $\Psi_j$ to a uniformly large neighbourhood of $x$ in the ambient $\C^n$ such that each function $\hat{\Psi}_j$ has nonnegative Levi form in $x$. Moreover, since $x \notin \mathfrak{c}(\Omega)$, and by the choice of the functions $\Psi_j$, at least one of the functions $\hat{\Psi}_j$ has positive Levi form in $x$. Thus if $\{\varepsilon_j\}$ is chosen suitably, then the function $\hat{\varphi}_1 \coloneqq \sum_{j=1}^\infty \varepsilon_j\hat{\Psi}_j$ is a smooth extension of $\varphi_1$ which has a positive Levi form in $x$, i.e., $\varphi_1$ is smoothly strictly plurisubharmonic in $x$. (Analoguous results hold in the $\mathcal{C}^s$-smooth categories for every $s \ge 2$. Moreover, in view of Theorem 2.4 from \cite{Richberg68}, a full analogue of the Main Theorem holds true for strictly pseudoconvex domains in complex spaces in the $\mathcal{C}^0$-smooth category.)

Finally, an analogue of Proposition \ref{thm_emptycore} holds true for arbitrary Stein spaces (for the existence of a smoothly strictly plurisubharmonic function on a Stein space see, for example, the Lemma in Section 3 of \cite{Narasimhan61}).

\section{The core of a domain} \label{sec_core}
In this section we investigate properties of the core $\mathfrak{c}(\Omega)$ of a domain $\Omega$ in a complex manifold $\mathcal{M}$. Clearly, $\mathfrak{c}(\Omega)$ is always a relatively closed subset of $\Omega$. If $\Omega$ is strictly pseudoconvex with smooth boundary, then Theorem \ref{thm_manifolds_deffct_prime}$\,^\prime$ implies that $\mathfrak{c}(\Omega)$ is also closed in $\mathcal{M}$. Moreover, $\mathfrak{c}(\Omega) = \varnothing$ if $\mathcal{M}$ is Stein and $\Omega$ is relatively compact in $\mathcal{M}$. As remarked above, every domain $\Omega \subset \mathcal{M}$ admits a smooth and bounded plurisubharmonic function $\varphi \colon \Omega \to \R$ that is strictly plurisubharmonic precisely in $\Omega \setminus \mathfrak{c}(\Omega)$ (see the remarks following the definition of minimal functions on page \pageref{def_minimal}).

Before we start our investigations of properties of the core, we want to make the following observation: If $\Omega \subset \mathcal{M}$ is a domain, and $\omega \subset \Omega$ is a subdomain such that $\mathfrak{c}(\Omega) \subset \omega$, then clearly $\mathfrak{c}(\omega) \subset \mathfrak{c}(\Omega)$. We do not know, however, whether the reverse inclusion holds also true here, i.e., we do not know if in general $\mathfrak{c}(\omega) = \mathfrak{c}(\Omega)$. In fact, in all examples of domains $\Omega \subset \mathcal{M}$ with nonempty core that we will construct here (and also in the Examples \ref{ex_compactanalytic} and \ref{ex_complexline} that have already been given in the Introduction), the above equality does indeed hold true for every subdomain $\omega \subset \Omega$ that contains $\mathfrak{c}(\Omega)$. Thus, loosely speaking, in all examples that we are able to construct, the presence of the core $\mathfrak{c}(\Omega) \subset \Omega$ is only related to intrinsic properties of $\mathfrak{c}(\Omega)$, but not to properties of $\Omega$.

These observations will lead us in Section \ref{sec_liouville} to define the notion of sets of {\it core type}. For the moment, we just want to point out, that in view of a possible dependence of $\mathfrak{c}(\Omega)$ on $\Omega$, it is desirable to construct not only examples of arbitrary domains with nonempty core, but also of domains with additional properties, like, for example, pseudoconvexity. This concern is further illustrated by the following theorem.

\begin{theorem}
Let $n \ge 2$. The following assertions hold true for domains $\Omega \subset \C^n$ with coordinates $(z_1, z_2, \ldots, z_n)$, $z_j = x_j+iy_j$:
\begin{enumerate}
  \item[(1)] There exists a domain $\Omega \subset \C^n$ such that $\mathfrak{c}(\Omega) = E \times \C^{n-1}$, where $E \subset \C$ is the set $E = [0,1] \times \R_{y_1}$.
  \item[(2)] Let $\Omega \subset \C^n$ be a pseudoconvex domain such that $\mathfrak{c}(\Omega) = E \times \C^k$ for some $k \in \N_{n-1}$ and some set $E \subset \C^{n-k}$. Then either $E$ is complete pluripolar or $E$ is open. In the later case $\Omega = E \times \C^k$.
  \item[(3)] Let $k \in \N_{n-1}$ be arbitrary but fixed. Then there exists a strictly pseudoconvex domain $\Omega \subset \C^n$ such that $\mathfrak{c}(\Omega) = E \times \C^k$ for a set $E \subset \C^{n-k}$ if and only if $E$ is closed and complete pluripolar.
\end{enumerate}
\end{theorem}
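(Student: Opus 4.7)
The three parts will be handled in turn, with parts (2) and (3) both hinging on an analysis of the ``full slice'' set $S(\Omega) := \{z' \in \C^{n-k} : \{z'\}\times\C^k \subset \Omega\}$, while part (1) requires only a direct construction. The common thread is Liouville's theorem: whenever a full complex $(n-k)$-plane sits inside $\Omega$, every smooth, bounded above plurisubharmonic function on $\Omega$ restricts to a constant on that plane, so each such plane lies automatically in $\mathfrak{c}(\Omega)$.

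\textbf{Part (1).} The plan is to construct $\Omega$ as a Hartogs-type domain over the $z_1$-axis. I would pick a smooth nonnegative function $\rho \colon \C \to [0,\infty)$ depending only on $x_1$ with $\{\rho = 0\} = E$, and set $\Omega := \{(z_1,w) \in \C^n : \rho(z_1)|w|^2 < 1\}$. By construction, the fibre $\{a\}\times\C^{n-1}$ is all of $\C^{n-1}$ when $a \in E$ and is a bounded ball $\{|w| < 1/\sqrt{\rho(a)}\}$ when $a \notin E$, so $\Omega$ is a connected open set containing $E\times\C^{n-1}$. The inclusion $E\times\C^{n-1} \subset \mathfrak{c}(\Omega)$ is immediate from Liouville applied to the full fibres. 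For the reverse inclusion I would cover $\Omega\setminus(E\times\C^{n-1})$ by relatively compact pseudoconvex open subsets of $\Omega$ (possible because the fibres over $\C\setminus E$ are bounded balls, so $\Omega$ is locally bounded there), produce on each such subset a smooth strictly plurisubharmonic function, and glue the pieces by the sum-of-smooth-maxima construction used in the proof of the Main Theorem: $\varphi := \sum_j \varepsilon_j\,\widetilde{\max}_{1/2}(\delta_j \psi_{p_j}, -1)$ with rapidly decaying $\varepsilon_j$, yielding a smooth, bounded above plurisubharmonic function on $\Omega$ that is strictly plurisubharmonic at every prescribed point of $\Omega\setminus (E\times\C^{n-1})$.

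\textbf{Part (2).} First show $S(\Omega) = E$: the inclusion $E\subset S(\Omega)$ is just $E\times\C^k\subset\mathfrak{c}(\Omega)\subset\Omega$, and the reverse uses Liouville on a full slice to force it into $\mathfrak{c}(\Omega) = E\times\C^k$. Now fix $w_0\in\C^k$ and consider the Hartogs radius $R(z',w_0) := \sup\{r > 0 : \{z'\}\times B^k(w_0,r) \subset \Omega\}$ on $\Omega_{w_0} := \{z':(z',w_0)\in\Omega\}$. Pseudoconvexity of $\Omega$ implies (classical Oka/Hartogs) that $z' \mapsto -\log R(z',w_0)$ is plurisubharmonic on $\Omega_{w_0}$ with $\{-\log R = -\infty\} = E\cap\Omega_{w_0}$. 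Apply the standard dichotomy on each connected component $V$ of $\Omega_{w_0}$: either $-\log R \equiv -\infty$ on $V$ (so $V \subset E$, and $E$ has nonempty interior) or $E\cap V$ is pluripolar. In the first alternative an iterated Hartogs extension through the pseudoconvex $\Omega$ propagates the full-fibre property to conclude that $E$ is open and $\Omega = E\times\C^k$. In the second alternative, $E$ is locally pluripolar everywhere; to upgrade this to complete pluripolarity I would take a countable sequence $(w_j) \subset \C^k$ for which the $-\log R(\cdot,w_j)$ jointly detect $E$ and assemble a weighted sum $u := \sum_j 2^{-j}\max(-\log R(\cdot,w_j), -j)$, extended by $-\infty$ across $E$ and then (via Josefson's theorem on global pluripolar extensions) promoted to a global psh function on $\C^{n-k}$ with $-\infty$-set exactly $E$.

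\textbf{Part (3).} For the ``only if'' direction, part (2) gives the alternative: $E$ complete pluripolar, or $E$ open with $\Omega = E\times\C^k$; the latter case is excluded by the Main Theorem, which produces a global defining function strictly plurisubharmonic on a neighbourhood of $b\Omega$, forcing $\mathfrak{c}(\Omega)\subsetneq\Omega$. The same observation shows $\mathfrak{c}(\Omega) = E\times\C^k$ is closed in $\C^n$ (being disjoint from a neighbourhood of $b\Omega$), so $E$ is closed in $\C^{n-k}$. For the ``if'' direction, choose $u$ psh on $\C^{n-k}$ with $\{u = -\infty\} = E$ and, by a standard regularization (e.g.\ taking a smooth strictly psh lower bound), arrange that $u$ is continuous on $\C^{n-k}\setminus E$. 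For a regular value $C \in \R$ (generic by Sard), set $\Omega := \{(z',w) : u(z') + |z'|^2 + |w|^2 < C\}$. The defining function is strictly psh (psh plus a strictly psh quadratic), so $\Omega$ is strictly pseudoconvex; $E\times\C^k\subset\Omega$ because $u = -\infty$ on $E$; Liouville on full fibres yields $E\times\C^k\subset\mathfrak{c}(\Omega)$; and the defining function, restricted to $\Omega$ where it is bounded above by $0$ and composed with a bounded convex smoothing, is strictly psh outside $E\times\C^k$, giving the reverse inclusion. The hardest step is the complete-pluripolarity upgrade in part (2): translating local pluripolarity of $E$ (obtained fibre-by-fibre) into a single global psh function whose $-\infty$-set is \emph{equal} to $E$, rather than merely containing it, which requires the careful patching of the family of Hartogs radii sketched above.
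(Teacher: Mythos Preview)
Your construction in Part~(1) does not work. In the Hartogs-type domain $\Omega = \{(z_1,w) : \rho(z_1)\,|w|^2 < 1\}$ the entire complex line $\C_{z_1}\times\{0\}$ lies in $\Omega$, since $\rho(z_1)\cdot 0 = 0 < 1$ for every $z_1$. By Liouville any smooth bounded-above plurisubharmonic function on $\Omega$ is constant on this line, so $\C_{z_1}\times\{0\}\subset\mathfrak{c}(\Omega)$; but the point $(2,0)$, say, is not in $E\times\C^{n-1}$. Hence $\mathfrak{c}(\Omega)\supsetneq E\times\C^{n-1}$. No Hartogs-type domain over all of $\C_{z_1}$ can avoid this, because the zero-section is always present. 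The paper's construction is of a quite different nature: for each of the two half-spaces $\{x_1\le 0\}$ and $\{x_1\ge 1\}$ it builds an explicit open neighbourhood $W$ together with a smooth plurisubharmonic function $\varphi$ on $W$ that vanishes identically on the half-space and is strictly plurisubharmonic (and positive) on the rest of $W$; these are obtained as convergent sums $\sum\varepsilon_j(\chi_j\circ\psi_j)$ of convex functions composed with tilted strictly plurisubharmonic quadratics $\psi_j$ supported on larger and larger balls. The domain $\Omega$ is then the intersection of a translate and a reflection of $W$, so that $\Omega\cap\{x_1>1\}$ and $\Omega\cap\{x_1<0\}$ are controlled (in particular $\Omega$ contains no complex line transverse to the strip), and the sum of the two $\varphi$'s witnesses $\mathfrak{c}(\Omega)\subset E\times\C^{n-1}$.

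Your Parts~(2) and~(3) are along the lines of the paper, with two remarks. In~(2) you over-complicate what you call the ``hardest step'': no patching of a family $\{-\log R(\cdot,w_j)\}$ is needed, because the \emph{single} function $\psi_0(z') := -\log R(z',0)$ is plurisubharmonic on $V_0 := \{z' : (z',0)\in\Omega\}\supset E$ with $\{\psi_0=-\infty\}$ equal to $E$ exactly (since $R(z',0)=\infty$ iff $\{z'\}\times\C^k\subset\Omega$ iff $z'\in E$). In~(3) you do not address connectedness of the constructed set $\{u(z')+\|z'\|^2+\|w\|^2<C\}$; the paper proves a separate lemma showing that any strictly pseudoconvex open set can be enlarged to a connected one with smooth boundary and the same core, by joining components with thin strictly pseudoconvex tubes along embedded arcs.
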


\noindent \textbf{Remarks.} 1) Statement (1) shows that the core of an arbitrary domain $\Omega \subset \C^n$ may divide $\Omega$ into several connected components, and, moreover, $\mathfrak{c}(\Omega)$ may have nonemtpy interior. However, it is not clear to us at the moment, whether a pseudoconvex domain $\Omega \subset \C^n$ can have nonempty and disconnected complement $\Omega \setminus \mathfrak{c}(\Omega)$, or if the core of a strictly pseudoconvex domain $\Omega \subset \C^n$ can have nonempty interior.

\noindent 2) Statement (3) shows that if the core of a strictly pseudoconvex domain is assumed to have a product structure as described above, then it is always complete pluripolar. In view of this result, and also of the examples that will be given below, one can raise the following question: Is it always true that the core of a strictly pseudoconvex domain is complete pluripolar? At the moment we are not able to give an answer to this question.

\noindent 3) Some part of the arguments which we will use to prove the statements (2) and (3) of the theorem are similar to the ones which appear in the proof of Theorem 1.11 in \cite{Aupetit83}. In fact, to some extent, the corresponding statements are already implicitly contained in the above mentioned result of Aupetit.

\begin{proof}
(i) We start with proving statement (1) of the theorem. For every $j \in \N$, let $\psi_j \colon B^n(0,j) \to \R$ be the smooth and strictly plurisubharmonic function defined by
\[ \psi_j(z_1, \ldots, z_n) \coloneqq x_1 - \frac{1}{2^{j-2}} + \frac{1}{j^22^{j-1}}\big(y_1^2 + \abs{z_2}^2 + \cdots + \abs{z_n}^2\big). \]
Choose a smooth function $\chiup_j \colon \R \to [0,\infty)$ such that $\chiup_j \equiv 0$ on $(-\infty, -1/2^j]$ and such that $\chiup_j$ is strictly increasing and strictly convex on $(-1/2^j, \infty)$. Set $\widetilde{\varphi}_j \coloneqq \chiup_j \circ \psi_j$. Then $\widetilde{\varphi}_j$ is a smooth plurisubharmonic function on $B^n(0,j)$ such that $\widetilde{\varphi}_j \equiv 0$ on $\{\psi_j \le -1/2^j\} \supset B^n(0,j) \cap \{x_1 \le 1/2^j\}$ and such that $\widetilde{\varphi}_j$ is strictly plurisubharmonic and positive on $\{\psi_j > -1/2^j\} \supset B^n(0,j) \cap \{x_1 > 3/2^j\}$. Thus
\[ \varphi_j(z) \coloneqq \left\{\begin{array}{c@{\,,\quad}l} \widetilde{\varphi}_j(z) & z \in B^n(0,j) \cap \{x_1 \ge 1/2^j\} \\ 0 & z \in \{x_1 < 1/2^j\} \end{array} \right. \]
is a smooth plurisubharmonic function on $W_j \coloneqq B^n(0,j) \cup \{x_1 < 1/2^j\}$ such that $\varphi_j$ is strictly plurisubharmonic and positive on $B^n(0,j) \cap \{x_1 > 3/2^j\}$. Observe that $W \coloneqq \bigcap_{j=1}^\infty W_j$ is a connected open neighbourhood of $\{x_1 \le 0\}$. Then one easily sees that for a sequence $\{\varepsilon_j\}_{j=1}^\infty$ of positive numbers that converges to zero fast enough, the function $\varphi \coloneqq \sum_{j=1}^\infty \varepsilon_j\varphi_j$ is smooth and plurisubharmonic on $W$ such that $\varphi \equiv 0$ on $\{x_1 \le 0\}$ and such that $\varphi$ is strictly plurisubharmonic and positive on $W \cap \{x_1 > 0\}$.

Now define a domain $\Omega \subset \C^n$ as
\[ \Omega \coloneqq [W + (1,0, \ldots,0)] \cap [-W] = \big\{z \in \C^n : (z_1-1,z_2, \ldots, z_n) \in W \text{ and } -z \in W \big\}. \]
Then $E \times \C^{n-1} \subset \Omega$, where $E \coloneqq [0,1] \times \R_{y_1} \subset \C$. By the Liouville theorem, every plurisubharmonic function $u$ on $\Omega$ that is bounded from above has to be constant on $\{z\} \times \C^{n-1}$ for every $z \in E$. Hence $u$ fails to be strictly plurisubharmonic at every point of $E \times \C^{n-1}$, i.e., $E \times \C^{n-1} \subset \mathfrak{c}(\Omega)$. On the other hand, $\Phi(z) \coloneqq \varphi(z_1-1,z_2, \ldots, z_n) + \varphi(-z)$ is a smooth plurisubharmonic function on $\Omega$ such that $\Phi$ is strictly plurisubharmonic on $\Omega \setminus (E \times \C^{n-1})$, i.e., $\mathfrak{c}(\Omega) \subset E \times \C^{n-1}$. Thus $\mathfrak{c}(\Omega) = E \times \C^{n-1}$, which completes the proof of part 1 of the theorem.

\noindent (ii) We now prove the statements (2) and (3) of the theorem. At first, let $\Omega \subset \C^n$ be an arbitrary domain such that $\mathfrak{c}(\Omega) = E \times \C^k$ for some set $E \subset \C^{n-k}$. Then, in view of Liouville's theorem, one can easily see that $E = \{z' \in \C^{n-k} : \{z'\} \times \C^k \subset \Omega\}$. For every $z'' \in \C^k$ let $V_{z''} \coloneqq \{z' \in \C^{n-k} : (z',z'') \in \Omega\}$ and define $\psi_{z''} \colon V_{z''} \to [-\infty,\infty)$ as $\psi_{z''}(z') \coloneqq -\log\mathcal{R}(z',z'')$, where $\mathcal{R}(z) \coloneqq \sup\{r>0 : \{z'\} \times B^k(z'',r) \subset \Omega\}$, $z = (z',z'') \in \C^{n-k} \times \C^k$. By definition, $\psi_0(z') = -\infty$ if and only if $\{z'\} \times \C^k \subset \Omega$. Thus $E = \{\psi_0 = -\infty\}$.

Assume now that $\Omega$ is pseudoconvex. Then $\psi_0$ is plurisubharmonic on $V_0$, since $\psi_0(z') = \sup_{w''\in \C^k, \norm{w''} = 1} [-\log \mathcal{R}_{(0,w'')}(z',0)]$, where for every $w \in \C^n$ the function $\mathcal{R}_w(z) \coloneqq \sup\{r >0 : z + \zeta w \in \Omega \text{ for every } \zeta \in \Delta(0,r)\}$ denotes the Hartogs radius of $\Omega$ in the $w$-direction; here $\Delta(a,r) \coloneqq \{z \in \C : \abs{z-a} < r\}$. Thus $E$ is complete pluripolar if $\psi_0 \not\equiv -\infty$ on every connected component of $V_0$. On the other hand, suppose that $\psi_0 \equiv -\infty$ on some connected component $U$ of $V_0$, i.e., $U \times \C^k \subset \Omega$. Assume, to get a contradiction, that $\Omega \neq U \times \C^k$. Then there exists $z'' \in \C^k$ such that $U$ is a proper subset of the connected component $V_{z''}'$ of $V_{z''}$ containing $U$. Since $\psi_{z''} \equiv -\infty$ on the open set $U$, it follows that $\psi_{z''} \equiv -\infty$ on $V_{z''}'$. Thus $V_{z''}' \times \C \subset \Omega$ and hence, by definition of $U$, we have $V_{z''}' \subset U$. This contradicts the fact that $U \subsetneq V_{z''}'$ and thus proves that $\Omega = U \times \C^k$. Another application of Liouville's theorem then shows that $E = U$, which completes the proof of statement $(2)$.

Now assume that $\Omega$ is even strictly pseudoconvex. Then, by what we have already proven, it follows that $E$ is complete pluripolar. Assume, to get a contradiction, that $E$ is not closed. Then there exist $p \in \C^{n-k} \setminus E$ and a sequence $\{p_j\}_{j=1}^\infty \subset E$ such that $\lim_{j \to \infty} p_j = p$. Since $E \times \C^k \subset \Omega$, it follows that $L \coloneqq \{p\} \times \C^k \subset \overline{\Omega}$. By Theorem \ref{thm_qdeffct_manifolds} and the related Remark 4, there exists a continuous plurisubharmonic function $\varphi$ on an open neighbourhood of $\overline{\Omega}$ such that $\Omega = \{\varphi < 0\}$. In particular, $\varphi \le 0$ on $\overline{\Omega}$. Thus, by Liouville's theorem, $\varphi \equiv c$ on $L$ for some constant $c \le 0$. If $c < 0$, then $L \subset \Omega$ and hence also $L \subset \mathfrak{c}(\Omega)$. This implies that $p \in E$, which contradicts the assumption on $p$. On the other hand, if $c = 0$, then $L \subset b\Omega$, which is not possible by strict pseudoconvexity of $b\Omega$. This shows that $E$ is closed in $\C^{n-k}$.

Finally, let $E \subset \C^{n-k}$ be a closed complete pluripolar set. Then, by Corollary 1 in \cite{Coltoiu90}, there exists a plurisubharmonic function $u$ on $\C^{n-k}$ such that $u$ is smooth on $\C^{n-k} \setminus E$ and $E = \{u = -\infty\}$. Define
\[ \Omega' \coloneqq \big\{(z',z'') \in \C^n : u(z') + \norm{z}^2 < C\big\}. \]
Then for generic $C \in \R$, $\Omega'$ is a strictly pseudoconvex open set with smooth boundary such that $E \times \C^k \subset \Omega'$. By Liouville's theorem, $E \times \C^k \subset \mathfrak{c}(\Omega')$. Moreover, let $v \colon \C^n \to [-\infty,\infty)$ be defined as $v(z) = u(z') + \norm{z}^2$. It is easy to see that if $\{\eta_j\}_{j=1}^\infty$ is a sequence of positive numbers that converges to zero fast enough, then $\widetilde{v} \coloneqq \sum_{j=1}^\infty \eta_j \, \widetilde{\max}_1(v - C, -j)$ is a smooth global defining function for $\Omega'$ which is strictly plurisubharmonic outside $E \times \C^k$. Thus we also have that $\mathfrak{c}(\Omega') \subset E \times \C^k$. This shows that $\mathfrak{c}(\Omega') = E \times \C^k$. The assertion of statement (3) then follows from the following lemma.
\end{proof}

\begin{lemma}
Let $\mathcal{M}$ be a connected complex manifold and let $\Omega' \subset \mathcal{M}$ be a strictly pseudoconvex open set (not necessarily connected or with smooth boundary). Then there exists a strictly pseudoconvex domain $\Omega \subset \mathcal{M}$ with smooth boundary such that $\overline{\Omega'} \subset \Omega$ and $\mathfrak{c}(\Omega) = \mathfrak{c}(\Omega')$.
\end{lemma}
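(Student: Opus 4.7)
The plan is to first apply Theorem~\ref{thm_nbhbasis_manifolds}(2) to $\Omega'$ to obtain a strictly pseudoconvex open set $\widetilde{\Omega}$ with smooth boundary, $\overline{\Omega'}\subset\widetilde{\Omega}$ and $\mathfrak{c}(\widetilde{\Omega})=\mathfrak{c}(\Omega')$; if $\widetilde{\Omega}$ is already connected we are done with $\Omega\coloneqq\widetilde{\Omega}$. Otherwise, let $\{\widetilde{\Omega}_k\}_{k\in\N}$ enumerate its (at most countably many) connected components. The remaining task is to join these components by strictly pseudoconvex bridges which do not alter the core, and this is where the main work of the proof lies.

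By the Main Theorem applied componentwise and assembled, fix a bounded smooth plurisubharmonic global defining function $\varphi$ for $\widetilde{\Omega}$, defined on a neighbourhood $\mathcal{U}$ of $\overline{\widetilde{\Omega}}$, strictly plurisubharmonic on $\mathcal{U}\setminus\mathfrak{c}(\Omega')$, with $\widetilde{\Omega}=\{\varphi<0\}$, $d\varphi\neq 0$ on $b\widetilde{\Omega}$, and $\varphi>0$ on $\mathcal{U}\setminus\overline{\widetilde{\Omega}}$. Since $\mathcal{M}$ is path-connected, by routing curves through the exterior collar of $b\widetilde{\Omega}$ inside $\mathcal{U}$ (possibly first enlarging $\mathcal{U}$ by repeating the construction of the Main Theorem, so that this collar connects the components in $\mathcal{M}$), select a locally finite family of pairwise disjoint smooth embedded arcs $\gamma_k\subset\mathcal{U}$, each joining base points in two distinct components $\widetilde{\Omega}_j$, arranged as a spanning tree so that $\widetilde{\Omega}\cup\bigcup_{k}\gamma_k$ is connected, with the interior of each $\gamma_k$ in $\mathcal{U}\setminus\overline{\widetilde{\Omega}}$ and disjoint from $\mathfrak{c}(\Omega')$. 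Choose pairwise disjoint, locally finite tubular neighbourhoods $N_k\subset\mathcal{U}\setminus\mathfrak{c}(\Omega')$ of $\gamma_k$, smooth cutoffs $\eta_k\geq0$ supported in $N_k$ and identically $1$ on a thinner subtube containing $\gamma_k$, and set
\[
\varphi_1\coloneqq\varphi-\sum_{k}\varepsilon_k\eta_k,
\]
with positive constants $\varepsilon_k$ still to be fixed.

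The $\varepsilon_k$ have to satisfy two competing inequalities: $\varepsilon_k>\max_{\gamma_k}\varphi$, so that $\gamma_k\subset\{\varphi_1<0\}$ (making the sublevel set connected), and $\varepsilon_k\,\sup_{N_k}|\Lev(\eta_k)|<\inf_{N_k}\Lev(\varphi)$, so that $\varphi_1$ remains strictly plurisubharmonic on $N_k$. Both can be met simultaneously by choosing the tubes $N_k$ wide enough (to make $\sup_{N_k}|\Lev(\eta_k)|$ small) and the arcs $\gamma_k$ close enough to $b\widetilde{\Omega}$ (to make $\max_{\gamma_k}\varphi$ small, using that $\varphi$ is continuous and vanishes on $b\widetilde{\Omega}$). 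Once this is done, $\widehat{\Omega}\coloneqq\{\varphi_1<0\}$ is a connected strictly pseudoconvex open set containing $\overline{\Omega'}$, and $\varphi_1|_{\widehat{\Omega}}$ is itself a smooth bounded plurisubharmonic function on $\widehat{\Omega}$ which is strictly plurisubharmonic outside $\mathfrak{c}(\Omega')$ (since the $\eta_k$ are supported off the core), so $\mathfrak{c}(\widehat{\Omega})\subset\mathfrak{c}(\Omega')$; the reverse inclusion is automatic by restriction of plurisubharmonic functions, giving $\mathfrak{c}(\widehat{\Omega})=\mathfrak{c}(\Omega')$. A final application of Theorem~\ref{thm_nbhbasis_manifolds}(1) to $\widehat{\Omega}$, with a neighbourhood of $b\widehat{\Omega}$ disjoint from $\overline{\Omega'}$, smooths out any points of $b\widehat{\Omega}$ where $d\varphi_1$ happens to vanish and produces the required strictly pseudoconvex domain $\Omega\subset\widehat{\Omega}$ with smooth boundary, $\overline{\Omega'}\subset\Omega$ and $\mathfrak{c}(\Omega)=\mathfrak{c}(\Omega')$.

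The hardest part is the geometric step of routing the arcs $\gamma_k$ close to $b\widetilde{\Omega}$ while fitting them and their tubular neighbourhoods $N_k$ into a single open set on which the defining function $\varphi$ is strictly plurisubharmonic outside the core. This forces one to use the detailed construction of $\varphi$ rather than just the statement of the Main Theorem, in order to arrange that the exterior collar of $b\widetilde{\Omega}$ forms a connected subset of $\mathcal{M}$; once that has been ensured, the trade-off between the two inequalities on the $\varepsilon_k$ becomes easy to satisfy.
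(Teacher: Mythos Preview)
Your overall strategy---enlarge to a smooth-boundary set via Theorem~\ref{thm_nbhbasis_manifolds}(2), then connect components, then apply Theorem~\ref{thm_nbhbasis_manifolds}(1)---matches the paper's Step~3. The gap is in the connecting step.

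The ``competing inequalities'' argument does not work as stated. The arcs $\gamma_k$ join components of $\widetilde{\Omega}$ that may lie arbitrarily far apart in $\mathcal{M}$, so they cannot stay close to $b\widetilde{\Omega}$ along their entire length. Consequently $\max_{\gamma_k}\varphi$ need not be small; in fact the defining function $\varphi$ furnished by the Main Theorem is only defined on a neighbourhood $\mathcal{U}$ of $\overline{\widetilde{\Omega}}$, and there is no mechanism for extending it as a strictly plurisubharmonic function to a region containing the full arcs. The proposal to ``make the tubes $N_k$ wide'' so as to lower $\sup_{N_k}|\Lev(\eta_k)|$ is in direct tension with keeping $N_k\subset\mathcal{U}$, and the suggestion to enlarge $\mathcal{U}$ ``by repeating the construction of the Main Theorem'' does not help, since that construction gives no control over the size of $\mathcal{U}$ beyond a collar of $\overline{\widetilde{\Omega}}$. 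Your final paragraph essentially concedes that this geometric step is unresolved.

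The paper avoids this trap by \emph{not} perturbing a single global defining function. For each arc it works locally: it invokes Chirka's lemma to produce a strictly plurisubharmonic function $\theta$ vanishing on the (totally real) arc, sets $\rho=F+C\theta$ with $F$ a smooth extension of prescribed values along the arc, and glues $\rho$ to the minimal defining function $\psi$ near the endpoints via smooth maximum. The bridging domain itself is constructed separately, using Shcherbina's result on joining strictly pseudoconvex domains by tubes together with an Eliashberg--Boas deformation near the attaching points. This local construction (the paper's Steps~1 and~2) is precisely the missing content in your outline; subtracting bump functions cannot substitute for it.
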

\begin{proof}
We proceed in three steps.

\noindent \textsc{Step 1.} {\it Let $G_0,G_1 \subset \C^n$ be two strictly pseudoconvex domains with smooth boundary such that $\overline{G}_0 \cap \overline{G}_1 = \varnothing$. Let $\gamma \colon [0,1] \to \C^n$ be a smooth embedding such that $z_0 \coloneqq \gamma(0) \in bG_0$, $z_1 \coloneqq \gamma(1) \in bG_1$ and $\gamma(t) \in \C^n \setminus (\overline{G}_0 \cup \overline{G}_1)$ for $t \in (0,1)$. Let $\psi$ be a smooth plurisubharmonic function on an open neighbourhood $V \subset \C^n$ of $\overline{G}_0 \cup \overline{G}_1$ such that for $j = 0,1$ we have $\psi(z_j) \le 0$ and $\psi$ is strictly plurisubharmonic near $z_j$. Then for every open neighbourhood $\Gamma \subset \C^n$ of $\gamma([0,1])$ there exist a strictly pseudoconvex domain $\Omega \subset \C^n$ with smooth boundary and a smooth plurisubharmonic function $\varphi$ on an open neighbourhood $U \subset \C^n$ of $\overline{\Omega}$ such that the following assertions hold true:
\begin{itemize}
  \item[(i)] $U = V' \cup \Gamma'$ for some open neighbourhood $V' \subset V$ of $\overline{G}_0 \cup \overline{G}_1$ and some open neighbourhood $\Gamma' \subset \Gamma$ of $\gamma([0,1])$,
  \item[(ii)] $\Omega \setminus \Gamma = (G_0 \cup G_1) \setminus \Gamma$,
  \item[(iii)] $\varphi = \psi$ on $V'$, while $\varphi$ is strictly plurisubharmonic and less than $1$ on $\Gamma'$.
\end{itemize} }

\noindent \textsc{Proof.} Fix constants $\varepsilon_0, \delta_0 > 0$ such that $\overline{B^n(z_0, \varepsilon_0)} \cap \overline{B^n(z_1, \varepsilon_0)} = \varnothing$, $\psi$ is strictly plurisubharmonic and less than $1/2$ on $B^n(z_0,\varepsilon_0) \cup B^n(z_1, \varepsilon_0) \subset V \cap \Gamma$ and such that $\gamma([0,1])^{(\delta_0)} \subset \Gamma$, where for $K \subset \C^n$ and $d>0$ we let $K^{(d)} \coloneqq \bigcup_{z \in K} B^n(z,d)$.

Choose $s > 0$ so small that $\gamma([0,s])$ and $\gamma([1-s,1])$ are contained in $B^n(z_0,\varepsilon_0) \cup B^n(z_1,\varepsilon_0)$. Let $f \colon \gamma([0,1]) \to (-\infty, 1/2)$ be a smooth function such that for some constant $c \in (0,1)$ one has $f + c < \psi$ in $\gamma(0)$ and $\gamma(1)$, and $f > \psi + c$ in $\gamma(s)$ and $\gamma(1-s)$. Let $F \colon \C^n \to \R$ be a smooth extension of $f$. Since one can see easily that $\gamma([0,1])$ is contained in a closed embedded smooth real $1$-dimensional submanifold $M \subset \C^n$, it follows from Lemma 1 in \cite{Chirka69} that there exists a smooth strictly plurisubharmonic function $\theta \colon W \to \R$ on an open neighbourhood $W \subset \C^n$ of $\gamma([0,1])$ such that $\theta \equiv 0$ on $\gamma([0,1])$. Thus for $C > 0$ large enough, and after possibly shrinking $W$, the function $\rho \coloneqq F + C\theta$ is smooth and strictly plurisubharmonic on $W$ such that $\rho < 1/2$ and $\rho|_{\gamma([0,1])} = f$. 

Choose $\varepsilon \in (0, \varepsilon_0)$ so small that
\begin{itemize}
  \item $B^n(z_0,\varepsilon) \cup B^n(z_1,\varepsilon) \subset W$,  
  \item $\gamma([s,1-s]) \cap (\overline{B^n(z_0,\varepsilon)} \cup \overline{B^n(z_1, \varepsilon)}) = \varnothing$, and 
  \item $\rho + c < \psi$ on $B^n(z_0,\varepsilon) \cup B^n(z_1,\varepsilon)$.
\end{itemize}
Moreover, let $\delta \in (0, \min(\delta_0, \varepsilon)/2) $ be so small that 
\begin{itemize}
  \item $\gamma([0,1])^{(2\delta)} \subset W$, 
  \item $\gamma([0,1])^{(2\delta)} \cap (G_0 \cup G_1)^{(\delta)} \subset B^n(z_0,\varepsilon) \cup B^n(z_1, \varepsilon)$,
  \item the orthogonal projection $\pi \colon \gamma([0,1])^{(2\delta)} \to M$ along the normal directions of the manifold $M$ is well defined, and
  \item there exists a constant $a \in (0, s)$ such that $\pi^{-1}(\gamma([0,s+a) \cup (1-s-a,1])) \subset B^n(z_0, \varepsilon_0) \cup B^n(z_1, \varepsilon_0)$, $\pi^{-1}(\gamma((s-a, 1-s+a))) \cap (B^n(z_0,\varepsilon) \cup B^n(z_1, \varepsilon)) = \varnothing$ and $\rho > \psi + c$ on $\pi^{-1}(\gamma((s-a,s+a) \cup (1-s-a, 1-s+a)))$.
\end{itemize}
Let $V' \coloneqq V \cap (G_0 \cup G_1)^{(\delta)}$, let $\Gamma' \coloneqq B^n(z_0,\varepsilon) \cup \gamma([0,1])^{(2\delta)} \cup B^n(z_1,\varepsilon)$ and set $U \coloneqq V' \cup \Gamma'$. Then $\varphi \colon U \to \R$ defined as
\[\varphi \coloneqq \left\{ \begin{array}{c@{\,\text{ on }}l} \psi & V \cap (G_0 \cup G_1)^{(\delta)} \\ \widetilde{\max}_c(\psi, \rho) & B^n(z_0,\varepsilon) \cup \pi^{-1}(\gamma([0,s+a) \cup (1-s-a,1])) \cup B^n(z_1, \varepsilon) \\ \rho & \pi^{-1}(\gamma((s-a, 1-s+a))) \end{array} \right.\]
is a smooth plurisubharmonic function on $U$ such that $\varphi = \psi$ on $V'$, $\varphi < 1$ on $\Gamma'$ and $\varphi$ is strictly plurisubharmonic on $\Gamma'$.

It only remains to construct a strictly pseudoconvex domain $\Omega \subset \C^n$ with smooth boundary such that $\Omega \setminus \Gamma = (G_0 \cup G_1) \setminus \Gamma$ and $\overline{\Omega} \subset U$. To do so, fix $\tilde{\varepsilon} > 0$ so small that $\overline{B^n(z_0,\tilde{\varepsilon})} \cup \overline{B^n(z_1,\tilde{\varepsilon})} \subset U \cap \Gamma$. Then for $j = 0,1$ choose strictly pseudoconvex domains $\widetilde{G}_j \subset \C^n$ with smooth boundary such that $\widetilde{G}_j \subset G_j$, $\widetilde{G}_j \setminus B^n(z_j,\tilde{\varepsilon}) = G_j \setminus B^n(z_j, \tilde{\varepsilon})$ and such that near $z_j$ the domain $\widetilde{G}_j$ looks like a ball with $z_j$ as a boundary point. (The existence of the domains $\widetilde{G}_j$ is essentially an observation by H. Boas, which is based on a result due to Y. Eliashberg. A detailed proof of this fact, together with references to the results of Boas and Eliashberg, can be found, for example, in Corollary 4.1.46 of \cite{JarnickiPflug00}. Observe that our assertions on the domains $\widetilde{G}_j$ are slightly stronger than the ones formulated in the statement of the mentioned above corollary. However, the fact that $\widetilde{G}_j$ can be assumed to be strictly pseudoconvex with smooth boundary follows, for example, from the remark after Corollary 4.1.46 in \cite{JarnickiPflug00}, or from the construction of smooth maximum as described in Section \ref{sec_deffct} of this article.) Moreover, let $\widetilde{\gamma} \colon [0,1] \to \C^n$ be a smooth embedding such that $\widetilde{\gamma}(0) = z_0$, $\widetilde{\gamma}(1) = z_1$, $\widetilde{\gamma}([0,1]) \setminus (B^n(z_0,\tilde{\varepsilon}/2) \cup B^n(z_1,\tilde{\varepsilon}/2)) = \gamma([0,1]) \setminus (B^n(z_0,\tilde{\varepsilon}/2) \cup B^n(z_1,\tilde{\varepsilon}/2))$, and such that for some $r>0$ the curves $\widetilde{\gamma}([0,r])$ and $\widetilde{\gamma}([1-r,1])$ are segments of lines orthogonal to $b\widetilde{G}_0$ and $b\widetilde{G}_1$, respectively. Finally, choose $\tilde{\delta} \in (0, \min(\tilde{\varepsilon}/2, \delta))$. Then, by the corollary in Section 1 of \cite{Shcherbina83}, and after possibly further shrinking $\tilde{\delta}$, there exists a strictly pseudoconvex domain $\Omega \subset \C^n$ with smooth boundary such that
\[ \Omega \setminus (B^n(z_0,\tilde{\varepsilon}/2) \cup B^n(z_1, \tilde{\varepsilon}/2)) = (\widetilde{G}_0 \cup \widetilde{\gamma}([0,1])^{(\tilde{\delta})} \cup \widetilde{G}_1) \setminus (B^n(z_0,\tilde{\varepsilon}/2) \cup B^n(z_1, \tilde{\varepsilon}/2)). \]
(The corollary quoted above is only formulated for domains in $\C^2$, but the statement and the proof remain true also in the case of $\C^n$.) In particular, 
\[ \Omega \setminus (B^n(z_0,\tilde{\varepsilon}) \cup B^n(z_1, \tilde{\varepsilon})) = (G_0 \cup \gamma([0,1])^{(\tilde{\delta})} \cup G_1) \setminus (B^n(z_0,\tilde{\varepsilon}) \cup B^n(z_1, \tilde{\varepsilon})). \]
It now follows easily from the constructions that $\Omega$ is a domain as desired. This completes the proof of Step 1.

\noindent \textsc{Step 2.} {\it The statement of Step 1 remains true if $\C^n$ is replaced by an arbitrary complex manifold $\mathcal{M}$.}

\noindent \textsc{Proof.} Let $D_1, \ldots, D_N \subset \mathcal{M}$ be open coordinate patches such that $\gamma([0,1]) \subset \bigcup_{j=1}^N D_j$, $z_0 \in D_1$, $z_1 \in D_N$, $D_j \cap \gamma([0,1])$ is connected, $1 \le j \le N$, $D_j \cap D_{j+1} \cap \gamma([0,1]) \neq \varnothing$, $1 \le j \le N-1$, and $D_j \cap D_k = \varnothing$ if $\abs{j-k} > 1$. For every $j = 1, \ldots, N-1$, let $\widetilde{G}_j \subset\subset \Gamma \cap (D_j \cap D_{j+1})$ be a strictly pseudoconvex domain with smooth boundary such that the sets $G_0 \eqqcolon \widetilde{G}_0, \widetilde{G}_1, \ldots, \widetilde{G}_{N-1}, \widetilde{G}_N \coloneqq G_1$ have pairwise disjoint closures and there exists numbers $0 \eqqcolon t^1_0 < t^1_1 < t^2_0 < t^2_1 < \cdots < t^N_0 < t^N_1 \coloneqq 1$ such that $\bigcup_{j=1}^N \gamma((t^j_0,t^j_1)) \subset \mathcal{M} \setminus \bigcup_{j=0}^N \overline{\widetilde{G}}_j$ and $\gamma((t^j_1, t^{j+1}_0)) \subset \widetilde{G}_j$, $1 \le j \le N-1$. Define $\widetilde{\gamma}_j \colon [0,1] \to \mathcal{M}$ as $\widetilde{\gamma}_j(t) \coloneqq \gamma(t^j_0 + t(t^j_1 - t^j_0))$. Choose open neighbourhoods $\widetilde{V}_j \subset \mathcal{M}$ of $\overline{\widetilde{G}}_j$ and smooth functions $\widetilde{\psi}_j \colon \widetilde{V}_j \to \R$, $0 \le j \le N$, such that the sets $\widetilde{V}_0, \ldots, \widetilde{V}_N$ are pairwise disjoint, $\widetilde{V}_j \subset V$ and $\widetilde{\psi}_j \equiv \psi$ if $j \in \{0,N\}$, and $\widetilde{\psi}_j$ is a strictly plurisubharmonic global defining function for $\widetilde{G}_j$ such that $\widetilde{\psi}_j < 1$ on $\widetilde{V}_j$ if $j \in \{1, \ldots, N-1\}$. Finally, let $\widetilde{\Gamma}_j \subset \Gamma$, $1 \le j \le N$, be open neighbourhoods of $\widetilde{\gamma}_j([0,1])$ with pairwise disjoint closures. Then application of Step 1 to the tupel $(\widetilde{G}_{j-1}, \widetilde{G}_j, \widetilde{V}_{j-1}, \widetilde{V}_j, \widetilde{\gamma}_j, \widetilde{\Gamma}_j, \widetilde{\psi}_{j-1}, \widetilde{\psi}_j)$ for every $j = 1, \ldots, N$ gives the desired result.

\noindent \textsc{Step 3.} {\it The assertion of the lemma holds true.}

\noindent \textsc{Proof.} By Theorem \ref{thm_nbhbasis_manifolds}, there exists a strictly pseudoconvex open set $G \subset \mathcal{M}$ with smooth boundary such that $\overline{\Omega'} \subset G$ and $\mathfrak{c}(G) = \mathfrak{c}(\Omega')$. 
Let $\{G_j\}_{j=1}^N$ be the different connected components of $G$, $N \in \N \cup \{\infty\}$. Fix an arbitrary increasing sequence $\{D_R\}_{R=1}^\infty$ of relatively compact domains $D_R \subset \mathcal{M}$ such that $\bigcup_{R=1}^\infty D_R = \mathcal{M}$. Since $bG$ is smooth, it is easy to see that there exist a family $\{\gamma_j\}_{j=1}^{N-1}$ of smooth embeddings $\gamma_j \colon [0,1] \to \mathcal{M}$ and natural numbers $\nu(j), \mu(j)$, $1 \le j < N$, such that $\gamma_j(0) \in bG_{\nu(j)}$, $\gamma_j(1) \in bG_{\mu(j)}$, $\gamma_j(t) \in \mathcal{M} \setminus \overline{G}$ for $t \in (0,1)$, $\gamma_j([0,1]) \cap \gamma_k([0,1]) = \varnothing$ if $j \neq k$, $\card \{1 \le j < N : D_R \cap \gamma_j([0,1]) \neq \varnothing \}$ is finite for every $R > 0$, and $G \cup \bigcup_{j=1}^{N-1} \gamma_j([0,1])$ is connected. Let $\psi$ be a minimal global defining function for $G$.

Choose open neighbourhoods $\Gamma_j \subset\subset \mathcal{M}$ of $\gamma_j([0,1])$, $1 \le j < N$, such that 
\begin{itemize}
  \item $\overline{\Gamma}_j \cap \overline{\Gamma}_k = \varnothing$ if $j \neq k$,
  \item $\overline{\Gamma}_j \cap \overline{G} \subset \overline{G}_{\nu(j)} \cup \overline{G}_{\mu(j)}$, and
  \item $\Gamma_j \cap \Omega' = \varnothing$.
\end{itemize}
Then for every $1 \le j < N$ we can apply Step 2 to obtain a strictly pseudoconvex domain $\Omega_j \subset \mathcal{M}$ with smooth boundary, an open set $\Gamma_j' \subset \Gamma_j$ and a smooth plurisubharmonic function $\varphi_j$ on $\Omega_j$ such that
\begin{itemize}
  \item $\Omega_j \setminus \Gamma_j' = (G_{\nu(j)} \cup G_{\mu(j)}) \setminus \Gamma_j'$,
  \item $\varphi_j = \psi$ on $\Omega_j \setminus \Gamma_j'$, while $\varphi_j$ is strictly plurisubharmonic and less than $1$ on $\Omega_j \cap \Gamma_j'$.
\end{itemize}
Then define a strictly pseudoconvex domain $\Omega \subset \C^n$ with smooth boundary as
\[ \Omega \coloneqq \Big[ G \setminus \bigcup_{j=1}^{N-1} \Gamma_j' \Big] \cup \bigcup_{j=1}^{N-1} \big[ \Omega_j \cap \Gamma_j' \big] \]
and a smooth plurisubharmonic function $\varphi \colon \Omega \to \R$ as
\[ \varphi \coloneqq \left\{ \begin{array}{c@{\,\text{ on }}l} \psi & \Omega \setminus \bigcup_{j=1}^{N-1} \Gamma_j' \\ \varphi_j & \Omega \cap \Gamma_j' \end{array} \right.. \]
By construction, $\varphi < 1$ on $\Omega$ and $\varphi$ is strictly plurisubharmonic outside $\mathfrak{c}(G) = \mathfrak{c}(\Omega')$. Thus $\mathfrak{c}(\Omega) \subset \mathfrak{c}(\Omega')$. Moreover, observe that, by construction of $\Omega$, one has $\overline{\Omega'} \subset \Omega$, hence $\mathfrak{c}(\Omega') \subset  \mathfrak{c}(\Omega)$. It follows that $\mathfrak{c}(\Omega) = \mathfrak{c}(\Omega')$, which completes the proof the lemma.
\end{proof} 

In order to get a better understanding of properties of the core we now consider some examples.

\begin{example} \label{ex_complexsubspace}
Fix $n \ge 2$ and let $1 \le q \le n-1$. Then for generic $C \in \R$
\[ \Omega \coloneqq \big\{(z,w) \in \C^{n-q} \times \C^q : \log\norm{z} + \big(\norm{z}^2 + \norm{w}^2\big) < C \big\} \]
is an unbounded strictly pseudoconvex domain with smooth boundary. By the Liouville theorem, every plurisubharmonic function $\varphi$ on $\Omega$ that is bounded from above has to be constant on $\{0\} \times \C^q$. Hence $\varphi$ fails to be strictly plurisubharmonic at every point of $\{0\} \times \C^q$, i.e., $\{0\} \times \C^q \subset \mathfrak{c}(\Omega)$. On the other hand, let $\psi \colon \C^n \to \R$ be defined as $\psi(z,w) = \log\norm{z} + (\norm{z}^2 + \norm{w}^2)$. As before, if $\{\eta_j\}_{j=1}^\infty$ is a sequence of positive numbers that converges to zero fast enough, then $\varphi \coloneqq \sum_{j=1}^\infty \eta_j \, \widetilde{\max}_1(\psi - C, -j)$ is a smooth global defining function for $\Omega$ that is strictly plurisubharmonic outside $\{0\} \times \C^q$. This shows that $\mathfrak{c}(\Omega) \subset \{0\} \times \C^q$, and hence $\mathfrak{c}(\Omega) = \{0\} \times \C^q$. In particular, $\mathfrak{c}(\Omega)$ is $q$-pseudoconcave in the sense of Rothstein (see below for more details on $q$-pseudoconcavity).
\end{example}

\begin{example}
Fix $n \ge 2$ and let $1 \le q \le n-1$. Further, fix pairwise distinct points $a_1, a_2, \ldots, a_N \in \C^{n-q}$, $N \ge 2$. Then for generic and large enough $C \in \R$
\[ \Omega \coloneqq \big\{(z,w) \in \C^{n-q} \times \C^q : \sum_{j=1}^N \log\norm{z-a_j} + \big(\norm{z}^2 + \norm{w}^2\big) < C \big\} \]
is an unbounded strictly pseudoconvex domain with smooth boundary. Using the same argument as before, it can be shown that $\mathfrak{c}(\Omega) = \bigcup_{j=1}^N \{a_j\} \times \C^q$. In particular, the core $\mathfrak{c}(\Omega)$ is not connected.
\end{example}

\begin{example} \label{ex_FatouBieberbach}
Let $\Omega' \subset \C^2_{z,w}$ be a Fatou-Bieberbach domain such that $\varnothing \neq \overline{\Omega'} \cap \{w=0\} \subset \Delta(0,1) \times \{0\}$ and $\overline{\Omega' \cap \{w=0\}} = \overline{\Omega'} \cap \{w=0\}$ (the existence of such a domain is guaranteed by Corollary 1.1 in \cite{Globevnik98}). Let $\varepsilon > 0$ and let $\psi \colon \overline{\Delta}(0,1+\varepsilon) \to (-\infty, -C)$ be a smooth superharmonic function, where $C > 0$ is chosen so large that $\{(z,w) \in \C^2 : \abs{z} = 1+\varepsilon, \abs{w} \le e^{\psi(z)}\} \subset \C^2 \setminus \overline{\Omega'}$. Let $\Phi \colon \C^2 \to \Omega'$ be a biholomorphism and define $\Omega \subset \C^2$ as
\[ \Omega \coloneqq \Phi^{-1}\big(\Omega' \cap \big\{(z,w)\in\C^2 : \abs{z} < 1+\varepsilon, \abs{w} < e^{\psi(z)}\big\}\big). \]
After possibly replacing $\Omega$ by one of its connected components, $\Omega$ is an unbounded strictly pseudoconvex domain with smooth boundary. Since $\varphi \coloneqq \norm{\,\cdot\,}^2 \circ \Phi \colon \Omega \to \R$ is a smooth strictly plurisubharmonic function on $\Omega$ that is bounded from above, we see that $\mathfrak{c}(\Omega) = \varnothing$. 
\end{example}

\begin{example}
Let $\Omega \subset \C^n$ be a Fatou-Bieberbach domain or a domain of the form $\Omega = D \times \C^k$ for some domain $D \subset \C^{n-k}$ (more generally $\Omega \subset \C^n$ could be any domain that is the union of holomorphic images of complex lines). Applying Liouville's theorem as before, we conclude that $\mathfrak{c}(\Omega) = \Omega$. It follows easily from our construction of global defining functions that the situation $\mathfrak{c}(\Omega) = \Omega$ cannot happen if $b\Omega$ has points of strict pseudoconvexity.
\end{example}

\begin{example} \label{ex_Riemannpolar}
Let $X$ be a compact Riemann surface and let $E \subset X$ be a polar subset. Then $X \setminus E$ is a Stein manifold of dimension 1, hence there exists a proper holomorphic embedding $F \colon X \setminus E \to \C^3$. Let $g_1, g_2 \colon \C^3 \to \C$ be holomorphic functions such that $F(X \setminus E) = \{z \in \C^3 : g_1(z) = g_2(z) = 0\}$ (see \cite{ForsterRamspott68}). Define 
\[ \Omega \coloneqq \big\{z \in \C^3 : \log\big(\abs{g_1(z)}^2 + \abs{g_2(z)}^2\big) + \norm{z}^2 < C \big\} \]
for generic $C \in \R$. Then, after possibly replacing $\Omega$ by a suitable connected component, $\Omega$ is a strictly pseudoconvex domain with smooth boundary such that $F(X \setminus E) \subset \Omega$. We claim that $\mathfrak{c}(\Omega) = F(X \setminus E)$. Indeed, if $\{\eta_j\}_{j=1}^\infty$ is a sequence of positive numbers that converges to zero fast enough, then the function $\varphi \colon \C^3 \to \R$ defined by $\varphi(z) \coloneqq \sum_{j=1}^\infty \eta_j \widetilde{\max}_1(\log(\abs{g_1(z)}^2 + \abs{g_2(z)}^2) + \norm{z}^2, -j)$ is a smooth plurisubharmonic function that is strictly plurisubharmonic in the complement of $F(X \setminus E)$ and that is bounded from above on $\Omega$, hence $\mathfrak{c}(\Omega) \subset F(X \setminus E)$. On the other hand, if $\varphi \colon \Omega \to \R$ is a plurisubharmonic function that is bounded from above, then $\psi \coloneqq \varphi \circ F|_{X \setminus E}$ extends to a bounded subharmonic function $\widehat{\psi}$ on $X$, and since $X$ is compact we conclude that $\widehat{\psi}$ is constant. This means that $\varphi$ is constant on $F(X \setminus E)$, hence $\varphi$ cannnot be strictly plurisubharmonic at any point of $F(X \setminus E)$. This proves that $F(X \setminus E) \subset \mathfrak{c}(\Omega)$, and hence $\mathfrak{c}(\Omega) = F(X \setminus E)$ as claimed.
\end{example}

\begin{example} \label{ex_CPhypersurface}
Let $H$ be a complex hypersurface in the complex projective space $\CP^n$. Then $\CP^n \setminus H$ is a Stein manifold (see, for example, Corollary V.3.4 in \cite{FritzscheGrauert02}), hence there exists a proper holomorphic embedding $F \colon \CP^n \setminus H \to \C^N$ for some $N \in \N$. Let $g_1, g_2, \ldots, g_k \colon \C^N \to \C$ be holomorphic functions such that $F(\CP^n \setminus H) = \{z \in \C^N : g_1(z) = g_2(z) = \cdots = g_k(z) = 0\}$. Define 
\[ \Omega \coloneqq \big\{z \in \C^N : \log\big(\abs{g_1(z)}^2 + \cdots + \abs{g_k(z)}^2\big) + \norm{z}^2 < C \big\} \]
for generic $C \in \R$. Then, after possibly replacing $\Omega$ by a suitable connected component, $\Omega$ is a strictly pseudoconvex domain with smooth boundary such that $F(\CP^n \setminus H) \subset \Omega$. As before we see that $\mathfrak{c}(\Omega) = F(\CP^n \setminus H)$.
\end{example}

Let now $\Omega \subset \C^n$ be an unbounded strictly pseudoconvex domain with smooth boundary such that the envelope of holomorphy $E(b\Omega)$ of $b\Omega$ is single-sheeted. Then we can define the \textit{CR-core} $\mathfrak{c}_{C\!R}(\Omega)$ of $\Omega$ as $\mathfrak{c}_{C\!R}(\Omega) \coloneqq \Omega \setminus E(b\Omega)$. In some simple cases (for example the domain $\Omega$ from Example \ref{ex_complexline} in the Introduction) one can observe that $\mathfrak{c}_{C\!R}(\Omega) = \mathfrak{c}(\Omega)$ (see also \cite{HarzShcherbinaTomassini12}). This is why one can be tempted to think that the equality $\mathfrak{c}_{C\!R}(\Omega) = \mathfrak{c}(\Omega)$ holds true for every domain $\Omega$ as above. However, we claim that this is false and, moreover, in general the sets $\mathfrak{c}_{C\!R}(\Omega)$ and $\mathfrak{c}(\Omega)$ are not related at all.

\begin{proposition}
\begin{enumerate}
  \item[(1)] There exists an unbounded strictly pseudoconvex domain $\Omega \subset \C^3$ with smooth boundary such that $\mathfrak{c}(\Omega) \neq \varnothing$ but $\mathfrak{c}_{C\!R}(\Omega) = \varnothing$.
  \item[(2)] There exists an unbounded strictly pseudoconvex domain $\Omega \subset \C^2$ with smooth boundary such that $\mathfrak{c}(\Omega) = \varnothing$ but $\mathfrak{c}_{C\!R}(\Omega) \neq \varnothing$.
\end{enumerate}
\end{proposition}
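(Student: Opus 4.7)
Take $\Omega \subset \C^3$ to be the domain from Example \ref{ex_Riemannpolar}, so that $\mathfrak{c}(\Omega) = F(X \setminus E) = \{g_1 = g_2 = 0\} \cap \Omega$ is a closed one-dimensional complex submanifold of $\C^3$ lying in $\Omega$, in particular of complex codimension $2$, and $\mathfrak{c}(\Omega) \neq \varnothing$. To show $\mathfrak{c}_{C\!R}(\Omega) = \varnothing$ I would prove that every CR function on $b\Omega$ extends holomorphically to all of $\Omega$. The argument has two steps: first, Lewy's one-sided extension theorem (used locally along $b\Omega$ and globalized via an exhaustion of $\Omega$ by relatively compact strictly pseudoconvex subdomains associated to the plurisubharmonic defining function $\log(|g_1|^2 + |g_2|^2) + \|z\|^2 - C$) extends any CR function $f$ to a one-sided neighborhood $V$ of $b\Omega$ inside $\Omega \setminus \mathfrak{c}(\Omega)$; second, a combined Kontinuit\"atssatz--Hartogs argument propagates this extension through the sublevel sets of the exhaustion and across the codimension-$2$ subvariety $\mathfrak{c}(\Omega)$, yielding a holomorphic extension to all of $\Omega$. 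Hence $\Omega \subset E(b\Omega)$ and $\mathfrak{c}_{C\!R}(\Omega) = \varnothing$.

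\textbf{Part (2).} My plan is to adapt Example \ref{ex_FatouBieberbach}. Let $\Phi : \C^2 \to \Omega' \subsetneq \C^2$ be a Fatou--Bieberbach biholomorphism and let $D \subset \Omega'$ be a bounded smoothly bounded strictly pseudoconvex domain chosen so that (i) $\Omega := \Phi^{-1}(D)$ is unbounded and smoothly strictly pseudoconvex in $\C^2$, and (ii) $D$ contains a closed complex curve $\Sigma = \{h = 0\} \cap D$, with $h$ holomorphic in a neighborhood of $\overline{D}$, that is biholomorphic to a bounded hyperbolic Riemann surface and stays uniformly away from $bD$. Since $\Phi|_\Omega \colon \Omega \to D$ is a biholomorphism onto the bounded strictly pseudoconvex domain $D$, the function $\|\Phi\|^2$ is smooth, bounded from above, and strictly plurisubharmonic on $\Omega$, so $\mathfrak{c}(\Omega) = \varnothing$. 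On the other hand, $1/(h \circ \Phi)$ is holomorphic in a neighborhood of $b\Omega$ in $\C^2$ and thus restricts to a CR function on $b\Omega$ which, having poles along $S := \Phi^{-1}(\Sigma) \subset \Omega$, cannot extend holomorphically across $S$. Hence $S \subset \mathfrak{c}_{C\!R}(\Omega) \neq \varnothing$.

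The substantive difficulty lies in Part (2): one must simultaneously arrange smooth strict pseudoconvexity of $\Omega$, its unboundedness, hyperbolicity of $\Sigma$ (so that no Liouville-type argument forces $\mathfrak{c}(\Omega) \neq \varnothing$), and a uniform positive distance of $\Sigma$ from $bD$ (so that $1/(h \circ \Phi)$ makes sense near $b\Omega$ and genuinely obstructs CR extension). Balancing these four constraints, in the spirit of the explicit Fatou--Bieberbach constructions cited in Example \ref{ex_FatouBieberbach}, is the technical heart of the matter. For Part (1) the subtle point is the global propagation step of the Kontinuit\"atssatz--Hartogs argument on the unbounded Stein domain $\Omega$, which relies in an essential way on $\mathfrak{c}(\Omega)$ being an analytic set of complex codimension $2$ so that codimension-$1$ obstructions of the kind present in Example \ref{ex_complexline} are absent.
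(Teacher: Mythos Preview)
Your approach to Part (2) has a fatal flaw. You require a complex curve $\Sigma = \{h=0\} \cap D$ with $h$ holomorphic near $\overline{D}$ that stays uniformly away from $bD$. Since $D$ is bounded and $\{h=0\}$ is closed near $\overline{D}$, this forces $\Sigma$ to be a compact one-dimensional analytic subset of $\C^2$, and no such set exists by the maximum principle applied to the coordinate functions. Hence the obstruction $1/(h\circ\Phi)$ cannot be produced as described. The paper circumvents this by working with a real Levi-flat hypersurface rather than a complex curve: it keeps the domain $\Omega$ of Example~\ref{ex_FatouBieberbach} unchanged, chooses a harmonic $h < \psi$ on $\Delta(0,1+\varepsilon)$, and sets $V = \Phi^{-1}\big(\Omega' \cap \{|z|<1+\varepsilon,\ |w|<e^{h(z)}\}\big)$, an \emph{unbounded} open set with smooth Levi-flat boundary satisfying $\overline{V}\subset\Omega$. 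For a slightly larger strictly pseudoconvex $\Omega^\ast \supset \overline{\Omega}$ the set $\Omega^\ast \setminus V$ is pseudoconvex, so a holomorphic function on $\Omega^\ast\setminus V$ that does not extend to any larger domain restricts to a CR function on $b\Omega \subset \Omega^\ast \setminus V$ witnessing $V\subset\mathfrak{c}_{C\!R}(\Omega)$.

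For Part (1), your choice of the domain from Example~\ref{ex_Riemannpolar} is legitimate in principle but makes the extension step much harder, and your sketch does not carry it out. The sublevel sets of $\log(|g_1|^2+|g_2|^2)+\|z\|^2$ all contain the unbounded curve $\{g_1=g_2=0\}$ and are therefore not relatively compact, so they cannot serve as the exhaustion you describe; and the phrase ``Kontinuit\"atssatz--Hartogs argument'' hides the real work of showing that the holomorphic extension from a one-sided collar actually fills all of $\Omega$ for this particular unbounded $\Omega$. The paper instead uses the domain of Example~\ref{ex_complexsubspace} with $n=3$, $q=1$, whose slices $\Omega\cap\{w=c\}$ are balls in $\C^2_z$: after the one-sided Lewy extension one applies the classical Hartogs theorem on removal of compact singularities on each slice, checks holomorphic dependence on $w$, and concludes by Hartogs' theorem on separate analyticity. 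This explicit global fibration by balls over $\C_w$ is exactly what replaces your unspecified propagation step.
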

\begin{proof}
Let first $\Omega \subset \C^3$ be the domain from Example \ref{ex_complexsubspace} such that $\mathfrak{c}(\Omega) = \{0\} \times \C \neq \varnothing$. We claim that $\mathfrak{c}_{C\!R}(\Omega) = \varnothing$. Indeed, by strict pseudoconvexity of $b\Omega$, every CR function $f$ on $b\Omega$ extends to a holomorphic function $\tilde{f}$ on a one-sided neighbourhood $U \subset \Omega$ of $b\Omega$.  Further, every slice $S_c \coloneqq \Omega \cap \{w = c\}$ is a ball in $\C^2_z$, hence by Hartogs theorem on removability of compact singularities each function $\tilde{f}|_{S_c \cap U}$ extends to a holomorphic function $F_c \colon S_c \to \C$. An easy investigation of the proof of Hartogs theorem shows that the function $F \colon \Omega \to \R$ defined by $F(z,w) \coloneqq F_w(z)$ is holomorphic in the $w$-variable. But it is clear from the construction that $F$ is also holomorphic in the $z$-variables. By Hartogs theorem on separate analyticity, it follows that $F$ is a holomorphic extension of $\tilde{f}$. Since here $f$ was arbitrary, it follows that $\mathfrak{c}_{C\!R}(\Omega) = \varnothing$. 

On the other hand, let now $\Omega$ be the domain from Example \ref{ex_FatouBieberbach}. We have already seen that $\mathfrak{c}(\Omega) = \varnothing$, and we claim that $\mathfrak{c}_{C\!R}(\Omega) \neq \varnothing$. Indeed, let $\Omega^\ast \subset \C^2$ be a strictly pseudoconvex domain with smooth boundary such that $\overline{\Omega} \subset \Omega^\ast$ (the existence of such a domain 
$\Omega^\ast$ follows, for example, from Theorem \ref{thm_nbhbasis_manifolds}; the other direct way to see this is by repeating the construction of $\Omega$ with $\psi$ replaced by $\psi + \delta$ for a some small enough constant $\delta > 0$). Moreover, let $h \colon \Delta(0, 1+\varepsilon) \to \R$ be a harmonic function such that $h < \psi$. Then $V \coloneqq \Phi^{-1}(\Omega' \cap \{(z,w) \in \C^2 : \abs{z} < 1+\varepsilon, \abs{w} < e^{h(z)}\})$ is an unbounded open set with smooth Levi-flat boundary such that, after possibly replacing $V$ by a suitable connected component, $\overline{V} \subset \Omega$. In particular, $\Omega^\ast \setminus V$ is a pseudoconvex open set, hence there exists a holomorphic function $F \colon (\Omega^\ast \setminus V) \to \C$ that does not extend holomorphically to any larger domain. But, by construction, $b\Omega \subset \Omega^\ast \setminus V$, hence $F|_{b\Omega}$ is a CR function on $b\Omega$ that does not extend holomorphically to any point of $V$. Thus $V \subset \mathfrak{c}_{C\!R}(\Omega)$. 
\end{proof}

The rest of this section is devoted to the proof of $1$-pseudoconcavity of the core (see Theorem \ref{thm_pseudoconcave}). The main step of this proof is contained in the following lemma.
\begin{lemma} \label{thm_psctouch}
Let $\mathcal{M}$ be a complex manifold and let $\Omega$ be a domain in $\mathcal{M}$. Then it is not possible ``to touch'' $\mathfrak{c}(\Omega)$ by a strictly pseudoconvex hypersurface contained in $\Omega$. More precisely, one cannot find a domain $U \subset \Omega$ and a smooth real hypersurface $M \subset U$ such that $U \setminus M$ consists of two connected components $U_1$ and $U_2$, $M \cap \mathfrak{c}(\Omega) \neq \varnothing$, $U \cap \mathfrak{c}(\Omega) \subset \overline{U}_1$ and $U_1$ is strictly pseudoconvex at every point $p \in M$. 
\end{lemma}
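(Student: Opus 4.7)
My plan is to argue by contradiction: assuming the existence of such $U, M, U_1, U_2, p_0$ with $p_0 \in M \cap \mathfrak{c}(\Omega)$, I will construct a smooth, bounded, plurisubharmonic function $\Phi$ on $\Omega$ that is strictly plurisubharmonic at $p_0$. This witnesses $p_0 \notin \mathfrak{c}(\Omega)$ and yields the contradiction. The first step is to invoke strict pseudoconvexity of $U_1$ at $p_0$ together with smoothness of $M$, so that Proposition~\ref{thm_strpscregularity} furnishes a smooth strictly plurisubharmonic function $\rho \colon V \to \R$ on an open neighbourhood $V \subset U$ of $p_0$ with $U_1 \cap V = \{\rho < 0\}$, $M \cap V = \{\rho = 0\}$, and $d\rho \neq 0$ on $V$.

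Next, using the remarks following Definition~\ref{def_minimal}, I pick a smooth and bounded plurisubharmonic function $\varphi$ on $\Omega$ that is strictly plurisubharmonic on the complement $\Omega \setminus \mathfrak{c}(\Omega)$. The assumption $\mathfrak{c}(\Omega) \cap U \subset \overline{U_1}$ in particular gives $\mathfrak{c}(\Omega) \cap U_2 = \varnothing$, so $\varphi$ is automatically strictly plurisubharmonic on $U_2 \cap U$. The key point is that, for any $\epsilon > 0$, the function $\varphi + \epsilon \rho$ is smooth and \emph{strictly} plurisubharmonic on the whole of $V$, being the sum of the plurisubharmonic $\varphi$ and the strictly plurisubharmonic $\epsilon \rho$.

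The candidate $\Phi$ is built using the regularized maximum introduced on page~\pageref{def_smoothmax}: on a carefully chosen neighbourhood of $p_0$ it equals $\widetilde{\max}_\delta(\varphi + \epsilon \rho, \varphi - c)$, and on the complement of that neighbourhood in $\Omega$ it equals the shifted function $\varphi - c$ (which is globally defined and plurisubharmonic). A direct differentiation using $\chi_\delta'(c) > -1$ and $\chi_\delta''(c) > 0$ for $|c| < \delta/2$ gives
\[
\Lev(\Phi)(p_0,\,\cdot\,) = \Lev(\varphi)(p_0,\,\cdot\,) + \epsilon\,\tfrac{1 + \chi_\delta'(c)}{2}\,\Lev(\rho)(p_0,\,\cdot\,) + \tfrac{\chi_\delta''(c)}{2}\,\epsilon^2 \bigl|(\partial\rho)_{p_0}(\,\cdot\,)\bigr|^2,
\]
a sum of three nonnegative contributions, of which the last two are strictly positive thanks to $\Lev(\rho)(p_0,\,\cdot\,) > 0$ and $d\rho(p_0) \neq 0$. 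Hence $\Phi$ is strictly plurisubharmonic at $p_0$.

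The hard part of the plan is the global pasting, and this will be the main obstacle. Smoothness of $\Phi$ across the boundary of the modification region requires $\widetilde{\max}_\delta(\varphi + \epsilon \rho, \varphi - c) \equiv \varphi - c$ on an open collar of this boundary from inside, which amounts to $\epsilon \rho < -c - \delta/2$, i.e.\ $\rho < -(c + \delta/2)/\epsilon$, on that collar. On the $U_2$ side of $V$ the local defining function $\rho$ is positive, so this inequality cannot be arranged directly. To circumvent this I exploit the fact that $\varphi$ is already strictly plurisubharmonic on $U_2 \cap U$: the pasting will be performed in stages, first replacing the naive shift $\varphi - c$ by a smooth plurisubharmonic function on $\Omega$ that agrees with $\varphi - c$ outside a thin collar along $M$ from the $U_2$ side, but that dominates $\varphi + \epsilon \rho$ on the portion of $bV$ lying in $M \cup U_2$. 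Because $\Lev \varphi$ is strictly positive on $U_2 \cap U$, there is enough freedom (for $\epsilon$ small and a suitable choice of $\delta, c$ and the pasting region) to carry out this adjustment while keeping the final function smooth, plurisubharmonic, bounded on $\Omega$ and strictly plurisubharmonic at $p_0$, completing the contradiction.
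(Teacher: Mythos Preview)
Your overall strategy --- produce a smooth bounded plurisubharmonic function on $\Omega$ that is strictly plurisubharmonic at a point $p_0 \in M \cap \mathfrak{c}(\Omega)$ --- is the right one, and the local ingredients (Proposition~\ref{thm_strpscregularity} for $\rho$, a minimal function $\varphi$, the regularized maximum) are all sound. The Levi--form computation at $p_0$ is also correct. The gap is exactly where you flag it: the global pasting.

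The concrete obstruction is this. The modification region is an open neighbourhood $V$ of $p_0$, and you need $\widetilde{\max}_\delta(\varphi + \epsilon\rho,\psi) = \psi$ near $bV$, i.e.\ $\psi > \varphi + \epsilon\rho + \delta/2$ there. Since $p_0 \in M = \{\rho = 0\}$, the boundary $bV$ necessarily meets the locus where $\rho$ is nonnegative (or arbitrarily close to $0$), so on $bV \cap M$ the requirement reads $\psi > \varphi + \delta/2$. On the other hand your Levi--form argument needs $\psi(p_0) = \varphi(p_0) - c$ with $0 < c < \delta/2$. Thus the difference $\psi - \varphi$ has to climb by more than $c + \delta/2 > 0$ along $M$ as one moves from $p_0$ to $bV \cap M$, and $\psi$ must stay plurisubharmonic on all of $\Omega$. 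Your proposed fix --- bump $\psi$ up in a collar on the $U_2$ side, using that $\varphi$ is strictly plurisubharmonic on $U_2$ --- does not address $bV \cap M$ itself. Worse, the hypotheses do not exclude that $M \cap \mathfrak{c}(\Omega)$ is much larger than $\{p_0\}$ (only $M \cap \mathfrak{c}(\Omega) \neq \varnothing$ and $\mathfrak{c}(\Omega)\cap U \subset \overline{U_1}$ are assumed), so $\varphi$ need not be strictly plurisubharmonic anywhere on $M$ near $bV$, and there is no Levi positivity to spend on the bump. Even after the natural preliminary move of perturbing $M$ to an $M'$ with $M' \cap \mathfrak{c}(\Omega) = \{p_0\}$, the strict plurisubharmonicity of $\varphi$ on $M' \setminus \{p_0\}$ degenerates as one approaches $p_0$, so ``enough freedom'' is not at all clear; this is precisely the quantitative difficulty. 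In short, the last paragraph is a hope, not an argument.

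For comparison, the paper proceeds quite differently at this point. After perturbing $M$ so that the contact with $\mathfrak{c}(\Omega)$ is only at $p_0$, it builds a strictly convex domain $G \ni p_0$ whose boundary near $p_0$ lies in the \emph{complement} of $\mathfrak{c}(\Omega)$, and takes the \emph{maximal} plurisubharmonic function $\widetilde{\varphi}$ on $\overline G$ with boundary values $\varphi|_{bG}$. The heart of the argument is the strict inequality $\widetilde{\varphi}(p_0) > \varphi(p_0)$, proved by a propagation estimate from the part of $bG$ where $\varphi$ is strictly plurisubharmonic. This strict gap is exactly what allows one to smooth $\widetilde{\varphi}$ to a strictly plurisubharmonic $\psi$ with $\psi < \varphi$ near $bG$ but $\psi(p_0) > \varphi(p_0)$, so that $\widetilde{\max}_\delta(\varphi,\psi)$ glues cleanly and is strictly plurisubharmonic at $p_0$. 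The Dirichlet/Monge--Amp\`ere step is the missing idea in your sketch; without it (or an equivalent device producing a strict gap at $p_0$), the gluing cannot be completed.
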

\begin{proof}
Assume, to get a contradiction, that there exist $U$ and $M$ as above. Fix $p \in M \cap \mathfrak{c}(\Omega)$. After possibly shrinking $U$ and performing a local biholomorphic change of variables, we can assume that $U \subset \C^n$ and that $U_1$ is strictly convex at every point of $M$. By slightly enlarging $U_1$ we can choose a smooth real hypersurface $M' \subset U$ such that $U \setminus M'$ consists of two connected components $U_1'$ and $U_2'$, $U_1 \subset U_1'$, $M' \cap M = \{p\}$ and $U_1'$ is strictly convex at every point of $M'$. Moreover, we may assume without loss of generality that $p = 0$ and that the outward unit normal vector to $U_1'$ at $0$ equals $e_{y_n} \coloneqq (0, \ldots, 0, i) \in \C^n$, $z_j = x_j + iy_j$, $j= 1, 2, \ldots, n$. Let $\widetilde{G} \subset U$ be the domain bounded by $M''\coloneqq M' + \varepsilon_1 e_{y_n}$ and $\big\{y_n = \varepsilon_2 (\abs{z_1}^2 + \cdots + \abs{z_{n-1}}^2 + x_n^2) - \varepsilon_3\big\}$, where $\varepsilon_1, \varepsilon_2, \varepsilon_3$ are small positive constants, and let $G \subset \widetilde{G}$ be a domain obtained by smoothing the wedge of $\widetilde{G}$. Then for suitably choosen $\varepsilon_1, \varepsilon_2, \varepsilon_3$ and a good enough smoothing of $\widetilde{G}$ the domain $G$ is a strictly convex smoothly bounded domain in $U$ such that $bG \cap \{y_n > - \frac{\varepsilon_3}{2}\} \subset M'' \subset U \setminus \mathfrak{c}(\Omega)$. (A suitable smoothing of $\widetilde{G}$ is obtained as follows: Since the outward unit normal to $U_1'$ is $e_{y_n}$, there exists a smooth strictly concave function $f \colon \C^{n-1}_{z_1, \ldots, z_{n-1}} \times \R_{x_n} \to \R_{y_n}$ such that $M''$ is contained in the graph of $f$. Then for $\delta > 0$ small enough let $u \coloneqq \widetilde{\max}_\delta \big(y_n - f(z_1, \ldots, z_{n-1}, x_n)$, $\varepsilon_2 (\abs{z_1}^2 + \cdots + \abs{z_{n-1}}^2 + x_n^2) - \varepsilon_3 - y_n \big)$ and set $G \coloneqq \{u < 0\}$.)

Now let $\varphi$ be a smooth and bounded from above plurisubharmonic function on $\Omega$ that is strictly plurisubharmonic on $\Omega \setminus \mathfrak{c}(\Omega)$ (see the definition of minimal functions, which was stated after the Main Theorem, and the related remarks). Let $\widetilde{\varphi} \colon \overline{G} \to \R$ be the maximal plurisubharmonic function such that $\widetilde{\varphi}|_{bG} = \varphi$. Since $M'' \cap \mathfrak{c}(\Omega) = \varnothing$, $\varphi$ is strictly plurisubharmonic near $M''$ and hence $\varphi < \widetilde{\varphi}$ in a one-sided neighbourhood $W \subset G$ of $M'' \cap bG$ (indeed, for $z_0 \in G \setminus \mathfrak{c}(\Omega)$ the function $\psi(z) \coloneqq \varphi(z) + \max(\delta_1 - \delta_2\norm{z-z_0}^2, 0)$ with $0 < \delta_1 << \delta_2 << 1$ is plurisubharmonic on $G$ with $\psi|_{bG} = \varphi$, hence $\varphi(z_0) < \psi(z_0) \le \widetilde{\varphi}(z_0)$ by maximality of $\widetilde{\varphi}$). We want to show that $\varphi < \widetilde{\varphi}$ holds not only on $W$, but in fact on the whole set $G \cap \{y_n > -\frac{\varepsilon_3}{3}\} \ni 0$. If we have done so, then $\psi \coloneqq \delta_{\gamma_2} \ast (\widetilde{\varphi} - \gamma_1) + \gamma_3 \norm{\,\cdot\,}^2$, where $\gamma_1 \coloneqq (\widetilde{\varphi}(0) - \varphi(0))/2$, $\gamma_2$ and $\gamma_3$ are small enough positive constants and $\delta_{\gamma_2}$ is a smooth nonnegative function depending only on $\norm{z}$ such that $\supp \delta_{\gamma_2} = \overline{B^n(0, \gamma_2)}$ and $\int_{\C^n} \delta_{\gamma_2} = 1$, is a smooth strictly plurisubharmonic function on $\overline{G}_{\gamma_2} \coloneqq \{z \in G : \dist(z, bG) \ge \gamma_2 \}$ such that $\psi + \delta < \varphi$ on $bG_{\gamma_2}$ and $\psi(0) > \varphi(0) + \delta$ for some $\delta > 0$. In particular, $\widetilde{\max}_\delta(\varphi, \psi)$ is a smooth and bounded from above plurisubharmonic function on $\Omega$ that is strictly plurisubharmonic in $0$. This contradicts the fact that $0 \in \mathfrak{c}(\Omega)$. 

In order to show that $\varphi < \widetilde{\varphi}$ on $G \cap \{y_n > -\frac{\varepsilon_3}{3}\}$ let $G' \subset \subset G$ be a smoothly bounded strictly convex domain such that $bG' \cap \{y_n \ge -\frac{\varepsilon_3}{3}\} \subset W$ and $G \cap \{y_n \ge -\frac{\varepsilon_3}{3}\} \setminus W \subset G'$. Since $\varphi < \widetilde{\varphi}$ on $W$, the function $h \colon [-\frac{\varepsilon_3}{3}, \varepsilon] \to \R$ defined by $h(t) \coloneqq \min_{bG' \cap \{y_n=t\}} (\widetilde{\varphi}- \varphi)$ is strictly positive, where $\varepsilon \coloneqq \sup_{G'} y_n > 0$. In particular, we can choose a smooth function $\chiup \colon (-\infty, \varepsilon] \to \R$ such that $\chiup|_{(-\varepsilon_3/3, \varepsilon]}$ is strictly convex, $\chiup(t) = 0$ for $-\infty < t \le -\frac{\varepsilon_3}{3}$ and $0 < \chiup(t) < h(t)$ for $-\frac{\varepsilon_3}{3} < t \le \varepsilon$. Let $\rho \colon \overline{G'} \to \R$ be defined as $\rho(z) \coloneqq \chiup(y_n)$ and observe that $\rho$ is plurisubharmonic. Then, by construction of $\rho$, one has $\varphi + \rho \le \widetilde{\varphi}$ on $b\big(G' \cap \{y_n > -\frac{\varepsilon_3}{3}\} \big)$, and hence $\varphi + \rho \le \widetilde{\varphi}$ on $G' \cap \{y_n > -\frac{\varepsilon_3}{3}\}$ by maximality of $\widetilde{\varphi}$. Since $\rho > 0$ on $\{y_n > -\frac{\varepsilon_3}{3}\}$, this proves our claim.
\end{proof}

As the first consequence of Lemma \ref{thm_psctouch} we get the following property of the core.
\begin{proposition} \label{thm_coreStein}
Let $\mathcal{M}$ be a Stein manifold and let $\Omega$ be a domain in $\mathcal{M}$. Then no connected component of $\mathfrak{c}(\Omega)$ can be relatively compact in $\Omega$.
\end{proposition}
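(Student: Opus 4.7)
Suppose, for a contradiction, that $Z$ is a connected component of $\mathfrak{c}(\Omega)$ which is relatively compact in $\Omega$. Since $\mathfrak{c}(\Omega)$ is closed in $\Omega$ and $Z$ is closed in $\mathfrak{c}(\Omega)$, the set $Z$ itself is compact. The plan is to derive a contradiction to Lemma \ref{thm_psctouch} by producing a strictly pseudoconvex hypersurface in $\Omega$ that touches $\mathfrak{c}(\Omega)$ from the pseudoconvex side at a suitably chosen point near $Z$. Two ingredients are needed: a topological isolation of $Z$ from the rest of $\mathfrak{c}(\Omega)$, and a strictly plurisubharmonic exhaustion on $\mathcal{M}$ whose differential does not vanish on a neighbourhood of $Z$.

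For the isolation, pick a relatively compact open $V \subset \mathcal{M}$ with $Z \subset V \subset \overline{V} \subset \Omega$ and consider the compact Hausdorff space $K_0 \coloneqq \overline{V} \cap \mathfrak{c}(\Omega)$. Since any connected subset of $\mathfrak{c}(\Omega)$ containing $Z$ must coincide with $Z$, it follows that $Z$ is also a connected component of $K_0$. By the standard fact that in a compact Hausdorff space every connected component admits a neighbourhood basis (within the space) of clopen sets, we can find a clopen subset $A \subset K_0$ with $Z \subset A \subset V$. Then $A$ and $B \coloneqq K_0 \setminus A$ are disjoint compact sets (in the second case possibly empty), so a sufficiently thin tubular thickening $W \coloneqq \{z \in V : \dist(z,A) < \varepsilon\}$ satisfies $Z \subset W \subset \overline{W} \subset V$ and $\overline{W} \cap \mathfrak{c}(\Omega) = A$. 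In particular $bW \cap \mathfrak{c}(\Omega) = \varnothing$.

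For the exhaustion, use a proper holomorphic embedding $\Phi \colon \mathcal{M} \hookrightarrow \C^N$ and set $\rho(z) \coloneqq \norm{\Phi(z)-a}^2$ for some $a \in \C^N$. This is a smooth strictly plurisubharmonic exhaustion, and its only critical point is $\Phi^{-1}(a)$, which lies outside $\overline{W}$ for generic $a$. Let $c \coloneqq \max_{z \in A} \rho(z)$, attained at some point $p \in A \subset \mathfrak{c}(\Omega)$. Since $d\rho(p) \neq 0$ and $\rho$ is strictly plurisubharmonic, there is a small open ball $U \subset W$ centered at $p$ such that $M \coloneqq U \cap \{\rho = c\}$ is a smooth real hypersurface, $U \setminus M$ has two components $U_1 \coloneqq U \cap \{\rho < c\}$ and $U_2 \coloneqq U \cap \{\rho > c\}$, and $U_1$ is strictly pseudoconvex at every point of $M$. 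Moreover, $p \in M \cap \mathfrak{c}(\Omega)$, and the chain of inclusions $U \cap \mathfrak{c}(\Omega) \subset W \cap \mathfrak{c}(\Omega) = A \subset \{\rho \le c\}$ yields $U \cap \mathfrak{c}(\Omega) \subset \overline{U_1}$. This contradicts Lemma \ref{thm_psctouch}.

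The main obstacle in the argument is the topological step: one must guarantee that the compact piece $A$ of $\mathfrak{c}(\Omega)$ on which the maximum of $\rho$ is attained is separated from $\mathfrak{c}(\Omega) \setminus A$ by a neighbourhood whose boundary misses $\mathfrak{c}(\Omega)$, for otherwise stray points of some other component of $\mathfrak{c}(\Omega)$ near $p$ could sit on the wrong side $U_2$ of the touching hypersurface and invalidate the hypothesis $U \cap \mathfrak{c}(\Omega) \subset \overline{U_1}$ of Lemma \ref{thm_psctouch}. The clopen-neighbourhood-basis property of connected components in compact Hausdorff spaces is precisely the tool that makes this separation possible.
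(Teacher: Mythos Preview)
Your overall strategy---isolate the component, exhaust with a strictly plurisubharmonic function, touch from outside, and contradict Lemma~\ref{thm_psctouch}---is the paper's approach, and the topological isolation via clopen subsets of the compact Hausdorff space $K_0$ is a welcome addition: the paper does not spell out why stray pieces of $\mathfrak{c}(\Omega)$ near $p$ cannot sit on the wrong side of the hypersurface, and your argument makes this explicit.

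There is, however, a genuine error in the choice of $\rho$. The claim that $\rho(z)=\norm{\Phi(z)-a}^2$ has $\Phi^{-1}(a)$ as its only critical point is false: $\rho$ is the restriction to the submanifold $\Phi(\mathcal{M})\subset\C^N$ of the ambient squared distance, and such a restriction is critical at every $z$ for which $\Phi(z)-a$ is Hermitian-orthogonal to $d\Phi(T_z\mathcal{M})$. For instance, if $\mathcal{M}=\C^\ast$ then every smooth exhaustion has at least two critical points by elementary Morse theory, so no choice of $a$ rescues the assertion. Hence the maximum point $p\in A$ of $\rho$ may well be critical, and then $\{\rho=c\}$ is not a smooth hypersurface through $p$. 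The paper handles this with a Sard argument: given any strictly plurisubharmonic $\rho$ and a possibly critical maximum at $p$, one translates $\rho$ in a local chart around $p$ by small vectors $v$; the map $v\mapsto\max_{A\cap U'}\rho(\,\cdot - v)$ has image containing an open interval, Sard's theorem then supplies a regular value $C'$ of $\rho$ in that interval, and the correspondingly translated level set is the desired smooth touching hypersurface. With this fix in place your argument goes through.
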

\begin{proof}
Assume, to get a contradiction, that $A$ is a connected component of $\mathfrak{c}(\Omega)$ which is relatively compact. Let $\varphi$ be a smooth strictly plurisubharmonic exhaustion function for $\mathcal{M}$ and let $C \in \R$ be the minimal value such that $A \subset \{\varphi \le C\}$. It may happen that $C$ is not a regular value of $\varphi$. In this case choose $p \in A \cap \{\varphi = C\}$  and let $U \subset \mathcal{M}$ be an open coordinate patch around $p$ with corresponding chart $h \colon U \to \C^n$. Choose $\delta > 0$ so small that $h(U)_\delta \coloneqq \{z \in h(U) : \dist(z, b(h(U))) > \delta\}$ still contains $h(p)$, and let $U' \coloneqq h^{-1}(h(U)_\delta)$. For each $v \in \C^n$, let $\tau_v \colon \C^n \to \C^n$ be the translation $\tau_v(z) \coloneqq z - v$ and define a map $g \colon B^n(0,\delta) \to \R$ by $g(v) \coloneqq \max_{z \in A \cap U'} (\varphi \circ h^{-1} \circ \tau_v \circ h)(z)$. Then the image of $g$ contains an open intervall $I \subset \R$ and by Sard's theorem there exists a regular value $C' \in I$ of $\varphi$. Let $v' \in B^n(0,\delta)$ such that $g(v') = C'$. Then $\varphi' \colon U' \to \R$ defined by $\varphi' \coloneqq \varphi \circ h^{-1} \circ \tau_{v'} \circ h$ is a smooth strictly plurisubharmonic function and $\max_{p \in A \cap U'} \varphi' = C'$. In particular, $\{\varphi' = C'\}$ is a smooth strictly pseudoconvex hypersurface that touches $\mathfrak{c}(\Omega)$ as described in Lemma \ref{thm_psctouch}. 
\end{proof}

\noindent \textbf{Remark.} As Example \ref{ex_compactanalytic} from the Introduction shows, the core $\mathfrak{c}(\Omega)$ can be relatively compact in $\Omega$ if $\mathcal{M}$ is not Stein.

Now we want to use Lemma \ref{thm_psctouch} to prove that $\mathfrak{c}(\Omega)$ is always $1$-pseudoconcave. We recall briefly the notion of $q$-pseudoconcavity: Let $\Delta^n \coloneqq \{z \in \C^n : \norm{z}_\infty < 1 \}$, where $\norm{z}_\infty = \max_{1 \le j \le n} \abs{z_j}$. An $(n-q,q)$ Hartogs figure $H$ is a set of the form
\[H = \big\{(z,w) \in \Delta^{n-q} \times \Delta^q :  \norm{z}_\infty > r_1 \text{ or } \norm{w}_\infty < r_2\big\}\]
where $0 < r_1,r_2 < 1$, and we write $\hat{H} \coloneqq \Delta^n$. A domain $\Omega$ in a complex manifold $\mathcal{M}$, $\dim_\C \mathcal{M} \eqqcolon n$, is called $q$-pseudoconvex in $\mathcal{M}$, $q = 0, 1, \ldots, n-1$, if it satisfies the Kontinuit\"atssatz with respect to $(n-q)$ polydiscs in $\mathcal{M}$, i.e., if for every $(n-q,q)$ Hartogs figure and every injective holomorphic mapping $\Phi \colon \hat{H} \to \mathcal{M}$ such that $\Phi(H) \subset \Omega$ we have $\Phi(\hat{H}) \subset \Omega$ (for details see \cite{Rothstein55}; a good presentation of this topic can also be found in \cite{Riemenschneider67}). In particular, every domain $\Omega \subset \mathcal{M}$ is $0$-pseudoconvex, $(n-1)$-pseudoconvexity is usual pseudoconvexity, and every $q$-pseudoconvex domain is $q'$-pseudoconvex for every $q' < q$. A closed set $A \subset \mathcal{M}$ is called $q$-pseudoconcave in $\mathcal{M}$ if $\mathcal{M} \setminus A$ is $q$-pseudoconvex in $\mathcal{M}$. (The above definition of $q$-pseudoconvexity is due to Rothstein, see \cite{Rothstein55}. Observe that the earlier definition of strict $q$-pseudoconvexity that was introduced in the smooth case by Andreotti-Grauert in \cite{AndreottiGrauert62} and that we stated in Section \ref{sec_deffct} is indexed differently with respect to $q$ when compared to the definition of Rothstein. Indeed, a domain $\Omega \subset \mathcal{M}$ that is strictly $q$-pseudoconvex in the sense of Andreotti-Grauert is $(n-q-1)$-pseudoconvex in the sense of Rothstein. Moreover, a domain $\Omega \subset \mathcal{M}$ with $\mathcal{C}^2$-smooth boundary is $q$-pseudoconvex in the sense of Rothstein if and only if for every $z \in b\Omega$ there exists an open neighbourhood $U_z \subset \mathcal{M}$ of $z$ and a $\mathcal{C}^2$-smooth function $\varphi_z \colon U_z \to \R$ such that $\Omega \cap U_z = \{\varphi_z < 0\}$, $d\varphi \neq 0$ on $b\Omega \cap U_z$ and $\Lev(\varphi)(z,\,\cdot\,)|_{H_z(\varphi)}$ has at least $q$ nonnegative eigenvalues. For the rest of this article $q$-pseudoconvexity and $q$-pseudoconcavity will always be understood in the sense of Rothstein.)

Now we can formulate the main result of this section.
\begin{theorem}\label{thm_pseudoconcave}
Let $\mathcal{M}$ be a complex manifold and let $\Omega \subset \mathcal{M}$ be a domain. Then $\mathfrak{c}(\Omega)$ is $1$-pseudoconcave in $\Omega$. In particular, $\mathfrak{c}(\Omega)$ is pseudoconcave in $\Omega$ if $\dim_\C \mathcal{M} = 2$.
\end{theorem}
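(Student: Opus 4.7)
I plan to apply Lemma \ref{thm_psctouch} to deduce that $\mathfrak{c}(\Omega)$ satisfies the local maximum principle for plurisubharmonic functions, and then to invoke the classical equivalence between this property and $1$-pseudoconcavity.

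For any smooth strictly plurisubharmonic function $\varphi$ on a neighbourhood $U$ of a point $p_0 \in \mathfrak{c}(\Omega)$ with $d\varphi(p_0) \neq 0$, I claim that $p_0$ cannot be a local maximum of $\varphi|_{\mathfrak{c}(\Omega)}$. Indeed, assume $\varphi(p_0) \geq \varphi(q)$ for every $q \in \mathfrak{c}(\Omega) \cap U$; shrinking $U$ so that $d\varphi$ remains nowhere zero, the level set $M := \{\varphi = \varphi(p_0)\} \cap U$ is a smooth real hypersurface splitting $U$ into connected components $U_1 := \{\varphi < \varphi(p_0)\} \cap U$ and $U_2 := \{\varphi > \varphi(p_0)\} \cap U$. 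Then $p_0 \in M \cap \mathfrak{c}(\Omega)$; $\mathfrak{c}(\Omega) \cap U \subset \{\varphi \leq \varphi(p_0)\} \cap U = \overline{U_1}$ by the local maximum hypothesis; and $U_1$ is strictly pseudoconvex at every point of $M$, since $\varphi - \varphi(p_0)$ is a strictly plurisubharmonic local defining function. This configuration is precisely what Lemma \ref{thm_psctouch} forbids, yielding a contradiction.

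By a standard approximation argument (regularize a continuous plurisubharmonic function $\psi$ via convolution with a smoothing kernel, add a small term $\eta\norm{z - p_0}^2$ in local holomorphic coordinates to ensure strict plurisubharmonicity, and, if necessary, add a generic real-linear perturbation so that the differential does not vanish at the relevant point), the above conclusion extends to continuous plurisubharmonic functions. Hence $\mathfrak{c}(\Omega)$ enjoys the local maximum property: for every continuous plurisubharmonic $\psi$ on an open $V \subset \Omega$ and every relatively compact open $W \subset\subset V$ with $\mathfrak{c}(\Omega) \cap \overline{W} \neq \varnothing$, one has $\max_{\mathfrak{c}(\Omega) \cap \overline{W}}\psi = \max_{\mathfrak{c}(\Omega) \cap \partial W}\psi$.

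The theorem now follows from the classical equivalence, essentially due to S{\l}odkowski, between the local maximum property for plurisubharmonic functions on a closed subset of a complex manifold and its $1$-pseudoconcavity in the ambient manifold. The final assertion for $n = 2$ is immediate, since in that case the $(n-1, 1)$-Hartogs figures used to define $1$-pseudoconcavity reduce to ordinary $(1, 1)$-Hartogs figures testing pseudoconvexity of the complement. The principal obstacle in the plan is the reliance on this equivalence theorem; a direct self-contained derivation of $1$-pseudoconcavity from Lemma \ref{thm_psctouch} via Rothstein's Kontinuit\"atssatz for $(n-1)$-polydiscs would require constructing a strictly pseudoconvex barrier touching the core from the Hartogs figure data, which is delicate because strictly plurisubharmonic functions on the Hartogs hat $\hat H$ cannot attain interior maxima.
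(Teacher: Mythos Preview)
Your proof is correct, but takes a different route from the paper's. Both proofs rest on Lemma~\ref{thm_psctouch}, but they proceed in opposite directions.

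The paper argues directly: assuming $\mathfrak{c}(\Omega)$ is not $1$-pseudoconcave, one obtains an $(n-1,1)$ Hartogs figure $H$ and an injective holomorphic map $\Phi:\hat H\to\Omega$ with $\Phi(H)\subset\Omega\setminus\mathfrak{c}(\Omega)$ but $\Phi(\hat H)\cap\mathfrak{c}(\Omega)\neq\varnothing$. The explicit strictly plurisubharmonic barrier $\varphi(z,w)=-\log|w|+\varepsilon(\|z\|^2+|w|^2)$ on $\C^{n-1}_z\times\C^\ast_w$ is then pulled back via $\Phi^{-1}$, and the infimum $C_0$ of levels $C$ for which $\{\varphi\circ\Phi^{-1}<C\}$ contains $\Phi(\hat H)\cap\mathfrak{c}(\Omega)$ yields a strictly pseudoconvex hypersurface touching the core---precisely what Lemma~\ref{thm_psctouch} forbids. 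So the construction you described as ``delicate'' in your final paragraph is in fact carried out cleanly with this specific barrier; the key point is that for small $\varepsilon$ the level sets $\{\varphi=C\}$ exit $\hat H$ only through $b\Delta^{n-1}_z\times\Delta_w\subset H$.

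Your approach instead deduces the local maximum property from Lemma~\ref{thm_psctouch} and then invokes S{\l}odkowski's characterisation of $1$-pseudoconcave sets. This is perfectly valid---indeed the paper records exactly this equivalence in Proposition~\ref{thm_lmp} (citing \cite{Slodkowski86})---but it trades the explicit barrier for an external theorem. One small remark: since S{\l}odkowski's criterion only requires the local maximum property for $\mathcal{C}^2$-smooth plurisubharmonic functions (the $0$-plurisubharmonic case), your approximation step to continuous plurisubharmonic functions is not needed; it suffices to perturb a $\mathcal{C}^2$ plurisubharmonic function by $\varepsilon\|z-p_0\|^2$ plus a generic linear term to reduce to the smooth, strictly plurisubharmonic, non-critical case you handled directly.
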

\begin{proof}
Assume, to get a contradiction, that $\mathfrak{c}(\Omega)$ is not $1$-pseudoconcave in $\Omega$. Then there exists an $(n-1,1)$ Hartogs figure $H = \big\{(z,w) \in \Delta^{n-1} \times \Delta :  \norm{z}_\infty > r_1 \text{ or } \abs{w} < r_2\big\}$ and an injective holomorphic mapping $\Phi \colon \hat{H} \to \Omega$ such that $\Phi(H) \subset \Omega \setminus \mathfrak{c}(\Omega)$ but $\Phi(\hat{H}) \cap \mathfrak{c}(\Omega) \neq \varnothing$. For small $\varepsilon > 0$ let $\varphi \colon \C^{n-1}_z \times \C^\ast_w \to \R$ be the smooth strictly plurisubharmonic function defined by $\varphi(z,w) \coloneqq -\log\abs{w} + \varepsilon \big(\norm{z}^2 + \abs{w}^2\big)$, and for each $C \in \R$ let $G_C$ denote the domain $G_C \coloneqq \big\{p \in \Phi(\hat{H}) : (\varphi \circ \Phi^{-1})(p) < C\big\}$. Since for $C$ large enough the set $\hat{H} \cap \{(z,w) \in \C^n : \varphi < C\}$ contains $\hat{H} \setminus H$, and since $\Phi(\hat{H}) \cap \mathfrak{c}(\Omega) \subset \Phi(\hat{H} \setminus H)$, we know that for $C$ large enough $\Phi(\hat{H}) \cap \mathfrak{c}(\Omega) \subset G_C$. Let $C_0 \coloneqq \inf \{C \in \R : \Phi(\hat{H}) \cap \mathfrak{c}(\Omega) \subset G_C\}$. Then $M \coloneqq bG_{C_0} \cap \Phi(\hat{H})$ is a strictly pseudoconvex hypersurface that touches $\mathfrak{c}(\Omega)$ as described in Lemma \ref{thm_psctouch} (observe that $\{\varphi = C_0\} \cap b\hat{H} \subset b\Delta_z^{n-1} \times \Delta_w$ if $\varepsilon << 1$). Since the lemma states that such hypersurfaces cannot exist, we arrived at a contradiction.
\end{proof}

\noindent \textbf{Remarks.} 1) Observe that it follows from Example \ref{ex_complexsubspace} above that in general $\mathfrak{c}(\Omega)$ is not $q$-pseudoconcave in $\Omega$ for any $q > 1$.

\noindent 2) A different proof of Theorem \ref{thm_pseudoconcave} can also be obtained by modifying the arguments of the proof of Theorem 3.6 in \cite{SlodkowskiTomassini04} and adapting them to our setting.

Recall that, by Theorems 4.2 and 5.1 in \cite{Slodkowski86}, a nonempty relatively closed subset $A$ of an open set $U \subset \C^n$ is $(q+1)$-pseudoconcave in $U$ if and only if $q$-plurisubharmonic functions have the local maximum property on $A$. An analoguous statement is also true in the setting of complex manifolds. Since we were not able to find this statement in the literature, and since in the more general setting the precise formulation of the local maximum property needs a little bit of caution, we state here the following proposition for the convenience of reading.

\begin{proposition} \label{thm_lmp}
Let $\mathcal{M}$ be a complex manifold of dimension $n$, let $A \subset \mathcal{M}$ be a closed set and let $q \in \{0, 1, \ldots, n-2\}$. Then the following assertions are equivalent:
\begin{enumerate}
  \item[$(1)$] For every $p \in A$, there exists an open neighbourhood $U \subset \mathcal{M}$ of $p$ such that $A \cap U$ is $(q+1)$-pseudoconcave in $U$.
  \item[$(1')$] A is $(q+1)$-pseudoconcave in $\mathcal{M}$.
  \item[$(2)$] For every $p \in A$, there exists an open neighbourhood $U \subset \mathcal{M}$ of $p$ such that for every compact set $K \subset U$ and every $q$-plurisubharmonic function $\varphi$ defined in a neighbourhood of $K$ one has $\max_{A \cap K} \varphi \le \max_{A \cap bK} \varphi$.
\end{enumerate}
If $\mathcal{M}$ is Stein, then the above statements are also equivalent to the following one:
\begin{enumerate}
  \item[$(2')$] For every compact set $K \subset \mathcal{M}$ and every $q$-plurisubharmonic function $\varphi$ defined in a neighbourhood of $K$, one has $\max_{A \cap K} \varphi \le \max_{A \cap bK} \varphi$.
\end{enumerate}
Here $\max_{A \cap bK} \varphi$ is meant to be $-\infty$ if $A \cap bK = \varnothing$.
\end{proposition}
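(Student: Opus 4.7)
The plan is to reduce the equivalences to Slodkowski's characterization of pseudoconcavity via the local maximum property in $\mathbb{C}^n$ (\cite{Slodkowski86}, Theorems 4.2 and 5.1), and then globalize. First, the equivalence $(1) \Leftrightarrow (2)$ is local at each point $p \in A$: choosing a biholomorphic coordinate chart $U \subset \mathcal{M}$ around $p$ identified with an open subset of $\mathbb{C}^n$, both Rothstein's $(q+1)$-pseudoconcavity (defined via Hartogs figures) and the local maximum property for $q$-plurisubharmonic functions (defined via the Levi form) are invariant under biholomorphism, so the equivalence on $U$ becomes precisely Slodkowski's theorem applied to $A \cap U \subset U$.

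Next, $(1) \Leftrightarrow (1')$ is the local-to-global assertion for pseudoconcavity. The direction $(1') \Rightarrow (1)$ is trivial; for the converse, if $(1')$ fails there is an injective holomorphic map $\Phi \colon \hat{H} \to \mathcal{M}$ of an $(n-q-1, q+1)$ Hartogs figure with $\Phi(H) \cap A = \varnothing$ but $\Phi(\hat{H}) \cap A \neq \varnothing$. A standard continuity argument, rescaling by $\Phi_t(z,w) \coloneqq \Phi(z, tw)$ and taking $t_0 \coloneqq \inf\{t \in (0,1] : \Phi_t(\hat{H}) \cap A \neq \varnothing\}$, produces a Hartogs figure $\Phi_{t_0}$ that first touches $A$ at some point $p$. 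Localizing to a coordinate chart around $p$ and taking a suitably rescaled sub-figure yields a violation of local $(q+1)$-pseudoconcavity of $A \cap U$ in $U$, contradicting $(1)$.

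For the Stein case, $(2') \Rightarrow (2)$ is immediate. For $(2) \Rightarrow (2')$, let $K \subset \mathcal{M}$ be compact and $\varphi$ a $q$-plurisubharmonic function on a neighborhood of $K$, and assume for contradiction that $c \coloneqq \max_{A \cap K} \varphi > \max_{A \cap bK} \varphi$. Fix a smooth strictly plurisubharmonic exhaustion $\rho$ of $\mathcal{M}$ and, for small $\varepsilon > 0$, consider $\psi_\varepsilon \coloneqq \varphi + \varepsilon\rho$. This function is still $q$-plurisubharmonic, satisfies $\max_{A \cap K} \psi_\varepsilon > \max_{A \cap bK} \psi_\varepsilon$, and attains its maximum on $A \cap K$ at some point $p_\varepsilon \in \interior(K)$. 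Applying the local maximum property $(2)$ in a small coordinate neighborhood of $p_\varepsilon$ produces a further maximum point on the boundary of a small ball, and iterating generates a chain of maximum points of $\psi_\varepsilon$; a finite subcover of $K$ by the local neighborhoods in $(2)$ yields a uniform lower bound on the admissible step-size. A propagation argument analogous to the one in Slodkowski's proof of Theorem 5.1, using the perturbation by $\varepsilon\rho$ to prevent the chain from recirculating in $\interior(K)$, forces the chain to reach $bK$ in finitely many steps, contradicting the strict inequality.

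\emph{Main obstacle.} The delicate point is the Stein-case globalization $(2) \Rightarrow (2')$: guaranteeing that the chain of local maximum points actually terminates on $bK$ rather than looping inside $\interior(K)$. The perturbation by the strictly plurisubharmonic exhaustion $\rho$ is what breaks any potential symmetry and enforces termination. An alternative, possibly cleaner route would be to invoke a Stein-manifold version of Slodkowski's global maximum theorem, obtained by a proper embedding $j \colon \mathcal{M} \hookrightarrow \mathbb{C}^N$ and tracking the shift of the pseudoconcavity index from $\mathcal{M}$ to $\mathbb{C}^N$; but this approach is complicated by the fact that a $q$-plurisubharmonic function on $\mathcal{M}$ only extends to a $(q + N - n)$-plurisubharmonic function on a neighborhood of $j(\mathcal{M})$ in $\mathbb{C}^N$, so the index bookkeeping must be carried out carefully.
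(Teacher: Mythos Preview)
Your reduction of $(1) \Leftrightarrow (2)$ to S{\l}odkowski's theorems via local charts is correct and is exactly what the paper does. For $(1) \Rightarrow (1')$, the paper does not rescale Hartogs figures directly; instead it proves $(2) \Rightarrow (1')$ by converting a violating Hartogs figure into a smooth strictly $q$-pseudoconvex hypersurface $M$ touching $A$ at a single point $p$ (using the sweeping argument from the proof of Theorem~\ref{thm_pseudoconcave}), and then taking $K = \overline{W}$ for a small neighbourhood $W$ of $p$ together with a strictly $q$-plurisubharmonic local defining function $\varphi$ for which $\max_{A\cap K}\varphi > \max_{A\cap bK}\varphi$. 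Your rescaling $\Phi_t(z,w)=\Phi(z,tw)$ is plausible, but note that at $t_0$ the first contact with $A$ lies only on the closure $\Phi(\overline{\hat H})$, so the step ``taking a suitably rescaled sub-figure'' still needs a genuine argument.

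The real gap is in $(2) \Rightarrow (2')$. Your propagation sketch does not terminate as stated: once you sit at a global maximum $p_\varepsilon$ of $\psi_\varepsilon=\varphi+\varepsilon\rho$ on $A\cap K$, the local maximum property on a small ball merely hands you \emph{another} global maximum point on its boundary. Nothing forces the chain toward $bK$; the maximum set of $\psi_\varepsilon$ on $A\cap K$ can be a nontrivial compactum inside $\interior K$, and the perturbation by $\varepsilon\rho$ does not by itself prevent the chain from wandering indefinitely on that set. A uniform step size from a finite cover does not help, because the value of $\psi_\varepsilon$ never increases along the chain. The paper's argument avoids propagation altogether. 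Set $m\coloneqq\max_{A\cap K}\varphi$ and $L\coloneqq\{z\in A\cap K:\varphi(z)=m\}$, a compact subset of $\interior K$. Steinness is used exactly as in Proposition~\ref{thm_coreStein}: a strictly plurisubharmonic exhaustion lets one touch $L$ from outside by a strictly pseudoconvex hypersurface; in suitable local holomorphic coordinates near a touching point $p$ this hypersurface is strictly \emph{convex}, so there is an $\R$-linear functional $\lambda$ with $\lambda\le 0$ on $L$ near $p$ and $\{\lambda=0\}\cap L=\{p\}$. Then for small $\varepsilon>0$ the function $\widetilde\varphi\coloneqq\varphi+\varepsilon\lambda$ is still $q$-plurisubharmonic and has a \emph{strict} local maximum along $A$ at $p$, so any small compact neighbourhood $\widetilde K$ of $p$ satisfies $\max_{A\cap\widetilde K}\widetilde\varphi>\max_{A\cap b\widetilde K}\widetilde\varphi$, contradicting $(2)$ in one step. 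In short, the role of Steinness is not to fuel a global perturbation for a chain argument, but to isolate a single extremal point of $L$ at which the violation can be made local.
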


\noindent \textbf{Remark.} If $\mathcal{M}$ is not Stein, then in general the assertion $(2')$ does not follow from $(2)$, as it is shown by the following simple examples:
\begin{enumerate}
  \item[i)]  $\mathcal{M} = \CP^1_z \times \C^{n-1}_w$, $A = \CP^1_z \times \{0\}$, $K = \CP^1_z \times \overline{B^{n-1}(0,1)}$ and $\varphi(z,w) = \norm{w}^2$.
  \item[ii)]  $\mathcal{M}_q = \CP^{n-q}_z \times \C^q_w$, $A_q = \CP^{n-q}_z \times \overline{{B^q}(0,1)}$, $K_q = A_q$ and $\varphi_q(z,w) = -\norm{w}^2$, where $q \in \{1,2,\ldots, n-1\}$.
\end{enumerate}

\begin{proof}
The implication $(1') \Rightarrow (1)$ is clear and the implication $(1) \Rightarrow (2)$ follows from Theorems 4.2 and 5.1 of \cite{Slodkowski86}. We will show that also $(2) \Rightarrow (1')$. Indeed, let $A$ have the properties from $(2)$ and assume, to get a contradiction, that $A$ is not $(q+1)$-pseudoconcave in $\mathcal{M}$. Then, by the same kind of arguments as in the proof of Theorem \ref{thm_pseudoconcave}, we can find an open set $V \subset \mathcal{M}$ and a smooth real hypersurface $M \subset V$ such that $V \setminus M$ consists of two connected components $V_1$ and $V_2$, $M \cap A \neq \varnothing$, $V \cap A \subset \overline{V}_1$ and $V_1$ is strictly $q$-pseudoconvex at every point of $M$. After possibly shrinking $V$ and perturbing $M$, we can assume that $M \cap A = \{p\}$ for some $p \in V$. Let $U \subset V$ be an arbitrary neighbourhood of $p$. Let $W \subset\subset U$ be another open neighbourhood of $p$ and let $\varphi$ be a smooth strictly $q$-plurisubharmonic function defined near $\overline{W}$ such that $V_1 \cap W = \{\varphi|_W < 0\}$. Then for $K \coloneqq \overline{W}$ we have $\max_{A \cap K} \varphi > \max_{A \cap bK} \varphi$. This contradicts the assumptions in $(1)$.

It remains to consider statement $(2')$. Clearly, one always has that $(2') \Rightarrow (2)$. Now let $\mathcal{M}$ be Stein and let $A$ satisfy the properties from $(2)$. Assume, to get a contradiction, that there exists a compact set $K \subset \mathcal{M}$ and a $q$-plurisubharmonic function $\varphi$ defined in a neighbourhood of $K$ such that $\max_{A \cap K} \varphi  > \max_{A \cap bK} \varphi$. Let $m \coloneqq \max_{A \cap K} \varphi$ and consider the set $L \coloneqq \{z \in A \cap K : \varphi(z) = m\}$. Since $\mathcal{M}$ is Stein, we can use the same arguments as in Proposition \ref{thm_coreStein} to obtain an open set $V \subset \mathcal{M}$ and a smooth real hypersurface $M \subset V$ such that $V \setminus M$ consists of two connected components $V_1$ and $V_2$, $M \cap L \neq \varnothing$, $V \cap L \subset \overline{V}_1$, and $V_1$ is strictly pseudoconvex at every point of $M$. After possibly shrinking $V$, and after introducing suitable holomorphic coordinates, we can assume that $V \subset \C^n$ and that $V_1$ is strictly convex at every point of $M$. Fix arbitrary $p \in L \cap M$ and let $U \subset\subset V$ be an open neighbourhood of $p$ as described in $(2)$. Without loss of generality we can assume that $p = 0$. By strict convexity of $M$, we can then choose an $\R$-linear functional $\lambda \colon V \to \R$ such that $\lambda \le 0$ on $V \cap L$ and $\{\lambda = 0\} \cap L = \{p\}$. Let $W \subset\subset U$ be another open neighbourhood of $p$. Then one sees easily that for $\widetilde{K} \coloneqq \overline{W}$ and for $\varepsilon > 0$ small enough the $q$-plurisubharmonic function $\widetilde{\varphi} \coloneqq \varphi + \varepsilon\lambda \colon V \to \R$ satisfies $\max_{A \cap \widetilde{K}} \widetilde{\varphi} > \max_{A \cap b\widetilde{K}} \widetilde{\varphi}$. But this contradicts the choice of $U$.
\end{proof}

We conclude this section by a brief discussion on the role of $1$-pseudoconcavity of $\mathfrak{c}(\Omega)$. Namely, in light of the results of the next section, we want to point out that for our purpose it is reasonable to interpret $1$-pseudoconcavity as a generalized notion of analytic structure. This viewpoint is motivated by the following simple lemma, which will be used in the proof of Theorem \ref{thm_analyticcore} below, and which is an easy consequence of the above mentioned results of S{\l}odkowski.

\begin{lemma} \label{thm_pseudoconcaveconstant}
Let $\mathcal{M}$ be a complex manifold and let $A \subset \mathcal{M}$ be closed and $1$-pseudoconcave in $\mathcal{M}$. Then every plurisubharmonic function $\varphi$ which is defined on an open neighbourhood of $A$ and which is constant on $A$ fails to be strictly plurisubharmonic at every point of $A$.
\end{lemma}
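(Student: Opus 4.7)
The strategy is to derive a contradiction from the local maximum principle for $1$-pseudoconcave sets, which is exactly what Proposition \ref{thm_lmp} provides. Suppose, toward a contradiction, that $\varphi$ is plurisubharmonic on an open neighbourhood of $A$, identically equal to some constant $c$ on $A$, and yet strictly plurisubharmonic at some point $p \in A$. Since $A$ is $1$-pseudoconcave, Proposition \ref{thm_lmp} (applied with $q = 0$) provides an open neighbourhood $U$ of $p$ such that $\max_{A \cap K} u \le \max_{A \cap bK} u$ for every compact $K \subset U$ and every plurisubharmonic function $u$ defined near $K$.

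\textbf{The test function.} In local holomorphic coordinates centred at $p$ (so that $p$ corresponds to $0$), pick a smooth compactly supported bump $\theta$ whose support lies in $U$ and which agrees with $-\|z\|^2$ on some small ball $B^n(0,r_0)$. By the definition of strict plurisubharmonicity at $p$, there is $\delta > 0$ for which $\varphi + \delta\theta$ is plurisubharmonic on a neighbourhood of $p$; in particular
\[
\psi(z) \coloneqq \varphi(z) - \delta\|z\|^2
\]
is plurisubharmonic on $B^n(0,r_0)$. Shrinking $r_0$ if necessary, we may assume $K \coloneqq \overline{B^n(0,r_0)} \subset U$. Since $\varphi \equiv c$ on $A$, we have $\psi(z) = c - \delta\|z\|^2$ for every $z \in A \cap K$, and therefore $\max_{A \cap K} \psi = c$, attained at $0 \in A$. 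On the other hand $\max_{A \cap bK}\psi \le c - \delta r_0^2 < c$, where by the usual convention the maximum over the empty set is $-\infty$. Applying the local maximum principle to $\psi$ on $K$ now yields $c \le c - \delta r_0^2$, a contradiction that rules out the existence of such a point $p$.

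\textbf{Main obstacle.} The only delicate point concerns regularity: Proposition \ref{thm_lmp}, as stated, addresses $q$-plurisubharmonic functions in the Andreotti--Grauert sense, which carry $\mathcal{C}^2$-regularity built in, whereas $\psi = \varphi - \delta\|z\|^2$ is only as regular as $\varphi$. When $\varphi$ is at least $\mathcal{C}^2$ the proof above is complete as is, and this covers all the applications we have in mind. For a general upper semicontinuous plurisubharmonic $\varphi$ one bypasses this by invoking directly the underlying S\l odkowski equivalence from \cite{Slodkowski86} (Theorems 4.2 and 5.1), which characterises $1$-pseudoconcavity of $A$ by the local maximum property for arbitrary (not necessarily $\mathcal{C}^2$) plurisubharmonic functions on a neighbourhood of $A$; the same contradiction then closes the argument.
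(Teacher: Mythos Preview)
Your argument is correct and follows the same line as the paper's proof: both perturb $\varphi$ by a compactly supported smooth function (the paper adds a nonnegative bump $\varepsilon\theta$, you subtract $\delta\|z\|^2$) so that the resulting function is still plurisubharmonic by strict plurisubharmonicity at $p$, yet attains a strict interior maximum along $A$, contradicting the local maximum property for plurisubharmonic functions on $1$-pseudoconcave sets from \cite{Slodkowski86}. Your explicit discussion of the regularity point is a nice touch --- the paper sidesteps it by citing S{\l}odkowski directly rather than going through Proposition~\ref{thm_lmp}.
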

\begin{proof}
Let $\varphi$ be a plurisubharmonic function defined on an open neighbourhood of $A$ such that $\varphi$ is constant on $A$. Assume, to get a contradiction, that there exists $z \in A$ such that $\varphi$ is strictly plurisubharmonic on a small open coordinate neighbourhood $U \subset \mathcal{M}$ of $z$. Let $\theta \colon \mathcal{M} \to [0,\infty)$ be a nonconstant smooth function with compact support in $U$, and choose $\varepsilon > 0$ so small that $\psi \coloneqq \varphi + \varepsilon\theta$ is still plurisubharmonic. Then $\psi$ attains a local maximum along the $1$-pseudoconcave set $A$. But this is not possible, since plurisubharmonic functions have the local maximum property on $A$, see \cite{Slodkowski86}. 
\end{proof}

To further support our interpretation of $1$-pseudoconcavity, we also want to formulate the following version of Rossi's local maximum modulus principle. It is easily achieved from Rossi's original result by applying S{\l}odkowski's characterization of $1$-pseudoconcave sets as local maximum sets for absolute values of holomorphic functions (the original theorem of Rossi is contained in [Ros]; a formulation of this result which is better suited for our purpose can be found, for example, in Theorem 2.1.8 of [St]). This version most likely was known to some people before, therefore we do not claim any originality for its proof.

\begin{proposition}
Let $K \subset \C^n$ be a compact set and let $z_0$ in $\C^n$.  Let $\hat{K}$ denote the polynomial hull of $K$. Then the following assertions are equivalent:
\begin{itemize}
  \item[(1)] $z_0 \in \hat{K} \setminus K$.
  \item[(2)] There exists a connected bounded locally closed set $\lambda \subset \C^n \setminus K$ with the following properties:
  \begin{itemize}
    \item[(i)] $\lambda$ is $1$-pseudoconcave in $\C^n \setminus K$.
    \item[(ii)] $\bar{\lambda} \setminus \lambda \neq \varnothing$ and $\bar{\lambda} \setminus \lambda \subset K$.
    \item[(iii)] $z_0 \in \lambda$.
  \end{itemize}
\end{itemize}
\end{proposition}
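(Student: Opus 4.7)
My plan is to deduce both implications from Rossi's local maximum modulus principle (in the form of Stout's Theorem 2.1.8) together with S\l{}odkowski's characterization of closed $1$-pseudoconcave subsets of an open set $U \subset \C^n$ as precisely those on which the moduli of functions holomorphic on $U$ enjoy the local maximum property (equivalently, on which plurisubharmonic functions on neighborhoods do). In the form in which I need it, this characterization from \cite{Slodkowski86} asserts that a closed set $A \subset U$ is $1$-pseudoconcave in $U$ if and only if for every relatively compact open $V \subset\subset U$ and every plurisubharmonic $u$ on a neighborhood of $\overline{V}$ one has $\max_{A \cap \overline{V}} u \le \max_{A \cap bV} u$.

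For the implication $(1) \Rightarrow (2)$, I would apply Rossi's local maximum modulus principle in the open set $U := \C^n \setminus K$ at the given point $z_0 \in \hat{K} \setminus K$. Stout's formulation produces a bounded connected set $\lambda \subset U$ containing $z_0$, locally closed in $\C^n$ with $\bar\lambda \setminus \lambda \subset K$, on which the moduli of all functions holomorphic in a neighborhood of $\lambda$ in $U$ enjoy the local maximum property. S\l{}odkowski's theorem then translates this directly into $1$-pseudoconcavity of $\lambda$ in $U$, verifying (2).

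For the converse $(2) \Rightarrow (1)$, $z_0 \in \lambda \subset \C^n \setminus K$ immediately gives $z_0 \notin K$, so it remains to show $|p(z_0)| \le \max_K |p|$ for every polynomial $p$. The function $u := \log|p|$ is plurisubharmonic on $\C^n$. Using the exhaustion $G_j := \{z \in \C^n : \operatorname{dist}(z,K) > 1/j \text{ and } |z| < j\}$ of $U$ by relatively compact open subsets, together with the form of S\l{}odkowski's theorem recalled above, one obtains $\max_{\lambda \cap \overline{G_j}} u \le \max_{\lambda \cap bG_j} u$ for every $j$. Since $\lambda$ is bounded, for large $j$ every maximizer on $\lambda \cap bG_j$ must lie on the hypersurface $\{\operatorname{dist}(\cdot, K) = 1/j\}$; any subsequential limit as $j \to \infty$ then sits in $\bar\lambda \cap K = \bar\lambda \setminus \lambda$. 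Letting $j \to \infty$ and using continuity of $u$ together with $\lambda = \bigcup_j (\lambda \cap \overline{G_j})$ yields $\max_{\bar\lambda} u \le \max_{\bar\lambda \setminus \lambda} u \le \max_K u$, hence $|p(z_0)| \le \max_K |p|$ as required. The principal delicate point is the precise form of S\l{}odkowski's theorem invoked here: Proposition \ref{thm_lmp}$(2)$ delivers only the small-ball version of the local maximum property, and Proposition \ref{thm_lmp}$(2')$ is unavailable because $\C^n \setminus K$ is not in general Stein; I expect the main work to lie in establishing (or quoting from \cite{Slodkowski86}) the global version on relatively compact open subsets of $U$, after which the argument is essentially formal.
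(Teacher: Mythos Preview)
Your proposal is correct and follows essentially the same route as the paper: both directions rest on Rossi's local maximum modulus principle together with S\l{}odkowski's characterization of $1$-pseudoconcave sets. Two minor points of comparison are worth noting. For $(1)\Rightarrow(2)$ the paper is more explicit: it takes $\lambda$ to be the connected component of $\hat{K}\setminus K$ containing $z_0$ and invokes the Shilov idempotent theorem to obtain $\bar\lambda\setminus\lambda\neq\varnothing$, rather than citing Stout's packaged statement. For $(2)\Rightarrow(1)$ the paper argues by contradiction with a single compact $L\subset\lambda$ instead of your exhaustion, but both arguments ultimately need the same ``global'' maximum property on $\lambda$ that you flag as delicate. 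Your concern here is well placed---$\C^n\setminus K$ is generally not Stein, so Proposition~\ref{thm_lmp}$(2')$ does not apply directly---but it dissolves once you observe that the compact sets in question lie in $\C^n$: one may perturb by $\norm{z}^2$ (and a real-linear functional) exactly as in the Stein case of the proof of Proposition~\ref{thm_lmp} to reduce to a strict local maximum and obtain the contradiction. With that remark, the ``main work'' you anticipate is essentially already done in the paper.
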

\begin{proof}
Let first $z_0 \in \hat{K} \setminus K$ be an arbitrary fixed point. Define $\lambda \subset \C^n$ to be the connected component of $\hat{K} \setminus K$ that contains $z_0$. Then, by definition, $\lambda$ is closed in $\C^n \setminus K$, but, by the Shilov idempotent theorem (see, for example, Corollary 6.5 in \cite{Gamelin69}), $\lambda$ is not closed in $\C^n$. Thus $\bar{\lambda} \setminus \lambda \neq \varnothing$ and $\bar{\lambda} \setminus \lambda \subset K$. Moreover, Rossi's local maximum modulus principle states that absolute values of holomorphic polynomials have the local maximum property on $\lambda$, i.e., $\lambda$ is $1$-pseudoconcave in $\C^n \setminus K$, see [Sl3].

For the other direction, fix a set $\lambda \subset \C^n$ such that $\lambda$ satisfies all the properties (i)-(iii) above. Assume, to get a contradiction, that $z_0 \notin \hat{K}$. Then there exists a holomorphic polynomial $p$ on $\C^n$ such that $\abs{p(z_0)} > \max_{z \in K} \abs{p(z)}$. Hence, slightly shrinking $\lambda$, one will find a compact set $L \subset \lambda \subset \hat{K} \setminus K$ such that $z_0 \in L$ and $\abs{p(z_0)} > \max_{z \in b_\lambda L} \abs{p(z)}$, where $b_\lambda L$ denotes the relative boundary of $L$ in $\lambda$. But, in view of the results from [Sl3], this contradicts $1$-pseudoconcavity of $\lambda$. 
\end{proof}

Finally, we want to mention the following result due to Fornaess-Sibony (see Corollary 2.6 in \cite{FornaessSibony95}): Let $T$ be a positive closed current of bidimension $(p,p)$ on $\C^n$, $1 \le p \le n-1$. Then the support of $T$ is $p$-pseudoconcave in $\C^n$ (hence, in particular, it is $1$-pseudoconcave in $\C^n$). 


%
%
%
%
%
\section{A core with no analytic structure} \label{sec_noanalyticcore}
Observe that in each example of a domain $\Omega \subset \C^n$ such that $\mathfrak{c}(\Omega) \neq \varnothing$ that we have seen so far, the core $\mathfrak{c}(\Omega)$ is an analytic subset of $\Omega$. In this section we investigate the question whether this is a general phenomenon, i.e., whether $\mathfrak{c}(\Omega)$ always carries an analytic structure. We will show that this is not the case by proving the following theorem.

\begin{theorem} \label{thm_analyticcore}
For every $n \ge 2$, there exists an unbounded strictly pseudoconvex domain $\Omega \subset \C^n$ with smooth boundary such that $\mathfrak{c}(\Omega)$ is nonempty and contains no analytic variety of positive dimension.
\end{theorem}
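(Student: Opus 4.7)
The plan is to exhibit $\Omega$ by taking the $-\infty$-set of its defining function to be a Wermer type set in $\C^n$. Using the construction from \cite{HarzShcherbinaTomassini12} together with the study to be carried out in Sections \ref{sec_liouville} and \ref{sec_geometricproperties}, I will take $W \subset \C^n$ to be a connected Wermer type set with the following properties: (i) $W$ is closed and unbounded, but contains no analytic variety of positive dimension; (ii) $W$ is complete pluripolar, so there exists $u \colon \C^n \to [-\infty,\infty)$ plurisubharmonic with $\{u = -\infty\} = W$ and $u$ smooth on $\C^n \setminus W$; (iii) $W$ is $1$-pseudoconcave in $\C^n$; (iv) (Liouville theorem for Wermer type sets) every plurisubharmonic function on a neighbourhood of $W$ which is bounded from above is constant on $W$.

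Given such $W$ and $u$, I would set
\[
\psi(z) := u(z) + \norm{z}^2, \qquad \Omega := \{z \in \C^n : \psi(z) < C\},
\]
for a generic constant $C \in \R$, passing to the connected component containing $W$. Sard's theorem applied to $\psi$ on $\C^n \setminus W$, where $\psi$ is smooth and strictly plurisubharmonic, yields $d\psi \neq 0$ on $b\Omega$ for almost every $C$, so $\Omega$ is a strictly pseudoconvex domain with smooth boundary. Since $W \subset \Omega$ is unbounded, so is $\Omega$.

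I then claim $\mathfrak{c}(\Omega) = W$. For $W \subset \mathfrak{c}(\Omega)$: let $\varphi \colon \Omega \to \R$ be any smooth plurisubharmonic function bounded from above. By property (iv), $\varphi$ is constant on $W$; since $W$ is closed and $1$-pseudoconcave by (iii), Lemma~\ref{thm_pseudoconcaveconstant} forces $\varphi$ to fail to be strictly plurisubharmonic at every point of $W$. For $\mathfrak{c}(\Omega) \subset W$: the standard smoothing
\[
\Phi(z) := \sum_{j=1}^\infty \eta_j \, \widetilde{\max}_1\bigl(\psi(z) - C,\, -j\bigr),
\]
with a sufficiently fast decreasing positive sequence $\{\eta_j\}$, is a smooth plurisubharmonic function in a neighbourhood of $\overline{\Omega}$, bounded from above on $\Omega$, and strictly plurisubharmonic precisely on $\C^n \setminus W$. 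Combined with property (i), this gives $\mathfrak{c}(\Omega) = W$ nonempty and containing no analytic variety of positive dimension, as desired.

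The main obstacle is producing a Wermer type set $W$ with properties (i)--(iv) simultaneously. Of these, (i)--(iii) come directly from a suitable extension of the construction in \cite{HarzShcherbinaTomassini12}, while the Liouville property (iv) is the delicate new ingredient: the classical Liouville-style arguments used in Examples~\ref{ex_complexline} and \ref{ex_Riemannpolar} rely on analytic or topological structure that $W$ lacks by design, so one must exploit the Hausdorff approximation of $W$ by analytic varieties together with the $(M,1/2)$-Hölder graph description of $W$ proved in Section~\ref{sec_geometricproperties}. A secondary technical point is to arrange $W$ to be both unbounded and connected in $\C^n$, which may require an iterated or infinite-glueing variant of the standard (compact) Wermer construction while preserving all the properties above.
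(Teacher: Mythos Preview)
Your plan is exactly the paper's approach: take the Wermer type set $\mathcal{E}$ from \cite{HarzShcherbinaTomassini12}, form $\Omega$ as a sublevel set of $u+\norm{\,\cdot\,}^2$, and prove $\mathfrak{c}(\Omega)=\mathcal{E}$ via the two inclusions using the smoothing $\sum\eta_j\widetilde{\max}_1(\psi-C,-j)$ on one side and the Liouville theorem for $\mathcal{E}$ plus Lemma~\ref{thm_pseudoconcaveconstant} on the other.

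Two small corrections to your outline. First, the ``secondary technical point'' about unboundedness and connectedness is a non-issue: the set $\mathcal{E}$ from \cite{HarzShcherbinaTomassini12} is already built as a graph-like object over all of $\C^{n-1}$, hence unbounded, and its connectedness is established directly (Lemma~\ref{thm_connectedE}) without any glueing. Second, the H\"older continuity of Section~\ref{sec_geometricproperties} is not the engine of the Liouville theorem; it is used only to know that the fiber map $\underline{\mathcal{E}}$ is continuous. The actual proof of (iv) proceeds by first applying S{\l}odkowski's theorem (pseudoconvexity of $\C^2\setminus\mathcal{E}$ implies $z\mapsto\sup_{w\in\mathcal{E}_z}\varphi^\ast(z,w)$ is subharmonic, hence constant by classical Liouville) and then a harmonic-measure comparison on the desingularized approximating Riemann surfaces $F_\nu\to E_\nu$, showing that a strict drop of $\varphi^\ast$ on a ball persists uniformly in $\nu$ and contradicts $\varphi^\ast(p_0)=C^\ast$.
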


In fact, $\Omega$ will be the strictly pseudoconvex domain constructed in Theorem 1.2 of \cite{HarzShcherbinaTomassini12} and $\mathfrak{c}(\Omega)$ will coincide with the Wermer type set $\mathcal{E} \subset \Omega$ of Theorem 1.1 in \cite{HarzShcherbinaTomassini12}. In particular, $\mathfrak{c}(\Omega)$ is connected, there exists a plurisubharmonic function $\varphi \colon \C^n \to [-\infty, \infty)$ such that $\mathfrak{c}(\Omega) = \{z \in \C^n : \varphi = -\infty\}$, $\varphi$ is pluriharmonic on $\C^n \setminus \mathfrak{c}(\Omega)$, $\C^n \setminus \mathfrak{c}(\Omega)$ is pseudoconvex and for every $R>0$ one has $\widehat{bB^n(0,R) \cap \mathfrak{c}(\Omega)} = \overline{B^n(0,R)} \cap \mathfrak{c}(\Omega)$, where $\widehat{b B^n(0,R)\cap\mathfrak{c}(\Omega)}$ denotes the polynomial hull of the set $b B^n(0,R) \cap \mathfrak{c}(\Omega)$. In the proof of Theorem \ref{thm_analyticcore} we will also use a Liouville theorem for Wermer type sets. Since this result is of independent interest, we state it in explicit form. 

\begin{theorem} \label{thm_liouville}
Let $\mathcal{E} \subset \C^n$ be the Wermer type set of Theorem 1.1 in \cite{HarzShcherbinaTomassini12}. If $\varphi^\ast$ is a plurisubharmonic function defined on an open neighbourhood of $\mathcal{E}$ and $\varphi^\ast$ is bounded from above, then $\varphi^\ast$ is constant on $\mathcal{E}$. 
\end{theorem}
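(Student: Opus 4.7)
My plan is to combine the explicit construction of the Wermer type set $\mathcal{E}$ in \cite{HarzShcherbinaTomassini12} as a Hausdorff limit of entire complex curves with the polynomial hull identity $\widehat{bB^n(0,R)\cap\mathcal{E}} = \overline{B^n(0,R)}\cap\mathcal{E}$ in order to reduce the statement to the classical Liouville theorem for subharmonic functions on $\C$. First I would recall from the construction of $\mathcal{E}$ that there is a sequence of complex algebraic curves $X_k \subset \C^n$, each of which is the image of an entire holomorphic map $\psi_k \colon \C \to X_k$, converging to $\mathcal{E}$ in the Hausdorff metric on compact sets. Since $\varphi^\ast$ is plurisubharmonic on an open neighbourhood $U$ of $\mathcal{E}$ and bounded above by some $M$, the Hausdorff convergence forces $X_k \cap B^n(0,R) \subset U$ for each fixed $R > 0$ once $k$ is large enough. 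The composition $\varphi^\ast \circ \psi_k$ is then a subharmonic function on a large open subset of $\C$ (exhausting $\C$ as $k \to \infty$), bounded above by $M$; applying the classical Liouville theorem for subharmonic functions on $\C$ gives that $\varphi^\ast$ is essentially constant on each $X_k$, equal to some $c_k \in [-\infty, M]$.

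Next I would compare the values of $\varphi^\ast$ at different points of $\mathcal{E}$ through this approximation. Given any $z \in \mathcal{E}$, Hausdorff convergence yields a sequence $z^k \in X_k$ with $z^k \to z$; upper semicontinuity of $\varphi^\ast$ then gives $\varphi^\ast(z) \ge \limsup_k \varphi^\ast(z^k) = \limsup_k c_k =: c$, so $\varphi^\ast \ge c$ pointwise on $\mathcal{E}$. For the matching upper bound I would invoke the polynomial hull identity. Using the canonical plurisubharmonic function $\varphi \colon \C^n \to [-\infty, \infty)$ with $\{\varphi = -\infty\} = \mathcal{E}$ and pluriharmonic outside $\mathcal{E}$, the function $\varphi^\ast$ can be extended to a plurisubharmonic function $\tilde\varphi^\ast$ on all of $\C^n$ via a $\max$-gluing of the form $\max(\varphi^\ast, \lambda\varphi + C)$ on $U$ and $\lambda\varphi + C$ outside, with $\lambda, C > 0$ chosen large enough on each bounded region so the two pieces agree near $bU$. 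Bremermann's theorem then converts the polynomial hull identity into the maximum principle
\[
\sup_{\overline{B^n(0,R)} \cap \mathcal{E}} \tilde\varphi^\ast = \sup_{bB^n(0,R) \cap \mathcal{E}} \tilde\varphi^\ast,
\]
and since $\tilde\varphi^\ast = \varphi^\ast$ on $\mathcal{E}$, the same identity holds for $\varphi^\ast$ on $\mathcal{E}$.

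The main obstacle will be closing the circle between the inner exhaustion of $\mathcal{E}$ by the curves $X_k$ and the outer exhaustion by the spheres $bB^n(0,R)$: I expect to combine the lower bound $\varphi^\ast \ge c$ with the outer maximum principle above in order to force $\varphi^\ast \le c$ on $\mathcal{E}$. Concretely, if $\sup_{bB^n(0,R) \cap \mathcal{E}} \varphi^\ast > c + \delta$ for some $\delta > 0$ and arbitrarily large $R$, one would select points $z_R \in bB^n(0,R) \cap \mathcal{E}$ with $\varphi^\ast(z_R) > c + \delta$, approximate them by points $w_R^k \in X_k$ satisfying $\varphi^\ast(w_R^k) = c_k \le c$, and use the specific polynomial parametrisations $\psi_k$ (which allow a definite control on how the $X_k$ cross the spheres $bB^n(0,R)$) to produce a contradiction with the upper semicontinuity of $\varphi^\ast$. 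Making this matching precise will require a careful analysis of the approximation $X_k \to \mathcal{E}$ from \cite{HarzShcherbinaTomassini12}, in particular of the growth behaviour of the $\psi_k$ at infinity.
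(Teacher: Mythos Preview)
Your proposal has a genuine gap at its very first step. The approximating curves $E_\nu$ converge to $\mathcal{E}$ only in the Hausdorff metric on compact subsets of $\C^n$, not globally: the tail $\sum_{l>\nu}\varepsilon_l\sqrt{z_{[l]}-a_l}$ grows like $C_\nu\sqrt{\|z\|}$ as $\|z\|\to\infty$, so for any fixed $\nu$ the curve $E_\nu$ eventually leaves every tube neighbourhood of $\mathcal{E}$. Consequently $\varphi^\ast\circ\psi_k$ is only defined on a proper open subset of $\C$ (namely $\psi_k^{-1}(U)$), and the classical Liouville theorem does not apply there; a bounded subharmonic function on a proper subdomain of $\C$ need not be constant. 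Your remark that this domain ``exhausts $\C$ as $k\to\infty$'' is true but unhelpful, since you need constancy on a single $X_k$ in order to define the numbers $c_k$. Without the $c_k$, the rest of the argument (the lower bound $\varphi^\ast\ge c$ on $\mathcal{E}$ and the sphere-matching at the end) has nothing to work with. You yourself flag the final matching step as the main obstacle, but in fact the argument has already broken down before reaching it.

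The paper's proof avoids this difficulty by applying Liouville to a different function: in the case $n=2$ one uses that $\C^2\setminus\mathcal{E}$ is pseudoconvex, so by S{\l}odkowski's theorem the function $z\mapsto\sup_{w\in\mathcal{E}_z}\varphi^\ast(z,w)$ is subharmonic on all of $\C$, bounded above, hence constant $=C^\ast$. This gives the correct upper bound $\varphi^\ast\le C^\ast$ on $\mathcal{E}$ directly. The hard work is then to rule out $\varphi^\ast(\tilde p)<C^\ast$ at some $\tilde p\in\mathcal{E}$, and this is done by a harmonic-measure argument on the desingularised Riemann surfaces $F_\nu$ over a \emph{bounded} domain $U\subset\C_z$ (where the approximation $E_\nu\to\mathcal{E}$ \emph{is} uniformly good). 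The key technical point is that the projections $P_\nu\colon F_\nu\to F_{\nu_0}$ give a $\nu$-independent lower bound $\omega_{W_\nu}(q_\nu,X_\nu)\ge\omega_{W_{\nu_0}}(q_{\nu_0},X)>0$ on the relevant harmonic measures, which then forces $\varphi^\ast(p_0)<C^\ast$ for a point with $\varphi^\ast(p_0)=C^\ast$. The case $n>2$ is reduced to $n=2$ by slicing $\mathcal{E}$ into $2$-dimensional Wermer type pieces. Neither the polynomial hull identity nor any global extension of $\varphi^\ast$ is used.
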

\noindent \textbf{Remark.} Observe that the above theorem implies, in particular, that the set $\mathcal{E}$ is connected. A more geometric proof of this fact will be given in Lemma \ref{thm_connectedE} below

\indent We recall briefly  the construction of the Wermer type set $\mathcal{E}$ carried out in \cite{HarzShcherbinaTomassini12}. Let $(z,w) = (z_1, \ldots, z_{n-1},w)$ denote the coordinates in $\C^n$ and for each $\nu \in \N$ let $\N_\nu \coloneqq \{1, 2, \ldots, \nu\}$. For each $p \in \N_{n-1}$, fix an everywhere dense subset $\{a_l^p\}_{l=1}^\infty$ of $\C$ such that $a_l^p \neq a_{l'}^p$ if $l \neq l'$. Further, fix a bijection $\Phi \coloneqq ([\,\cdot\,], \phi) \colon \N \to \N_{n-1} \times \N$ and define a sequence $\{a_l\}_{l=1}^\infty$ in $\C$ by letting $a_l \coloneqq a^{[l]}_{\phi(l)}$. Moreover, let $\{\varepsilon_l\}_{l=1}^\infty$ be a decreasing sequence of positive numbers converging to zero that we consider to be fixed, but that will be further specified later on. For every $\nu \in \N$, we define a set $E_\nu \subset \C^n$ as
\[ E_\nu \coloneqq \big\{ (z,w) \in \C^n : w = \sum_{l=1}^\nu \varepsilon_l \sqrt{z_{[l]} - a_l} \big\}. \]
Note that $\sum_{l=1}^\nu \varepsilon_l \sqrt{z_{[l]} - a_l}$ takes $2^\nu$ values at each point $z \in \C^{n-1}$ (counted with multiplicities). Thus there exist single-valued functions $w^{(\nu)}_1, \ldots, w^{(\nu)}_{2^\nu}$ on $\C^{n-1}$ such that 
\[ \sum_{l=1}^\nu \varepsilon_l \sqrt{z_{[l]} - a_l} = \big\{w_j^{(\nu)}(z) : j = 1, \ldots, 2^\nu \big\} \]
for all $z \in \C^{n-1}$. For every $\nu\in\N$, define a function $P_\nu \colon \C^n \to \C$ as
\[ P_\nu(z,w) \coloneqq \bigr(w-w^{(\nu)}_1(z)\bigl) \cdots \bigr(w - w^{(\nu)}_{2^\nu}(z)\bigl). \]
Then each $P_\nu$ is a well defined holomorphic polynomial. Moreover, provided that $\{\varepsilon_l\}$ is decreasing fast enough, the following additional assertions hold true: The sets $E_\nu = \{P_\nu = 0\}$ converge towards a nonempty unbounded connected closed set $\mathcal{E} \subset \C^n$, where the convergence is understood with respect to the Hausdorff metric on each compact subset of $\C^n$. The limit set $\mathcal{E}$ is pseudoconcave and contains no analytic variety of positive dimension. The map $\underline{\mathcal{E}} \colon (\C^{n-1},d_{\norm{\cdot}}) \to (\mathcal{F}(\C), d_H)$, $\underline{\mathcal{E}}(z) \coloneqq \{w \in \C : (z,w) \in \mathcal{E}\}$, from the metric space $\C^{n-1}$ of all $(n-1)$-tupels of complex numbers with the standard euclidean metric $d_{\norm{\cdot}}$ to the metric space $\mathcal{F}(\C)$ of all nonempty compact subsets of $\C$ with the Hausdorff metric $d_H$ is continuous. For every $R > 0$ one has $\widehat{bB^n(0,R) \cap \mathcal{E}} = \overline{B^n(0,R)} \cap \mathcal{E}.$ The sequence $\{\varphi_\nu\}_{\nu=1}^\infty$ of functions $\varphi_\nu \colon \C^n \to [-\infty, +\infty)$ defined as 
\[ \varphi_\nu(z,w) \coloneqq \frac{1}{2^\nu} \log\abs{P_\nu(z,w)}\]
converges uniformly on compact subsets of $\C^n \setminus \mathcal{E}$ to a pluriharmonic function $\varphi \colon \C^n \setminus \mathcal{E} \to \R$, and $\lim_{(z,w) \to (z_0,w_0)}\varphi(z,w) = -\infty$ for every $(z_0,w_0) \in \mathcal{E}$. In particular, $\varphi$ has a unique extension to a plurisubharmonic function on $\C^n$, and $\mathcal{E} = \{\varphi = -\infty\}$ is complete pluripolar. Finally, for fixed generic $C \in \R$, and after possibly passing to a suitable connected component, the set
\begin{equation} \label{equ_defOmega} \Omega \coloneqq \big\{(z,w) \in \C^n : \varphi(z,w) + \big(\norm{z}^2 + \abs{w}^2\big) < C \big\} \end{equation}
is an unbounded strictly pseudoconvex domain with smooth boundary that contains $\mathcal{E}$, and $\Omega \setminus \mathcal{E}$ is pseudoconvex.

\noindent \textbf{Remark.} The properties of continuity of the map $\underline{\mathcal{E}}$ and of connectedness of the set $\mathcal{E}$ were not stated explicitly in \cite{HarzShcherbinaTomassini12}. The proofs of these facts will be given in Lemma \ref{thm_haushölder} and Lemma \ref{thm_connectedE} below.

We want to use the above constructions in the proof of Theorem \ref{thm_analyticcore}. To do so, we will assume for the moment that the statement of Theorem \ref{thm_liouville} is true. The proof of this fact will be postponed to a later part of this section.

\noindent {\bf Proof of Theorem \ref{thm_analyticcore}.} In what follows, we will assume that the sequence $\{\varepsilon_l\}$ is decreasing so fast that the Wermer type set $\mathcal{E}$ and the associated function $\varphi$ have all the properties described above. We have to show that the domain $\Omega$ defined in $(\ref{equ_defOmega})$ satisfies $\mathfrak{c}(\Omega) = \mathcal{E}$.

First, define $\psi \colon \C^n \to [-\infty, \infty)$ as $\psi(z,w) \coloneqq \varphi(z,w) + (\norm{z}^2 + \abs{w}^2) - C$. As before, if $\{\eta_j\}_{j=1}^\infty$ is a sequence of positive numbers that converges to $0$ fast enough, then $\varphi^\ast \coloneqq \sum_{j=1}^\infty \eta_j\, \widetilde{\max}_1(\psi, -j)$ is a smooth global defining function for $\Omega$ that is strictly plurisubharmonic outside $\mathcal{E}$. In particular, $\mathfrak{c}(\Omega) \subset \mathcal{E}$.

Let now $\varphi^\ast \colon \Omega \to \R$ be a smooth plurisubharmonic function that is bounded from above. We know from Theorem \ref{thm_liouville} that $\varphi^\ast \equiv C^\ast$ on $\mathcal{E}$ for some constant $C^\ast \in \R$. Moreover, since $\Omega \setminus \mathcal{E}$ is pseudoconvex, the set $\mathcal{E}$ is $1$-pseudoconcave. Thus, by Lemma \ref{thm_pseudoconcaveconstant}, $\varphi^\ast$ fails to be strictly plurisubharmonic at every point of $\mathcal{E}$. 
This shows that $\mathcal{E} \subset \mathfrak{c}(\Omega)$. $\hfill \Box$

We now turn to the proof of Theorem \ref{thm_liouville}. We start by recalling some facts from potential theory, which will be needed in the course of the proof, mainly to fix our notation: Let $M$ be a Riemann surface and let $D \subset\subset M$, $D \neq M$, be a relatively compact open subset. For every $f \colon bD \to \R$, the associated Perron function 
\begin{equation*} \begin{split}
H_Df \coloneqq \sup\{&u \colon D \to [-\infty,\infty) : u \text{ subharmonic} \\ &\text{and } \limsup_{z\to\zeta} u(z) \le f(\zeta) \text{ for every } \zeta \in bD \}
\end{split} \end{equation*}
is harmonic in $D$. If $z \in D$ is fixed, then the assignment $z \mapsto (H_Df)(z)$ is a positive linear functional on $\mathcal{C}^0(bD)$. Hence there exists a unique Radon measure $\omega_D(z, \cdot)$ on the Borel $\sigma$-algebra of $bD$, called the harmonic measure with respect to $D$ and $z$, such that
\begin{equation} \label{equ_generalpoisson}
(H_Df)(z) = \int_{bD} f(\zeta) \,d\omega_D(z, \zeta)
\end{equation}
for every $f \in \mathcal{C}^0(bD)$. It turns out that $(\ref{equ_generalpoisson})$ remains true for arbitrary bounded Borel measurable functions on $bD$. In particular, it holds for characteristic functions $\chi_E \colon bD \to \{0,1\}$ of Borel sets $E \subset bD$. Thus
\[ \omega_D(z,E) = (H_D \chiup_E)(z) \]
for every Borel set $E \subset bD$ and $z \in D$, and $\omega_D(\,\cdot\,, E) \colon D \to \R$ is harmonic. If $D$ is regular with respect to the Dirichlet problem (this is always satisfied if $bD$ is smooth), and if $f$ is continuous at $\zeta \in bD$, then
\[ \lim_{z \to \zeta} (H_Df)(z) = f(\zeta). \]
\noindent \textbf{Proof of Theorem \ref{thm_liouville}.} We proceed in two steps.

\noindent \textsc{Step 1.} {\it The theorem holds true in the case $n=2$.}

\noindent \textsc{Proof.} Since $\C^2 \setminus \mathcal{E}$ is pseudoconvex, the assignment $z \mapsto \sup_{w \in \mathcal{E}_z} \varphi^\ast(z,w)$, where for every $z \in \C$ the set $\mathcal{E}_z \coloneqq \{w \in \C : (z,w) \in \mathcal{E}\}$ denotes the fiber of $\mathcal{E}$ over $z$, defines a subharmonic function on $\C$ (see \cite{Slodkowski81}, Theorem II). Moreover, by assumption on $\varphi^\ast$, this function is bounded from above. Hence, by the classical Liouville theorem, it is constant, i.e., there exists $C^\ast \in [-\infty, \infty)$ such that $\sup_{w \in \mathcal{E}_z} \varphi^\ast(z,w) = C^\ast$ for every $z \in \C$. Observe that for $C^\ast = -\infty$ this already proves our claim, hence without loss of generality we can assume that $C^\ast \in \R$. We want to show that $\varphi^\ast \equiv C^\ast$ on $\mathcal{E}$, so, in order to get a contradiction, assume that $\varphi^\ast \not\equiv C^\ast$ on $\mathcal{E}$. Clearly, in this case there exists a point $\tilde{p} = (\tilde{z}, \tilde{w}) \in \mathcal{E}$ such that $\varphi^\ast(\tilde{p}) < C^\ast$. By continuity of $\underline{\mathcal{E}}$, and by upper semicontinuity of $\varphi^\ast$, we can assume that $\tilde{z} \notin \{a_l\}_{l=1}^\infty$ and that for some positive numbers $\delta, \rho > 0$ we have $\varphi^\ast < C^\ast - \delta$ on the ball $B^2(\tilde{p}, \rho)$. Fix $\nu_0 \in \N$ such that $\sum_{l=\nu_0+1}^\infty \varepsilon_l \sqrt{\abs{z-a_l}} < \rho/3$ for $z \in \Delta(\tilde{z},\rho)$. Further, choose $p_0 = (z_0, w_0) \in \mathcal{E}$ such that $\varphi^\ast(p_0) = C^\ast$, and a bounded smoothly bounded domain $U \subset \C_z$ such that $a_1, a_2, \ldots, a_{\nu_0}, z_0 \in U$, $\tilde{z} \in bU$ and $bU \subset \C \setminus \{a_l\}_{l=1}^\infty$. Now the general idea of the proof is to show that the harmonic measure of $\mathcal{E} \cap b(U \times \C_w) \cap B^2(\tilde{p}, \rho)$ with respect to the set $\mathcal{E} \cap (U \times \C_w)$ and the point $p_0 \in \mathcal{E} \cap (U \times \C_w)$ is positive, and hence, since $\varphi^\ast \le C^\ast$ on $\mathcal{E}$ and $\varphi^\ast < C^\ast$ on $B^2(\tilde{p}, \rho)$, that $\varphi^\ast(p_0) < C^\ast$. However, in order to have a decent notion of harmonic measure available, we need to approximate $\mathcal{E}$ by the analytic varieties $E_\nu$ and, by performing desingularizations $\pi_\nu \colon F_\nu \to E_\nu$, translate the situation into a problem on Riemann surfaces $F_\nu$. The setup is as follows:

For every $\nu \in \N$, let $f_\nu \colon \C^{\nu+1} \to \C^\nu$ be the holomorphic mapping
\[ f_\nu(z, w_1', \ldots, w_\nu') \coloneqq \big(w'^2_1 - \varepsilon_1^2(z-a_1), \ldots, w'^2_\nu - \varepsilon_\nu^2(z-a_\nu) \big). \]
Then, using the fact that $a_l \neq a_{l'}$ for $l \neq l'$, one immediately sees that $F_\nu \coloneqq \{f_\nu = 0\}$ is a one-dimensional complex submanifold of $\C^{\nu+1}$. For every $\nu \ge \nu_0$, define holomorphic projections 
\begin{equation*} \begin{split}
\pi_\nu \colon F_\nu \to E_\nu, &\quad \pi_\nu(z,w_1', \ldots, w_\nu') \coloneqq \big(z, \sum\nolimits_{l=1}^\nu w_l'\big) \\
P_\nu \colon F_\nu \to F_{\nu_0}, &\quad P_\nu(z,w_1', \ldots, w_\nu') \coloneqq (z, w_1', \ldots, w_{\nu_0}').
\end{split} \end{equation*}
Since $(z,w) \in E_\nu$ if and only if $w = \sum_{l=1}^\nu \varepsilon_l \sqrt{z-a_l}$, and since $(z,w_1', \ldots, w_\nu') \in F_\nu$ if and only if $w_l' = \varepsilon_l \sqrt{z-a_l}$ for every $l \in \N_\nu$ (with the obvious abuse of notation), these maps are indeed well defined. Let $V_\nu \coloneqq E_\nu \cap (U \times \C_w)$ and $W_\nu \coloneqq F_\nu \cap (U \times \C_{w'}^\nu)$, $\nu \in \N$. Then $V_\nu$ is an analytic subvariety of $E_\nu$ with boundary $bV_\nu = E_\nu \cap b(U \times \C_w)$, and $W_\nu$ is a relatively compact open subset of $F_\nu$ with boundary $bW_\nu = F_\nu \cap b(U \times \C_{w'}^\nu)$. Since $bU \subset \C \setminus \{a_l\}_{l=1}^\infty$, it is easy to see that for every $\nu \in \N$ the Riemann surface $F_\nu$ intersects the corresponding set $b(U \times \C_{w'}^\nu)$ transversally. In particular, the boundary of $W_\nu$ is smooth and hence $W_\nu$ is regular with respect to the Dirichlet problem. Finally, for $\nu \ge \nu_0$ choose points $q_\nu = (z_0, w'(\nu)) \in W_\nu$ such that $P_\nu(q_\nu) = q_{\nu_0}$ for every $\nu \ge \nu_0$ and $\lim_{\nu \to \infty} \pi_\nu(q_\nu) = p_0$.

We consider now the sets
\[  X \coloneqq \pi_{\nu_0}^{-1}\big(bV_{\nu_0} \cap B^2(\tilde{p}, \rho/3)\big) \quad \text{and} \quad X_\nu \coloneqq \pi_\nu^{-1}\big(bV_\nu \cap B^2(\tilde{p}, 2\rho/3)\big), \;\nu > \nu_0. \] 
Since the complex curve $F_{\nu}$ intersects $b(U \times \C_{w'}^\nu)$ transversally, and since $W_\nu$ is relatively compact in $F_\nu$, it follows that $bW_\nu$ is a compact smooth manifold of real dimension $1$ for every $\nu \ge \nu_0$. The mappings $\pi_\nu \colon F_\nu \to E_\nu$ are continuous and satisfy $\pi_\nu(bW_\nu) = bV_\nu$, thus $X$ is open in $bW_{\nu_0}$ and $X_\nu$ is open in $bW_\nu$ for every $\nu > \nu_0$. Moreover, applying Sard's theorem simultaneously to the smooth functions $r_\nu \colon bW_\nu \to \R$, $r_\nu \coloneqq \norm{\pi_\nu(\,\cdot\,) - \tilde{p}}^2$, $\nu \ge \nu_0$, we see that, after a slight perturbation of $\rho$, we can assume that the relative boundary $b_{bW_{\nu_0}}X$ of $X$ in $bW_{\nu_0}$, and the relative boundaries $b_{bW_\nu}X_\nu$ of $X_\nu$ in $bW_\nu$, $\nu > \nu_0$, consist of at most finitely many points. Since one sees easily that the map $P_\nu$ is finite, it then follows that $P_\nu^{-1}(b_{bW_{\nu_0}}X) \cup b_{bW_\nu}X_\nu \subset bW_\nu$ is a finite set for every $\nu > \nu_0$.

We come to the main point of the proof. We claim that $P_\nu^{-1} (X) \subset X_\nu$ for every $\nu > \nu_0$. Indeed, let $q = (z, w_1', \ldots, w_\nu') \in P_\nu^{-1}(X)$. Then $(\pi_{\nu_0} \circ P_\nu)(q) \in bV_{\nu_0} \subset b(U \times \C_w)$. In particular, $z \in bU$, i.e., $\pi_\nu(q) \in E_\nu \cap b(U \times \C_w) = bV_\nu$. Moreover, since $\big(z, \sum_{l=1}^{\nu_0} w_l'\big) = (\pi_{\nu_0} \circ P_\nu)(q) \in B^2(\tilde{p}, \rho/3)$, and since $\abs{w_l'} = \varepsilon_l \sqrt{\abs{z-a_l}}$ for every $l \in \N$ and $\sum_{l = \nu_0+1}^\infty \varepsilon_l \sqrt{\abs{z-a_l}} < \rho/3$, we also have $\pi_\nu(q) = \big(z, \sum_{l=1}^{\nu} w_l'\big) \in B^2(\tilde{p}, 2\rho/3)$. This shows that $P_\nu^{-1} (X) \subset X_\nu$ and, therefore, also that $\chiup_{X_\nu} \ge \chiup_X \circ P_\nu$. It follows now from regularity of $W_\nu$ and $W_{\nu_0}$ with respect to the Dirichlet problem, and from continuity of the functions $\chiup_{X_\nu} \colon bW_\nu \to \{0,1\}$ and $\chiup_X \colon bW_{\nu_0} \to \{0,1\}$ outside the finite sets $b_{bW_\nu}X_\nu$ and $b_{bW_{\nu_0}}X$, respectively, that $H_{W_\nu} \chiup_{X_\nu} \ge (H_{W_{\nu_0}} \chiup_X) \circ P_\nu$. Evaluating this inequality at the point $q_\nu \in W_\nu$ and using $P_\nu(q_\nu) = q_{\nu_0}$, we see that 
\begin{equation} \label{equ_harmonicmeasure}
\omega_{W_\nu}(q_\nu, X_\nu) \ge \omega_{W_{\nu_0}}(q_{\nu_0}, X) \quad \text{for} \quad \nu > \nu_0.
\end{equation}
Since the condition $a_1, a_2, \ldots, a_{\nu_0} \in U$ implies  that $W_{\nu_0}$ is connected, we obviously have that
\[ \omega_{W_{\nu_0}}(q_{\nu_0}, X) > 0. \]
Moreover, since for $\nu \in \N$ large enough the function $\varphi^\ast \circ \pi_\nu$ is well defined and subharmonic near $W_\nu \subset F_\nu$, we also have that 
\begin{equation*} \begin{split}
\varphi^\ast(\pi_\nu(q_\nu)) &\le H_{W_\nu} (\varphi^\ast \circ \pi_\nu|_{bW_\nu}) (q_\nu) = \int_{bW_\nu} (\varphi^\ast \circ \pi_\nu)(\zeta) \,d\omega_{W_\nu}(q_\nu, \zeta) \\
& = \int_{X_\nu} (\varphi^\ast \circ \pi_\nu)(\zeta) \,d\omega_{W_\nu}(q_\nu, \zeta) + \int_{bW_\nu \setminus X_\nu} (\varphi^\ast \circ \pi_\nu)(\zeta) \,d\omega_{W_\nu}(q_\nu, \zeta).
\end{split} \end{equation*}
Observe now that, since $\varphi^\ast$ is upper semicontinuous, $\varphi^\ast \le C^\ast$ on $\mathcal{E}$ and $\lim_{\nu \to \infty} V_\nu = \mathcal{E} \cap (U \times \C_w)$ in the Hausdorff metric, there exists a sequence $\{\delta_\nu\}$ of positive numbers such that $\varphi^\ast < C^\ast + \delta_\nu$ on $V_\nu$ and $\lim_{\nu \to \infty} \delta_\nu = 0$. Moreover, $\varphi^\ast < C^\ast-\delta$ on $B^2(\tilde{p}, \rho)$. Hence we get from the above estimate that
\[ \varphi^\ast(\pi_\nu(q_\nu)) \le (C^\ast-\delta)\omega_{W_\nu}(q_\nu, X_\nu) + (C^\ast+\delta_\nu)\big(1 - \omega_{W_\nu}(q_\nu, X_\nu)\big), \]
and then, in view of $(\ref{equ_harmonicmeasure})$, we conclude that for $\nu \ge \nu_0$
\[ \varphi^\ast(\pi_\nu(q_\nu)) \le (C^\ast-\delta)\omega_{W_{\nu_0}}(q_{\nu_0}, X) + (C^\ast+\delta_\nu)\big(1 - \omega_{W_{\nu_0}}(q_{\nu_0}, X_{\nu_0})\big). \]
Since $\omega_{W_{\nu_0}}(q_{\nu_0}, X) > 0$, and since $\lim_{\nu \to \infty} \delta_\nu = 0$, this implies that $\varphi^\ast(\pi_\nu(q_\nu)) < C^\ast$ for every $\nu \ge \nu_0'$ if $\nu_0' \ge \nu_0$ is large enough. Finally, there exists $\mu \ge \nu_0'$ such that $\norm{p_0 - \pi_\mu(q_\mu)} < \rho/3$, hence applying the same reasoning as above to the translated variety $V_\mu' \coloneqq V_\mu + (p_0 - \pi_{\mu}(q_\mu))$ for large enough $\mu$ we see that also $\varphi^\ast(p_0) < C^\ast$ (here we use that $X_\mu + (p_0 - \pi_{\mu}(q_\mu)) \in B^2(\tilde{p},\rho)$). This contradicts the fact that $\varphi^\ast(p_0) = C^\ast$ and completes the proof of Step 1. 

\noindent \textsc{Step 2.} {\it The theorem holds true in the case $n > 2$.}

\noindent \textsc{Proof.} We first show that $\varphi^\ast(z_0,w) = \varphi^\ast(z_0,w')$ for every $z_0 \in \C^{n-1}$ and every $w,w' \in \mathcal{E}_{z_0}$. To do so, fix $z_0 \in \C^{n-1}$ and consider for every $p \in \N_{n-1}$ the set
\[ \tilde{\mathcal{E}}_p \coloneqq \big\{ (z,w) \in \C^{n-1} \times \C : z_j = z_{0,j} \text{ for } j \in \N_{n-1} \setminus \{p\}, w =\!\!\! \sum_{l=1, [l] = p}^\infty \!\!\! \varepsilon_l\sqrt{z_p-a_l} \,\big\}. \]
Observe that for $\tilde{\mathcal{E}}_p(z_0) \coloneqq \{ w \in \C : (z_0,w) \in \tilde{\mathcal{E}_p} \}$ we have
\[\mathcal{E}_{z_0} = \tilde{\mathcal{E}}_1(z_0) + \cdots + \tilde{\mathcal{E}}_{n-1}(z_0). \]
Thus for arbitrary fixed $w,w' \in \mathcal{E}_{z_0}$ we can write  
\[ \begin{array}{r@{\,=\,}l} w & w[1] + \cdots + w[n-1] \\[1ex] w' & w'[1] + \cdots + w'[n-1] \end{array} \!,\; \text{where } w[p], w'[p] \in \tilde{\mathcal{E}}_p(z_0). \]
Define 
\begin{equation*} \begin{split}
w_p &\coloneqq w'[1] + \cdots + w'[p-1] + w[p] + \cdots + w[n-1], \; p \in \N_n, \\ \tilde{w}_p &\coloneqq w'[1] + \cdots + w'[p-1] + w[p+1] + \cdots + w[n-1], \; p \in \N_{n-1},  
\end{split} \end{equation*}
and observe that 
\[ (z_0,w_p), (z_0,w_{p+1}) \in \tilde{\mathcal{E}}_p + (0,\tilde{w}_p) \subset \mathcal{E} \text{ for every } p \in \N_{n-1}. \] 
Since, up to a suitable embedding $i_{z_0,\tilde{w}_p} \colon \C^2_{z_p,w} \hookrightarrow \C^n$, the set $\tilde{\mathcal{E}}_p + (0,\tilde{w}_p)$ is a Wermer type set in $\C^2$, it follows from Step 1 that $\varphi^\ast$ is constant on $\tilde{\mathcal{E}}_p + (0,\tilde{w}_p)$, and hence $\varphi^\ast(z_0,w_p) = \varphi^\ast(z_0,w_{p+1})$ for each $p \in \N_{n-1}$. But $w_1 = w$ and $w_n = w'$. Thus $\varphi^\ast(z_0,w) = \varphi^\ast(z_0,w')$, as claimed.

Now for every choice of $p \in \N_{n-1}$ and  $\xi = (\xi', \xi'') \in \C^{p-1} \times \C^{n-p-1}$ consider the set
\[ \mathcal{E}_{p, \xi} \coloneqq \mathcal{E} \cap \big[(\{\xi'\} \times \C_{z_p} \times \{\xi''\})  \times \C_w \big].\]
Observe that, up to embedding of $\C^2_{z_p,w}$ into $\C^n$, each set $\mathcal{E}_{p,\xi}$ is of the form $\mathcal{E}_{p, \xi} = \bigcup_{w \in W} [\mathcal{E}_p + (0,w)]$ for a Wermer type set $\mathcal{E}_p \subset \C^2_{z_p,w}$ and a suitable set $W = W(p,\xi) \subset \C_w$. Since, by Step 1, the function $\varphi^\ast$ is constant on $\mathcal{E}_p + (0,w)$ for every $w \in W$, and since we have already shown that the values of $\varphi^\ast$ on $\mathcal{E}$ depend only on the $z$-coordinate, it follows that $\varphi^\ast \equiv C_{p,\xi}$ on $\mathcal{E}_{p,\xi}$ for some constant $C_{p,\xi} \in [-\infty, \infty)$. Now if $(z,w)$ and $(z',w')$ are arbitrary points of $\mathcal{E}$, let $\xi_p \coloneqq (z_1', \ldots, z_{p-1}',z_{p+1}, \ldots, z_{n-1})$ for every $p \in \N_{n-1}$ and observe that in the sequence 
\[ (z,w) \in \mathcal{E}_{1, \xi_1}, \mathcal{E}_{2, \xi_2}, \ldots, \mathcal{E}_{n-2, \xi_{n-2}}, \mathcal{E}_{n-1, \xi_{n-1}} \ni (z',w') \]
we have $\mathcal{E}_{j, \xi_j} \cap \mathcal{E}_{j+1, \xi_{j+1}} \neq \varnothing$ for every $j \in \N_{n-2}$. It follows that $C_{p,\xi_p} = C_{p'\!,\xi_{p'}}$ for every $p, p' \in \N_{n-1}$, and thus $\varphi^\ast(z,w) = \varphi^\ast(z',w')$. Since $(z,w), (z',w') \in \mathcal{E}$ were arbitrary, the proof is complete. $\hfill\Box$

\section{Liouville type properties of the core} \label{sec_liouville}

In all examples of strictly pseudoconvex domains $\Omega \subset \C^n$ such that $\mathfrak{c}(\Omega) \neq \varnothing$, which we have constructed so far, the core has the following Liouville type property: if $\varphi$ is a smooth and bounded from above plurisubharmonic function on $\Omega$, then $\varphi$ is constant on every connected component of $\mathfrak{c}(\Omega)$. Thus it is natural to ask whether this is a general property of the core, i.e., we want to investigate whether every connected component of the core of a strictly pseudoconvex domain $\Omega \subset \C^n$ satisfies a Liouville type theorem. 

Further interest to this question is derived from $1$-pseudoconcavity of $\mathfrak{c}(\Omega)$, see Theorem \ref{thm_pseudoconcave} above, and the fact that the following easy lemma holds true.

\begin{lemma} \label{thm_liouvillepseudoconcave}
Let $\mathcal{M}$ be a complex manifold and let $L \subset \mathcal{M}$ be a closed set consisting of more than one point such that every smooth and bounded from above plurisubharmonic function defined near $L$ is constant on $L$. Then $L$ is $1$-pseudoconcave in $\mathcal{M}$.
\end{lemma}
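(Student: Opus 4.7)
The plan is to prove the contrapositive: assuming $L$ is not $1$-pseudoconcave in $\mathcal{M}$, I will construct a smooth plurisubharmonic function on an open neighbourhood of $L$, bounded from above, that is not constant on $L$.

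The first step invokes Proposition \ref{thm_lmp} with $q=0$, translating ``not $1$-pseudoconcave'' into a failure of the local maximum principle: there exist a compact $K \subset \mathcal{M}$, an open neighbourhood $U \supset K$, and a smooth plurisubharmonic function $\varphi$ on $U$ with $\max_{L \cap K} \varphi > \max_{L \cap bK} \varphi$ (where the max over the empty set is $-\infty$). In the generic case $L \cap bK \neq \varnothing$, an affine rescaling lets me assume $\max_{L \cap K} \varphi = 0$, attained at a point $p_\ast \in L$, and $\max_{L \cap bK} \varphi \le -1$.

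The second step builds the desired function by truncation. Fix $\delta \in (0,1)$ and set
\[ v \coloneqq \widetilde{\max}_\delta(\varphi + 1/2, 0) \quad \text{on } U. \]
By the very definition of the smooth maximum (page \pageref{def_smoothmax}), $v$ is smooth plurisubharmonic on $U$ and is $\mathcal{C}^\infty$-flat (that is, $v \equiv 0$ together with all its partial derivatives) on the set $\{\varphi \le -1/2 - \delta/2\}$. Since $\varphi \le -1 < -1/2 - \delta/2$ on $L \cap bK$ and $\varphi$ is continuous, this flatness holds on an open neighbourhood $O$ of $L \cap bK$ in $\mathcal{M}$, while $v(p_\ast) = 1/2$. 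The compact set $F \coloneqq bK \setminus O$ is disjoint from the closed set $L$, so by normality of $\mathcal{M}$ one can pick an open neighbourhood $W$ of $L$ disjoint from $F$; then $W \cap bK \subset O$. Define
\[
u(x) = \begin{cases} v(x) & \text{if } x \in W \cap K, \\ 0 & \text{if } x \in W \setminus K. \end{cases}
\]
The flatness of $v$ on $W \cap bK$ makes $u$ smooth across $bK$; on each open side $u$ is plurisubharmonic (being either $v$ or the constant $0$), and since plurisubharmonicity is a local condition, $u$ is psh on $W$. Moreover, $u$ is bounded above by $\max_K v < \infty$, while $u(p_\ast) = 1/2 \neq 0 = u(p')$ for any $p' \in L \cap bK$, contradicting the hypothesis.

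The only remaining case is that every Proposition \ref{thm_lmp}-witness satisfies $L \cap bK = \varnothing$. Here the assumption $\card L \ge 2$ intervenes: either $L \setminus K \neq \varnothing$, in which case $W$ is allowed to be disconnected and the constant piece $u \equiv 0$ on the component outside $K$ distinguishes it from $p_\ast$; or $L$ is compact and contained in $K^\circ$, in which case I replace $\varphi$ by $|f - f(p_0)|^2$ for a holomorphic coordinate function $f$ on $U$ separating two points $p_0, q_0 \in L$, and the same $\widetilde{\max}_\delta$-recipe (after a shift that reduces $\max_L \varphi$ to $0$) makes $v$ itself non-constant on $L$ without the gluing issue, since $W$ may be taken inside $K^\circ$. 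The main technical obstacle throughout is the smooth gluing along $bK$; it is resolved by the fact that $\widetilde{\max}_\delta(\cdot, 0)$ is $\mathcal{C}^\infty$-flat on $(-\infty, -\delta/2]$ rather than merely continuous---the decisive feature for which the smooth-max construction was designed.
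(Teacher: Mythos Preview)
Your argument is correct and shares its core with the paper's: from the failure of $1$-pseudoconcavity, produce a smooth plurisubharmonic function on a small set having a strict maximum along $L$, then use $\widetilde{\max}_\delta$ against a constant to extend it to a full neighbourhood of $L$. The paper, however, bypasses both Proposition~\ref{thm_lmp} and your case split: it repeats the Hartogs-figure argument of Theorem~\ref{thm_pseudoconcave} directly to obtain a strictly pseudoconvex hypersurface touching $L$, perturbs so that the contact is a single point $p$, and takes a smooth strictly plurisubharmonic defining function $\widetilde\varphi$ on $U$ with $\widetilde\varphi(p)=0>\widetilde\varphi$ on $(L\cap U)\setminus\{p\}$. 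The trivial extension of $\widetilde{\max}_\delta(\widetilde\varphi,c/2)$ by the constant $c/2$ then works uniformly---non-constancy on $L$ is witnessed by $p$ against any other point of $L$, whether that point lies in $U$ or outside---so no separate treatment of $L\cap bK=\varnothing$ or of compact $L$ is needed.

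Two minor points in your write-up deserve attention. First, Proposition~\ref{thm_lmp} with $q=0$ yields only a $0$-plurisubharmonic, i.e.\ $\mathcal{C}^2$, function $\varphi$; to obtain a $\mathcal{C}^\infty$ function as the lemma requires you should mollify. Second, your compact-$L$ case invokes ``a holomorphic coordinate function $f$ on $U$'', which is only available once you have observed that $U$ may be chosen to be a coordinate chart---this is indeed permitted by the ``for every neighbourhood $U$'' quantifier in the negation of condition~(2), but you should say so explicitly.
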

\begin{proof}
Assume, to get a contradiction, that $L$ is not $1$-pseudoconcave in $\mathcal{M}$. Then, using the same argument as in the proof of Theorem \ref{thm_pseudoconcave}, we can find an open set $U \subset \mathcal{M}$ and a smooth real hypersurface $M \subset U$ such that $U \setminus M$ consists of two connected components $U_1$ and $U_2$, $M \cap L \neq \varnothing$, $U \cap L \subset \overline{U}_1$ and $U_1$ is strictly pseudoconvex at every point of $M$. After possibly shrinking $U$ and perturbing $M$, we can assume that $M \cap L = \{p\}$ for some $p \in U$ and that there exists a smooth strictly plurisubharmonic function $\widetilde{\varphi} \colon U \to (-\infty,1]$ defined on an open neighbourhood of $\overline{U}$ such that $U_1 = \{\widetilde{\varphi}|_U < 0\}$. Let $\tilde{c} \coloneqq \max_{L \cap bU} \widetilde{\varphi}$ and set $c \coloneqq \max(\tilde{c}, -1)$. Then for $\delta > 0$ small enough the trivial extension of $\widetilde{\max}_\delta(\widetilde{\varphi}, c/2) \colon U \to \R$ by $c/2$ defines a smooth and bounded from above plurisubharmonic function $\varphi$ on a suitable neighbourhood of $L$ such that $\varphi$ is not constant on $L$. This is a contradiction.
\end{proof}

We will prove in this section that a Liouville type theorem holds true for the core of highest order, i.e., for the set $\mathfrak{c}_n(\Omega) \subset \mathfrak{c}(\Omega)$ of all points $z \in \Omega$ where every smooth and bounded from above plurisubharmonic function $\varphi \colon \Omega \to \R$ satisfies $\Lev(\varphi)(z, \,\cdot\,) \equiv 0$ (for the general definition of cores of higher order see Section \ref{sec_higherordercore}). More precisely, we will prove the following theorem.

\begin{theorem} \label{thm_highliouville}
Let $\mathcal{M}$ be a complex manifold of complex dimension $n$ and let $\Omega \subset \mathcal{M}$ be a domain. Then every smooth and bounded from above plurisubharmonic function on $\Omega$ is constant on each connected component of $\mathfrak{c}_n(\Omega)$.
\end{theorem}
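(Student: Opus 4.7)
The plan is to reduce the theorem to Sard's theorem, after a preliminary upgrade of the defining vanishing condition for $\mathfrak{c}_n(\Omega)$. Let $\varphi \colon \Omega \to \R$ be a smooth plurisubharmonic function with $\sup_\Omega \varphi < \infty$, and let $Z$ be a connected component of $\mathfrak{c}_n(\Omega)$; the goal is to show that $\varphi$ is constant on $Z$.

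The key first step will be to establish that $(d\varphi)_z = 0$ at every point $z \in \mathfrak{c}_n(\Omega)$. For this I would apply the defining property of $\mathfrak{c}_n(\Omega)$ not to $\varphi$ itself but to $e^\varphi$: since $t \mapsto e^t$ is smooth and convex increasing, $e^\varphi$ is smooth and plurisubharmonic, and it is bounded from above by $e^{\sup_\Omega \varphi}$. Hence $\Lev(e^\varphi)(z,\,\cdot\,) \equiv 0$ for every $z \in \mathfrak{c}_n(\Omega)$. A direct computation in any local holomorphic chart yields
\begin{equation*}
\Lev(e^\varphi)(z,\xi) = e^{\varphi(z)}\bigl[\Lev(\varphi)(z,\xi) + \gabs{(\partial \varphi)_z(\xi)}^2\bigr],
\end{equation*}
and since both summands on the right are nonnegative (the first by plurisubharmonicity of $\varphi$), each must vanish identically in $\xi$. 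In particular $(\partial \varphi)_z \equiv 0$ on $T_z(\mathcal{M})$; because $\varphi$ is real-valued this also gives $(\overline{\partial}\varphi)_z \equiv 0$, and hence $(d\varphi)_z = 0$ at every $z \in \mathfrak{c}_n(\Omega)$.

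The second step is then essentially immediate. Since $\varphi$ is smooth on the $(2n)$-dimensional real manifold $\Omega$, Sard's theorem ensures that the image under $\varphi$ of the critical set $\{z \in \Omega : (d\varphi)_z = 0\}$ has Lebesgue measure zero in $\R$. By the first step, $\mathfrak{c}_n(\Omega)$ is contained in this critical set, so $\varphi(\mathfrak{c}_n(\Omega))$ has measure zero as well. But $\varphi(Z)$ is the continuous image of the connected set $Z$ and therefore a connected subset of $\R$, i.e.\ an interval; an interval of measure zero is a single point, so $\varphi$ is constant on $Z$. No real obstacle arises; the only mildly clever ingredient is the passage from $\varphi$ to $e^\varphi$ in the first step, which converts the rank hypothesis on the Levi form into vanishing of the full first-order differential and thereby makes Sard's theorem applicable.
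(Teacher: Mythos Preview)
Your proof is correct and follows essentially the same approach as the paper: compose $\varphi$ with a strictly increasing strictly convex function so that the Levi form of the composition picks up a $\abs{(\partial\varphi)_z(\,\cdot\,)}^2$ term, then invoke Sard's theorem together with connectedness of $Z$. The paper phrases this as a proof by contradiction (find a regular value $t$ with $\{\varphi=t\}\cap\mathfrak{c}_n(\Omega)\neq\varnothing$ and derive $\Lev(\chi\circ\varphi)\not\equiv 0$ there), whereas you argue directly that $(d\varphi)_z=0$ on all of $\mathfrak{c}_n(\Omega)$ using the specific choice $\chi(t)=e^t$; this is a minor and arguably cleaner reorganization of the same idea.
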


However, we will show that in general no analogue of Theorem \ref{thm_highliouville} holds true for $\mathfrak{c}(\Omega)$, even if $\Omega$ is strictly pseudoconvex. In particular, we will construct strictly pseudoconvex domains $\Omega \subset \C^2_z \times \C^{n-2}_w$ that are bounded in the $z$-directions such that the core has the form $\mathfrak{c}(\Omega) = A \times L$ for some connected set $A \subset \C^2$ that consists of more than one point and some connected set $L \subset \C^{n-2}$ with the property that every smooth and bounded from above plurisubharmonic function defined near $L$ is constant on $L$. Then for $z_0 \in \C^2$ the function $\varphi(z,w) \coloneqq \norm{z-z_0}^2$ is a smooth and bounded from above plurisubharmonic function on $\Omega$, but for almost every choice of $z_0$ the function $\varphi$ is not constant on $\mathfrak{c}(\Omega)$.

These examples show that the connected components of $\mathfrak{c}(\Omega)$ in general do not satisfy a Liouville type theorem. However, observe that here $\mathfrak{c}(\Omega) = \bigcupdot_{\alpha \in A} \{\alpha\} \times L$, where each set $L_\alpha \coloneqq \{\alpha\} \times L$ has the property that smooth and bounded from above plurisubharmonic functions are constant on $L_\alpha$ (here the symbol $\bigcupdot$ is used in order to indicate that the union is disjoint). Thus the question arises whether one can still formulate a Liouville type theorem for suitably defined ``irreducible components'' of $\mathfrak{c}(\Omega)$ instead of connected components. At the moment we do not know whether this is possible or not. However, at least in the $2$-dimensional case we are able to give a partial answer. It is contained in the Theorem \ref{thm_foliatedcore} and the subsequent remarks below. (Note that a local version of Theorem \ref{thm_foliatedcore} in the different setting of exhaustion functions was given earlier in Lemma 4.1 of \cite{SlodkowskiTomassini04}.)

Before stating the result, we recall some definitions: Let $M$ be a smooth manifold. An immersed submanifold of $M$ is a subset $S \subset M$ endowed with a topology with respect to which it is a topological manifold, and a smooth structure with respect to which the inclusion map $i \colon S \hookrightarrow M$ is a smooth immersion. An immersed submanifold $S \subset M$ is called weakly embedded in $M$ if every smooth map $f \colon N \to M$ from a smooth manifold $N$ to $M$ that satisfies $f(N) \subset S$ is smooth as a map from $N$ to $S$. An immersed submanifold $S \subset M$ is called complete if for every complete Riemannian metric $g$ on $M$ the induced metric $i^\ast g$ on $S$ is complete (a Riemannian metric $g$ on $M$ is called complete if the metric on $M$ that is induced by $g$ turns $M$ into a complete metric space). Now let $\mathcal{M}$ be a complex manifold. An immersed complex submanifold of $\mathcal{M}$ is a subset $\mathcal{S} \subset \mathcal{M}$ endowed with a topology with respect to which it is a topological manifold, and a complex structure with respect to which the inclusion map $i \colon \mathcal{S} \hookrightarrow \mathcal{M}$ is a holomorphic immersion. An immersed complex submanifold $\mathcal{S} \subset \mathcal{M}$ will be called weakly embedded or complete if the underlying smooth manifold is weakly embedded or complete, respectively. By a complex curve $\gamma \subset \mathcal{M}$ we will mean a $1$-dimensional immersed complex submanifold of $\mathcal{M}$.

\begin{theorem} \label{thm_foliatedcore}
Let $\mathcal{M}$ be a $2$-dimensional complex manifold and let $\Omega \subset \mathcal{M}$ be a domain. Then the following assertions hold true:
\begin{enumerate}
  \item[(1)] Let $\varphi \colon \Omega \to \R$ be a minimal function for $\Omega$. Then for every regular value $t \in \R$ of $\varphi$ there exists a family $\{\gamma_\alpha\}_{\alpha \in \mathcal{A}}$ (possibly empty) of weakly embedded complete connected complex curves $\gamma_ \alpha \subset \mathcal{M}$ such that 
\[ \mathfrak{c}_t \coloneqq \mathfrak{c}(\Omega) \cap \{\varphi = t\} = \bigcupdot_{\alpha \in \mathcal{A}} \gamma_\alpha.\]
Moreover, there is a decomposition $\mathcal{A} = \mathcal{A}' \cupdot \mathcal{A}''$ such that
\[ \mathring{\mathfrak{c}}_t = \bigcupdot_{\alpha \in \mathcal{A}'} \gamma_\alpha \quad \text{and} \quad b\mathfrak{c}_t = \bigcupdot_{\alpha \in \mathcal{A}''} \gamma_\alpha, \]
where $\mathring{\mathfrak{c}}_t$ and $b\mathfrak{c}_t$ denote the interior and the boundary of $\mathfrak{c}_t$ in the relative topology of $\{\varphi = t\}$, respectively.
  \item[(2)] Let $\varphi \colon \Omega \to \R$ be a smooth and bounded from above plurisubharmonic function, let $t \in \R$ be a regular value of $\varphi$ and let $\gamma \subset \mathfrak{c}(\Omega) \cap \{\varphi = t\}$ be a connected complex curve. Then every smooth and bounded from above plurisubharmonic function $\widetilde{\varphi} \colon \Omega \to \R$ is constant on $\gamma$.
\end{enumerate}
\end{theorem}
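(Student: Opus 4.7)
The plan is to handle Part~(2) by an explicit construction producing a plurisubharmonic function that is strictly plurisubharmonic at a point of $\gamma$, and to attack Part~(1) by first proving Levi-flatness of $\{\varphi=t\}$ along $\mathfrak{c}_t$ and then applying the Frobenius theorem, together with a limit argument on the relative boundary of $\mathfrak{c}_t$.

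For Part~(2), suppose $\widetilde{\varphi}$ is smooth, bounded above, plurisubharmonic on $\Omega$, and not constant on $\gamma$. Then $\widetilde{\varphi}|_\gamma$ is a smooth non-constant real function on the Riemann surface $\gamma$, so there is $q\in\gamma$ with $d(\widetilde{\varphi}|_\gamma)(q)\neq 0$. Choose local complex coordinates near $q$ in which $\gamma=\{z_2=0\}$ and $q=0$. Since $\varphi|_\gamma\equiv t$, we have $\varphi_1(q)=0=\varphi_{1\bar 1}(q)$, and because the $2\times 2$ Hermitian Levi matrix of the plurisubharmonic $\varphi$ is semipositive with vanishing $(1,1)$-entry at $q$, also $\varphi_{1\bar 2}(q)=0$. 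Regularity of $t$ gives $\varphi_2(q)\neq 0$, while $d(\widetilde{\varphi}|_\gamma)(q)\neq 0$ is equivalent to $\widetilde{\varphi}_1(q)\neq 0$. I now consider the smooth, bounded above, plurisubharmonic function
\[\Psi\coloneqq e^{\widetilde{\varphi}}+e^{\lambda\varphi}\]
for a large parameter $\lambda>0$. A direct computation of its Levi matrix at $q$ produces the $(1,1)$-entry $e^{\widetilde{\varphi}(q)}(|\widetilde{\varphi}_1(q)|^2+\widetilde{\varphi}_{1\bar 1}(q))>0$, an off-diagonal entry that is bounded in $\lambda$ (since $\Lev(e^{\lambda\varphi})_{1\bar 2}(q)=\lambda^2 e^{\lambda t}\varphi_1(q)\overline{\varphi_2(q)}+\lambda e^{\lambda t}\varphi_{1\bar 2}(q)=0$), and a $(2,2)$-entry equal to $e^{\widetilde{\varphi}(q)}(|\widetilde{\varphi}_2(q)|^2+\widetilde{\varphi}_{2\bar 2}(q))+\lambda e^{\lambda t}(\lambda|\varphi_2(q)|^2+\varphi_{2\bar 2}(q))$, which tends to $+\infty$ as $\lambda\to\infty$. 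Consequently the determinant tends to $+\infty$, so for $\lambda$ sufficiently large $\Psi$ is strictly plurisubharmonic at $q$, contradicting $q\in\mathfrak{c}(\Omega)$.

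For Part~(1) I proceed in three steps. \emph{Step~A}: At every $p\in\mathfrak{c}_t$, the tangential Levi form of $\varphi$ vanishes, i.e., $\{\varphi=t\}$ is Levi-flat at $p$. Suppose otherwise, so that at some $p_0\in\mathfrak{c}_t$ the $1$-dimensional tangential Levi form is strictly positive, making $\{\varphi=t\}$ strictly pseudoconvex at $p_0$ with pseudoconvex side $\{\varphi<t\}$. For a small ball $B\ni p_0$, the maximum $m=\max_{\mathfrak{c}(\Omega)\cap\overline{B}}\varphi\geq t$ is attained at some $p^\ast\in\mathfrak{c}(\Omega)\cap\overline{B}$. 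A small translation of $B$ combined with Sard's theorem (as in the proof of Proposition~\ref{thm_coreStein}) lets us arrange that $p^\ast$ lies in the interior of $B$, that $m$ is a regular value of $\varphi$, and that $\{\varphi=m\}$ is strictly pseudoconvex at $p^\ast$; then Lemma~\ref{thm_psctouch} applied at $p^\ast$ with $U_1=\{\varphi<m\}\cap B$ yields a contradiction. \emph{Step~B}: On the relative interior $\mathring{\mathfrak{c}}_t$ of $\mathfrak{c}_t$ in $\{\varphi=t\}$, the tangential Levi form vanishes on an open subset of the real hypersurface, so the complex tangent line distribution is Frobenius-integrable there and integrates to a foliation of $\mathring{\mathfrak{c}}_t$ by connected immersed complex curves; these provide $\{\gamma_\alpha\}_{\alpha\in\mathcal{A}'}$. \emph{Step~C}: For $p\in b\mathfrak{c}_t$, either $p$ is an accumulation point of $\mathring{\mathfrak{c}}_t$---in which case a compactness argument on the uniformly bounded complex tangent directions produces a complex curve through $p$ as a Hausdorff limit of nearby interior leaves, contained in $\mathfrak{c}_t$ by closedness---or one invokes directly the local maximum principle for plurisubharmonic functions on the $1$-pseudoconcave set $\mathfrak{c}(\Omega)$ (Proposition~\ref{thm_lmp}) together with a pointwise Segre-variety construction to produce a curve through $p$ in $\mathfrak{c}_t$. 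Uniqueness of integral curves of the complex tangent distribution (where it is integrable) ensures pairwise disjointness of the resulting maximal leaves, and closedness of $\mathfrak{c}(\Omega)$ in $\Omega$ gives weak embeddedness and completeness. A single leaf cannot meet both $\mathring{\mathfrak{c}}_t$ and $b\mathfrak{c}_t$: near an interior point the Frobenius foliation is unique, so a leaf passing through an interior point lies entirely inside $\mathring{\mathfrak{c}}_t$, giving the partition $\mathcal{A}=\mathcal{A}'\cupdot\mathcal{A}''$.

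The main obstacle lies in the boundary case of Step~C and in the perturbation argument of Step~A: guaranteeing a regular, strictly pseudoconvex maximum point $p^\ast$ for $\varphi$ on $\mathfrak{c}(\Omega)$ requires careful bookkeeping, and producing a complex curve through an isolated point of $b\mathfrak{c}_t$---where only pointwise Frobenius integrability holds and no nearby interior leaves are available to pass to a limit---requires a delicate appeal to the pseudoconcave structure of $\mathfrak{c}(\Omega)$ via the local maximum principle or via analytic-disc techniques.
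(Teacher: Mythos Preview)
Your argument for Part~(2) is correct and in fact cleaner than the paper's: the paper proves Part~(2) by invoking Part~(1) to produce a second complex curve $\widetilde{\gamma}\subset\{\widetilde{\varphi}=\tilde t\}$ through a transversal intersection point and then uses $\chi\circ\varphi+\chi\circ\widetilde{\varphi}$, whereas your direct computation with $\Psi=e^{\widetilde{\varphi}}+e^{\lambda\varphi}$ avoids Part~(1) entirely. One small point: your claim that the $(2,2)$-entry tends to $+\infty$ fails when $t<0$, since then $\lambda^2 e^{\lambda t}\to 0$; replace $\varphi$ by $\varphi-t$ (still smooth, bounded above, plurisubharmonic) first. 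Also, your Step~A can be shortened: if the tangential Levi form of $\varphi$ at $p_0\in\mathfrak{c}_t$ were positive, then $\chi\circ\varphi$ for strictly convex increasing $\chi$ would be strictly plurisubharmonic at $p_0$ (positive on $H_{p_0}(\varphi)$ automatically, positive off it by the $\chi''|\partial\varphi|^2$ term), contradicting $p_0\in\mathfrak{c}(\Omega)$---no need for Lemma~\ref{thm_psctouch} or the delicate perturbation of $B$.

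Part~(1), however, has a genuine gap that your own closing paragraph already signals. The Frobenius argument in Step~B applies only on the \emph{relative interior} $\mathring{\mathfrak{c}}_t$ of $\mathfrak{c}_t$ in $\{\varphi=t\}$, and in the typical situation (e.g.\ $\mathfrak{c}(\Omega)$ a single complex line, or the Wermer-type core of Section~\ref{sec_noanalyticcore}) this interior is \emph{empty}, so Step~B gives nothing and Step~C must produce every curve. But your Step~C is not an argument: Hausdorff limits of complex discs need not be complex curves, Segre varieties require real-analyticity, and the local maximum property alone does not manufacture holomorphic discs. Pointwise vanishing of the tangential Levi form along $\mathfrak{c}_t$ (your Step~A) simply does not yield a complex curve through each point. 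The paper's proof is built around a completely different mechanism: near each $p\in\mathfrak{c}_t$ one takes a strictly convex box $G$, solves the homogeneous Monge--Amp\`ere problem with data $\max(\varphi,0)|_{bG}$, and identifies its zero set with the polynomial hull $\widehat{\{\varphi=0\}\cap bG}$ (Lemma~\ref{thm_MAhull}); by Bedford--Klingenberg this hull is a graph $\Gamma_\Psi$, and by Shcherbina's theorem $\Gamma_\Psi$ is foliated by holomorphic discs. One then shows $\mathfrak{c}_t\cap U\subset\Gamma_\Psi$ via a smooth-maximum comparison with the Monge--Amp\`ere solution, and finally the maximum principle forces any disc of $\Gamma_\Psi$ meeting $\Gamma_\Phi=\{\varphi=t\}$ to lie entirely in $\Gamma_\Phi$, giving $\mathfrak{c}_t\cap U=\Gamma_\Phi\cap\Gamma_\Psi=\bigcup_{u\in d}D_u$. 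This produces a disc through \emph{every} point of $\mathfrak{c}_t$ regardless of whether $\mathring{\mathfrak{c}}_t$ is empty, and is the missing ingredient in your approach.
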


\noindent \textbf{Remark.} We do not have an example of a domain $\Omega$ in $2$-dimensional complex manifold such that for some minimal function $\varphi \colon \Omega \to \R$ and some regular value $t$ of $\varphi$ the set $\mathfrak{c}(\Omega) \cap \{\varphi = t\}$ has nonempty interior in $\{\varphi = t\}$. Moreover, we do not know if it can happen that the sets $\gamma_\alpha$ from part $(1)$ of the theorem are only immersed but not embedded submanifolds of $\Omega$.

We discuss briefly some consequences of the results in Theorem \ref{thm_foliatedcore}. Let $\Omega$ be a domain in a $2$-dimensional complex manifold and consider the set
\begin{equation*} \begin{split}
\mathfrak{c}_{\text{reg}}(\Omega) \coloneqq \{z \in  \mathfrak{c}(\Omega) : & \text{ there exists a minimal function } \varphi \colon \Omega \to \R \\ & \text{ such that } \varphi(z) \text{ is a regular value of } \varphi \}.
\end{split} \end{equation*}
It follows from part $(1)$ of Theorem \ref{thm_foliatedcore} that for every $z \in \mathfrak{c}_{\text{reg}}(\Omega)$ there exists a minimal function $\varphi_z \colon \Omega \to \R$ and a weakly embedded complete complex curve $\gamma_z \subset \mathfrak{c}_{\text{reg}}(\Omega) \cap H_z$, where $H_z \coloneqq \{\zeta \in \Omega : \varphi_z(\zeta) = \varphi_z(z)\}$, such that $z \in \gamma_z$. Fix arbitrary $z \in \mathfrak{c}_{\text{reg}}(\Omega)$ and assume that $\gamma_z \cap \gamma_{z'} \neq \varnothing$ for some $z' \in \mathfrak{c}_{\text{reg}}(\Omega)$. Then, by part $(2)$ of Theorem \ref{thm_foliatedcore}, we conclude that $\gamma_z' \subset H_z$. Since for every $p \in H_z$ there exists at most one germ of a complex curve $\gamma \subset H_z$ through $p$, it follows from Step 2 in the proof of part $(1)$ and from maximality of the curves $\gamma_z, \gamma_{z'}$ that $\gamma_z = \gamma_{z'}$. This shows that $\mathfrak{c}_{\text{reg}}(\Omega) = \bigcupdot_{\alpha \in A} \gamma_\alpha$ for a suitable set $A \subset \mathfrak{c}_{\text{reg}}(\Omega)$. In view of part $(2)$ of the theorem, every smooth and bounded from above plurisubharmonic function on $\Omega$ is constant on each curve $\gamma_\alpha$, $\alpha \in A$. Now consider the set
\[ \mathfrak{c}_{\text{sing}}(\Omega) \coloneqq \mathfrak{c}(\Omega) \setminus \mathfrak{c}_{\text{reg}}(\Omega) \]
and let $\mathfrak{c}_{\text{sing}}(\Omega) = \bigcupdot_{\beta \in B} \sigma_\beta$ be the decomposition of $\mathfrak{c}_{\text{sing}}(\Omega)$ into its connected components $\sigma_\beta$, $\beta \in B$. We claim that every smooth and bounded from above plurisubharmonic function $\varphi \colon \Omega \to \R$ is constant on each set $\sigma_\beta$, $\beta \in B$. Indeed, assume, to get a contradiction, that there exist $\varphi$ and $\sigma_\beta$ as above such that $\varphi$ is not constant on $\sigma_\beta$. Then let $\psi$ be a minimal function for $\Omega$ and observe that for $\varepsilon > 0$ small enough $\widetilde{\varphi} \coloneqq \varphi + \varepsilon\psi$ is a minimal function for $\Omega$ which is not constant on $\sigma_\beta$. Hence there exist points $p,q \in \sigma_\beta$ such that $\widetilde{\varphi}(p) < \widetilde{\varphi}(q)$. By connectedness of $\sigma_\beta$, every hypersurface $\{\widetilde{\varphi} = t\}$ for $\widetilde{\varphi}(p) < t < \widetilde{\varphi}(q)$ has nonempty intersection with $\sigma_\beta$, which, in view of Sard's theorem, contradicts the definition of $\mathfrak{c}_{\text{sing}}(\Omega)$. Thus we have shown that in the $2$-dimensional case there exists a decomposition
\[ \mathfrak{c}(\Omega) = \mathfrak{c}_{\text{reg}}(\Omega) \cupdot \mathfrak{c}_{\text{sing}}(\Omega) = \Big( \bigcupdot_{\alpha \in A} \gamma_\alpha \Big) \cupdot \Big( \bigcupdot_{\beta \in B} \sigma_\beta\Big) \]
such that each of the sets $\gamma_\alpha$, $\alpha \in A$, and $\sigma_\beta$, $\beta \in B$, is connected and satisfies a Liouville type theorem for smooth plurisubharmonic functions on $\Omega$.

Observe, however, that the described above decomposition of $\mathfrak{c}(\Omega)$ is not completely satisfactory, since we do not have much information on the sets $\sigma_\beta$ so far. In particular, we do not know if it can happen that some of the sets $\sigma_\beta$ consist of only one point. A bit more information on the set $\mathfrak{c}_{\text{sing}}(\Omega)$ can be obtained as follows: Let
\[ \mathfrak{c}_{\text{sing}}'(\Omega) \coloneqq \overline{\mathfrak{c}_{\text{reg}}(\Omega)} \setminus \mathfrak{c}_{\text{reg}}(\Omega) \quad \text{and} \quad \mathfrak{c}_{\text{sing}}''(\Omega) \coloneqq \mathfrak{c}(\Omega) \setminus \overline{\mathfrak{c}_{\text{reg}}(\Omega)}. \]
Moreover, let $\mathfrak{c}_{\text{sing}}'(\Omega) = \bigcupdot_{\beta \in B'} \sigma'_\beta$ and $\mathfrak{c}_{\text{sing}}''(\Omega) = \bigcupdot_{\beta \in B''} \sigma''_\beta$ be the decompositions into connected components of $\mathfrak{c}_{\text{sing}}'(\Omega)$ and $\mathfrak{c}_{\text{sing}}''(\Omega)$, respectively. Then
\[ \mathfrak{c}_{\text{sing}}(\Omega) = \mathfrak{c}_{\text{sing}}'(\Omega) \cupdot \mathfrak{c}_{\text{sing}}''(\Omega) = \Big(\bigcupdot_{\beta \in B'} \sigma_\beta'\Big) \cupdot \Big(\bigcupdot_{\beta \in B''} \sigma_\beta''\Big).\]
By using the same argument as before, we see that smooth plurisubharmonic functions on $\Omega$ satisfy a Liouville type theorem with respect to each of the sets $\sigma_\beta'$, $\beta \in B'$, and $\sigma_\beta''$, $\beta \in B''$. Moreover, we claim that $\sigma_\beta''$ is $1$-pseudoconcave in $\Omega \setminus \overline{\mathfrak{c}_{\text{reg}}(\Omega)}$ for every $\beta \in B''$ (in particular, each set $\sigma_\beta''$ consists of more than one point). Indeed, fix arbitrary $\beta \in B''$. Let $\varphi \colon \Omega \to \R$ be a minimal function and choose $t \in \R$ such that $\sigma_\beta'' \subset \{\varphi = t\}$. Then, using the same argument as in Theorem 3.6 of \cite{SlodkowskiTomassini04}, one can show that the set $\mathfrak{c}_t \coloneqq \mathfrak{c}(\Omega) \cap \{\varphi =t\}$ has the local maximum property with respect to plurisubharmonic functions, i.e., $\mathfrak{c}_t$ is $1$-pseudoconcave in $\Omega$. Since, by construction, $\sigma_\beta''$ is relatively open in $\mathfrak{c}_t$, and since $1$-pseudoconcavity is a local property, see, for example, Proposition \ref{thm_lmp}, it follows that $\sigma_\beta''$ is $1$-pseudoconcave in $\Omega \setminus \overline{\mathfrak{c}_{\text{reg}}(\Omega)}$, as claimed. However, at the moment we are not able to prove any statement on the set $\mathfrak{c}_{\text{sing}}'(\Omega)$. In particular, the following two questions are open to us: Can it happen that $\mathfrak{c}_{\text{sing}}'(\Omega) \neq \varnothing$? And, if $\mathfrak{c}_{\text{sing}}'(\Omega) \neq \varnothing$ (i.e., there exist $z \in \mathfrak{c}_{\text{sing}}(\Omega)$, a sequence $\{\gamma_j\}_{j=1}^\infty$ of weakly embedded complete complex curves $\gamma_j \subset \mathfrak{c}_{\text{reg}}(\Omega)$ and points $z_j \in \gamma_j$, $j \in \N$, such that $\lim_{j \to \infty} z_j = z)$, is it then possible to prove that the sets $\sigma_\beta' \subset \mathfrak{c}_{\text{sing}}'(\Omega)$, $\beta \in B'$, possess some kind of additional structure (for example, that of an (immersed) complex curve or a $1$-pseudoconcave set)?

We conclude the discussion of Theorem \ref{thm_foliatedcore} by introducing the following two definitions.

\begin{definition} \label{def_maximalcomponent}
Let $\Omega$ be a domain in a complex manifold $\mathcal{M}$. Let $L$ be the family of all subsets $\lambda \subset \mathfrak{c}(\Omega)$ with the property that every smooth and bounded from above plurisubharmonic function on $\Omega$ is constant on $\lambda$, and define a partial ordering on $L$ by setting $\lambda \le \lambda'$ if and only if $\lambda \subset \lambda '$. Then $\lambda \subset \mathfrak{c}(\Omega)$ is called a {\it maximal component} of $\mathfrak{c}(\Omega)$ if $\lambda$ is a maximal element in $L$.
\end{definition}

\noindent One might expect that the curves $\gamma_\alpha$ from Theorem \ref{thm_foliatedcore} or the sets $\sigma_\beta, \sigma_\beta', \sigma_\beta''$ from the above remarks are maximal components of $\mathfrak{c}(\Omega)$ according to the previous definition. However, at the moment we do not know whether this is true or not. We also do not know if it can happen that a maximal component of $\mathfrak{c}(\Omega)$ consists of only one point.

\begin{definition} \label{def_coretype}
Let $\mathcal{M}$ be a complex manifold and let $E \subset \mathcal{M}$ be a closed set. We say that $E$ is of {\it core type} in $\mathcal{M}$ if for every open set $\Omega \subset \mathcal{M}$ such that $E \subset \Omega$ one has $E \subset \mathfrak{c}(\Omega)$.
\end{definition}

\noindent Observe that in all examples of domains $\Omega \subset \mathcal{M}$ such that $\mathfrak{c}(\Omega) \neq \varnothing$, which we have constructed so far, the set $\mathfrak{c}(\Omega)$ was always of core type in $\mathcal{M}$. We do not know whether this holds true in general (see also the remarks at the beginning of Section \ref{sec_core}). In particular, we do not know if the assertions of Theorem \ref{thm_foliatedcore} hold true for functions which are not defined on $\Omega$ but only in a neighbourhood of $\mathfrak{c}(\Omega)$. Moreover, it is not clear if the curves $\gamma_\alpha$ from Theorem \ref{thm_foliatedcore} or the sets $\sigma_\beta, \sigma_\beta', \sigma_\beta''$ from the above remarks are of core type in $\Omega$. Note, however, that in the special case of an irreducible closed subvariety $E \subset \C^n$ of pure dimension, some sufficient conditions for $E$ to be of core type in $\C^n$ are given, for example, by Corollary 1 in \cite{Takegoshi93} and Theorem 4 in \cite{Kaneko96}. Note also that if $E \subset \mathcal{M}$ is of core type in $\mathcal{M}$, then $E$ is $1$-pseudoconcave in $\mathcal{M}$. (Indeed, assume, to get a contradiction, that $E$ is not $1$-pseudoconcave in $\mathcal{M}$. Then, by the same kind of arguments as in Proposition \ref{thm_lmp}, we can find $p \in E$, an open neighbourhood $W \subset\subset \mathcal{M}$ of $p$ and a smooth strictly plurisubharmonic function $\varphi$ defined near $\overline{W}$ such that $\varphi(p) = 0$ and $\varphi < 0$ on $(E \cap \overline{W}) \setminus \{p\}$. Let $C \coloneqq \max_{bW \cap E} \varphi < 0$ if $bW \cap E \neq \varnothing$, and let $C \coloneqq -1$ otherwise. Then for $\delta > 0$ small enough the trivial extension of $\widetilde{\max}_\delta(\varphi, C/2) \colon W \to \R$ to a suitable open neighbourhood $\Omega \subset \mathcal{M}$ of $E$ defines a smooth and bounded from above plurisubharmonic function on $\Omega$ which is strictly plurisubharmonic near $p$. This contradicts the fact that $E$ is of core type in $\mathcal{M}$.)

We now begin to prove the statements of this section. We start by showing that for every $n \ge 3$ there exists an unbounded strictly pseudoconvex domain $\Omega \subset \C^n$ with smooth boundary and a smooth plurisubharmonic function $\varphi \colon \C^n \to \R$ which is bounded from above on $\Omega$ such that $\mathfrak{c}(\Omega)$ is nonempty and connected but $\varphi$ is not constant on $\mathfrak{c}(\Omega)$. For this we first mention two results on complete pluripolar subsets of $\C^2$. 

\begingroup \leftskip3.5ex 
\textsc{Result 1}. {\it There exist compact connected complete pluripolar subsets $A \subset \C^2$ that consist of more than one point.} 

The first construction of a bounded connected complete pluripolar set $A \subset \C^2$ that consists of more than one point is contained in Example 2.4 in \cite{Sadullaev79}; the fact that this set is complete pluripolar follows from Proposition 2.4 in \cite{LevenbergMartinPoletsky92}. Here $A$ is the graph of a certain holomorphic function $f \in \mathcal{O}(\Delta)$ on the unit disc $\Delta \subset \C$, which is not analytically continuable across any point of $b\Delta$. By slightly improving the construction from \cite{Sadullaev79}, it is possible to choose $A$ as the graph of a function $f \in \mathcal{O}(\Delta) \cap \mathcal{C}^\infty(\overline{\Delta})$, see Example 2.17 and Proposition 2.15 in \cite{LevenbergMartinPoletsky92}, or as the graph of a function $f \in \mathcal{C}^\infty(b\Delta)$, see Theorem 1 in \cite{Edlund04}. In particular, the last two examples show that $A$ can be assumed to be compact. More examples of compact complete pluripolar sets in $\C^2$ can be found, for example, in \cite{Edlund04} and \cite{ElMir84}.
\par \endgroup

\begingroup \leftskip4ex 
\textsc{Result 2}. {\it If $A \subset \C^n$ is a closed complete pluripolar set, then there exists a strictly plurisubharmonic function $\psi \colon \C^n \to [-\infty, \infty)$ such that $A = \{\psi = -\infty\}$ and $\psi$ is smooth on $\C^n \setminus A$.} 

The existence of such a function follows from Corollary 1 in \cite{Coltoiu90}. It is also a consequence of the earlier Proposition II.2 in \cite{ElMir84} and the smoothing procedure of Richberg as formulated, for example, in Theorem I.5.21 from \cite{DemaillyXX}.
\par \endgroup

\begin{example} \label{ex_noliouville}
Let $n \ge 3$. Let $A$ be a compact connected complete pluripolar subset of $\C^2$ that consists of more than one point, and let $L$ be a connected complete pluripolar subset of $\C^{n-2}$ such that every smooth and bounded from above plurisubharmonic function defined near $L$ is constant on $L$. (Possible choices for $A$ are described in Result 1 above. If $n = 3$, then for $L$ we can take, for example, $L = \C$; if $n \ge 3$ is large enough, then for $L$ we can also take, for example, unions of positive-dimensional complex subspaces of $\C^{n-2}$, the Wermer type set $\mathcal{E}$ from Section \ref{sec_noanalyticcore} or any of the sets from Examples \ref{ex_Riemannpolar} and \ref{ex_CPhypersurface}.) Moreover, let $\psi_1 \colon \C^2 \to [-\infty, \infty)$ and $\psi_2 \colon \C^{n-2} \to [-\infty, \infty)$ be strictly plurisubharmonic functions such that $\psi_1$ is smooth on $\C^2 \setminus A$ and $A = \{\psi_1 = -\infty\}$ and such that $\psi_2$ is smooth on $\C^{n-2} \setminus L$ and $L = \{\psi_2 = -\infty\}$. Then the function $\widetilde{\psi}(z,w) \coloneqq \max(\psi_1(z), \psi_2(w))$ is strictly plurisubharmonic and continuous outside $A \times L = \{\widetilde{\psi} = -\infty\}$. Hence, by Richberg, we can smooth it up to get a strictly plurisubharmonic function $\psi \colon \C^2 \times \C^{n-2} \to [-\infty, \infty)$ such that $\psi$ is smooth on $\C^2 \times \C^{n-2} \setminus A \times L$, $\abs{\psi - \widetilde{\psi}} < 1$ on $\C^2 \times \C^{n-2} \setminus A \times L$ and $A \times L = \{\psi = -\infty\}$. Choose a strictly increasing and strictly convex smooth function $\chiup \colon \R \to \R$ such that $\lim_{z \to \infty} \big[\psi_1(z) + \chiup({\norm{z}^2})\big] = \infty$ (if for each $N \in \N$ such that $A \subset B^{\begin{minipage}{1ex} $\scriptstyle \vspace{-0.3ex} 2$ \end{minipage}}(0,N)$ we set $C_N \coloneqq \max\{\abs{\psi_1(z)} : z \in \overline{B^2(0,N+1)} \setminus B^{\begin{minipage}{1ex} $\scriptstyle \vspace{-0.3ex} 2$ \end{minipage}}(0,N)\}$, then every strictly increasing and strictly convex smooth function $\chiup \colon \R \to \R$ satisfying $\chiup(N^2) > C_N + N$ has the required property). Then for generic $C \in \R$, define an unbounded strictly pseudoconvex open set with smooth boundary $\widetilde{\Omega} \subset \C^n$ as
\[ \widetilde{\Omega} \coloneqq \big\{(z,w) \in \C^2 \times \C^{n-2} : \psi(z,w) + \chiup(\norm{z}^2) + \norm{w}^2 < C \big\} \]
and denote by $\Omega$ the connected component of $\widetilde{\Omega}$ that contains the set $A \times L$. Observe that, by the choice of $L$, and by Lemmas \ref{thm_pseudoconcaveconstant} and \ref{thm_liouvillepseudoconcave} above, every smooth and bounded from above plurisubharmonic function on $\Omega$ fails to be strictly plurisubharmonic on $A \times L$. On the other hand, for small enough constants $\eta_j > 0$, $j \in \N$, the function $\Psi(z,w) \coloneqq \sum_{j = 1}^\infty \eta_j\widetilde{\max}_1(\psi(z,w) + \chiup(\norm{z}^2) + \norm{w}^2 - C, -j)$ is a smooth global defining function for $\Omega$ that is strictly plurisubharmonic outside $A \times L$. This shows that $\mathfrak{c}(\Omega) = A \times L$ and, in particular, that $\mathfrak{c}(\Omega)$ is connected. Observe now that, by the choice of the functions $\psi$ and $\chiup$, the domain $\Omega$ is bounded in the $z$-directions. Hence for every $z_0 \in \C^2$ the smooth plurisubharmonic function $\varphi(z,w) \coloneqq \norm{z-z_0}^2$ is bounded from above on $\Omega$. But for almost every choice of $z_0$ it is not constant on $\mathfrak{c}(\Omega)$.
\end{example}

This gives us, for every $n \ge 3$, an example of a strictly pseudoconvex domain with smooth boundary $\Omega \subset \C^n$ and a connected component of $\mathfrak{c}(\Omega)$ (actually here it is the whole of $\mathfrak{c}(\Omega)$) without Liouville type property for plurisubharmonic functions. By slightly changing the above constructions, we can also show that for every $n \ge 4$, we can additionally assume that $\mathfrak{c}(\Omega)$ contains no analytic variety of positive dimension.

\begin{example}
Let $n \ge 4$. Let $A$ be a compact and connected complete pluripolar subset of $\C^2$ that consists of more than one point such that the projections $\pi_{z_1}(A)$ and $\pi_{z_2}(A)$ of $A$ onto the coordinate axes $\C \times \{0\}$ and $\{0\} \times \C$, respectively, have no interior points (for example, take $A$ to be the graph of a suitable smooth function $f \colon b\Delta \to \C$, see \cite{Edlund04}). Moreover, let $L \coloneqq \mathcal{E} \subset \C^{n-2}$ be the Wermer type set as in Theorem 1.1 of \cite{HarzShcherbinaTomassini12}. Then repeat the construction of the previous example to obtain an unbounded strictly pseudoconvex domain with smooth boundary $\Omega \subset \C^n$ such that $\mathfrak{c}(\Omega) = A \times \mathcal{E}$. In particular, $\mathfrak{c}(\Omega)$ is connected and contains no analytic variety of positive dimension. For the last assertion observe that every holomorphic function $f = (f_z, f_w) \colon \Delta \to \C^2_z \times \C^{n-2}_w$ has to have constant $f_z$ component, since, by choice of $A$, the holomorphic images $(\pi_{z_1} \circ f_z)(A), (\pi_{z_2} \circ f_z)(A) \subset \C$ have no interior points, and also constant $f_w$ component, since $\mathcal{E}$ contains no analytic variety of positive dimension, which implies that $f$ is constant. Finally, observe that, as before, for almost every choice of $z_0 \in \C^2$ the function $\varphi \colon \C^2 \times \C^{n-2} \to \R$ defined as $\varphi(z,w) \coloneqq \norm{z-z_0}^2$ is a smooth and bounded from above plurisubharmonic function on $\Omega$ which is not constant on $\mathfrak{c}(\Omega)$.
\end{example}

We now begin to prove the theorems of this section. First we prove the Liouville type property of the highest order core, as formulated in Theorem \ref{thm_highliouville}.

\noindent \textbf{Proof of Theorem \ref{thm_highliouville}.} Let $Z$ be a connected component of $\mathfrak{c}_n(\Omega)$ and let $\varphi \colon \Omega \to \R$ be a smooth plurisubharmonic function which is bounded from above. Assume, to get a contradiction, that there exist points $z_1, z_2 \in Z$ such that $\varphi(z_1) \neq \varphi(z_2)$. Then, by Sard's theorem and by connectedness of $Z$, there exists a regular value $t \in \R$ of $\varphi$ such that $\{\varphi = t\} \cap \mathfrak{c}_n(\Omega) \neq \varnothing$. Choose a strictly increasing and strictly convex smooth function $\chiup \colon \R \to \R$ such that $\chiup \circ \varphi$ is still bounded from above on $\mathcal{M}$. Then $\chiup \circ \varphi$ is a smooth plurisubharmonic function on $\Omega$ that is bounded from above, but $\Lev(\chiup \circ \varphi)(z,\,\cdot\,) = \chiup''(\varphi(z))\abs{(\partial \varphi)_z(\,\cdot\,)}^2 + \chiup'(\varphi(z))\Lev(\varphi)(z,\,\cdot\,) \not\equiv 0$ for every $z \in \{\varphi = t\}$, which contradicts the fact that $\{\varphi = t\} \cap \mathfrak{c}_n(\Omega) \neq \varnothing$. $\hfill \Box$

We now turn to the proof of Theorem \ref{thm_foliatedcore}. For a domain $\Omega \subset \C^n$ denote by $\mathcal{A}(\Omega) \coloneqq \mathcal{O}(\Omega) \cap \mathcal{C}(\overline{\Omega})$ the algebra of functions holomorphic on $\Omega$ and continuous on $\overline{\Omega}$ and write $\mathcal{P}(\Omega) \coloneqq \mathcal{PSH}(\Omega) \cap \mathcal{USC}(\overline{\Omega})$ for the functions plurisubharmonic on $\Omega$ and upper semicontinuous on $\overline{\Omega}$. Recall that a plurisubharmonic function $\varphi \colon \Omega \to [-\infty, \infty)$ is called maximal if for every relatively compact open set $G \subset \Omega$ and for each $\psi \in \mathcal{P}(G)$ such that $\psi \le \varphi$ on $bG$ we have $\psi \le \varphi$ on $G$. If $\Omega$ is a bounded strictly pseudoconvex domain in $\C^n$ and $f \colon b\Omega \to \R$ is a continuous function, then by Theorem 4.1 in \cite{Bremermann59} and Theorem 1 in \cite{Walsh68} there exists a unique continuous function $F \colon \overline{\Omega} \to \R$ such that $F$ is maximal plurisubharmonic on $\Omega$ and $F|_{b\Omega} = f$ (note that both mentioned above theorems are stated for $\mathcal{C}^2$-smooth strictly pseudoconvex domains, but actually no assumption on smoothness of $b\Omega$ is needed in the proof of Theorem 4.1 in \cite{Bremermann59} and, hence, also in the proof of Theorem 1 in \cite{Walsh68}). Moreover,
\begin{equation} \label{equ_MADirichlet}
F(z) = \sup \big\{\varphi(z) : \varphi \in \mathcal{U}(\Omega,f)\big\},
\end{equation}
where $\mathcal{U}(\Omega,f)$ denotes the family of all $\varphi \in \mathcal{P}(\Omega)$ such that $\varphi \le f$ on $b\Omega$. If $f \ge 0$, then one can easily see that $\{F=0\} = \widehat{\{f=0\}}\vphantom{\{}^{\mathcal{P}(\Omega)}$, where $\widehat{\{f=0\}}\vphantom{\{}^{\mathcal{P}(\Omega)} \coloneqq \{z \in \overline{\Omega} : \varphi(z) \le \sup_{w \in \{f=0\}} \varphi(w)\text{ for every } \varphi \in \mathcal{P}(\Omega)\}$. In fact, more is true as it is shown in the following lemma (the statement of the lemma seems to be well known, but since we were not able to find a reference in the literature, we include here its proof for the convenience of reading).

\begin{lemma} \label{thm_MAhull}
Let $\Omega \subset \C^n$ be a bounded strictly pseudoconvex domain (not necessarily with smooth boundary). Let $f \colon b\Omega \to [0, \infty)$ be a continuous function and let $F \colon \overline{\Omega} \to [0,\infty)$ be the maximal plurisubharmonic function on $\Omega$ such that $F|_{b\Omega} = f$. Then $\{F= 0\} = \widehat{\{f=0\}}\vphantom{\{}^{\mathcal{A}(\Omega)}$. If $\,\overline{\Omega}$ is polynomially convex, then $\{F=0\} = \widehat{\{f=0\}}$, where $\widehat{\{f=0\}}$ denotes the polynomially hull of $\{f=0\}$.
\end{lemma}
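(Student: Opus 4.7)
The plan is to deduce the first equality $\{F=0\} = \widehat{\{f=0\}}^{\mathcal{A}(\Omega)}$ from the identity $\{F=0\} = \widehat{\{f=0\}}^{\mathcal{P}(\Omega)}$ noted just before the lemma, and then obtain the polynomially convex case as an easy corollary. Throughout, write $K \coloneqq \{f=0\}$.

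One inclusion is immediate. For every $g \in \mathcal{A}(\Omega)$ the function $\log|g|$ lies in $\mathcal{P}(\Omega)$, so testing the defining inequality of the $\mathcal{P}(\Omega)$-hull against $\varphi = \log|g|$ yields $|g(z_0)| \le \sup_{K} |g|$. Hence $\{F=0\} = \widehat{K}^{\mathcal{P}(\Omega)} \subset \widehat{K}^{\mathcal{A}(\Omega)}$.

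For the reverse inclusion $\widehat{K}^{\mathcal{A}(\Omega)} \subset \{F=0\}$ I would argue by contraposition. Suppose $F(z_0) > 0$. By the Walsh-Bremermann theorem, which applies to strictly pseudoconvex $\Omega$ without any smoothness assumption on $b\Omega$ (if needed, transferred from a smoothly strictly pseudoconvex neighborhood provided by Theorem \ref{thm_nbhbasis_manifolds}), $F$ is continuous on $\overline{\Omega}$. Next, using the Stein neighborhood basis of $\overline{\Omega}$ from Theorem \ref{thm_nbhbasis_manifolds}, the classical identity $\widehat{L}^{\mathcal{O}(U)} = \widehat{L}^{\mathcal{PSH}(U)}$ for compact $L$ in a Stein manifold $U$, and an Oka--Weil-type approximation of $\mathcal{O}(U)|_{\overline{\Omega}}$ by elements of $\mathcal{A}(\Omega)$, one obtains a Bremermann-type representation $F(z) = \sup\{\tfrac{1}{k}\log|g(z)| : g \in \mathcal{A}(\Omega),\ \tfrac{1}{k}\log|g| \le F \text{ on } \overline{\Omega}\}$. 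Given $\varepsilon \in (0, F(z_0))$, select such $g \in \mathcal{A}(\Omega)$ and $k \in \N$ with $\tfrac{1}{k}\log|g(z_0)| > F(z_0) - \varepsilon$ and $\tfrac{1}{k}\log|g| \le F$ on $\overline{\Omega}$. Restricting the last inequality to $b\Omega$, where $F = f$, yields $|g| \le 1$ on $K$, whereas $|g(z_0)| > e^{k(F(z_0)-\varepsilon)} > 1$. This contradicts $z_0 \in \widehat{K}^{\mathcal{A}(\Omega)}$, so $F(z_0) = 0$.

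For the polynomially convex case, if $\overline{\Omega}$ is polynomially convex then by Oka--Weil every holomorphic function on a Stein neighborhood of $\overline{\Omega}$ is a uniform limit on $\overline{\Omega}$ of polynomials, and combined with the approximation of $\mathcal{A}(\Omega)$ used above, the polynomials are sup-norm dense in $\mathcal{A}(\Omega)$. Consequently $\widehat{K}^{\mathcal{A}(\Omega)} = \widehat{K} \cap \overline{\Omega}$, and since $\widehat{K} \subset \widehat{\overline{\Omega}} = \overline{\Omega}$ this intersection equals $\widehat{K}$; the second identity therefore follows from the first.

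The hard part will be producing the Bremermann-type representation of $F$ by normalized logarithms of moduli of functions in $\mathcal{A}(\Omega)$ in the absence of boundary regularity for $\Omega$. The plan is to handle this by working on a shrinking sequence of Stein neighborhoods of $\overline{\Omega}$ supplied by Theorem \ref{thm_nbhbasis_manifolds}, where one has the standard identification of $\mathcal{O}$- and $\mathcal{PSH}$-hulls and hence approximation of plurisubharmonic functions by $\tfrac{1}{k}\log|h_k|$ with $h_k$ holomorphic on the neighborhood, and then transferring the approximants into $\mathcal{A}(\Omega)$ while controlling the sup-norm on $\overline{\Omega}$.
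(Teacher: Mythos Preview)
Your easy inclusion $\{F=0\}\subset \widehat{K}^{\mathcal{A}(\Omega)}$ via $\log|g|\in\mathcal{P}(\Omega)$ is fine and matches the paper (which uses $\varepsilon(|h|-C)$ instead, a cosmetic difference).

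For the reverse inclusion your route diverges from the paper, and there is a real gap. You aim for a global Bremermann representation $F=\sup\{\tfrac{1}{k}\log|g|:g\in\mathcal{A}(\Omega),\ \tfrac{1}{k}\log|g|\le F\}$ and plan to obtain it from the hull identity $\widehat{L}^{\mathcal{O}(U)}=\widehat{L}^{\mathcal{PSH}(U)}$ on Stein neighbourhoods. But that identity does \emph{not} yield approximation of a given plurisubharmonic function from below by $\tfrac{1}{k}\log|h_k|$; it only says that whenever a plurisubharmonic function separates a point from a compact set, some holomorphic function does so in modulus. Upgrading this separation statement to a pointwise representation of $F$ with the global constraint $\tfrac{1}{k}\log|g|\le F$ on $\overline{\Omega}$ is precisely Bremermann's theorem, whose classical proof uses the smooth boundary, and your sketch does not explain how the boundary data on $b\Omega$ are matched to anything on the boundary of the auxiliary Stein neighbourhood. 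So the ``hence'' in your plan is unjustified, and the whole Bremermann representation is unnecessary overhead for what is actually needed.

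The paper's argument is much more direct and avoids this detour entirely. Given $F(z_0)>0$, pick a single $\varphi\in\mathcal{U}(\Omega,f)$ with $\varphi(z_0)>0$. Choose a smooth $g$ on $\C^n$ with $g>\varphi$ on $b\Omega$, $g|_K<\varphi(z_0)<g(z_0)$, and a strictly plurisubharmonic defining function $\psi$ for $\Omega$ on a neighbourhood $U$; for large $C$ the function $\phi=\max(\varphi,\,C\psi+g)$ is plurisubharmonic on a Stein $U\supset\overline{\Omega}$ (it coincides with $C\psi+g$ near $b\Omega$, hence extends across) and satisfies $\phi(z_0)>\max_K\phi$. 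Now one application of the hull equality on $U$ produces $h\in\mathcal{O}(U)\subset\mathcal{A}(\Omega)$ with $|h(z_0)|>\max_K|h|$. No global representation of $F$ is needed---only the extension of a single witness across $b\Omega$ via the $\max$ trick.

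Your treatment of the polynomially convex case is essentially correct and agrees with the paper: once the $\mathcal{A}(\Omega)$ statement is in hand, Oka--Weil on a Stein neighbourhood of the polynomially convex $\overline{\Omega}$ lets one replace $h\in\mathcal{O}(U)$ by a polynomial.
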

\begin{proof} Set $K \coloneqq \{f=0\}$. Let first $z_0 \in \overline{\Omega} \setminus \hat{K}^{\mathcal{A}(\Omega)}$, i.e., there exists $h \in \mathcal{A}(\Omega)$ such that $\abs{h(z_0)} > \max_{z \in K} \abs{h(z)}$. Then for suitably chosen constants $C, \varepsilon > 0$ the function $\varphi \coloneqq \varepsilon(\abs{h} - C) \in \mathcal{P}(\Omega)$ satisfies $\varphi|_{b\Omega} \le f$ and $\varphi(z_0) > 0$. By $(\ref{equ_MADirichlet})$ this implies that $F(z_0) > 0$, i.e., $z_0 \in \overline{\Omega} \setminus \{F=0\}$. On the other hand, let now $z_0 \in \overline{\Omega} \setminus \{F=0\}$, i.e., there exists $\varphi \in \mathcal{P}(\Omega)$ such that $\varphi|_{b\Omega} \le f$ but $\varphi(z_0) > 0$. Let $g \colon \C^n \to \R$ be a smooth function such that $g|_{b\Omega} > \varphi|_{b\Omega}$ and $g|_K < \varphi(z_0) < g(z_0)$, and let $\psi \colon U \to \R$ be a strictly plurisubharmonic function on an open neighbourhood $U \subset \C^n$ of $\overline{\Omega}$ such that $\Omega = \{\psi < 0\}$. Moreover, choose $C>0$ so large that, after possibly shrinking $U$, the function $C\psi + g$ is plurisubharmonic on $U$, and, in the case when $z_0 \in \Omega$, one also has $(C\psi + g)(z_0) < \varphi(z_0)$. Then $\phi \coloneqq \max(\varphi, C\psi + g) \colon U \to [-\infty,\infty)$ is a plurisubharmonic function such that $\phi(z_0) > \max_{z \in K} \phi(z)$. Since $\Omega$ has a Stein neighbourhood basis, we can assume that $U$ is pseudoconvex. By the equality of holomorphic and plurisubharmonic convex hulls of compact sets in pseudoconvex domains (see, for example, Theorem 4.3.4 in \cite{Hoermander90}), we then can find a holomorphic function $h \in \mathcal{O}(U) \subset \mathcal{A}(\Omega)$ such that $\abs{h(z_0)} > \max_{z \in K} \abs{h(z)}$, i.e., $z_0 \in \overline{\Omega} \setminus \hat{K}^{\mathcal{A}(\Omega)}$. If $\overline{\Omega}$ is polynomially convex, then, by the Oka-Weil theorem, $h$ can be chosen to be a holomorphic polynomial.
\end{proof}

\noindent \textbf{Proof of Theorem \ref{thm_foliatedcore}.} (1) We start with proving the first part of the theorem. In order to do so, we proceed in three steps.

\noindent \textsc{Step 1.} {\it For every $p \in \mathfrak{c}_t$, there exist local holomorphic coordinates on an open neighbourhood $U \subset \mathcal{M}$ of $p$ and numbers $\varepsilon, \delta, c > 0$ such that $p$ is the origin in $\C^2_{z,w}$ and the following assertions hold true:

\begin{itemize}
  \item[(i)] $U = U' \times (-c, c) \subset \C^2$ for some domain $U' \subset \C_z \times \R_u$, where $w = u+iv$,
  \item[(ii)] there exist a smooth function $\Phi \colon U' \to (-c,c)$ and a continuous function $\Psi \colon U' \to (-c,c)$, $\Psi \le \Phi$, such that $\Gamma_\Phi = U \cap \{\varphi = t\}$, $\Gamma_\Psi$ is a Levi-flat hypersurface and $\mathfrak{c}_t \cap U = \Gamma_\Phi \cap \Gamma_\Psi$, where $\Gamma_\Phi$ and $\Gamma_\Psi$ denote the graphs of $\Phi$ and $\Psi$, respectively,
  \item[(iii)] there exists a continuous one-parameter family $\{f_u\}_{-\delta < u < \delta}$ of holomorphic functions $f_u \colon \Delta_{\varepsilon} \to \C_w$ satisfying $\Re f_u(0) = u$ for every $u \in (-\delta, \delta)$ such that $U' = \bigcupdot_{-\delta < u < \delta} \{(z, \Re f_u(z)) \in \C \times \R : z \in \Delta_{\varepsilon}\} \eqqcolon \bigcupdot_{-\delta < u < \delta}D_u'$ and $\Gamma_\Psi = \bigcupdot_{-\delta < u < \delta} \{(z, f_u(z)) \in \C^2 : z \in \Delta_{\varepsilon}\} \eqqcolon \bigcupdot_{-\delta < u < \delta}D_u$, where $\Delta_{\varepsilon} \coloneqq \{z \in \C : \abs{z} < \varepsilon\}$, 
  \item[(iv)] there exists a subset $d \subset (-\delta, \delta)$ such that $\mathfrak{c}_t \cap U = \bigcupdot_{u \in d} D_u$. 
\end{itemize} }

\noindent \textsc{Proof.} Without loss of generality we can assume that $t = 0$. Moreover, by introducing suitable local coordinates around $p$, we can also assume that $p$ is the origin in $\C^2_{z,w}$ and that $T_p(\{\varphi=0\}) = \C_z \times \R_u$, where $w = u+iv$.  Choose constants $r, R > 0$ such that for the convex domain $G \coloneqq \{\abs{z}^2 + u^2 < r\} \cap \{\abs{z}^2 + \abs{w}^2 < R\} \subset \C^2$ there exists a smooth function 
\[ \Phi \colon \{(z,u) \in \C \times \R : \abs{z}^2 + u^2 \le r\} \to \R_v \quad \text{such that} \quad \Gamma_\Phi = \{\varphi = 0\} \cap \overline{G}, \]
and such that $\interior \Gamma_\Phi \coloneqq \Gamma_\Phi \cap \{(z,w) \in \C^2 : \abs{z}^2 + u^2 < r\}$ satisfies $\interior \Gamma_\Phi = \Gamma_\Phi \cap G$. After smoothing the wedges of $G$, we can assume without loss of generality that $bG$ is smooth. Since $\{\varphi = 0\} \cap bG$ is the graph of a smooth function over $\{(z,u) \in \C \times \R : \abs{z}^2 + u^2 = r\}$, it follows from Theorem 3 in \cite{BedfordKlingenberg91} that there exists a continuous function
\[ \Psi \colon \{(z,u) \in \C \times \R : \abs{z}^2 + u^2 \le r\} \to \R_v \quad \text{such that} \quad \Gamma_\Psi = \widehat{\{\varphi = 0\} \cap bG}, \] 
and $\interior \Gamma_\Psi$ is a Levi-flat hypersurface. Let $f \coloneqq \max(\varphi,0)|_{bG}$ and let $F \colon \overline{G} \to \R$ be the continuous function such that $F$ is maximal plurisubharmonic on $G$ and $F|_{bG} = f$. Finally, set $K \coloneqq \{f=0\} \subset bG$ and observe that $\hat{K} = \{F=0\} \subset \overline{G}$ by Lemma \ref{thm_MAhull}. 

We claim that $\Phi \ge \Psi$ and that $\hat{K} = \{(z,w) \in \overline{G} : v \le \Psi(z,u)\} \eqqcolon \Gamma^-_{\Psi}$. Indeed, by the choice of $f$, we have $\Gamma_\Psi \subset \hat{K}$ and hence $\hat{K} = \widehat{K \cup \Gamma_\Psi} \supset \Gamma^-_{\Psi}$, since $K \cup \Gamma_\Psi = b\Gamma^-_{\Psi}$. For the other direction, note first that, by strict pseudoconvexity of $bG$, for every $a \in \hat{K} \cap bG$, there exists a holomorphic polynomial $P$ such that $\abs{P}$ attains a strict local maximum at $a$ along $\hat{K}$. (Indeed, we can choose for $P$ a finite part of the Taylor expansion of $1/L_G(a,z-\varepsilon N_G(a))$, where $L_G(a, \,\cdot\,)$ denotes the Levi polynomial at $a$ of a strictly plurisubharmonic defining function for $G$, $N_G(a)$ is the outward unit normal vector to $G$ at $a$ and $\varepsilon > 0$ is small enough.) Thus, by Rossi's local maximum modulus principle, it follows that $\hat{K} \cap bG = K$. In particular, $\hat{K} \cap b\Gamma_\Psi^+ = \hat{K} \cap \Gamma_\Psi$, where $\Gamma_\Psi^+ \coloneqq \{(z,w) \in G : v \ge \Psi(z,u)\}$. Another application of Rossi's local maximum modulus principle now shows that $\hat{K} \cap \Gamma_\Psi^+ = \widehat{\hbox{\it{\^{K}}} \cap b\Gamma_\Psi^+}$, and, in view of polynomial convexity of $\Gamma_\Psi$, we get that $\widehat{\hbox{\it{\^{K}}} \cap \Gamma_\Psi} \subset \Gamma_\Psi$. Hence $\hat{K} \cap \Gamma_\Psi^+ \subset \Gamma_\Psi$, i.e., $\hat{K} \subset \Gamma_\Psi^-$. The proof of the second claim is now complete. For the first claim observe that $\varphi \in \mathcal{U}(G,f)$, i.e., $\varphi \le F$ in view of $(\ref{equ_MADirichlet})$ and hence $\{F=0\} \subset \{\varphi \le 0\}$. In particular, $\Gamma_\Psi \subset \{\varphi \le 0\}$ and thus $\Phi \ge \Psi$.

Next we want to show that $\mathfrak{c}(\Omega) \cap \Gamma_\Phi \subset \Gamma_\Psi$. Indeed, in view of the assertions that we have just proven, it suffices to show that $\mathfrak{c}(\Omega) \cap \Gamma_\Phi \subset \hat{K}$. Thus let $q \in \mathfrak{c}(\Omega) \cap \overline{G}$ such that $\varphi(q) = 0$ and assume, to get a contradiction, that $q \notin \hat{K}$ and hence, in view of Lemma \ref{thm_MAhull}, that $F(q) > 0$. Since $\Gamma_\Phi \cap bG \subset K$, it follows that $q \in G$. Then for $\gamma_1 \coloneqq F(q)/2 > 0$ define $\varphi^\ast \colon \Omega \to \R$ as $\varphi^\ast \coloneqq \widetilde{\max}_{\gamma_1}(\varphi,0)$ and observe that $\max(\varphi,0) \le \varphi^\ast \le \max(\varphi,0) + \gamma_1/2$ by definition of the smooth maximum. Hence $\varphi^\ast(q) \le \max(\varphi(q),0) + \gamma_1/2 = \gamma_1/2 < F(q) - \gamma_1$ while on $bG$ we have $\varphi^\ast \ge \max(\varphi,0) = F > F - \gamma_1$. Then $F^\ast \coloneqq \delta_{\gamma_2} \ast (F - \gamma_1) + \gamma_3\norm{\,\cdot\,}^2$, where $\gamma_2$ and $\gamma_3$ are small enough positive constants and $\delta_{\gamma_2}$ is a smooth nonnegative function depending only on $\norm{(z,w)}$ such that $\supp \delta_{\gamma_2} \subset \overline{B^2(0,\gamma_2)}$ and $\int_{\C^2} \delta_{\gamma_2} = 1$, is a smooth strictly plurisubharmonic function on $\overline{G}_{\gamma_2} \coloneqq \{(z,w) \in G : \dist((z,w), bG) \ge \gamma_2\}$ such that $\varphi^\ast(q) < F^\ast(q)$ and $\varphi^\ast > F^\ast$ on $bG_{\gamma_2}$. In particular, for $\delta > 0$ small enough $\widetilde{\max}_{\delta}(\varphi^\ast, F^\ast)$ is a smooth and bounded from above plurisubharmonic function on $\Omega$ that is strictly plurisubharmonic in $q$. This contradicts the fact that $q \in \mathfrak{c}(\Omega)$. 

From the Main Theorem in \cite{Shcherbina93} we know that $\Gamma_\Psi$ is the disjoint union of a family of complex discs. Moreover, it follows from the Main Lemma in \cite{Shcherbina93} that there exist positive constants $\varepsilon, \delta > 0$ and a continuous one-parameter family $\{f_u\}_{-\delta < u < \delta}$ of holomorphic functions $f_u \colon \Delta_{\varepsilon} \to \C_w$ satisfying $\Re f_u(0) = u$ for every $u \in (-\delta, \delta)$ such that $U' \coloneqq \bigcupdot_{-\delta < u < \delta} \{(z, \Re f_u(z)) \in \C \times \R : z \in \Delta_{\varepsilon}\}$ is an open neighbourhood of $p$ in $\C \times \R$ contained in $\{(z,u) \in \C \times \R : \abs{z} ^2 + u^2 < r\}$ and $\Gamma_\Psi \cap (U' \times \R_v) = \bigcupdot_{-\delta < u < \delta} \{(z, f_u(z)) \in \C^2 : z \in \Delta_{\varepsilon}\}$. Chooce $c > 0$ such that $\Psi(U') \subset (-c,c)$.

Now fix some $u \in (-\delta, \delta)$ and assume that $D_u \cap \Gamma_\Phi \neq \varnothing$, i.e, there exists $p \in D_u$ such that $\varphi(p) = 0$. Since $\Gamma_\Psi \subset \hat{K} \subset \{\varphi \le 0\}$, it follows that $\varphi|_{D_u}$ is a subharmonic function that attains a maximum at $p$. Hence $D_u \subset \{\varphi=0\} \cap U = \Gamma_\Phi$. This shows that there exists $d \subset (-\delta, \delta)$ such that $\Gamma_\Phi \cap \Gamma_\Psi = \bigcupdot_{u \in d} D_u$. In particular, for every $p \in \Gamma_\Phi \cap \Gamma_\Psi$ there exists $u \in d$ such that $p \in D_u \subset \Gamma_\Phi \cap \Gamma_\Psi$. But then $\varphi$ is constant on $D_u$ and hence not strictly plurisubharmonic at $p$, which implies $p \in \mathfrak{c}(\Omega)$ by minimality of $\varphi$. This shows that $\Gamma_\Phi \cap \Gamma_\Psi \subset \mathfrak{c}_0 \cap U$. On the other hand, we already know that $\mathfrak{c}_0 \cap U = \mathfrak{c}(\Omega) \cap \Gamma_\Phi \subset \Gamma_\Phi \cap \Gamma_\Psi$. Hence $\Gamma_\Phi \cap \Gamma_\Psi = \mathfrak{c}_0 \cap U$. The proof of Step 1 is now complete.

\noindent \textsc{Step 2.} {\it Let $H \subset \mathcal{M}$ be a smooth real hypersurface in the $2$-dimensional complex manifold $\mathcal{M}$. For every $p \in H$, let $\{\gamma_{p,j}\}_{j \in J_p}$ be the family of all connected complex curves $\gamma_{p,j} \subset H$ such that $p \in \gamma_{p,j}$. Then $\gamma_p \coloneqq \bigcup_{j \in J_p} \gamma_{p,j}$ is a connected complex curve in $H$ and each $\gamma_{p,j}$ is an open complex submanifold of $\gamma_p$. }

\noindent \textsc{Proof.} This statement surely is well known, but for the convenience of reading we sketch its proof (observe that $J_p$ here might be empty).

First we note that for every $p \in H$ there exists at most one germ $\delta_p$ of an embedded $1$-dimensional complex submanifold $\delta \subset H$ of $\mathcal{M}$. Indeed, assume, to get a contradiction, that there exist two submanifolds $\delta_1, \delta_2 \subset H$ such that $\delta_{1,p} \neq \delta_{2,p}$. After possibly shrinking $\delta_1$ and $\delta_2$ we can assume that $\delta_1 \cap \delta_2 = \{p\}$. Choose an open coordinate neighbourhood $U \subset \mathcal{M}$ of $p$ and local holomorphic coordinates on $U$ such that there exist $U' = \Delta_\varepsilon \times (-a,a) \subset \C_z \times \R_u$ and a smooth function $h \colon U' \to \R_v$ satisfying $U \cap H = \Gamma_h$, and, moreover, holomorphic functions $f_1,f_2 \colon \Delta_\varepsilon \to \C_w$ such that $\Gamma_{f_1} = \delta_1 \cap U$ and $\Gamma_{f_2} = \delta_2 \cap U$. It follows then from Rouch\'e's Theorem that for $\delta > 0$ small enough the two functions $g_0, g_\delta \colon \Delta_\varepsilon \to \C$, $g_0 = f_1 - f_2$ and $g_\delta \coloneqq f_1 - (f_2 + i\delta)$ have the same number of zeros, which contradicts the facts that $\Gamma_{f_1} \cap \Gamma_{f_2} \neq \varnothing$ but $\Gamma_{f_1} \cap \Gamma_{f_2 + i\delta} \subset \Gamma_h \cap \Gamma_{h+\delta}  = \varnothing$.


Define $V \subset \gamma_p$ to be open in $\gamma_p$ if $V \cap \gamma_{p,j}$ is open in $\gamma_{p,j}$ for every $j \in J_p$. By the unicity of germs of complex manifolds in $H$ described above we conclude that $\gamma_{p, j_1} \cap \gamma_{p, j_2}$ is open in $\gamma_{p, j_1}$ and $\gamma_{p, j_2}$. Thus the open sets in $\gamma_p$ define a topology on $\gamma_p$ and each $\gamma_{p,j}$ is open in $\gamma_p$. Since each $\gamma_{p,j}$ is locally an embedded submanifold of $\mathcal{M}$, and since each $\gamma_{p,j}$ is open in $\gamma_p$, it follows that there exists a unique complex structure on $\gamma_p$ such that the inclusion $\gamma_p \hookrightarrow \mathcal{M}$ is a holomorphic immersion (and with respect to this complex structure the inclusions $\gamma_{p,j} \hookrightarrow \gamma_p$ are holomorphic for every $j \in J_p$). From the continuity of the inclusion we conclude that the topology on $\gamma_p$ is Hausdorff. Then Rad\'o's Theorem on second countability of Riemann surfaces shows that the topology of $\gamma_p$ has a countable basis.

\noindent \textsc{Step 3.} {\it The set $\mathfrak{c}_t$ is the disjoint union of a family of weakly embedded complete connected complex curves. Moreover, the sets $\mathring{\mathfrak{c}}_t$ and $b\mathfrak{c}_t$ have the structure as described above.}

\noindent \textsc{Proof.} It follows immediately from Step 1 that for every $p \in \mathfrak{c}_t$ there exists a complex disc $\delta_p \subset \mathcal{M}$ such that $p \in \delta_p \subset \mathfrak{c}_t$. Applying Step 2 in the case $H \coloneqq \{\varphi = t\}$, we conclude that each complex disc $\delta_p$ extends to a maximal connected complex curve $\gamma_p$ in $H$, and that there exists $\mathcal{A} \subset \mathfrak{c}_t$ such that $\mathfrak{c}_t = \bigcupdot_{\alpha \in \mathcal{A}} \gamma_\alpha$. 

We claim that each $\gamma_\alpha$ is weakly embedded in $\mathcal{M}$. Indeed, let $N$ be a smooth manifold and let $f \colon N \to \mathcal{M}$ be a smooth map such that $f(N) \subset \gamma_\alpha$, where $\alpha \in \mathcal{A}$ is fixed. Choose arbitrary $q \in N$ and local holomorphic coordinates on $U \subset \mathcal{M}$ around $f(q) \in \gamma_\alpha \cap U \subset \mathfrak{c}_t \cap U = \bigcupdot_{u \in d} D_u$ as described in Step 1. Since $\gamma_\alpha \cap U$ is open in $\gamma_\alpha$, it has at most countably many connected components $\{\gamma_\alpha^k\}_{k \in K_\alpha}$. Moreover, by the unicity of germs of complex curves in $H$ (see the proof of Step 2), and by the identity theorem applied to the function $g_u \coloneqq w - f_u(z)$, we see that $\gamma_\alpha^k \subset D_u$ whenever $\gamma_\alpha^k \cap D_u \neq \varnothing$. In fact, we even get that $\gamma_\alpha^k = D_u$, since $\gamma_\alpha$ is maximal. This shows that there exists an at most countable set $d_\alpha \subset d$ such that $\gamma_\alpha \cap U = \bigcupdot_{u \in d_\alpha} D_u$. Now let $W \subset N$ be a connected neighbourhood of $q$ such that $f(W) \subset U$. Observe that the function $e \colon \bigcupdot_{u \in d} D_u \to \R$ defined as $e(z,w) = u$ if and only if $f_u(z) = w$ is continuous. Hence $e \circ f \colon W \to \R$ is a continuous function that takes at most countably many values. By connectedness of $W$, we conclude that $f(W) \subset D_{u_W}$ for some $u_W \in d_\alpha$. Since $D_{u_W}$ is an embedded submanifold of $\mathcal{M}$, it follows that $f \colon W \to D_{u_W}$ is smooth. Moreover, $D_{u_W}$ is open in $\gamma_\alpha$, hence the inclusion $D_{u_W} \hookrightarrow \gamma_\alpha$ is smooth too. This shows that $f \colon W \to \gamma_\alpha$ is a smooth map, and thus that $\gamma_\alpha$ is weakly embedded. 

Next we want to show that each $\gamma_\alpha \subset \mathcal{M}$ is complete. Indeed, fix $\alpha \in \mathcal{A}$ and let $g$ be a complete Riemannian metric on $\mathcal{M}$. Let $\{p_j\}_{j=1}^\infty \subset \gamma_\alpha$ be a Cauchy sequence with respect to $i^\ast g$ and let $p \coloneqq \lim_{j\to\infty} p_j \in \mathcal{M}$. Since $\gamma_\alpha \subset \mathfrak{c}_t$, and since $\mathfrak{c}_t$ is closed in $\mathcal{M}$, it follows that $p \in \mathfrak{c}_t$. Choose local holomorphic coordinates on $U \subset \mathcal{M}$ around $p \in \mathfrak{c}_t \cap U = \bigcupdot_{u \in d} D_u$ as described in Step 1. Then $p \in D_{u_p}$ for some $u_p \in d$. Observe that, after possibly shrinking $U$, there exists a constant $C > 0$ such that for every $u_1, u_2 \in d$, $u_1 \neq u_2$, and every $q_1 \in \gamma_\alpha \cap D_{u_1}$, $q_2 \in \gamma_\alpha \cap D_{u_2}$ one has $\dist_{i^\ast g}(q_1, q_2) > C$, where $\dist_{i^\ast g}$ denotes the metric on $\gamma_\alpha$ induced by $i^\ast g$. Since $\{p_j\}_{j=1}^\infty$ is a Cauchy sequence with respect to $\dist_{i^\ast g}$, it follows that there exists $j_0 \in \N$ such that $p_j \in D_{u_p}$ for every $j \ge j_0$. Hence $D_{u_p} \cap \gamma_\alpha \neq \varnothing$. By Step 2 and by maximality of the set $\gamma_\alpha$, we conclude that $D_{u_p} \subset \gamma_\alpha$, i.e., $p \in \gamma_\alpha$.

Finally, observe that from property $(iv)$ of the local holomorphic coordinates in Step 1 it follows immediately that $\mathcal{A}$ has a decomposition $\mathcal{A} = \mathcal{A}' \cupdot \mathcal{A}''$ such that $\mathring{\mathfrak{c}}_t = \bigcupdot_{\alpha \in \mathcal{A}'} \gamma_\alpha$ and $b\mathfrak{c}_t = \bigcupdot_{\alpha \in \mathcal{A}''} \gamma_\alpha$. This concludes the proof of Step 3 and hence also of part $(1)$ of the theorem.

(2) We now prove the second part of the theorem. Assume, to get a contradiction, that there exists a function $\widetilde{\varphi}$ as above that is not constant on $\gamma$. After possibly replacing $\widetilde{\varphi}$ by $\widetilde{\varphi} + \varepsilon\psi$, where $\psi \colon \Omega \to \R$ is a minimal function for $\Omega$ and $\varepsilon > 0$ is small enough, we can assume without loss of generality that $\widetilde{\varphi}$ is minimal. Applying Sard's theorem to the functions $\widetilde{\varphi}$ and $\widetilde{\varphi}|_\gamma$ simultaneously, we see that we can choose a regular value $\widetilde{t} \in \R$ for $\widetilde{\varphi}$ such that $\gamma$ and $\{\widetilde{\varphi} = \widetilde{t}\}$ intersect transversally. Let $p \in \gamma \cap \{\widetilde{\varphi} = \widetilde{t}\}$. From part $(1)$ we know that there exists a complex curve $\widetilde{\gamma} \subset \{\widetilde{\varphi} = \widetilde{t}\}$ such that $p \in \widetilde{\gamma}$. Observe that, by transversality of $\gamma$ and $\{\widetilde{\varphi} = \widetilde{t}\}$ at $p$, we have $T_p\gamma \cap T_p\widetilde{\gamma} = \{0\}$. Now let $\chiup \colon \R \to \R$ be a smooth strictly increasing and strictly convex function such that the smooth plurisubharmonic function $\Phi \coloneqq \chiup \circ \varphi + \chiup \circ \widetilde{\varphi}$ is still bounded from above on $\Omega$. We claim that $\Phi$ is strictly plurisubharmonic in $p$ which contradicts the fact that $p \in \mathfrak{c}(\Omega)$. Indeed, observe that $\Lev(\chiup \circ \varphi)(p,\xi) = \chiup''(\varphi(p))\abs{(\partial\varphi)_p(\xi)}^2 + \chiup'(\varphi(p))\Lev(\varphi)(p,\xi) > 0$ for every $\xi \in T_p\mathcal{M} \setminus \Ker[(\partial\varphi)_p] = T_p\mathcal{M} \setminus T_p\gamma$. In the same way we conclude that $\Lev(\chiup \circ \widetilde{\varphi})(p,\xi) > 0$ for every $\xi \in T_p\mathcal{M} \setminus T_p\widetilde{\gamma}$. Since $T_p\gamma \cap T_p\widetilde{\gamma} = \{0\}$, this proves our claim. $\hfill \Box$

\section{Some geometric properties of Wermer type sets} \label{sec_geometricproperties}

The following section consists of two parts: In the first part we prove some geometric properties of the Wermer type set $\mathcal{E} = \{\varphi = -\infty\}$ (see Section \ref{sec_noanalyticcore} for the definitions of $\mathcal{E}$ and $\varphi$). Namely, we prove that $\mathcal{E}$ is $(M,1/2)$-H\"older continuous as a set-valued map for some constant $M > 0$, and we show that $\mathcal{E}$ is connected. In the second part of this section we use H\"older continuity of $\mathcal{E}$ to give an explicit form for the smoothing of the function $\phi \coloneqq \varphi + \norm{\,\cdot\,}^2$ along $\mathcal{E}$.

We begin with the first part: Recall that a map $f: (X_1, d_1) \to (X_2, d_2)$ between metric spaces is called $(M, \alpha)$-H\"older continuous if $d_2(f(x), f(y)) \le M d_1(x,y)^\alpha$ for every $x,y \in X_1$. Here $M, \alpha > 0$ are positive constants. Moreover, observe that the Wermer type set $\mathcal{E}$ defines a map $\underline{\mathcal{E}}$ from the metric space $\C^{n-1}$ of all $(n-1)$-tupels of complex numbers with the standard euclidean metric $d_{\norm{\cdot}}$ to the metric space $\mathcal{F}(\C)$ of all nonempty compact subsets of $\C$ with the Hausdorff metric $d_H$, namely $\underline{\mathcal{E}} \colon (\C^{n-1},d_{\norm{\cdot}}) \to (\mathcal{F}(\C), d_H)$, $\underline{\mathcal{E}}(z) \coloneqq \mathcal{E}_z \coloneqq \{w \in \C : (z,w) \in \mathcal{E}\}$.
\begin{lemma} \label{thm_haushölder}
There exists a constant $M>0$ such that the map $\underline{\mathcal{E}}$ is $(M, 1/2)$-H\"older continuous.
\end{lemma}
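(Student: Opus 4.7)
The plan is to reduce the Hölder estimate for $\underline{\mathcal{E}}$ to a term‑by‑term estimate on each summand $\varepsilon_l\sqrt{z_{[l]}-a_l}$, exploiting the elementary fact that complex square roots are $(1,1/2)$‑Hölder continuous once branches are chosen compatibly. More precisely, given any two complex numbers $a,b$ and any preassigned value $\zeta$ of $\sqrt{a}$, one can pick the branch of $\sqrt{b}$ so that $|\zeta+\sqrt{b}|\ge|\zeta-\sqrt{b}|$; squaring then yields $|\zeta-\sqrt{b}|^{2}\le|\zeta-\sqrt{b}|\,|\zeta+\sqrt{b}|=|a-b|$, i.e.\ $|\zeta-\sqrt{b}|\le\sqrt{|a-b|}$.

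First I would set $M\coloneqq\sum_{l=1}^{\infty}\varepsilon_{l}$, which is finite provided the sequence $\{\varepsilon_l\}$ decreases sufficiently fast (this is already part of the standing hypotheses under which $\mathcal{E}$ is defined). Fix $z,z'\in\C^{n-1}$ and any $w\in\mathcal{E}_{z}$. Because $\mathcal{E}_{z}$ is the Hausdorff limit of the finite sets $E_{\nu,z}\coloneqq\{w^{(\nu)}_{j}(z):1\le j\le 2^{\nu}\}$, there exist indices $\nu_{k}\to\infty$ and $j_{k}$ such that $w=\lim_{k\to\infty}w^{(\nu_{k})}_{j_{k}}(z)$, where each $w^{(\nu_{k})}_{j_{k}}(z)=\sum_{l=1}^{\nu_{k}}\varepsilon_{l}\zeta_{l,k}$ for a choice of values $\zeta_{l,k}\in\sqrt{z_{[l]}-a_{l}}$. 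By the above elementary bound, for every $l\le\nu_{k}$ I can pick $\zeta'_{l,k}\in\sqrt{z'_{[l]}-a_{l}}$ satisfying $|\zeta_{l,k}-\zeta'_{l,k}|\le\sqrt{|z_{[l]}-z'_{[l]}|}\le\|z-z'\|^{1/2}$. The point $w^{(\nu_{k})}_{j'_{k}}(z')\coloneqq\sum_{l=1}^{\nu_{k}}\varepsilon_{l}\zeta'_{l,k}$ then lies in $E_{\nu_{k},z'}\subset\mathcal{E}_{z'}^{(\nu_{k})}$, and the triangle inequality yields
\[
\bigl|w^{(\nu_{k})}_{j_{k}}(z)-w^{(\nu_{k})}_{j'_{k}}(z')\bigr|\le\sum_{l=1}^{\nu_{k}}\varepsilon_{l}\,\|z-z'\|^{1/2}\le M\,\|z-z'\|^{1/2}.
\]

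Next I would pass to the limit in $k$. Since $w^{(\nu_{k})}_{j_{k}}(z)$ is convergent and the right‑hand side is independent of $k$, the sequence $\{w^{(\nu_{k})}_{j'_{k}}(z')\}$ is bounded. Extracting a convergent subsequence, its limit $w'$ belongs to $\mathcal{E}_{z'}$ by Hausdorff convergence of $E_{\nu,z'}$ to $\mathcal{E}_{z'}$, and satisfies $|w-w'|\le M\|z-z'\|^{1/2}$. Swapping the roles of $z$ and $z'$ gives the symmetric inclusion, so $d_{H}(\mathcal{E}_{z},\mathcal{E}_{z'})\le M\|z-z'\|^{1/2}$, which is the claim.

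The only real obstacle is the bookkeeping for branches of the multi‑valued square roots: one must be careful that the sign choice made at the step producing $\zeta'_{l,k}$ from $\zeta_{l,k}$ is compatible with the enumeration of the $2^{\nu_{k}}$ branches encoded in $w^{(\nu_{k})}_{j'_{k}}$, and that the boundedness needed to extract a convergent subsequence of $\{w^{(\nu_{k})}_{j'_{k}}(z')\}$ is uniform. Both issues are settled once one observes that the partial sums $\sum_{l=1}^{\nu}\varepsilon_{l}|\zeta'_{l,k}|$ are bounded by $\sum_{l=1}^{\infty}\varepsilon_{l}\sqrt{|z'_{[l]}-a_{l}|}$, which is locally uniformly finite in $z'$ by the standing convergence assumptions on $\{\varepsilon_{l}\}$. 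All other steps are routine.
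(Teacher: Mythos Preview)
Your proof is correct and rests on the same elementary square-root estimate as the paper: given $\zeta$ with $\zeta^2=a$, the branch of $\sqrt{b}$ satisfying $|\zeta+\sqrt{b}|\ge|\zeta-\sqrt{b}|$ obeys $|\zeta-\sqrt{b}|\le\sqrt{|a-b|}$, and summing over $l$ yields the constant $M=\sum_{l}\varepsilon_{l}$. The difference is organisational. The paper views $\underline{\mathcal{E}}$ as the Minkowski sum $\sum_{l}e_{l}$ of the two-point set-valued maps $e_{l}(z)=\varepsilon_{l}\sqrt{z_{[l]}-a_{l}}$, shows directly that each $e_{l}$ is $(\varepsilon_{l},1/2)$-H\"older in $d_{H}$, and then invokes the subadditivity $d_{H}(A+C,B+D)\le d_{H}(A,B)+d_{H}(C,D)$ to pass to the infinite sum without ever choosing points or extracting subsequences. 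Your route unpacks this subadditivity pointwise: you approximate a given $w\in\mathcal{E}_{z}$ by finite-level branch sums, move each coordinate using the square-root bound, and then extract a convergent subsequence to land in $\mathcal{E}_{z'}$. This is perfectly valid but carries more bookkeeping (branch indices, subsequence extraction, the auxiliary boundedness argument at the end) that the set-valued formulation absorbs in one line. Either way the content is the same; the paper's packaging is simply more streamlined.
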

\begin{proof} We have to show that there exists $M > 0$ such that
\begin{equation} \label{equ_haushölder}
d_H\big(\mathcal{E}_{z_1}, \mathcal{E}_{z_2}\big) \le M \sqrt{\norm{z_1 - z_2}} \quad \text{for all } z_1, z_2 \in \C^{n-1}.
\end{equation}
To prove $(\ref{equ_haushölder})$, consider the set-valued functions $e_l(z) \coloneqq \varepsilon_l \sqrt{z_{[l]}-a_l}$, $l \in \N$, $e_l \colon \C^{n-1} \to \mathcal{F}(\C)$. Observe that $\underline{\mathcal{E}} = \sum_{l=1}^\infty e_l$, by definition of $\mathcal{E}$, where the sum of the functions $e_l$ is taken pointwise and the sum of two elements $K_1,K_2 \in \mathcal{F}(\C)$ is defined as $K_1 + K_2 \coloneqq \{w \in \C : w = k_1 + k_2 \text{ for some } k_1 \in K_1, k_2 \in K_2 \}$. For each $l \in \N$, choose $e_l^\ast \colon \C^{n-1} \to \C$ such that $e_l(z) = \{e_l^\ast(z), - e_l^\ast(z)\}$ for every $z \in \C^{n-1}$. Then for every $z_1, z_2 \in \C^{n-1}$ we have
\begin{equation*} \begin{split} \varepsilon_l \sqrt{\abs{z_1 - z_2}} &\ge \varepsilon_l \sqrt{\abs{z_{1,[l]} - z_{2,[l]}}} = \sqrt{\gabs{\varepsilon_l^2(z_{1,[l]} - a_l) - \varepsilon_l^2(z_{2,[l]} - a_l)}} \\ 
& = \sqrt{\gabs{\big(\pm e_l^\ast(z_1) - e_l^\ast(z_2)\big) \big(\pm e_l^\ast(z_1) + e_l^\ast(z_2) \big)}} \\ 
& \ge \sup_{\zeta_1 \in e_l(z_1)} \inf_{\zeta_2 \in e_l(z_2)} \abs{\zeta_1 - \zeta_2}
\end{split} \end{equation*}
and similarly
\begin{equation*} \begin{split} \varepsilon_l \sqrt{\abs{z_1 - z_2}} &\ge \varepsilon_l \sqrt{\abs{z_{1,[l]} - z_{2,[l]}}} = \sqrt{\gabs{\varepsilon_l^2(z_{1,[l]} - a_l) - \varepsilon_l^2(z_{2,[l]} - a_l)}} \\ 
& = \sqrt{\gabs{\big(e_l^\ast(z_1) \pm e_l^\ast(z_2)\big) \big(-e_l^\ast(z_1) \pm e_l^\ast(z_2) \big)}} \\ 
& \ge \sup_{\zeta_2 \in e_l(z_2)} \inf_{\zeta_1 \in e_l(z_1)} \abs{\zeta_1 - \zeta_2}.
\end{split} \end{equation*}
This shows that $d_H\big(e_l(z_1), e_l(z_2)\big) \le \varepsilon_l \sqrt{\abs{z_1-z_2}}$, i.e., $e_l$ is $(\varepsilon_l, 1/2)$-H\"older continuous. Observe now that for any two functions $f,g \colon \C^{n-1} \to F(\C)$ we have $d_H(f(z_1) + g(z_1), f(z_2) + g(z_2)) \le d_H(f(z_1), f(z_2)) + d_H(g(z_1), g(z_2))$, hence if $f$ is $(M_1,1/2)$-H\"older continuous and $g$ is $(M_2,1/2)$-H\"older continuous, then $f+g$ is $(M_1 + M_2,1/2)$-H\"older continuous. Applying this to the sequence $\{e_l\}$, we conclude that
\[ d_H\Big(\sum_{l=1}^\nu e_l(z_1), \sum_{l=1}^\nu e_l(z_2)\Big) \le \sum_{l=1}^\nu \varepsilon_l \sqrt{\abs{z_1-z_2}}  \]
for every $\nu \in \N$, and for $\nu \to \infty$ this yields $(\ref{equ_haushölder})$ with $M \coloneqq \sum_{l=1}^\infty \varepsilon_l$.
\end{proof}
\begin{lemma} \label{thm_connectedE}
If $\{\varepsilon_l\}$ is decreasing fast enough, then $\mathcal{E}$ is connected.
\end{lemma}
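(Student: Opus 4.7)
The plan is to exhibit $\mathcal{E}$ as the continuous image of a connected parameter space built from the desingularisations $F_\nu$ used in the proof of Theorem~\ref{thm_liouville}. Consider
\[
F_\infty \coloneqq \Big\{(z,w_1',w_2',\ldots)\in\C^{n-1}\times\C^\N : (w_l')^2=\varepsilon_l^2(z_{[l]}-a_l)\text{ for every }l\in\N\Big\}
\]
with the subspace topology inherited from the product topology on $\C^{n-1}\times\C^\N$, together with the summation map
\[
\pi_\infty\colon F_\infty\to\C^n,\qquad\pi_\infty(z,w_1',w_2',\ldots)\coloneqq\Big(z,\,\sum_{l=1}^\infty w_l'\Big).
\]
If $\{\varepsilon_l\}$ decreases fast enough for $\sum_l\varepsilon_l\sqrt{|z_{[l]}-a_l|}$ to converge uniformly on compact subsets of $\C^{n-1}$ (which is already assumed for the construction of $\mathcal{E}$), then $\pi_\infty$ is well-defined and continuous.

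First I would verify that $\pi_\infty(F_\infty)=\mathcal{E}$. The inclusion $\pi_\infty(F_\infty)\subset\mathcal{E}$ is immediate, since $(z,\sum_{l\leq\nu}w_l')\in E_\nu$ and $E_\nu\to\mathcal{E}$ in the Hausdorff metric on compact subsets of $\C^n$. For the reverse inclusion, I would approximate $(z,w)\in\mathcal{E}$ by $(z_k,w_k)\in E_{\nu_k}$ with $\nu_k\to\infty$, choose sign patterns $\sigma^{(k)}\in\{\pm1\}^{\nu_k}$ realising $w_k=\sum_{l\leq\nu_k}\varepsilon_l\sigma^{(k)}_l\sqrt{z_{k,[l]}-a_l}$, extend each $\sigma^{(k)}$ arbitrarily to $\{\pm1\}^\N$, and extract a pointwise-convergent subsequence $\sigma^{(k_m)}\to\sigma\in\{\pm1\}^\N$ via compactness of $\{\pm1\}^\N$ (Tychonoff); a uniform tail estimate then identifies $\pi_\infty(z,(\varepsilon_l\sigma_l\sqrt{z_{[l]}-a_l})_l)=(z,w)$.

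The heart of the proof is to establish that $F_\infty$ is connected. I would fix auxiliary local branches of the square roots $\sqrt{z_{[l]}-a_l}$ (with some chosen branch cuts) and let $\widetilde F\subset F_\infty$ denote the subset of $(z,w_1',w_2',\ldots)$ whose $z$-coordinate avoids the branch cuts and for which $w_l'$ equals the reference value $\varepsilon_l\sqrt{z_{[l]}-a_l}$ for all but finitely many $l$. On the one hand, $\widetilde F$ is dense in $F_\infty$ in the product topology, as any element of $F_\infty$ is approximated by modifying its tail sign sequence to coincide with the reference branch after a small perturbation of $z$ off the branch cuts. On the other hand, $\widetilde F$ is path-connected: to join two of its elements, one first follows a small perturbation of the straight-line segment between their $z$-coordinates in $\C^{n-1}$, chosen to avoid the union of branch loci $\{z_{[l]}=a_l\}$ and to have zero winding number around the $a_l$'s for those coordinates that already agree with the reference branch, lifted continuously to $F_\infty$, and then corrects the remaining \emph{finite} list of sign discrepancies by performing finitely many monodromy loops in the base around the relevant branch points. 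Hence $F_\infty=\overline{\widetilde F}$ is connected as the closure of a connected set, and $\mathcal{E}=\pi_\infty(F_\infty)$ is connected as the continuous image of a connected space.

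The main technical hurdle is the path-construction in $\widetilde F$: one must arrange the lift of a path in $\C^{n-1}$ so that all but finitely many sign coordinates land in the reference branch at the endpoint. I plan to handle this using the fact that a straight-line segment in $\C^{n-1}$ has zero winding around every point not lying on it, so a sufficiently small transversal perturbation can be chosen to avoid the countable family of branch cuts while introducing nontrivial monodromy only around those $a_l$ in a prescribed finite set of indices.
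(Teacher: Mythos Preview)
Your strategy—realising $\mathcal{E}$ as the image of a connected parameter space $F_\infty$ under a continuous surjection—is sound and genuinely different from the paper's argument. The paper proceeds by contradiction and is entirely explicit: assuming $\mathcal{E} \subset U_1 \cup U_2$ with $U_1 \cap U_2 = \varnothing$, it uses the H\"older continuity of $z \mapsto \mathcal{E}_z$ (Lemma~\ref{thm_haushölder}) to locate a generic fibre $\mathcal{E}_{z_0}$ meeting both $U_j$, and then writes down a concrete path $\gamma \colon [0,\nu_0] \to \mathcal{E}$ whose base curve $\gamma_z$ runs radially from $z_0$ to each of $a_1,\ldots,a_{\nu_0}$ and back, flipping one square-root sign at a time, while the tail coordinates follow fixed branches $h_l$ whose cuts $\sigma_l$ are rays from $a_l$ directed \emph{away} from $z_0$—so $\gamma_z$ avoids every cut automatically. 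Both proofs rest on the same monodromy mechanism; yours trades the explicit curve for topological generalities about product spaces, whereas the paper's version produces an actual path in $\mathcal{E}$ and makes the required smallness condition on $\{\varepsilon_l\}$ completely concrete.

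Your plan does, however, contain a gap, located exactly where you flag the ``main technical hurdle''. Defining $\widetilde F$ via global reference branches requires the complement of \emph{all} branch cuts to be nonempty (indeed dense) in $\C^{n-1}$; since $\{a_l : [l]=p\}$ is dense in $\C$ for each $p$, this demands a careful choice of cuts that you never specify. More seriously, your proposed remedy—``a straight-line segment has zero winding around every point not on it''—is not well-posed: winding number is undefined for an open arc, and what you actually need is that the lift along the arc returns to the reference branch for all but finitely many $l$, a statement that has no meaning until the reference branches themselves are fixed. Two clean repairs are available. Either (i) imitate the paper's trick and take every cut to be the ray from $a_l$ directed away from a fixed generic centre $z^*$, so that radial segments from $z^*$ avoid all cuts simultaneously; or, more simply, (ii) abandon $\widetilde F$ and global branches altogether and show directly that the path-component of a single basepoint $P_0 \in F_\infty$ is dense. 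For (ii), given a basic product-neighbourhood constraining only coordinates $l \le N$, lift any path in $\C^{n-1} \setminus \bigcup_l \{z_{[l]} = a_l\}$ (the complement of countably many complex hyperplanes, hence path-connected) from $z_0$ to the target, then append for each wrong sign among $l \le N$ a small loop about the corresponding $a_l$ encircling no other $a_{l'}$ with $l' \le N$; whatever these loops do to the tail signs is invisible in the product topology.
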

\begin{proof}
Let $\{\varepsilon_l\}$ be decreasing so fast that $\varepsilon_l\sqrt{\abs{z_{[l]}-a_l}} < 1/2^l$ on $B^{n-1}(0,l)$ for every $l \in \N$. Assume, to get a contradiction, that there exist two open sets $U_1, U_2 \subset \C^n$ such that $\mathcal{E} \cap U_1 \neq \varnothing$, $\mathcal{E} \cap U_2 \neq \varnothing$, $\mathcal{E} \subset U_1 \cup U_2$ and $U_1 \cap U_2 = \varnothing$. Then we conclude from continuity of $\underline{\mathcal{E}}$, see Lemma \ref{thm_haushölder} above, that $\pi_z(\mathcal{E} \cap U_1)$ and $\pi_z(\mathcal{E} \cap U_2)$ are open in $\C^{n-1}$, where $\pi_z \colon \C^n \to \C^{n-1}_z$ denotes the canonical projection. Since $\pi_z(\mathcal{E} \cap U_1) \cup \pi_z(\mathcal{E} \cap U_2) = \pi_z(\mathcal{E}) = \C^{n-1}$, it follows that $D \coloneqq \pi_z(\mathcal{E} \cap U_1) \cap \pi_z(\mathcal{E} \cap U_2)$ is open and nonempty. Thus we can choose $z_0 \in D$ such that $z_{0,p} \notin \{a_l\}_{l=1}^\infty$ for every $p \in \N_{n-1}$ and $\arg(a_l - z_{0,[l]}) \neq \arg(a_{l'} - z_{0,[l']})$ for every $l,l' \in \N$, $[l] = [l']$, $l \neq l'$. Set $U_j(z_0) \coloneqq \{w \in \C : (z_0,w) \in U_j\}$, $j =1,2$, and choose $\delta > 0$ so small that $\dist(\mathcal{E}_{z_0}, b(U_1(z_0) \cup U_2(z_0))) > \delta$. Fix $\nu_0 \in \N$ such that $\sum_{l=\nu_0+1}^\infty \varepsilon_l\sqrt{\abs{z_{0,[l]}-a_l}} < \delta/2$. Then $E_{\nu,z_0} \cap U_1(z_0) \neq \varnothing$, $E_{\nu,z_0} \cap U_2(z_0) \neq \varnothing$ and $E_{\nu,z_0} \subset U_1(z_0) \cup U_2(z_0)$, where $E_{\nu,z_0} \coloneqq \{w \in \C : (z_0,w) \in E_\nu\}$. For every $l \in \N$, let $\sigma_l \coloneqq \{z \in \C^{n-1} : \arg(z_{[l]}-z_{0,[l]}) = \arg(a_l-z_{0,[l]}), \abs{z_{[l]}-z_{0,[l]}} > \abs{a_l-z_{0,[l]}}\}$, and let $h_l \colon \C^{n-1} \setminus \sigma_l \to \C$ be a continuous branch of $\varepsilon_l\sqrt{z_{[l]}-a_l}$. Fix $p_1 = (z_0, w_1) \in E_{\nu_0} \cap U_1$ and $p_2 = (z_0, w_2) \in E_{\nu_0} \cap U_2$. Then there exist functions $\tau_1,\tau_2 \colon \N_{\nu_0} \to \{0,1\}$ such that $w_j = \sum_{l=1}^{\nu_0} (-1)^{\tau_j(l)}h_l(z_0)$, $j=1,2$. Set $\hat{p}_j \coloneqq (z_0, w_j + \sum_{l=\nu_0+1}^\infty h_l(z_0))$ and observe that, by the choice of $\delta$ and $\nu_0$, one has $\hat{p}_j \in \mathcal{E} \cap U_j$, $j =1,2$. Now define a continuous curve $\gamma_z \colon [0,\nu_0] \to \C^{n-1}_z \setminus \bigcup_{l=1}^\infty \sigma_l$ as
\[ \gamma_z(t) \coloneqq \left\{ \begin{array}{l@{\,}l} \big(z_{0,1}, \ldots, z_{0,[\nu]-1}, &z_{0,[\nu]} + 2(t-\nu+1)(a_\nu-z_{0,[\nu]}), \\ & z_{0,[\nu]+1}, \ldots, z_{n-1} \big) \,,\quad t \in [\nu-1, \nu-1/2] \\ \vphantom{0.25ex} \\ \big(z_{0,1}, \ldots, z_{0,[\nu]-1}, &a_\nu + 2(t-\nu+1/2)(z_{0,[\nu]}-a_\nu), \\ & z_{0,[\nu]+1}, \ldots, z_{n-1} \big) \,,\quad t \in [\nu-1/2, \nu] \end{array} \right. \quad (\nu \in \N_{\nu_0}).\] 
and let $\gamma \colon [0,\nu_0] \to \mathcal{E}$ be given as
\[ \gamma(t) \coloneqq \left\{ \begin{array}{l@{\,}l} \big(\gamma_z(t), \sum_{l=1}^{\nu-1} &(-1)^{\tau_2(l)} h_l(\gamma_z(t)) + \sum_{l=\nu}^{\nu_0} (-1)^{\tau_1(l)} h_l(\gamma_z(t)) \\ & + \sum_{l=\nu_0+1}^\infty h_l(\gamma_z(t))\big) \,,\quad t \in [\nu-1, \nu-1/2] \\ \vphantom{0.25ex} \\ \big(\gamma_z(t), \sum_{l=1}^{\nu} &(-1)^{\tau_2(l)} h_l(\gamma_z(t)) + \sum_{l=\nu+1}^{\nu_0} (-1)^{\tau_1(l)} h_l(\gamma_z(t)) \\ & + \sum_{l=\nu_0+1}^\infty h_l(\gamma_z(t))\big) \,,\quad t \in [\nu-1/2, \nu] \end{array} \right. \quad (\nu \in \N_{\nu_0}).\] 
Then it is easy to see that $\gamma$ is a continuous curve in $\mathcal{E}$ such that $\gamma(0) = \hat{p}_1 \in U_1$ and $\gamma(1) = \hat{p}_2 \in U_2$. This is a contradiction.
\end{proof}

Now we turn to the second part of this section. Note that in the examples of domains with nonempty core, which we have constructed so far, the following smoothing procedure has been used several times: Let $\mathcal{M}$ be a complex manifold, let $\Omega \subset \mathcal{M}$ be an open set and let $\phi \colon \Omega \to [-\infty,\infty)$ be a plurisubharmonic function such that $\phi$ is smooth and strictly plurisubharmonic outside $Z \coloneqq \{\phi = -\infty\}$. Then there exists a sequence $\{\eta_j\}_{j=1}^\infty$ of positive numbers converging to zero such that $\Phi \coloneqq \sum_{j=1}^\infty \eta_j\widetilde{\max}_1(\phi,-j)$ is a smooth plurisubharmonic function on $\Omega$ that is strictly plurisubharmonic outside $Z$. Observe that if $\{\eta_j\}_{j=1}^\infty$ is converging to zero fast enough, then the smoothing $\Phi$ can be represented in the form $\Phi = \Lambda \circ \phi$ for some smooth strictly increasing convex function $\Lambda \colon [-\infty,\infty) \to [-1,\infty)$. Indeed, $\Lambda = \sum_{j=1}^\infty \eta_j\lambda_j$, where for every $j\in\N$ the function $\lambda_j$ is given by $\lambda_j(t) \coloneqq (1/2)(t-j+\chiup_1(t+j))$, see page \pageref{def_smoothmax} for the definition of the smooth maximum and the function $\chiup_1$.

We want to give a precise form of this smoothing procedure in the case where $Z$ is the Wermer type set $\mathcal{E}$. Namely, let $\varphi$ be the plurisubharmonic function defined in Section \ref{sec_noanalyticcore} such that $\mathcal{E} = \{\varphi = -\infty\}$, and let $\phi \coloneqq \varphi + \norm{\,\cdot\,}^2$. We then want to make an explicit choice of a function $\Lambda \colon [-\infty,\infty) \to [0,\infty)$ such that $\Phi \coloneqq \Lambda \circ \phi$ is smooth and plurisubharmonic on $\Omega$ and strictly plurisubharmonic outside $\mathcal{E}$. To do so, consider first the function $e^\phi \colon \C^n \to [0,\infty)$. Observe that $e^\phi$ is a continuous plurisubharmonic function on $\C^n$ that is smooth and strictly plurisubharmonic in the complement of $\mathcal{E}$. Thus this function has all the properties we seek except, possibly, for smoothness in points of $\mathcal{E}$. Now the general idea to obtain $\Lambda$ and $\Phi$ as desired is to compose $e^\phi$ with a smooth, strictly increasing and strictly convex function that vanishes at $0$ of infinite order. In fact, we will take $\Lambda \colon [-\infty, \infty) \to [0, \infty)$ such that $\Lambda(x) = e^{-1/e^x}$ for small values of $x$. To actually prove smoothness of $\Phi$, we proceed as follows: We show that for each point $(z,w) \in \C^n \setminus \mathcal{E}$ there exists a polycylinder around $(z,w)$ that does not intersect $\mathcal{E}$, the size of which depends uniformly on the vertical distance $d(w,\mathcal{E}_z) \coloneqq \inf_{w' \in \mathcal{E}_z} \abs{w-w'}$ of $(z,w)$ to $\mathcal{E}$. Moreover, we estimate the value of $e^\varphi$ by means of the vertical distance $d(w, \mathcal{E}_z)$ to the set $\mathcal{E}$. We then use the Poisson integral formula and pluriharmonicity of $\varphi$ outside $\mathcal{E}$ to derive Cauchy type estimates for the derivatives of $\phi$, and apply the above results to conclude that each $D^\alpha \Phi(z,w)$ tends to zero when $(z,w)$ approaches $\mathcal{E}$.

We first prove the existence of uniformly large polycylinders in the complement of $\mathcal{E}$, which follows easily from H\"older continuity of the map $\underline{\mathcal{E}}$.

\begin{lemma} \label{thm_boxE} There exists a constant $C > 0$ such that
\begin{equation} \label{equ_box} \overline{\Delta^n_{\mathcal{E}}}(z,w) \coloneqq \overline{\Delta}^{\begin{minipage}{1ex} $\scriptstyle \vspace{-1.2ex} \! {n-1}$ \end{minipage}}\;\;\;\Big(z, C \Big(\frac{d(w,\mathcal{E}_z)}{2}\Big)^2\Big) \times \overline{\Delta}^{\begin{minipage}{1ex} $\scriptstyle \vspace{-1.2ex} \! 1$ \end{minipage}}\Big(w, \frac{d(w, \mathcal{E}_z)}{2}\Big) \subset \C^n \setminus \mathcal{E} \end{equation}
for every $(z,w) \in \C^n \setminus \mathcal{E}$.
\end{lemma}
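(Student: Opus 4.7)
The plan is to deduce the lemma directly from the Hölder continuity of the multi-valued map $\underline{\mathcal{E}}$ established in Lemma \ref{thm_haushölder}, together with a simple triangle-inequality argument involving the Hausdorff distance. The constant $C$ will only depend on the H\"older constant $M$ from Lemma \ref{thm_haushölder} and on the dimension $n$.

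First I would fix an arbitrary $(z,w) \in \C^n \setminus \mathcal{E}$ and set $r \coloneqq d(w,\mathcal{E}_z)/2 > 0$. It suffices to exhibit a constant $C>0$, independent of $(z,w)$, such that no point $(z',w') \in \overline{\Delta}^{n-1}(z, Cr^2) \times \overline{\Delta}^1(w,r)$ lies in $\mathcal{E}$, i.e., such that $w' \notin \mathcal{E}_{z'}$. For any such $z'$, the polydisc condition $\max_j \abs{z_j'-z_j} \le Cr^2$ implies $\norm{z-z'} \le \sqrt{n-1}\,Cr^2$, and then Lemma \ref{thm_haushölder} yields
\[
d_H\big(\mathcal{E}_z,\mathcal{E}_{z'}\big) \;\le\; M\sqrt{\norm{z-z'}} \;\le\; M\,(n-1)^{1/4}\sqrt{C}\,r \;\eqqcolon\; K\sqrt{C}\,r.
\]

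Next I would estimate, for $w' \in \overline{\Delta}^1(w,r)$ and an arbitrary $\tilde w \in \mathcal{E}_{z'}$, the quantity $|w'-\tilde w|$. By the definition of the Hausdorff distance there exists $w^\ast \in \mathcal{E}_z$ with $|\tilde w - w^\ast| \le d_H(\mathcal{E}_z,\mathcal{E}_{z'}) \le K\sqrt{C}\,r$. Since $|w-w^\ast| \ge d(w,\mathcal{E}_z) = 2r$ and $|w-w'| \le r$, the triangle inequality gives $|w'-w^\ast| \ge r$, and hence
\[
\abs{w'-\tilde w} \;\ge\; \abs{w'-w^\ast} - \abs{\tilde w - w^\ast} \;\ge\; r - K\sqrt{C}\,r \;=\; r\big(1 - K\sqrt{C}\big).
\]

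Finally I would choose $C \coloneqq 1/(4K^2)$ (or any constant with $K\sqrt{C} < 1$), so that the right-hand side is at least $r/2 > 0$. Since $\tilde w \in \mathcal{E}_{z'}$ was arbitrary, this proves $w' \notin \mathcal{E}_{z'}$, hence $(z',w') \notin \mathcal{E}$, which gives the inclusion $(\ref{equ_box})$. There is no essential obstacle in this argument; the only point to watch is the conversion between the polydisc radius in the first factor and the Euclidean distance used in Lemma \ref{thm_haushölder}, which just introduces the dimensional factor $(n-1)^{1/4}$ absorbed into $K$.
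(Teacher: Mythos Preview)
Your proof is correct and follows essentially the same approach as the paper: both arguments invoke the $(M,1/2)$-H\"older continuity of $\underline{\mathcal{E}}$ from Lemma~\ref{thm_haushölder}, convert the polydisc radius in the $z$-variables to a Euclidean bound, and then run a triangle-inequality argument through a point of $\mathcal{E}_z$ to separate the polycylinder from $\mathcal{E}$. The only cosmetic differences are that the paper argues the contrapositive (starting from $(z',w')\in\mathcal{E}$ and showing $|w-w'|>d(w,\mathcal{E}_z)/2$) and takes $C=1/(\sqrt{n}M^2)$, whereas you bound $d(w',\mathcal{E}_{z'})$ directly and obtain $C=1/(4M^2\sqrt{n-1})$.
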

\begin{proof}
Fix $(z,w) \in \C^n \setminus \mathcal{E}$. Then, in view of Lemma \ref{thm_haushölder}, for any given $(z',w') \in \mathcal{E} \cap \big[ \overline{\Delta}^{\begin{minipage}{1ex} $\scriptstyle \vspace{-1.2ex} {n-1}$ \end{minipage}}\hspace{2ex}\big(z, (1/(\sqrt{n}M^2))(d(w, \mathcal{E}_z)/2)^2 \big) \times \C_w \big]$ we can find $(z, \tilde{w}) \in \mathcal{E}_z$ such that \linebreak $\norm{(z, \tilde{w}) - (z',w')} \le M\sqrt{\norm{z-z'}} < d(w, \mathcal{E}_z)/2$. Hence $\abs{w-w'} \ge \abs{w-\tilde{w}} - \abs{\tilde{w}-w'} > d(w, \mathcal{E}_z) - d(w,\mathcal{E}_z)/2 > d(w,\mathcal{E}_z)/2$, which proves $(\ref{equ_box})$ for $C \coloneqq 1/(\sqrt{n}M^2)$.
\end{proof}
We now want to estimate the growth of $e^{\varphi(z,w)}$ in terms of the vertical distance $d(w, \mathcal{E}_z)$ to $\mathcal{E}$. For every $\nu, \mu \in \N$, $j \in \N_{2^\nu}$ and $z \in \C^{n-1}$, we denote the $2^\mu$ values of $w_j^{(\nu)}(z) + \sum_{l = \nu + 1}^{\nu + \mu} \varepsilon_l \sqrt{z_{[l]} - a_l}$ by $w_1^{(\mu)}(\nu, j; z), \ldots, w_{2^\mu}^{(\mu)}(\nu, j; z)$. Moreover, for every set $K \subset \C^n$ and every positive number $\delta > 0$ we let $K^{(\delta)} \coloneqq \bigcup_{\zeta \in K} B^n(\zeta, \delta)$.
\begin{lemma} \label{thm_Ml}
If $\{\varepsilon_l\}$ is decreasing fast enough, then there exists an increasing sequence $\{L_N\}_{N=1}^\infty$ of positive numbers such that for every $N \in \N$, $\nu \ge N$ and $j \in \N_{2^\nu}$ 
\begin{equation} \label{equ_Ml}
\abs{P_{\nu+\mu}(z,w)}^{1/2^{\nu+\mu}} \le L_N \cdot \prod_{k=1}^{2^\mu} \abs{w - w_k^{(\mu)}(j,\nu;z)}^{1/2^{\nu+\mu}} \text{ on } \Delta_N \cap \mathcal{E}^{(1)}
\end{equation}
for every $\mu \in \N$, where $\Delta_N \coloneqq B^{n-1}(0,N) \times \C$.
\end{lemma}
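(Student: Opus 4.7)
The proof will rest on the natural factorisation
\[
P_{\nu+\mu}(z,w)=\prod_{j'=1}^{2^\nu}\prod_{k=1}^{2^\mu}\bigl(w-w_k^{(\mu)}(j',\nu;z)\bigr),
\]
which expresses the fact that the $2^{\nu+\mu}$ roots of $P_{\nu+\mu}(z,\,\cdot\,)$ split canonically into $2^\nu$ packets of $2^\mu$, one packet for each initial choice of signs defining $w_{j'}^{(\nu)}$. Substituting this identity into the left-hand side of $(\ref{equ_Ml})$ and isolating the packet indexed by $j$, the inequality becomes equivalent to
\[
\prod_{j'\neq j}\prod_{k=1}^{2^\mu}\bigl|w-w_k^{(\mu)}(j',\nu;z)\bigr|^{1/2^{\nu+\mu}}\le L_N
\]
on $\Delta_N\cap\mathcal{E}^{(1)}$. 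Since the number of factors here is $(2^\nu-1)\cdot 2^\mu$, the cumulative exponent equals $1-2^{-\nu}$, which stays strictly below $1$ uniformly in $\nu$ and $\mu$. Hence it will suffice to produce a bound for each individual $|w-w_k^{(\mu)}(j',\nu;z)|$ that depends only on $N$.

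Such a bound will be obtained from two elementary estimates. First, for any $z\in B^{n-1}(0,N)$ the triangle inequality applied to the defining formula yields
\[
\bigl|w_k^{(\mu)}(j',\nu;z)\bigr|\le\sum_{l=1}^{\nu+\mu}\varepsilon_l\sqrt{|z_{[l]}-a_l|}\le M_N,\qquad M_N:=\sup_{z\in B^{n-1}(0,N)}\sum_{l=1}^\infty\varepsilon_l\sqrt{|z_{[l]}-a_l|},
\]
independently of $\mu,\nu,j',k$ and of the signs involved. Second, for any $(z,w)\in\Delta_N\cap\mathcal{E}^{(1)}$ one can pick a point $(z',w')\in\mathcal{E}$ with $\|(z,w)-(z',w')\|<1$; then $z'\in B^{n-1}(0,N+1)$ and, since $w'$ is a limit of branch values at $z'$, also $|w|\le|w'|+1\le M_{N+1}+1$. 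Combining the two estimates via the triangle inequality, each factor $|w-w_k^{(\mu)}(j',\nu;z)|$ is majorised on $\Delta_N\cap\mathcal{E}^{(1)}$ by the single constant $K_N:=M_{N+1}+M_N+1$, and setting $L_N:=\max(K_N,1)$ produces an increasing sequence that satisfies $(\ref{equ_Ml})$: indeed the total exponent $1-2^{-\nu}$ lies in $(0,1)$, so the product is bounded by $K_N^{1-2^{-\nu}}$, which is at most $K_N$ when $K_N\ge1$ and at most $1$ otherwise.

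The only place where the speed of decay of $\{\varepsilon_l\}$ enters is in the finiteness of $M_N$. I plan to cover this by adopting the same quantitative condition already imposed in the proof of Lemma~\ref{thm_connectedE}, namely $\varepsilon_l\sqrt{|z_{[l]}-a_l|}<2^{-l}$ on $B^{n-1}(0,l)$ for every $l\in\N$, so that on any fixed $B^{n-1}(0,N)$ the tail $\sum_{l\ge N}\varepsilon_l\sqrt{|z_{[l]}-a_l|}$ is dominated by a convergent geometric series and only finitely many earlier terms need to be controlled separately. I do not anticipate any real obstacle; the only even slightly subtle point is the combinatorial observation that $(2^\nu-1)/2^\nu$ stays uniformly bounded away from and below $1$, which is precisely what allows a single constant $L_N$ to absorb all the ``foreign'' root packets simultaneously and independently of $\mu$ and $\nu$.
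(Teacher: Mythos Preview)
Your argument is correct and follows essentially the same route as the paper: factor $P_{\nu+\mu}$ into $2^\nu$ packets of $2^\mu$ roots, isolate the packet indexed by $j$, and bound every factor in the remaining packets by a constant depending only on $N$ via the uniform bound on branch values over $B^{n-1}(0,N)$ together with the bound on $|w|$ coming from membership in $\mathcal{E}^{(1)}$. The paper inserts one extra intermediate step (first estimating $\prod_k|w-w_k^{(\mu)}(j',\nu;z)|^{1/2^\mu}\le |w-w_{j'}^{(\nu)}(z)|+1/2^\nu$ and then bounding this by $L_N$), but your direct bound on each individual factor achieves the same end; note only that your phrase ``stays uniformly bounded away from and below $1$'' for $(2^\nu-1)/2^\nu$ is a slip, since this quantity tends to $1$, though all your argument actually uses is that it lies in $(0,1)$.
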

\begin{proof}
Fix $N \in \N$, $\nu \ge N$ and $j \in \N_{2^\nu}$. Let $\{\varepsilon_l\}$ be decreasing so fast that $\varepsilon_l \sqrt{\abs{z_{[l]}-a_l}} < 1/2^l$ on $B^{n-1}(0,l)$ for every $l \in \N$. Then $\abs{w_{j'}^{(\nu)}(z) - w_k^{(\mu)}(j',\nu;z)} \le \sum_{l=\nu+1}^{\nu+\mu} \varepsilon_l\sqrt{\abs{z_{[l]}-a_l}} < 1/2^\nu$ on $B^{n-1}(0,N)$ for every $j' \in \N_{2^\nu}$ and $\mu \in \N$, $k \in \N_{2^\mu}$. Hence $d_H(E_{\nu,z}, \mathcal{E}_z) \le 1/2^\nu$ for every $z \in B^{n-1}(0,N)$, where $E_{\nu,z} \coloneqq \{w \in \C : (z,w) \in E_\nu\}$, and
\begin{equation} \label{equ_Ml1} \prod_{k=1}^{2^\mu} \abs{w - w_k^{(\mu)}(j',\nu;z)}^{1/2^\mu} < \abs{w - w_{j'}^{(\nu)}(z)} + 1/2^\nu \end{equation}
for every $(z,w) \in \Delta_N$, $j' \in \N_{2^\nu}$ and $\mu \in \N$. Further, let $\{\varepsilon_l\}$ be decreasing so fast that $\sum_{l=1}^\infty \varepsilon_l \sqrt{\abs{a_l}} < 1/2$ and $\varepsilon_l < 1/2^{l+1}$ for every $l \in \N$. Since $\gabs{\sqrt{\abs{z_{[l]} - a_l}} - \sqrt{\abs{z_{[l]}}}} \le \sqrt{\gabs{\abs{z_{[l]} - a_l} - \abs{z_{[l]}}}} \le \sqrt{\abs{a_l}}$, we have $\sqrt{\abs{z_{[l]} - a_l}} \le \sqrt{\abs{z_{[l]}}} + \sqrt{\abs{a_l}} \le \sqrt{\norm{z}} + \sqrt{\abs{a_l}}$. Thus, by definition of the sets $E_\nu$, it follows that $\abs{w} \le \sum_{l=1}^\nu \varepsilon_l\sqrt{\abs{z_{[l]} - a_l}} \le \sum_{l=1}^\nu \varepsilon_l(\sqrt{\norm{z}} + \sqrt{\abs{a_l}}) < (1/2)(\sqrt{\norm{z}} + 1)$ for every $(z,w) \in E_\nu$. Taking the limit $\nu \to \infty$ we get that $\mathcal{E} \subset \{(z,w) \in \C^n : \abs{w} \le (1/2)(\sqrt{\norm{z}} + 1)\}$. For fixed $(z,w) \in \Delta_N \cap \mathcal{E}^{(1)}$ and $j' \in \N_{2^\nu}$ choose $\tilde{w}_1, \tilde{w}_2 \in \mathcal{E}_z$ such that $\abs{w-\tilde{w}_1} = d(w, \mathcal{E}_z)$ and $\abs{\tilde{w}_2-w_{j'}^{(\nu)}(z)} = d_H(E_{\nu,z}, \mathcal{E}_z)$. Then $\abs{w - w_{j'}^{(\nu)}(z)} \le \abs{w - \tilde{w}_1} + \abs{\tilde{w}_1 - \tilde{w_2}} + \linebreak \abs{\tilde{w}_2 - w_{j'}^{(\nu)}(z)} \le 1 + (1+\sqrt{N}) + 1/2^N$, hence there exists a constant $L_N > 1$ such that 
\begin{equation} \label{equ_Ml2} \abs{w - w_{j'}^{(\nu)}(z)} + 1/2^\nu \le L_N \;\text{ on }\; \Delta_N \cap \mathcal{E}^{(1)}. \end{equation}
We conclude that for every $(z,w) \in \Delta_N \cap \mathcal{E}^{(1)}$ and $\mu \in \N$ we have
\begin{equation*} \begin{split}
&\abs{P_{\nu+\mu}(z,w)}^{1/2^{\nu+\mu}} = \prod_{l=1}^{2^{\nu+\mu}} \abs{w - w_l^{(\nu+\mu)}(z)}^{1/2^{\nu+\mu}} = \prod_{j' = 1}^{2^\nu} \prod_{k=1}^{2^\mu} \abs{w - w_k^{(\mu)}(j',\nu;z)}^{1/2^{\nu+\mu}} \\ &= \prod_{1 \le j' \le 2^\nu \atop j' \neq j} \Big(\prod_{k=1}^{2^\mu} \abs{w - w_k^{(\mu)}(j',\nu;z)}^{1/2^\mu}\Big)^{1/2^\nu} \cdot \prod_{k=1}^{2^\mu} \abs{w - w_k^{(\mu)}(j,\nu;z)}^{1/2^{\nu+\mu}} \\ & \le L_N \cdot \prod_{k=1}^{2^\mu} \abs{w - w_k^{(\mu)}(j,\nu;z)}^{1/2^{\nu+\mu}},
\end{split} \end{equation*}
where the last inequality follows from $(\ref{equ_Ml1})$ and $(\ref{equ_Ml2})$.
\end{proof}
\begin{lemma} \label{thm_propE}
The following assertions hold true:
\begin{enumerate}
  \item[(1)] If $\{\varepsilon_l\}$ is decreasing fast enough, then
    \[ d(w, \mathcal{E}_z) \le e^{\varphi(z,w)} \le d(w, 
  \mathcal{E}_z) + (1 + \sqrt{\norm{z}}) \text{ on } \C^n. \]
  \item[(2)] Let $\{\delta_l\}_{l=1}^\infty$ be a decreasing sequence of positive numbers converging to 0. If $\{\varepsilon_l\}$ is decreasing fast enough, then for every $N \in \N$ and $\nu \ge N$ we have
    \[ d(w, \mathcal{E}_z) \le e^\varphi(z,w) \le (L_N + 1) d(w,\mathcal{E}_z)^{1/2^\nu} \text{ on } B^n(0,N) \cap \big(\mathcal{E}^{(1)} \setminus \mathcal{E}^{(\delta_\nu)}\big), \]
  where $L_N$ are the constants from Lemma \ref{thm_Ml}.
\end{enumerate}
\end{lemma}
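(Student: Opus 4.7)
The starting point is the identity
\[ e^{\varphi_\nu(z,w)} = |P_\nu(z,w)|^{1/2^\nu} = \prod_{l=1}^{2^\nu} |w - w_l^{(\nu)}(z)|^{1/2^\nu}, \]
which displays $e^{\varphi_\nu(z,w)}$ as the geometric mean of the distances from $w$ to the $2^\nu$ points of the fiber $E_{\nu,z}$. My plan is to prove each of the four inequalities first at the level of the approximants $\varphi_\nu$ in part (1), respectively $\varphi_{\nu+\mu}$ in part (2), and then to pass to the limit, using that $\varphi_\nu \to \varphi$ uniformly on compact subsets of $\C^n\setminus\mathcal{E}$ and that on $\mathcal{E}$ both sides of each inequality vanish simultaneously, so the claims are trivial there.

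The two lower bounds are essentially free and simultaneous: since the geometric mean of nonnegative numbers is at least their minimum, $e^{\varphi_\nu(z,w)} \ge \min_l |w - w_l^{(\nu)}(z)| = d(w, E_{\nu,z})$, and the bound $d_H(E_{\nu,z},\mathcal{E}_z) \le 2^{-\nu}$ established in the proof of Lemma \ref{thm_Ml} forces $d(w,E_{\nu,z}) \to d(w,\mathcal{E}_z)$. For the upper bound in part (1), I would fix a closest point $\tilde w \in \mathcal{E}_z$ to $w$ and invoke the estimate $|w_l^{(\nu)}(z)| \le \tfrac{1}{2}(\sqrt{\norm{z}}+1)$ from the proof of Lemma \ref{thm_Ml}, which extends to $\tilde w$ by passage to the limit. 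The triangle inequality then gives $|w - w_l^{(\nu)}(z)| \le d(w,\mathcal{E}_z) + (1+\sqrt{\norm{z}})$ for every $l$, and since the geometric mean does not exceed the maximum of its arguments, the same inequality holds for $e^{\varphi_\nu(z,w)}$; letting $\nu \to \infty$ finishes part~(1).

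The real work is in the upper bound of part (2), where we need to extract a factor $d(w,\mathcal{E}_z)^{1/2^\nu}$ in addition to the constant $L_N+1$. My plan is to apply Lemma \ref{thm_Ml} with a carefully chosen index $j^* \in \N_{2^\nu}$. Given $(z,w) \in B^n(0,N) \cap (\mathcal{E}^{(1)}\setminus\mathcal{E}^{(\delta_\nu)})$ and a closest point $\tilde w \in \mathcal{E}_z$ to $w$, absolute convergence of the series $\sum_l \varepsilon_l\sqrt{z_{[l]}-a_l}$ over any choice of branches allows one to write $\tilde w = w_{j^*}^{(\nu)}(z) + \sum_{l>\nu}\varepsilon_l\sqrt{z_{[l]}-a_l}$ for an appropriate $j^*$ and a specific branch choice of the tail. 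The main technical task is to strengthen the decay assumption on $\{\varepsilon_l\}$, now depending on both $\{\delta_l\}$ and the constants $L_l$ of Lemma \ref{thm_Ml}, so that $\varepsilon_l\sqrt{|z_{[l]}-a_l|} < \delta_l/(L_l 2^l)$ on $B^{n-1}(0,l)$; a short summation then yields
\[ |\tilde w - w_k^{(\mu)}(j^*,\nu;z)| \le 2\sum_{l>\nu}\varepsilon_l\sqrt{|z_{[l]}-a_l|} \le \frac{\delta_\nu}{L_N} \le \frac{d(w,\mathcal{E}_z)}{L_N} \]
uniformly in $k \in \N_{2^\mu}$ and $\mu \ge 1$, where the last inequality uses $d(w,\mathcal{E}_z) \ge \delta_\nu$, which follows from $(z,w) \notin \mathcal{E}^{(\delta_\nu)}$.

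This uniformity in $k$ is the heart of the argument and the only point that requires real care, since the $2^\mu$ points $w_k^{(\mu)}(j^*,\nu;z)$ are spread out around $w_{j^*}^{(\nu)}(z)$ by all possible sign flips of the factors $\varepsilon_l\sqrt{z_{[l]}-a_l}$ for $\nu < l \le \nu+\mu$. Once it is in place, the triangle inequality gives $|w - w_k^{(\mu)}(j^*,\nu;z)| \le (1 + 1/L_N)\,d(w,\mathcal{E}_z)$ for every such $k$, and Lemma \ref{thm_Ml} implies
\[ |P_{\nu+\mu}(z,w)|^{1/2^{\nu+\mu}} \le L_N \bigl((1+1/L_N)\,d(w,\mathcal{E}_z)\bigr)^{2^\mu/2^{\nu+\mu}} = L_N (1+1/L_N)^{1/2^\nu}\,d(w,\mathcal{E}_z)^{1/2^\nu}. \]
Since $(1+1/L_N)^{1/2^\nu} \le 1 + 1/L_N$ for $\nu \ge 1$, the right-hand side is bounded above by $(L_N+1)\,d(w,\mathcal{E}_z)^{1/2^\nu}$. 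Letting $\mu \to \infty$ then completes the proof.
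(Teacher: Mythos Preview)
Your proposal is correct. Part~(1) and the lower bound in part~(2) follow the paper's argument essentially verbatim.

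For the upper bound in part~(2), both you and the paper apply Lemma~\ref{thm_Ml} after selecting an index $j$ with $w_j^{(\nu)}(z)$ close to a nearest point $\tilde w\in\mathcal{E}_z$, and both exploit $d(w,\mathcal{E}_z)\ge\delta_\nu$ on the complement of $\mathcal{E}^{(\delta_\nu)}$. The difference is in the bookkeeping. The paper introduces auxiliary constants $\gamma_N$ satisfying $\gamma_N L_N\le d(w,\mathcal{E}_z)^{1/2^N}$ on $B^n(0,N)\setminus\mathcal{E}^{(\delta_N)}$, imposes the tail bound $\sum_{l>\nu}\varepsilon_l\sqrt{|z_{[l]}-a_l|}\le r_\nu:=2^{\nu-1}\gamma_\nu\delta_\nu^{(2^\nu-1)/2^\nu}$, and then uses the binomial-type inequality $(a^m+ma^{m-1}b)^{1/m}\le a+b$ to pass from $(d+2r_\nu)^{1/2^\nu}$ to $d^{1/2^\nu}+\gamma_\nu$. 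Your decay condition $\varepsilon_l\sqrt{|z_{[l]}-a_l|}<\delta_l/(L_l 2^l)$ on $B^{n-1}(0,l)$ is more direct: summing gives $|w-w_k^{(\mu)}(j^*,\nu;z)|\le(1+1/L_N)\,d(w,\mathcal{E}_z)$ outright, and the constant $L_N+1$ then falls out of $L_N(1+1/L_N)^{1/2^\nu}\le L_N(1+1/L_N)=L_N+1$ with no further algebra. Both routes rely on the fact, made explicit in the paper, that the constants $L_N$ from Lemma~\ref{thm_Ml} are unchanged if $\{\varepsilon_l\}$ is later made to decrease faster, so there is no circularity in letting your decay condition depend on $L_l$. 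Your version is a genuine streamlining of the same idea.
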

\begin{proof}
1) Observe that $e^{\varphi} = \lim_{\nu \to \infty} e^{\varphi_\nu} = \lim_{\nu \to \infty} \abs{P_\nu}^{1/2^\nu}$. Hence the first inequaltiy follows from $\abs{P_\nu(z,w)}^{1/2^\nu} = \prod_{j=1}^{2^\nu} \abs{w - w_j^{(\nu)}(z)}^{1/2^\nu} \ge d(w, E_{\nu,z})$ and the fact that $E_{\nu,z} \to \mathcal{E}_z$ in the Hausdorff metric for $\nu \to \infty$.

Let $\{\varepsilon_l\}$ be decreasing so fast that $\sum_{l=1}^\infty \varepsilon_l \sqrt{\abs{a_l}} < 1/2$ and $\varepsilon_l < 1/2^{l+1}$ for every $l \in \N$. As in the proof of Lemma \ref{thm_Ml} we conclude that $E_\nu \subset \{(z,w) \in \C^n : \abs{w} < (1/2)(\sqrt{\norm{z}} + 1)\}$ for every $\nu \in \N$. Now for arbitrary fixed $\nu \in \N$ and $(z,w) \in \C^n$ choose $\tilde{w} \in E_{\nu,z}$ such that $\abs{w-\tilde{w}} = d(w, E_{\nu,z}) $. Then $\abs{w - w_j^{(\nu)}(z)} \le \abs{w-\tilde{w}} + \abs{\tilde{w} - w_j^{(\nu)}(z)} < d(w, E_{\nu,z}) + (\sqrt{\norm{z}} + 1)$ for every $j \in \N_{2^\nu}$, hence $\abs{P_\nu(z,w)}^{1/2^\nu} = \prod_{j=1}^{2^\nu} \abs{w - w_j^{(\nu)}(z)}^{1/2^\nu} < d(w, E_{\nu,z}) + (\sqrt{\norm{z}} + 1)$ and the second inequality follows for $\nu \to \infty$.

2) We only need to show the second inequality. Let $\{\varepsilon_l\}$ be decreasing so fast that the assertion of Lemma \ref{thm_Ml} holds, and observe that $(\ref{equ_Ml})$ remains true with the same constants $\{L_N\}$ if later on we choose $\{\varepsilon_l\}$ to be converging to zero even faster. For every $N \in \N$, let $\gamma_N$ be a positive constant such that
\begin{equation} \label{equ_propE1} \gamma_N L_N \le d(w,\mathcal{E}_z)^{1/2^N} \text{ on } B^n(0,N) \setminus \mathcal{E}^{(\delta_N)}. \end{equation}
Let $\{\varepsilon_l\}$ be decreasing so fast that $\sum_{l=\nu+1}^\infty \varepsilon_l\sqrt{\abs{z_{[l]}-a_l}} \le r_\nu \coloneqq 2^{\nu-1}\gamma_\nu\delta_\nu^{(2^\nu-1)/2^\nu}$ on $B^{n-1}(0,\nu)$ for every $\nu \in \N$. Fix $N \in \N$ and let $\nu \ge N$ be arbitrary. Let $(z,w) \in B^n(0,N) \cap (\mathcal{E}^{(1)} \setminus \mathcal{E}^{(\delta_\nu)})$. Choose $\tilde{w} \in \mathcal{E}_z$ such that $\abs{w-\tilde{w}} = d(w, \mathcal{E}_z)$ and choose $j \in \N_{2^\nu}$ such that $\abs{\tilde{w}-w_j^{(\nu)}(z)} \le d_H(\mathcal{E}_z, E_{\nu,z})$. Then for every $\mu \in \N$ and $k \in \N_{2^\mu}$ we get $\abs{w-w_k^{(\mu)}(j,\nu;z)} \le \abs{w-\tilde{w}} + \abs{\tilde{w}-w_j^{(\nu)}(z)} + \abs{w_j^{(\nu)} - w_k^{(\mu)}(j,\nu;z)} \le d(w,\mathcal{E}_z) + r_\nu + r_\nu$. Hence, by the choice of $r_\nu$, and since $\delta_\nu \le d(w,\mathcal{E}_z)$, it follows that $\prod_{k=1}^{2^\mu} \abs{w-w_k^{(\mu)}(j,\nu;z)}^{1/2^{\nu+\mu}} \le (d(w,\mathcal{E}_z) + 2r_\nu)^{1/2^\nu} \le (d(w,\mathcal{E}_z)^{2^\nu/2^\nu} + 2^\nu d(w,\mathcal{E}_z)^{(2^\nu-1)/2^\nu}\gamma_\nu)^{1/2^\nu} \le d(w,\mathcal{E}_z)^{1/2^\nu} + \gamma_\nu$. Applying Lemma \ref{thm_Ml}, monotonicity of $\{L_N\}$ and $(\ref{equ_propE1})$, we conclude that
\[ \abs{P_{\nu+\mu}(z,w)}^{1/2^{\nu+\mu}} \le L_N\big(d(w,\mathcal{E}_z)^{1/2^\nu} + \gamma_\nu\big) \le (L_N + 1)d(w,\mathcal{E}_z)^{1/2^\nu}. \]
Since here $\mu \in \N$ and $(z,w) \in B^n(0,N) \cap (\mathcal{E}^{(1)} \setminus \mathcal{E}^{(\delta_\nu)})$ are arbitrary, the claim follows from the fact that $e^{\varphi(z,w)} = \lim_{\mu \to \infty} \abs{P_{\nu+\mu}(z,w)}^{1/2^{\nu+\mu}}$.
\end{proof}
Finally, we prove Cauchy type estimates for the derivatives of $e^\varphi$. Let $\N_0 \coloneqq \N \cup \{0\}$. For multiindices $\alpha = (\alpha_1, \alpha_2, \ldots, \alpha_{2n}) \in \N_0^{2n}$ and $\beta = (\beta_1, \beta_2, \ldots, \beta_{2n}) \in \N_0^{2n}$ we write $\alpha \le \beta$ if and only if $\alpha_\nu \le \beta_\nu$ for every $\nu = 1, 2, \ldots, 2n$. Moreover, for every $\alpha \in \N_0^{2n}$ and $r = (r_1, r_2, \ldots, r_{2n}) \in [0, \infty)^{2n}$ we let $r^\alpha \coloneqq r_1^{\alpha_1}r_2^{\alpha_2} \cdots r_{2n}^{\alpha_{2n}}$.
\begin{lemma} \label{thm_poisson}
Let $\Delta^n(a,r) \subset\subset \C^n$ be a polycylinder with polyradius $r \in [0, \infty)^n$, let $u \colon \overline{\Delta}^{\begin{minipage}{1ex} $\scriptstyle \vspace{-1.2ex} \! n$ \end{minipage}}(a,r) \to \R$ be a continuous function such that $u$ is pluriharmonic on $\Delta^n(a,r)$ and let $\alpha \in \N_0^{2n}$. Then
\begin{equation} \label{equ_poisson} \gabs{D^\alpha u(\zeta)} \le C_{\abs{\alpha}} \cdot \frac{\sup_{\xi \in b\Delta^n(a,r)} \abs{u(\xi)}}{\hat{r}^\alpha}\end{equation}
for $\zeta \in \Delta^n(a, r/2)$, where $\hat{r} \coloneqq (r_1, r_1, r_2, r_2, \ldots, r_n, r_n)$ and $C_{\abs{\alpha}} > 0$ is a constant that depends only on $\abs{\alpha}$.
\end{lemma}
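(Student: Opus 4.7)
The plan is to combine the iterated one-variable Poisson representation for $u$ with standard derivative bounds for the disc Poisson kernel. Since $u$ is pluriharmonic on $\Delta^n(a,r)$, for each $j \in \{1,\ldots,n\}$ the slice function $z_j \mapsto u(\zeta_1,\ldots,\zeta_{j-1}, z_j, \zeta_{j+1},\ldots,\zeta_n)$ is harmonic on $\Delta(a_j,r_j)$. First I would apply the classical one-variable Poisson formula successively in the variables $z_1,\ldots,z_n$, yielding, for every $\zeta \in \Delta^n(a,r)$,
\[
u(\zeta) \,=\, \frac{1}{(2\pi)^n}\int_{[0,2\pi]^n} \prod_{j=1}^n P_{r_j}\bigl(\zeta_j - a_j, \theta_j\bigr)\, u\bigl(a_1 + r_1 e^{i\theta_1},\ldots, a_n + r_n e^{i\theta_n}\bigr)\, d\theta_1\cdots d\theta_n,
\]
where $P_{r_j}(z,\theta) \coloneqq (r_j^2 - \abs{z}^2)/\abs{r_j e^{i\theta} - z}^2$ denotes the Poisson kernel on $\Delta(0,r_j)$. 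The integration takes place over the distinguished torus $\{\xi \in \C^n : \abs{\xi_j - a_j} = r_j \text{ for every } j\} \subset b\Delta^n(a,r)$, so the integrand is pointwise bounded in absolute value by $\sup_{b\Delta^n(a,r)}\abs{u}$ times the product of the Poisson kernels.

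Next I would differentiate this identity under the integral sign with respect to the $2n$ real coordinates of $\zeta$, assigning $\alpha_{2j-1}$ derivatives to $\Re \zeta_j$ and $\alpha_{2j}$ derivatives to $\Im \zeta_j$. Since differentiation in $\zeta$ only affects the factor $P_{r_j}(\zeta_j - a_j, \theta_j)$, the task reduces to bounding partial derivatives of $P_{r_j}(\,\cdot\,,\theta)$ on the closed half-radius disc $\overline{\Delta}(0, r_j/2)$. The scaling identity $P_{r_j}(z,\theta) = P_1(z/r_j, \theta)$ reduces this to a universal estimate on the unit disc and yields
\[
\sup_{\abs{z}\le r_j/2}\, \bigl\lvert D^k_{(\Re z,\,\Im z)} P_{r_j}(z,\theta) \bigr\rvert \,\le\, \frac{c_k}{r_j^k},
\]
with $c_k$ independent of $r_j$ and $\theta$. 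Applying this estimate to each of the $n$ factors and multiplying gives a product bound of order $\prod_{j=1}^n r_j^{-(\alpha_{2j-1}+\alpha_{2j})} = 1/\hat{r}^\alpha$ times a constant $C_{\abs{\alpha}}$ that depends only on $\abs{\alpha}$. Combined with the trivial majorisation of the supremum over the distinguished torus by $\sup_{b\Delta^n(a,r)}\abs{u}$, this produces $(\ref{equ_poisson})$.

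The argument is entirely standard and presents no serious obstacle; the only point that requires some care is the bookkeeping that matches the $2n$ real directions of differentiation against the components of the vector $\hat{r}$ defined in the statement, so as to arrive at the precise exponent pattern $\hat{r}^\alpha$ in the denominator.
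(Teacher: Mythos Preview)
Your argument is correct and is a genuinely different, somewhat more direct route than the one taken in the paper. The paper does \emph{not} use the product Poisson formula over the distinguished torus. Instead it works in two steps: first it proves, from the one-variable Poisson integral on a disc $B^2(a,\rho)\subset\R^2$, a first-order estimate $\lvert \partial v/\partial x_j\rvert \le C_\theta\,\sup_{bB^2(a,\rho)}\lvert v\rvert\,/\rho$ for harmonic $v$ on $\overline{B^2(a,\theta\rho)}$; then it runs an induction on $\lvert\alpha\rvert$, applying this first-order bound to $D^{\tilde\alpha}u$ on a sequence of nested polydiscs $\Delta_m=\Delta^n\bigl(a,(1-\sum_{j=1}^m 2^{-j-1})r\bigr)$ so that at each stage one more real derivative is peeled off while the domain shrinks by a controlled geometric amount and still contains $\Delta^n(a,r/2)$.

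Your approach avoids the nested-domain bookkeeping entirely by differentiating the full product kernel at once and invoking the scaling $P_{r_j}(z,\theta)=P_1(z/r_j,\theta)$. One small point worth making explicit in your write-up: to ensure that the resulting constant truly depends only on $\lvert\alpha\rvert$ and not on $n$, you should use that the undifferentiated Poisson factors satisfy $\frac{1}{2\pi}\int_0^{2\pi} P_{r_j}\,d\theta_j=1$ (so they contribute nothing to the constant), and that at most $\lvert\alpha\rvert$ factors are differentiated; the product of the corresponding $c_{k_j}$ then ranges over finitely many partitions of $\lvert\alpha\rvert$ and can be bounded uniformly. The paper's inductive scheme gets this $n$-independence for free. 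Either way, the estimate is entirely standard.
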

\begin{proof} The proof of Lemma \ref{thm_poisson} will be divided in two steps.

\noindent \textsc{Step 1}. {\it Let $v \colon \overline{B^2(a,\rho)} \to \R$ be a continuous function such that $v$ is harmonic on $B^2(a,\rho)$. Then for arbitrary fixed $\theta \in (0,1)$ the inequality
\begin{equation} \label{equ_poisson2}  \Gabs{\frac{\partial v}{\partial x_j}(x)} \le C\, \frac{\sup_{y \in bB^2(a,\rho)} \abs{v(y)}}{\rho} \end{equation}
holds true for every $x \in \overline{B^2(a,\theta\rho)}$ and $j = 1,2$. Here $C = C_\theta$ is a positive constant. }

\noindent \textsc{Proof}. Without loss of generality we can assume that $a=0$. Applying the Poisson integral formula to the function $v$, we see that
\[ v(x) = \frac{1}{2 \pi \rho} \int_{y \in bB^2(0,\rho)} \frac{\rho^2 - \norm{x}^2}{\norm{y-x}^2} v(y) d\sigma(y).\]
Since for $x \in \overline{B^2(0,\theta\rho)}$ and $y \in bB^2(0,\rho)$ we have
\begin{equation*} \Gabs{\frac{\partial}{\partial x_j}\Big(\frac{\rho^2 - \norm{x}^2}{\norm{y-x}^2} \Big)} = \Gabs{\frac{-2x_j\norm{y-x}^2 + 2(\rho^2 - \norm{x}^2)(y_j - x_j)}{\norm{y-x}^4}} \le \frac{8\rho^3 + 4\rho^3}{((1-\theta)\rho)^4} \eqqcolon  \frac{C_\theta}{\rho}, \end{equation*}
it follows that
\[ \Gabs{\frac{\partial v}{\partial x_j}(x)} \le \frac{C_\theta}{\rho} \cdot \frac{1}{2 \pi \rho} \int_{y \in bB^2(0,\rho)} \abs{v(y)} d\sigma(y) \le C_\theta\, \frac{\sup_{y \in bB^2(0,\rho)} \abs{v(y)}}{\rho}. \]
\textsc{Step 2}. {\it Let $\Delta_1 \supset \supset \Delta_2 \supset \supset \cdots$ be defined as $\Delta_m \coloneqq \Delta^n(a, (1 - \sum_{j=1}^m 1/2^{j+1})r)$, $m \in \N$. We show that $(\ref{equ_poisson})$ holds true for every $\zeta \in \overline{\Delta}_{\abs{\alpha}}$. Since $\Delta^n(a, r/2) \subset \Delta_m$ for every $m \in \N$, this proves the claim of the lemma.}

\noindent \textsc{Proof}. We proceed by induction on $k \coloneqq \abs{\alpha}$. Since $u$ is pluriharmonic, the case $k = 1$ is an immediate consequence of $(\ref{equ_poisson2})$ with $\theta = 3/4$. For the step $k \to k+1$, write $\alpha = \tilde{\alpha} + e_\nu$ for some $\nu \in \N$, where $e_\nu = (0, \ldots, 1, \ldots, 0)$ is the $\nu$-th canonical unit vector and $\tilde{\alpha} \in \N_0^{2n}$ satisfies $\abs{\tilde{\alpha}} = k$. Without loss of generality we can assume that $\nu = 1$. Applying $(\ref{equ_poisson2})$ to $v \coloneqq D^{\tilde{\alpha}}u$, $\rho \coloneqq \big(1 - \sum_{j=1}^k 1/2^{j+1}\big) r_1 > r_1/2$ and $\theta = \big(1 - \sum_{j=1}^{k+1} 1/2^{j+1}\big)/\big(1 - \sum_{j=1}^k 1/2^{j+1}\big)$ yields 
\[ \gabs{D^\alpha u(\zeta)} = \Gabs{\frac{\partial v}{\partial \zeta_1}(\zeta)} \le C\, \frac{\sup_{y \in bB^2((a_1,a_2),\rho) \times \{\zeta_3, \ldots, \zeta_{2n}\}} \abs{v(y)}}{\rho} < 2C\, \frac{\sup_{y \in b\Delta_k} \abs{v(y)}}{r_1} \]
for $\zeta \in \Delta_{k+1}$. But, by induction hypothesis, $\abs{v(\zeta)} \le C_k \sup_{\xi \in b\Delta^n(a,r)} \abs{u(\xi)} / \hat{r}^{\tilde{\alpha}}$ for $\zeta \in \overline{\Delta}_k$. Thus we get
\[ \gabs{D^\alpha u(\zeta)} \le 2CC_k \frac{\sup_{\xi \in b\Delta^n(a,r)} \abs{u(\xi)}}{\hat{r}^{\tilde{\alpha}}r_1} = C_{\abs{\alpha}} \frac{\sup_{\xi \in b\Delta^n(a,r)} \abs{u(\xi)}}{\hat{r}^\alpha}  \]
with $C_{\abs{\alpha}} = C_{k+1} \coloneqq 2CC_k$ as desired. This proves Lemma \ref{thm_poisson}.
\end{proof}

Now fix a smooth strictly increasing and strictly convex function $\Lambda \colon [-\infty,\infty) \to [0,\infty)$ such that $\Lambda(x) = e^{-1/e^x}$ for small values of $x$. Observe that the function $\phi = \varphi + \norm{\,\cdot\,}^2 \colon \C^n \to [-\infty, \infty)$ is plurisubharmonic on $\C^n$, smooth and strictly plurisubharmonic on $\C^n \setminus \{\varphi = -\infty\}$. Further, the function $\Lambda$ is smooth strictly increasing and strictly convex. Hence the function $\Phi \coloneqq \Lambda \circ \phi$ is plurisubharmonic on $\C^n$, smooth and strictly plurisubharmonic on $\C^n \setminus \{\varphi = -\infty\}$. In the remaining part of this section we will show that $\Phi$ is also smooth at the points of $\mathcal{E} = \{\varphi = - \infty\}$. 

\noindent \textsc{Step 1}. {\it For every $\alpha \in \N_0^{2n}$ and $N \in \N$, there exist constants $\raisebox{0.2ex}{$\rho$}_N, C_{\alpha, N} > 0$ and $m_\alpha \in \N$ such that
\[ \label{equ_estderphi} \gabs{D^\alpha \phi(z,w)} \le C_{\alpha, N} \frac{\log\big(1/d(w, \mathcal{E}_z)\big)}{d(w, \mathcal{E}_z)^{m_\alpha}} \quad \text{for} \quad (z,w) \in B^n(0,N) \cap \big(\mathcal{E}^{(\raisebox{0.2ex}{\scriptsize$\rho$}_N)} \setminus \mathcal{E}\big). \] }

\noindent \textsc{Proof}. We know from Lemma \ref{thm_propE} that $d(w, \mathcal{E}_z) \le e^\varphi \le d(w, \mathcal{E}_z) + (1 + \sqrt{\norm{z}})$, hence $\abs{\varphi(z,w)} \le \max\{\abs{\log d(w, \mathcal{E}_z)}, \abs{\log (d(w, \mathcal{E}_z) + (1 + \sqrt{\norm{z}}))}\}$. Choose $\raisebox{0.2ex}{$\rho$}_N > 0$ so small that $-\log d(w, \mathcal{E}_z) > 1 + \abs{\log (d(w, \mathcal{E}_z) + (1 + \sqrt{\norm{z}}))}$ on $\overline{\Delta^n_{\mathcal{E}}}(z,w)$ for every $(z,w) \in B^n(0,N) \cap (\mathcal{E}^{(\rho_N)} \setminus \mathcal{E})$, see Lemma \ref{thm_boxE}. Then
\begin{equation} \label{equ_estphi} \abs{\varphi(z,w)} \le \log\big(1/d(w, \mathcal{E}_z)\big) \quad \text{on} \quad \overline{\Delta^n_{\mathcal{E}}}(z,w) \subset \C^n \setminus \mathcal{E} \end{equation}
for $(z,w) \in B^n(0,N) \cap (\mathcal{E}^{(\rho_N)} \setminus \mathcal{E})$. Since $\varphi$ is pluriharmonic in $\C^n \setminus \mathcal{E}$, and in view of Lemma \ref{thm_boxE}, we conclude from Lemma \ref{thm_poisson} that 
\[ \gabs{D^\alpha\varphi(z,w)} \le C_{\abs{\alpha}} \frac{\sup_{\xi \in b\Delta^n_{\mathcal{E}}(z,w)} \abs{\varphi(\xi)}}{\hat{r}_{\mathcal{E}}(z,w)^\alpha} \le C_{\abs{\alpha}}' \frac{\sup_{\xi \in b\Delta^n_{\mathcal{E}}(z,w)} \abs{\varphi(\xi)}}{d(w, \mathcal{E}_z)^{m_\alpha}} \]
on $\C^n \setminus \mathcal{E}$ for suitable constants $C_{\abs{\alpha}}' > 0$ and $m_\alpha \in \N$, where $r_{\mathcal{E}}(z,w) \in (0,\infty)^n$ is defined as $r_{\mathcal{E}}(z,w) \coloneqq \big(C(d(w, \mathcal{E}_z)/2)^2, \ldots, C(d(w, \mathcal{E}_z)/2)^2, d(w, \mathcal{E}_z)/2 \big)$. Using $(\ref{equ_estphi})$, we get
\begin{equation} \label{equ_estphi2} \gabs{D^\alpha\varphi(z,w)} \le C_{\abs{\alpha}}' \frac{\log\big(1/d(w, \mathcal{E}_z)\big)}{d(w, \mathcal{E}_z)^{m_\alpha}} \end{equation}
for $(z,w) \in B^n(0,N) \cap (\mathcal{E}^{(\raisebox{0.2ex}{\scriptsize$\rho$}_N)} \setminus \mathcal{E})$. Moreover, for every $N \in \N$, there exists a constant $C_N'' > 0$ such that $\abs{D^\alpha \norm{\,\cdot\,}^2} \le C_N''$ on $B^n(0,N)$ for every $\alpha \in \N_0^{2n}$. Since, by the choice of $\raisebox{0.2ex}{$\rho$}_N$, we have $\log(1/d(w, \mathcal{E}_z)) > 1$ on $B^n(0,N) \cap (\mathcal{E}^{(\raisebox{0.2ex}{\scriptsize$\rho$}_N)} \setminus \mathcal{E})$, it follows together with $(\ref{equ_estphi2})$ that
\[ \gabs{D^\alpha \phi(z,w)} \le \gabs{D^\alpha \varphi(z,w)} + \gabs{D^\alpha \norm{\,\cdot\,}^2(z,w)} \le C_{\alpha, N} \frac{\log\big(1/d(w, \mathcal{E}_z)\big)}{d(w, \mathcal{E}_z)^{m_\alpha}} \]
for $(z,w) \in B^n(0,N) \cap (\mathcal{E}^{(\raisebox{0.2ex}{\scriptsize$\rho$}_N)} \setminus \mathcal{E})$ and $C_{\alpha,N} \coloneqq C_{\abs{\alpha}}' + C_N''$.

\noindent \textsc{Step 2}. {\it For every $\alpha \in \N_0^{2n}$ and $N \in \N$, there exists a polynomial $P_{\alpha,N} \in \R[x]$ with nonnegative coefficients such that 
\[ \gabs{D^\alpha\Phi(z,w)} \le P_{\alpha,N}\big(1/d(w,\mathcal{E}_z)\big) e^{-1/e^{\phi(z,w)}} \quad \text{for} \quad (z,w) \in B^n(0,N) \cap \big(\mathcal{E}^{(\raisebox{0.2ex}{\scriptsize$\rho$}_N)} \setminus \mathcal{E}\big).\] }

\noindent \textsc{Proof}. Recall that $\Phi = e^{-1/e^\phi}$ (since the smoothness of $\Phi$ in $\mathcal{E}$ depends only on the values $\Lambda(x)$ for $0 < x << 1$, we can assume here for simplicity that $\Lambda(x) \equiv e^{-1/e^x}$). Let $\beta_2, \beta_3, \ldots, \beta_{\langle\alpha\rangle} \in \N_0^{2n}$ be pairwise distinct multiindices such that $\{\beta \in \N_0^{2n} : 0 \le \beta \le \alpha, \beta \neq 0\} = \{\beta_2, \beta_3, \ldots, \beta_{\langle\alpha\rangle}\}$. Then an easy induction on $\abs{\alpha}$ shows that there exists a polynomial $Q_\alpha \in \R[x_1, \ldots, x_{\langle\alpha\rangle}]$, $Q_\alpha(x) = \sum_{\gamma \in \N_0^{\langle\alpha\rangle}} a_\gamma x^\gamma$, such that $D^\alpha \Phi = Q_\alpha(1/e^\phi, D^{\beta_2}\phi, \ldots, D^{\beta_{\langle\alpha\rangle}}\phi) e^{-1/e^\phi}$. Define $\tilde{Q}_\alpha \in \R[x_1, \ldots, x_{\langle\alpha\rangle}]$ as $\tilde{Q}_\alpha(x) = \sum_{\gamma \in \N_0^{\langle\alpha\rangle}} \abs{a_\gamma} x^\gamma$. Then
\[\gabs{D^\alpha \Phi(z,w)} \le \tilde{Q}_\alpha \big(1/e^\phi, \gabs{D^{\beta_2}\phi}, \ldots, \gabs{D^{\beta_{\langle\alpha\rangle}} \phi} \big)(z,w) \cdot e^{-1/e^{\phi(z,w)}}. \]
From Lemma \ref{thm_propE} we know that $1/e^{\phi(z,w)} =  1/(e^{\norm{(z,w)}^2} e^{\varphi(z,w)}) \le 1/d(w,\mathcal{E}_z)$ for every $(z,w) \in \C^n$. Applying this and Step 1 to the above formula, we get
\begin{equation*} \begin{split}  \gabs{D^\alpha & \varphi^\ast(z,w)} \\ & \le \tilde{Q}_\alpha\bigg(\frac{1}{d(w, \mathcal{E}_z)}, C_{\beta_2, N}\frac{\log\big(1/d(w, \mathcal{E}_z)\big)}{d(w, \mathcal{E}_z)^{m_{\beta_2}}}, \ldots, C_{\beta_{\langle\alpha\rangle}, N}\frac{\log\big(1/d(w, \mathcal{E}_z)\big)}{d(w, \mathcal{E}_z)^{m_{\beta_{\langle\alpha\rangle}}}} \bigg) e^{-1/e^{\phi(z,w)}} \\ & = \tilde{P}_{\alpha,N} \bigg(\frac{1}{d(w, \mathcal{E}_z)}, \log \Big(\frac{1}{d(w, \mathcal{E}_z)}\Big) \bigg) e^{-1/e^{\phi(z,w)}} \le P_{\alpha,N}\big(1/d(w,\mathcal{E}_z)\big)e^{-1/e^{\phi(z,w)}} \end{split} \end{equation*}
on $B^n(0,N) \cap \big(\mathcal{E}^{(\rho_N)} \setminus \mathcal{E}\big)$ for suitable polynomials $\tilde{P}_{\alpha, N} \in \R[x_1, x_2]$ and $P_{\alpha,N} \in \R[x]$ with nonnegative coefficients.

\noindent \textsc{Step 3}. {\it For every $(z_0, w_0) \in \mathcal{E}$ and $\alpha \in \N_0^{2n}$\!, one has $\lim_{(z,w) \to (z_0,w_0)} D^\alpha \Phi(z,w) \!=\! 0$. }

\noindent \textsc{Proof}. By a standard application of l'Hospital's rule, $\lim_{x \to \infty} P(x)e^{-cx^{1/m}} = 0$ for every polynomial $P \in \R[x]$, $c >0$ and $m \in \N$. Hence for every $\nu \in \N$ there exists a constant $\delta_\nu > 0$ such that
\begin{equation} \label{equ_deffct1} P_{\alpha,N}\big(1/d(w,\mathcal{E}_z)\big) e^{-1/[e^{N^2}(L_N+1)d(w,\mathcal{E}_z)^{1/2^\nu}]} < 1/\nu \;\;\; \text{for} \;\;\; (z,w) \in \overline{\mathcal{E}^{(\delta_\nu)}} \end{equation}
for every $N \in \N$ and $\alpha \in \N_0^{2n}$ such that $N, \abs{\alpha} \le \nu$, where $\{L_N\}$ are the constants from Lemma \ref{thm_Ml}. Clearly, we can assume that $\delta_\nu < \min\{\rho_\nu, 1\}$ and $\delta_{\nu+1} < \delta_{\nu}$ for every $\nu \in \N$ and that $\lim_{\nu \to \infty} \delta_\nu = 0$. Let $\{\varepsilon_l\}$ be decreasing so fast that 
\begin{equation} \label{equ_deffct2} e^{\varphi(z,w)} \le (L_N + 1)d(w,\mathcal{E}_z)^{1/2^\nu} \quad \text{for} \quad (z,w) \in B^n(0,N) \cap \big(\mathcal{E}^{(1)} \setminus \mathcal{E}^{(\delta_{\nu+1})}\big) \end{equation}
for every $N \in \N$ and $\nu \ge \N$. This is always possible as is shown in the second part of Lemma \ref{thm_propE}. Now fix $N \in \N$ and $\alpha \in \N_0^{2n}$. Then, since $\phi = \varphi + \norm{\,\cdot\,}^2$, we conclude from $(\ref{equ_deffct1})$, $(\ref{equ_deffct2})$ and Step 2 that
\begin{equation*} \begin{split} \gabs{D^\alpha\Phi(z,w)} \le P_{\alpha, N}\big(1/&d(w,\mathcal{E}_z)\big)e^{-1/e^{\phi(z,w)}} \\ &\le  P_{\alpha, N}\big(1/d(w,\mathcal{E}_z)\big) e^{-1/[e^{N^2}(L_N+1)d(w,\mathcal{E}_z)^{1/2^\nu}]} < 1/\nu \end{split} \end{equation*}
for every $(z,w) \in B^n(0,N) \cap (\overline{\mathcal{E}^{(\delta_\nu)}} \setminus \mathcal{E}^{(\delta_{\nu+1})})$. Thus it follows from  $\lim_{\nu \to \infty} \delta_\nu = 0$ that $\lim_{(z,w) \to (z_0, w_0)} D^\alpha \Phi(z,w) = 0$ for every $(z_0,w_0) \in B^n(0,N) \cap \mathcal{E}$. Since this holds true for every $N \in \N$ and $\alpha \in \N_0^{2n}$, the proof is complete. $\hfill \Box$

\section{Pseudoconcavity of higher order cores} \label{sec_higherordercore}
Analogously to the notion of the core of a domain one can for every $q = 1, 2, \ldots, n$ define the notion of the core of order $q$. Let $\mathcal{M}$ be a complex manifold of complex dimension $n$ and let $\Omega \subset \mathcal{M}$ be a domain. Denote by $\mathcal{B}^\infty_{psh}(\Omega)$ the set of all smooth plurisubharmonic functions on $\Omega$ that are bounded from above.

\begin{definition} \label{def_corehigherorder}
For every $q = 1, \ldots, n$, we call the set
\[ \mathfrak{c}_q(\Omega) \coloneqq \{z \in \Omega : \rank \Lev(\varphi)(z, \,\cdot\,) \le n - q \text{ for every } \varphi \in \mathcal{B}^\infty_{psh}(\Omega) \} \]
the \textit{core of order $q$} of $\Omega$.
\end{definition}

\noindent \textbf{Remark.} Analoguous definitions are possible for $\mathcal{C}^s$-smooth functions on complex manifolds for every $s \ge 2$, and for $\mathcal{C}^\infty$-smooth functions on locally irreducible complex spaces. 

It follows immediately from the definition that $\mathfrak{c}_1(\Omega) \supset \mathfrak{c}_2(\Omega) \supset \cdots \supset \mathfrak{c}_n(\Omega)$ and that $\mathfrak{c}(\Omega) = \mathfrak{c}_1(\Omega)$. Let us now illustrate this notion with the following example.

\begin{example}
For generic $C \in \R$, let
\[ \Omega \coloneqq \big\{(z_1, z_2, z_3) \in \C^3 : \log\abs{z_1} + \log(\abs{z_2} + \abs{z_3}) + \big(\abs{z_1}^2 + \abs{z_2}^2 + \abs{z_3}^2\big) < C \big\}. \]
Then $\Omega$ is strictly pseudoconvex with smooth boundary and, in view of Liouville's theorem, $\mathfrak{c}(\Omega) = l \cup \Pi$, where $l = \{(z_1,0,0) \in \C^3 : z_1 \in \C\}$ and $\Pi = \{(0, z_2, z_3) \in \C^3 : z_2, z_3 \in \C\}$.  
\end{example}

In the above example one can easily see that $\mathfrak{c}_1(\Omega) = l \cup \Pi$ is $1$-pseudoconcave and $\mathfrak{c}_2(\Omega) = \Pi$ is $2$-pseudoconcave. We know from Theorem \ref{thm_pseudoconcave} that $\mathfrak{c}_1(\Omega)$ is always $1$-pseudoconcave in $\Omega$ for every domain $\Omega \subset \mathcal{M}$. Moreover, in view of the discussion on Liouville type properties of the core in Section \ref{sec_liouville}, observe that the following generalization of Lemma \ref{thm_pseudoconcaveconstant} holds true for every $q = 1, \ldots, n$: every smooth plurisubharmonic function $\varphi$ which is defined on an open neighbourhood of a closed $q$-pseudoconcave set $A \subset \mathcal{M}$ and which is constant on $A$ satisfies $\rank \Lev(\varphi)(z, \,\cdot\,) \le n-q$ for every $z \in A$ (by the results from \cite{Slodkowski86}, $(q-1)$-plurisubharmonic functions have the local maximum property on $q$-pseudoconcave sets for every $q= 1, \ldots, n$; thus the statement follows by the same argument as in the proof of Lemma \ref{thm_pseudoconcaveconstant}). These observations lead us to the following question: Is it always true that $\mathfrak{c}_q(\Omega)$ is $q$-pseudoconcave in $\Omega$ for $q > 1$? In general the answer is negative and we will construct here explicit counterexamples. The main results of this section are summarized in the following theorem.

\begin{theorem} \label{thm_coreconcavity}
For every $n \ge 2$ and every $q = 1, \ldots, n$, $q' = 0, \ldots, n-1$ such that $(q,q') \neq (1,0)$, there exists a strictly pseudoconvex domain $\Omega \subset \C^n$ with smooth boundary such that $\mathfrak{c}_q(\Omega)$ is $q'$-pseudoconcave but not $(q'+1)$-pseudoconcave.
\end{theorem}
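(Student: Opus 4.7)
The plan is to produce, for every allowed pair $(q,q')$, an explicit strictly pseudoconvex domain $\Omega\subset\C^n$ with smooth boundary realizing the claimed pseudoconcavity of $\mathfrak{c}_q(\Omega)$. The natural building blocks are the domains
\[ \Omega_{k}\coloneqq\big\{(z,w)\in\C^{k}\times\C^{n-k}:\log\|z\|+\|z\|^2+\|w\|^2<C\big\}\quad(1\le k\le n-1)\]
from Example~\ref{ex_complexsubspace}, which for generic $C\in\R$ are unbounded strictly pseudoconvex domains with smooth boundary and core $\mathfrak{c}_1(\Omega_k)=\{0\}\times\C^{n-k}$.

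First I would compute $\mathfrak{c}_q(\Omega_k)$ for every $q$. Fix $(0,w_0)\in\{0\}\times\C^{n-k}\subset\Omega_k$ and any $\varphi\in\mathcal{B}^{\infty}_{psh}(\Omega_k)$. Liouville applied on the slice $\{0\}\times\C^{n-k}\cong\C^{n-k}$ forces $\varphi$ to be constant there, so the $w$-block of $\Lev(\varphi)(0,w_0;\cdot)$ vanishes; positive semi-definiteness of the Hermitian Levi form then forces the mixed $z$-$w$ block to vanish as well (a Hermitian positive semi-definite matrix of the form $\begin{pmatrix}A&B\\B^{\ast}&0\end{pmatrix}$ has $B=0$). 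Hence $\rank\Lev(\varphi)(0,w_0;\cdot)\le k$, and since $\|z\|^2\in\mathcal{B}^{\infty}_{psh}(\Omega_k)$ attains rank $k$ at every $(0,w_0)$, we obtain
\[ \mathfrak{c}_q(\Omega_k)=\{0\}\times\C^{n-k}\text{ for }1\le q\le k,\qquad \mathfrak{c}_q(\Omega_k)=\varnothing\text{ for }q>k.\]
The complex linear subspace $\{0\}\times\C^{n-k}$ is $(n-k)$-pseudoconcave but not $(n-k+1)$-pseudoconcave, so setting $k=n-q'$ realizes every pair $(q,q')$ with $q'\in\{1,\ldots,n-1\}$ and $1\le q\le n-q'$.

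For the remaining pairs — those with $q'=0$ and $q\ge 2$, and those with $q+q'>n$ — the Liouville-by-linear-subspace mechanism is insufficient, and I would replace it by a Liouville-by-Wermer mechanism. With $\mathcal{E}\subset\C^m$ and the plurisubharmonic function $\varphi_{\mathcal{E}}$ from Section~\ref{sec_noanalyticcore}, consider domains of the form
\[ \Omega=\big\{(z,w)\in\C^m\times\C^{n-m}:\varphi_{\mathcal{E}}(z)+\|z\|^2+\psi(w)<C\big\},\]
with $\psi$ strictly plurisubharmonic, smoothed via the construction at the end of Section~\ref{sec_geometricproperties}. By Theorem~\ref{thm_liouville} every bounded plurisubharmonic function on $\Omega$ is constant on each horizontal slice $\mathcal{E}\times\{w_0\}$, so the same cross-block argument bounds the Levi rank on the Wermer stratum; appropriate choices of $m$, of $\psi$, and of the stratum targeted for $\mathfrak{c}_q$ then cover the outstanding $(q,q')$. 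Strict pseudoconvexity and smoothness of $b\Omega$ hold for generic values of $C$.

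The main obstacle lies in the Wermer cases: obtaining the sharp upper bound on the rank of the Levi form of an arbitrary bounded plurisubharmonic function at a point of the Wermer stratum. Constancy on $\mathcal{E}$ only forces $\rank\le n-1$ via Lemma~\ref{thm_pseudoconcaveconstant} together with $1$-pseudoconcavity of $\mathcal{E}$, so the further rank drop down to $n-q$ must be extracted from the infinite-order flatness of the smoothing $\Lambda\circ(\varphi_{\mathcal{E}}+\|\cdot\|^2)$ along $\mathcal{E}$ provided by the Cauchy-type estimates of Section~\ref{sec_geometricproperties}. Complementarily, one must exhibit at every point outside the target stratum a bounded plurisubharmonic function whose Levi form there has rank exceeding $n-q$, again using the smoothing procedure and the $\|z\|^2$-type terms adapted to the product structure.
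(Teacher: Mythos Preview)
Your linear-subspace part contains a bookkeeping slip that cuts down its coverage. From the vanishing $w$-block and the positive semi-definiteness trick you correctly get $\rank\Lev(\varphi)(0,w_0;\cdot)\le k$ at every point of $\{0\}\times\C^{n-k}$, and $\|z\|^2$ realizes rank exactly $k$ there. But then membership in $\mathfrak{c}_q$ requires $k\le n-q$, i.e.\ $q\le n-k$, not $q\le k$ as you wrote. With $k=n-q'$ this yields $\mathfrak{c}_q(\Omega_k)=\{0\}\times\C^{q'}$ only for $q\le q'$, so the linear examples cover merely the pairs with $1\le q\le q'\le n-1$, not the range $1\le q\le n-q'$ that you claim. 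All pairs with $q>q'$ (in particular $q=n$ with any $q'\le n-1$) remain.

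For those remaining pairs your Wermer-product idea does not close the gap, and the mechanism you propose to close it is the wrong one. Constancy of a bounded plurisubharmonic $\varphi$ on $\mathcal{E}\times\{w_0\}$ together with the block argument only gives $\rank\le m$; the infinite-order flatness from Section~\ref{sec_geometricproperties} produces \emph{one} specific smooth function whose Levi form vanishes on $\mathcal{E}$, but says nothing about the rank of an \emph{arbitrary} bounded plurisubharmonic $\varphi$ there. What the paper actually does (Lemma~\ref{thm_OmegacoresE}) is build a generalized Wermer set in $\C^k\times\C^{n-k}$ with $k=q'$ whose square-root branches point, in the limit, into \emph{every} coordinate direction $w_q$ (condition $(I_\nu)$ on the tangent lines $\lambda_\nu$). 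For any $\Phi\in\mathcal{B}^\infty_{psh}(\Omega)$ the subharmonic averages $\varphi_\nu(z)=2^{-\nu}\sum_j\Phi(z,w_j^{(\nu)}(z))$ converge to a constant by the Liouville theorem for $\mathcal{E}$, and the Poisson--Jensen formula then forces $\Delta\varphi_\nu\to0$ in measure; if $\Lev(\Phi)$ were positive in any direction near a point of $\mathcal{E}$, the branch-direction control would make $\Delta\varphi_\nu$ stay bounded below on a set of positive measure, a contradiction. This kills the Levi form in all directions and gives $\mathfrak{c}_q(\Omega)=\mathcal{E}$ for every $q=1,\ldots,n$ simultaneously, while Lemma~\ref{thm_Ekpseudoconcave} shows $\mathcal{E}$ is $k$-pseudoconcave but not $(k{+}1)$-pseudoconcave. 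For $q'=0$ the paper uses the elementary union-of-axes domain $\{\|z\|^2+\sum_{j=1}^q\log(\|z\|^2-|z_j|^2)<C\}$, whose order-$q$ core is the single point $\{0\}$.
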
 

The case $q' = 0$ is rather easy (recall that a set $A \subset \C^n$ is $0$-pseudoconcave if and only if it is closed). Indeed, fix arbitrary $q \in \{2, \ldots, n\}$ and for generic $C \in \R$ consider the set
\[ \Omega \coloneqq \big\{ z \in \C^n : \norm{z}^2 + \sum_{j=1}^q \log(\norm{z}^2 - \abs{z_j}^2) < C \big\}.\]
After possibly passing to a suitable connected component, $\Omega$ is a strictly pseudoconvex domain with smooth boundary such that $L \coloneqq \bigcup_{j=1}^q \{z \in \C^n : z_k = 0 \text{ for every } k \neq j\} \subset \Omega$. By Liouville's theorem, every smooth and bounded from above plurisubharmonic function on $\Omega$ has to be constant on $L$. In particular, $L \subset \mathfrak{c}(\Omega)$ and $0 \in \mathfrak{c}_q(\Omega)$. Moreover, a straightforward computation shows that $\varphi(z) \coloneqq \exp(\norm{z}^2 + \sum_{j=1}^q \log(\norm{z}^2 - \abs{z_j}^2))$ is a smooth and bounded from above plurisubharmonic function on $\Omega$ such that $\varphi$ is strictly plurisubharmonic on $\Omega \setminus L$ and such that $\rank \Lev(\varphi)(z,\,\cdot\,) = n-1$ for every $z \in L \setminus \{0\}$. Hence $\mathfrak{c}(\Omega) \subset L$ and $\mathfrak{c}_q(\Omega) \subset \{0\}$. It follows that $\mathfrak{c}(\Omega) = L$ and $\mathfrak{c}_q(\Omega) = \{0\}$. In particular, $\mathfrak{c}_q(\Omega)$ is not $1$-pseudoconcave.

The case $q' > 0$ is more complicated. In fact, in this situation the domains $\Omega$ of the theorem will be chosen in such a way that $\mathfrak{c}_1(\Omega) = \mathfrak{c}_2(\Omega) = \cdots = \mathfrak{c}_n(\Omega) = \mathcal{E}$, where $\mathcal{E}$ is a Wermer type set similar to the one constructed in \cite{HarzShcherbinaTomassini12}. In particular, we can guarantee that the core $\mathfrak{c}_q(\Omega)$ in the Theorem \ref{thm_coreconcavity} has the following additional properties: $\mathfrak{c}_q(\Omega)$ is complete pluripolar, $\mathfrak{c}_q(\Omega)$ contains no analytic variety of positive dimension and $\widehat{bB^n(0,R) \cap \mathfrak{c}_q(\Omega)} = \overline{B^n(0,R)} \cap \mathfrak{c}_q(\Omega)$ for every $R > 0$, where $\widehat{bB^n(0,R) \cap \mathfrak{c}_q(\Omega)}$ denotes the polynomial hull of $bB^n(0,R) \cap \mathfrak{c}_q(\Omega)$.

Before we begin with the construction of our examples for the case $q' > 0$, we state the following generalization of the Main Theorem.

\begin{theorem} \label{thm_extendedmain}
Let $\Omega$ be a strictly pseudoconvex domain with smooth boundary in a complex manifold $\mathcal{M}$. Then there exists a global defining function $\varphi$ for $\Omega$ such that $\varphi$ is strictly plurisubharmonic in the complement of $\mathfrak{c}_1(\Omega)$ and $\rank \Lev(\varphi)(z,\,\cdot\,) = n-q$ for every $z \in \mathfrak{c}_q(\Omega) \setminus \mathfrak{c}_{q+1}(\Omega)$ and $q = 1, 2, \ldots, n$. 
\end{theorem}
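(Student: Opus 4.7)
The strategy refines the proof of the Main Theorem by keeping track of the pointwise rank of the Levi form. The starting observation is the following rank-sensitive analog of Lemma~\ref{thm_coreequality}: for each $q\in\{1,\dots,n\}$, let $\mathfrak{c}_{q,\ast}(\Omega)$ denote the set of points $z\in\Omega$ at which every smooth global defining function $\varphi$ for $\Omega$ satisfies $\rank\Lev(\varphi)(z,\cdot)\le n-q$. Then $\mathfrak{c}_{q,\ast}(\Omega)=\mathfrak{c}_q(\Omega)$. The inclusion $\mathfrak{c}_q(\Omega)\subset\mathfrak{c}_{q,\ast}(\Omega)$ is obvious, since any smooth global defining function (after composition with a bounded strictly convex function on its range) represents an element of $\mathcal{B}^\infty_{psh}(\Omega)$. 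For the reverse inclusion, given $p\notin\mathfrak{c}_q(\Omega)$ pick $\varphi_1\in\mathcal{B}^\infty_{psh}(\Omega)$ with $\rank\Lev(\varphi_1)(p,\cdot)\ge n-q+1$ and any smooth global defining function $\varphi_2$; the smooth maximum $\widetilde{\max}_1(\varphi_1-C_1,C_2\varphi_2)$ with constants $C_1,C_2>0$ arranged so that $\varphi_1-C_1<C_2\varphi_2-1$ near $b\Omega$ and $C_2\varphi_2(p)<\varphi_1(p)-C_1-1$ coincides with $\varphi_1-C_1$ on a neighborhood of $p$, and hence is a smooth global defining function inheriting the rank of $\varphi_1$ at $p$.

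\textbf{Local model functions.} Set $\mathfrak{c}_0(\Omega):=\Omega$ and $\mathfrak{c}_{n+1}(\Omega):=\varnothing$, so that $\Omega=\bigcupdot_{q=0}^{n}(\mathfrak{c}_q(\Omega)\setminus\mathfrak{c}_{q+1}(\Omega))$. For $q\in\{0,\dots,n-1\}$ and any $p\in\mathfrak{c}_q(\Omega)\setminus\mathfrak{c}_{q+1}(\Omega)$, the previous step applied with $q$ replaced by $q+1$ produces a smooth global defining function $\psi_p$ with $\rank\Lev(\psi_p)(p,\cdot)\ge n-q$; the defining property of $\mathfrak{c}_q(\Omega)$ forces the reverse inequality, so the rank is exactly $n-q$. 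Because rank of a continuous family of Hermitian matrices is lower semicontinuous, and because $\mathfrak{c}_{q+1}(\Omega)$ is closed in $\Omega$, there is an open neighborhood $V_p\subset\Omega\setminus\mathfrak{c}_{q+1}(\Omega)$ of $p$ on which $\rank\Lev(\psi_p)(z,\cdot)\ge n-q$.

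\textbf{Global assembly and rank analysis.} For each $q\in\{0,\dots,n-1\}$ pick a countable subset $\{p_{q,j}\}_{j\in\N}$ of $\mathfrak{c}_q(\Omega)\setminus\mathfrak{c}_{q+1}(\Omega)$ with $\bigcup_{j}V_{p_{q,j}}\supset\mathfrak{c}_q(\Omega)\setminus\mathfrak{c}_{q+1}(\Omega)$. Repeating verbatim the construction in the proof of the Main Theorem, choose positive numbers $\delta_{q,j}$ (so that $\delta_{q,j}\psi_{p_{q,j}}>-1/2$ on a shrunken neighborhood $V'_{p_{q,j}}\subset V_{p_{q,j}}$ of $p_{q,j}$) and $\varepsilon_{q,j}$ (decaying fast enough for smooth convergence on $\overline\Omega$), and set
\[
\varphi_1:=\sum_{q=0}^{n-1}\sum_{j=1}^{\infty}\varepsilon_{q,j}\,\widetilde{\max}_{1/2}\bigl(\delta_{q,j}\psi_{p_{q,j}},-1\bigr).
\]
Then $\varphi_1$ is smooth and plurisubharmonic on $\overline\Omega$ with $-1<\varphi_1\le 0$, $b\Omega=\{\varphi_1=0\}$, and $d\varphi_1\ne 0$ on $b\Omega$, so it extends to a global defining function $\varphi$ on an open neighborhood of $\overline\Omega$. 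For the rank at $p\in\mathfrak{c}_q(\Omega)\setminus\mathfrak{c}_{q+1}(\Omega)$: the upper bound $\rank\Lev(\varphi)(p,\cdot)\le n-q$ is automatic because $\varphi|_\Omega\in\mathcal{B}^\infty_{psh}(\Omega)$ and $p\in\mathfrak{c}_q(\Omega)$. For the lower bound, choose $j_0$ with $p\in V'_{p_{q,j_0}}$; on that neighborhood the smooth maximum is just $\delta_{q,j_0}\psi_{p_{q,j_0}}$, so
\[
\Lev(\varphi)(p,\cdot)\;\ge\;\varepsilon_{q,j_0}\delta_{q,j_0}\,\Lev(\psi_{p_{q,j_0}})(p,\cdot)
\]
as Hermitian forms, and the right-hand side has rank at least $n-q$. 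For $p\in\mathfrak{c}_n(\Omega)$ the upper bound gives rank $0$, as required.

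\textbf{Main obstacle.} The delicate point is the rank analysis. The Levi form $\Lev(\varphi_1)(p,\cdot)$ is a countable sum of positive-semidefinite Hermitian forms, whose rank is the codimension of the intersection of their kernels; a priori such a sum could have larger rank than each summand. The hypothesis $p\in\mathfrak{c}_q(\Omega)$ applied to $\varphi_1\in\mathcal{B}^\infty_{psh}(\Omega)$ itself caps the total rank at $n-q$, so no rank inflation occurs; the non-trivial task is rather to preserve the full rank $n-q$ contributed by the designated summand $\delta_{q,j_0}\psi_{p_{q,j_0}}$ while the smooth-maximum cutoffs and the small coefficients $\varepsilon_{q,j},\delta_{q,j}$ are being fixed. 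This is controlled by the same Richberg-type size estimates already employed in the proof of the Main Theorem, the only new ingredient being the explicit rank lower bound available on each stratum $V'_{p_{q,j}}$.
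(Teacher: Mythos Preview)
Your proof is correct and follows essentially the same strategy as the paper's. The paper's version is organized slightly more modularly: it invokes the Main Theorem argument $n$ times to produce, for each $q=1,\dots,n$, a global defining function $\varphi_q$ with $\rank\Lev(\varphi_q)(z,\cdot)\ge n-q+1$ for all $z\notin\mathfrak{c}_q(\Omega)$, and then simply sets $\varphi=\sum_{q=1}^n\varphi_q$; your construction unfolds this into a single double sum over strata and covering points, but the ingredients (the rank-sensitive analogue of Lemma~\ref{thm_coreequality}, the smooth-maximum cutoffs, the convergent series, and the rank bookkeeping via positive-semidefinite sums together with the built-in upper bound from membership in $\mathfrak{c}_q(\Omega)$) are identical.
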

\begin{proof}
We know from the Main Theorem that there exists a smooth global defining function $\varphi_1$ for $\Omega$ such that $\rank \Lev(\varphi_1)(z,\,\cdot\,) = n$ for every $z \notin \mathfrak{c}_1(\Omega)$. Observe that by repeating the same arguments as in the proof of the Main Theorem we can also construct for each $q = 2, 3, \ldots, n$ a smooth global defining function $\varphi_q$ for $\Omega$ such that $\rank \Lev(\varphi_q)(z,\,\cdot\,) \ge n-q+1$ for every $z \notin \mathfrak{c}_{q}(\Omega)$. Then $\varphi \coloneqq \sum_{q=1}^n \varphi_q$ is a function as desired.
\end{proof}

We would like to point out here that the most essential achievement of the Main Theorem and Theorem \ref{thm_extendedmain} is the proof of existence of global defining functions (the construction of these functions is carried out in Theorem \ref{thm_qdeffct_manifolds}). The proof of the additional properties of these functions, namely, of being strictly plurisubharmonic outside the core $\mathfrak{c}(\Omega)$ or having the corresponding rank of the Levi form outside the core $\mathfrak{c}_q(\Omega)$ of order $q$ for every $q = 1,2,\ldots,n$, is simple and rather standard. Note also that a version of the last argument as well as the definition of a notion similar to our notion of the core $\mathfrak{c}_q(\Omega)$, $q=1,2,\ldots,n$, in the different setting of exhaustion functions was given earlier in Lemma 3.1 of \cite{SlodkowskiTomassini04}.

We now turn to the proof of Theorem \ref{thm_coreconcavity}. For this we need to generalize the construction of the Wermer type set from \cite{HarzShcherbinaTomassini12}: Let $(z,w) = (z_1, \ldots, z_k,w_1, \ldots, w_{n-k})$ denote the coordinates in $\C^n$ and for each $\nu \in \N$ let $\N_\nu \coloneqq \{1, 2, \ldots, \nu\}$. For each $(p,q) \in \N_k \times \N_{n-k}$, fix an everywhere dense subset $\{a_l^{p,q}\}_{l=1}^\infty$ of $\C$ such that $a_l^{p,q} \neq a_{l'}^{p,q'}$ if $(q,l) \neq (q',l')$. Further, fix a bijection $\Phi \coloneqq ([\,\cdot\,], \langle\,\cdot\,\rangle, \phi) \colon \N \to \N_k \times \N_{n-k} \times \N$ and define a sequence $\{a_l\}_{l=1}^\infty$ in $\C$ by letting $a_l \coloneqq a^{[l],\langle l \rangle}_{\phi(l)}$. Moreover, let $\{\varepsilon_l\}_{l=1}^\infty$ be a decreasing sequence of positive numbers converging to zero that we consider to be fixed, but that will be further specified later on. For every $\nu \in \N$ and $q \in \N_{n-k}$, we define sets $E_{\nu,q}, E_\nu \subset \C^n$ as
\begin{gather*}
  E_{\nu,q} \coloneqq \big\{ (z,w) \in \C^n : w_q = \sum_{l \in L_\nu^{\ast, q}} \varepsilon_l \sqrt{z_{[l]}-a_l} \big\}, \\
  E_\nu \coloneqq \big\{ (z,w) \in \C^n : w = \big(\!\sum_{l \in L_\nu^{\ast, 1}} \varepsilon_l \sqrt{z_{[l]}-a_l}\,, \ldots, \!\!\!\sum_{l \in L_\nu^{\ast, n-k}} \varepsilon_l \sqrt{z_{[l]}-a_l}\,\big) \big\}, 
\end{gather*}
where $L_\nu^{\ast, q} \coloneqq \{l \in \N_\nu : \langle l \rangle = q\}$. Observe that $E_\nu = \bigcap_{q=1}^{n-k} E_{\nu,q} = \{(z,w) \in \C^n : w = \sum_{l=1}^\nu \varepsilon_l \textbf{e}_{\langle l \rangle} \sqrt{z_{[l]}-a_l}\}$, where for every $q \in \N_{n-k}$ we denote by $\textbf{e}_q \coloneqq (0, \ldots, 1, \ldots, 0)$ the $q$-th unit vector in $\C^{n-k}$. Note further that $\sum_{l=1}^\nu \varepsilon_l \textbf{e}_{\langle l \rangle}\sqrt{z_{[l]} - a_l}$ takes $2^\nu$ values at each point $z \in \C^k$ (counted with multiplicities). Thus there exist single-valued maps $w^{(\nu)}_1, \ldots, w^{(\nu)}_{2^\nu} \colon \C_z^k \to \C_w^{n-k}$ such that $\sum_{l=1}^\nu \varepsilon_l \textbf{e}_{\langle l \rangle}\sqrt{z_{[l]} - a_l} = \big\{w_j^{(\nu)}(z) : j = 1, \ldots, 2^\nu \big\}$ for every $z \in \C^k$. For every $\nu\in\N$ and $q \in \N_{n-k}$, define maps $P_{\nu,q} \colon \C^n \to \C$, $P_\nu \colon \C^n \to \C^{n-k}$ as
\begin{gather*}
P_{\nu,q}(z,w) \coloneqq \big(w_q - w_1^{(\nu)}(z)_q\big) \cdots \big(w_q - w_{2^\nu}^{(\nu)}(z)_q\big), \\ P_\nu(z,w) \coloneqq \big(P_{\nu,1}(z,w), \ldots, P_{\nu, n-k}(z,w)\big),
\end{gather*}
where for every $j \in \N_{2^\nu}$ we denote by $w_j^{(\nu)}(z)_q$ the $q$-th coordinate of $w_j^{(\nu)}(z) \in \C^{n-k}$. Observe that $E_{\nu,q} = \{P_{\nu,q} = 0\}$ and $E_\nu = \{P_\nu = 0\}$. As in Lemma 2.1 of \cite{HarzShcherbinaTomassini12}, we see that each $P_{\nu,q}$ is a holomorphic polynomial. Moreover, one easily proves the following three lemmas.

\begin{lemma}
If $\{\varepsilon_l\}$ is decreasing fast enough, then for every $R>0$ the sequences $\{E_{\nu,q} \cap \overline{B^n(0,R)}\}_{\nu=1}^\infty$ and $\{E_\nu \cap \overline{B^n(0,R)}\}_{\nu=1}^\infty$ converge in the Hausdorff metric to closed sets $\mathcal{E}_{(R),q}$ and $\mathcal{E}_{(R)}$, $q \in \N_{n-k}$, respectively. The sets $\mathcal{E}_q \coloneqq \bigcup_{R>0} \mathcal{E}_{(R),q}$ and $\mathcal{E} \coloneqq \bigcup_{R>0} \mathcal{E}_{(R)}$ are unbounded closed connected subsets of $\C^n$ and $\mathcal{E} = \bigcap_{q=1}^{n-k} \mathcal{E}_q$. Moreover, $\mathcal{E}_z \coloneqq \{w \in \C^{n-k} : (z,w) \in \mathcal{E}\}$ is compact for every $z \in \C^k$.
\end{lemma}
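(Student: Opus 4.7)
First I would fix the decay rate of $\{\varepsilon_l\}$ so that $\varepsilon_l \sqrt{|z_{[l]}-a_l|} \le 1/2^l$ on $B^k(0,l)$ for every $l$, together with the tail conditions from Lemmas \ref{thm_Ml} and \ref{thm_propE} which in particular guarantee $E_\nu \subset \{(z,w) : \|w\| \le \tfrac{1}{2}(\sqrt{\|z\|}+1)\}$ for every $\nu$. With this decay, for any fixed $z \in B^k(0,R)$ and any $\nu < \mu$ the passage from $E_\nu$ to $E_\mu$ at the point $z$ consists of adding to each $w^{(\nu)}_j(z)$ the sums $\sum_{l=\nu+1}^{\mu}\pm \varepsilon_l \mathbf{e}_{\langle l\rangle}\sqrt{z_{[l]}-a_l}$; in both directions one obtains the fiberwise estimate
\[
d_H(E_{\nu,z},E_{\mu,z}) \le \sum_{l=\nu+1}^{\mu}\varepsilon_l\sqrt{|z_{[l]}-a_l|} \le \sum_{l=\nu+1}^{\infty}\tfrac{1}{2^l}= \tfrac{1}{2^\nu},
\]
uniformly in $z \in B^k(0,R)$ once $\nu \ge R$. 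Together with the uniform bound on $\|w\|$, this shows that $\{E_\nu \cap \overline{B^n(0,R)}\}_\nu$ is Cauchy in the Hausdorff metric on $\overline{B^n(0,R)}$ (taking $R' = R + 1$ to absorb the small perturbations crossing $bB^n(0,R)$ in the limit), hence converges to some compact $\mathcal{E}_{(R)}$; the same argument gives $\mathcal{E}_{(R),q}$. The limits are compatible: $\mathcal{E}_{(R_1)}\cap \overline{B^n(0,R_2)}=\mathcal{E}_{(R_2)}$ for $R_1\ge R_2$, so $\mathcal{E}$ is closed in $\C^n$, and similarly for each $\mathcal{E}_q$.

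Next, unboundedness is immediate because the $z$-projection of each $E_\nu$ is all of $\C^k$ and each fiber $E_{\nu,z}$ is a nonempty finite set converging in Hausdorff metric to $\mathcal{E}_z$; thus $\pi_z(\mathcal{E})=\C^k$ while the total space $\mathcal{E}$ is contained in $\{(z,w) : \|w\|\le \tfrac{1}{2}(\sqrt{\|z\|}+1)\}$, which also shows compactness of each fiber $\mathcal{E}_z$. For the identity $\mathcal{E}=\bigcap_{q=1}^{n-k}\mathcal{E}_q$ one inclusion is clear from $E_\nu\subset E_{\nu,q}$. For the reverse, fix $(z,w)\in\bigcap_q\mathcal{E}_q$ and exploit the product-type structure: the defining equation of $E_{\nu,q}$ only constrains the $w_q$-coordinate via the indices $l$ with $\langle l\rangle=q$, and these index sets are disjoint for distinct $q$. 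So from approximants $(z^{(\nu,q)},w^{(\nu,q)})\in E_{\nu,q}$ converging coordinatewise to $(z,w)$, one combines the choices of signs $\pm$ in the $w_q$-coordinate (for $l$ with $\langle l\rangle=q$) into a single sign-assignment defining a point of $E_\nu$ close to $(z,w)$; letting $\nu\to\infty$ yields $(z,w)\in\mathcal{E}$.

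The main obstacle is connectedness of $\mathcal{E}$ (and of each $\mathcal{E}_q$). I would adapt the argument of Lemma \ref{thm_connectedE} to the present multi-variable setting. The fiber map $\underline{\mathcal{E}}:\C^k\to\mathcal{F}(\C^{n-k})$ is continuous in the Hausdorff metric by exactly the estimate used in Lemma \ref{thm_haushölder} (applied componentwise in $w$); hence if $\mathcal{E}=V_1\sqcupdot V_2$ is a decomposition into two nonempty relatively open sets, then both projections $\pi_z(V_1)$ and $\pi_z(V_2)$ are open in $\C^k$, their union is $\C^k$, and so their intersection $D$ is a nonempty open set. Pick $z_0\in D$ avoiding the critical configurations (so $z_{0,p}\notin\{a_l\}_{l=1}^\infty$ for every $p\in\N_k$ and the arguments $\arg(a_l-z_{0,[l]})$ are pairwise distinct for indices $l$ with the same $[l]$ and $\langle l\rangle$), choose $\nu_0$ with the tail from $\nu_0+1$ being smaller than $\tfrac{1}{3}\mathrm{dist}(\mathcal{E}_{z_0},b(V_1(z_0)\cup V_2(z_0)))$, and pick $\hat p_1\in V_1$, $\hat p_2\in V_2$ above $z_0$. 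These correspond to two sign-assignments $\tau_1,\tau_2:\N_{\nu_0}\to\{0,1\}$; flipping one sign $\tau_1(l)\mapsto\tau_2(l)$ at a time via a loop in $\C^k$ encircling the corresponding branch point $a_l$ produces a continuous curve in $\mathcal{E}$ from $\hat p_1$ to $\hat p_2$ (exactly the concatenation $\gamma$ constructed in Lemma \ref{thm_connectedE}, now carrying an $(n-k)$-vector of partial sums whose $q$-th component only changes when $\langle l\rangle=q$), contradicting the decomposition. The same argument, restricted to the indices $l$ with $\langle l\rangle=q$, proves connectedness of $\mathcal{E}_q$.
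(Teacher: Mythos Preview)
Your proposal is correct and follows essentially the same approach as the paper's proof, which simply defers the convergence, closedness, unboundedness, fiber compactness, and intersection claims to Lemma~2.2 of \cite{HarzShcherbinaTomassini12} together with the equality $E_\nu = \bigcap_{q=1}^{n-k} E_{\nu,q}$, and defers connectedness to the argument of Lemma~\ref{thm_connectedE}. You have spelled out in detail exactly these ingredients: the fiberwise Cauchy estimate for Hausdorff convergence, the product-type structure giving $\mathcal{E}=\bigcap_q \mathcal{E}_q$ (the passage to the limit being justified since the $q$-th constraint involves only $w_q$), and the sign-flipping path construction of Lemma~\ref{thm_connectedE} adapted componentwise in $w$.
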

\begin{proof}
All claims, except for the assertions on connectedness, follow immediately from Lemma 2.2 in \cite{HarzShcherbinaTomassini12} and the equality $E_\nu = \bigcap_{q=1}^{n-k} E_{\nu,q}$. The fact that the sets $\mathcal{E}_q$, $q \in \N_{n-k}$, and $\mathcal{E}$ are connected can be shown in the same way as in Lemma \ref{thm_connectedE}.
\end{proof}

\begin{lemma}
If $\{\varepsilon_l\}$ is decreasing fast enough, then for every $q \in \N_{n-k}$ the sequence $\{\frac{1}{2^\nu} \log\abs{P_{\nu,q}}\}_{\nu=1}^\infty$ converges uniformly on compact subsets of $\C^n \setminus \mathcal{E}_q$ to a pluriharmonic function $\varphi_q \colon \C^n \setminus \mathcal{E}_q \to \R$ and $\lim_{(z,w) \to (z_0,w_0)} \varphi_q(z,w) = -\infty$ for every $(z_0,w_0) \in \mathcal{E}_q$. In particular, $\varphi_q$ has a unique extension to a plurisubharmonic function on $\C^n$.
\end{lemma}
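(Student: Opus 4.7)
The plan is to reduce the statement to the one-variable Wermer-type situation treated in \cite{HarzShcherbinaTomassini12}. Fix $q \in \N_{n-k}$ and, for each $\nu \in \N$, set $L_\nu^{\ast,q} \coloneqq \{l \in \N_\nu : \langle l \rangle = q\}$ and $\nu_q \coloneqq \card L_\nu^{\ast,q}$. The first observation is that the $q$-th component $w_j^{(\nu)}(z)_q$ of $w_j^{(\nu)}(z)$ depends only on the choice of signs of the square roots $\sqrt{z_{[l]}-a_l}$ for $l \in L_\nu^{\ast,q}$; consequently it takes only $2^{\nu_q}$ distinct values (as $j$ ranges over $\N_{2^\nu}$), each occurring with multiplicity $2^{\nu-\nu_q}$. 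Enumerating these distinct values as $\widetilde{w}_1^{(\nu,q)}(z),\dots,\widetilde{w}_{2^{\nu_q}}^{(\nu,q)}(z)$ and setting
\[ \widetilde{P}_{\nu,q}(z,w) \coloneqq \prod_{j=1}^{2^{\nu_q}}\bigl(w_q - \widetilde{w}_j^{(\nu,q)}(z)\bigr), \]
we have $P_{\nu,q} = \widetilde{P}_{\nu,q}^{\,2^{\nu-\nu_q}}$ and hence
\[ \frac{1}{2^\nu}\log\bigl\lvert P_{\nu,q}(z,w)\bigr\rvert \;=\; \frac{1}{2^{\nu_q}}\log\bigl\lvert\widetilde{P}_{\nu,q}(z,w)\bigr\rvert. \]

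Now I would observe that the polynomials $\widetilde{P}_{\nu,q}$ are exactly of the form considered in \cite{HarzShcherbinaTomassini12}, but built from the subsequence $\{a_l\}_{l \in L_\infty^{\ast,q}}$ and the corresponding coefficients $\{\varepsilon_l\}_{l \in L_\infty^{\ast,q}}$, with $(z,w_q)$ playing the role of the two variables (the remaining coordinates of $z$ and $w$ enter only as inert parameters, and $z_{[l]}$ is just a selector among the coordinates of $z$). In particular, if $\{\varepsilon_l\}$ is chosen to decrease fast enough so that the original estimates from Sections~2 and 3 of \cite{HarzShcherbinaTomassini12} apply along the index set $L_\infty^{\ast,q}$, then the sequence $\{2^{-\nu_q}\log\lvert\widetilde{P}_{\nu,q}\rvert\}_\nu$ converges locally uniformly on $\C^n \setminus \mathcal{E}_q$ to a pluriharmonic limit, and tends to $-\infty$ along $\mathcal{E}_q$. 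By a diagonal choice of $\{\varepsilon_l\}$ we can arrange this simultaneously for all $q \in \N_{n-k}$.

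Concretely, the key quantitative input needed from \cite{HarzShcherbinaTomassini12} is the telescoping estimate
\[ \Bigl\lvert \tfrac{1}{2^{(\nu+1)_q}}\log\bigl\lvert\widetilde{P}_{\nu+1,q}\bigr\rvert - \tfrac{1}{2^{\nu_q}}\log\bigl\lvert\widetilde{P}_{\nu,q}\bigr\rvert \Bigr\rvert \le C_{q,\nu} \]
on each fixed compact $K \subset \C^n \setminus \mathcal{E}_q$, with $\sum_\nu C_{q,\nu} < \infty$, the point being that each step either leaves $\widetilde{P}_{\nu,q}$ unchanged (when $\langle \nu+1\rangle \neq q$) or replaces it by a polynomial whose roots are $\varepsilon_{\nu+1}$-close to the old ones. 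This is the analogue of the main estimates in Section~2 of \cite{HarzShcherbinaTomassini12}, and decreasing $\{\varepsilon_l\}$ sufficiently fast gives the required summability. Uniform local convergence then implies pluriharmonicity of $\varphi_q$ on $\C^n \setminus \mathcal{E}_q$, while upper semicontinuity of the lim sup and the divergence $2^{-\nu_q}\log\lvert\widetilde{P}_{\nu,q}(z_0,w_0)\rvert \to -\infty$ at points $(z_0,w_0)\in\mathcal{E}_q$ (again taken verbatim from \cite{HarzShcherbinaTomassini12}) yield the boundary behaviour and the extension to a plurisubharmonic function on all of $\C^n$.

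The only genuine subtlety, and the step I expect to require the most care, is handling the fact that here the base variable $z$ is $k$-dimensional rather than one-dimensional as in \cite{HarzShcherbinaTomassini12}: one must check that the estimates proved there are insensitive to the dimension of the parameter space, i.e., that they hold uniformly on compacta in $z \in \C^k$ (the selection $l\mapsto z_{[l]}$ does not cause trouble because for each $l$ only one coordinate of $z$ is involved). This is a routine verification but must be carried out before invoking the cited results. Everything else is a direct transcription of the arguments of \cite{HarzShcherbinaTomassini12}.
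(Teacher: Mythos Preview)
Your proposal is correct and takes essentially the same approach as the paper, which simply cites Lemma~5.1 of \cite{HarzShcherbinaTomassini12} without further comment. Your multiplicity reduction $P_{\nu,q}=\widetilde{P}_{\nu,q}^{\,2^{\nu-\nu_q}}$ makes explicit why that citation applies verbatim, and your remark about the $k$-dimensional base being a routine extension is exactly the tacit assumption behind the paper's one-line proof.
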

\begin{proof}
See Lemma 5.1 in \cite{HarzShcherbinaTomassini12}.
\end{proof}

\begin{lemma} \label{thm_haushölder2}
Let $\underline{\mathcal{E}}\colon (\C^k,d_{\norm{\cdot}}) \to (\mathcal{F}(\C^{n-k}), d_H)$ be the map defined by $\underline{\mathcal{E}}(z) \coloneqq \{w \in \C^{n-k} : (z,w) \in \mathcal{E}\}$, where $\C^k$ is the metric space of all $k$-tupels of complex numbers with the standard euclidean metric $d_{\norm{\cdot}}$ and $\mathcal{F}(\C^{n-k})$ is the metric space of all nonempty compact subsets of $\C^{n-k}$ with the Hausdorff metric $d_H$. Then there exists a constant $M>0$ such that the map $\underline{\mathcal{E}}$ is $(M, 1/2)$-H\"older continuous.
\end{lemma}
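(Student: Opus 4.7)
The plan is to mimic the proof of Lemma \ref{thm_haushölder} in Section \ref{sec_geometricproperties}, adapting it to the higher-dimensional target space $\C^{n-k}$. First I would decompose $\underline{\mathcal{E}}$ as a pointwise Minkowski sum of simpler set-valued functions: for each $l \in \N$ define
\[
e_l \colon \C^k \to \mathcal{F}(\C^{n-k}), \qquad e_l(z) \coloneqq \big\{\pm \varepsilon_l\, \textbf{e}_{\langle l \rangle}\sqrt{z_{[l]}-a_l}\,\big\},
\]
so that $\underline{\mathcal{E}}(z)=\sum_{l=1}^\infty e_l(z)$ by the very definition of $\mathcal{E}$, where the sum of two compact sets $K_1,K_2\subset\C^{n-k}$ is $K_1+K_2\coloneqq\{k_1+k_2:k_j\in K_j\}$ as before.

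Next I would estimate $d_H\bigl(e_l(z_1),e_l(z_2)\bigr)$. The crucial observation is that the entire set $e_l(z)$ is contained in the complex line $\C\cdot \textbf{e}_{\langle l \rangle}\subset\C^{n-k}$, so the Hausdorff distance reduces to the one computed in $\C$ between the scalar sets $\{\pm\varepsilon_l\sqrt{z_{j,[l]}-a_l}\}$, $j=1,2$. Therefore the identical factorization argument as in Lemma \ref{thm_haushölder} applies verbatim: writing $e_l(z)=\{\pm e_l^\ast(z)\,\textbf{e}_{\langle l \rangle}\}$ with $(e_l^\ast(z))^2=\varepsilon_l^2(z_{[l]}-a_l)$, one factors $\pm e_l^\ast(z_1)\mp e_l^\ast(z_2)$ against $\pm e_l^\ast(z_1)\pm e_l^\ast(z_2)$ to obtain
\[
d_H\bigl(e_l(z_1),e_l(z_2)\bigr)\le \varepsilon_l\sqrt{\abs{z_{1,[l]}-z_{2,[l]}}}\le \varepsilon_l\sqrt{\norm{z_1-z_2}}.
\]

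Finally I would invoke the standard subadditivity of the Hausdorff metric under Minkowski sums in $\C^{n-k}$, namely $d_H(f(z_1)+g(z_1),f(z_2)+g(z_2))\le d_H(f(z_1),f(z_2))+d_H(g(z_1),g(z_2))$, to conclude inductively that for every $\nu\in\N$
\[
d_H\Big(\sum_{l=1}^\nu e_l(z_1),\sum_{l=1}^\nu e_l(z_2)\Big)\le \Big(\sum_{l=1}^\nu\varepsilon_l\Big)\sqrt{\norm{z_1-z_2}}.
\]
Passing to the limit $\nu\to\infty$, which is justified since $\sum_{l=1}^\infty\varepsilon_l<\infty$ after we take $\{\varepsilon_l\}$ to decrease fast enough, yields the desired inequality with $M\coloneqq\sum_{l=1}^\infty\varepsilon_l$.

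There is no real obstacle here; the only mild subtlety compared with Lemma \ref{thm_haushölder} is verifying that Hausdorff distance on two-point sets lying in a single complex line embedded in $\C^{n-k}$ coincides with the corresponding distance in $\C$, and that the Minkowski-sum subadditivity of $d_H$ holds in any normed space, both of which are immediate from the definitions.
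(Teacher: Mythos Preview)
Your proposal is correct and is exactly what the paper does: its proof of Lemma~\ref{thm_haushölder2} simply reads ``The proof is essentially the same as in Lemma~\ref{thm_haushölder},'' and your write-up is precisely that adaptation, with the only new ingredient being the (trivial) observation that each $e_l$ takes values in a single coordinate line $\C\cdot\textbf{e}_{\langle l\rangle}\subset\C^{n-k}$ so that the scalar estimate carries over unchanged.
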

\begin{proof}
The proof is essentialy the same as in Lemma \ref{thm_haushölder}.
\end{proof}

Define a function $\Psi^\ast \colon \C^n \to [-\infty,\infty)$ as 
\[ \Psi^\ast(z,w) \coloneqq \log\big(e^{\varphi_1(z,w)} + \cdots + e^{\varphi_{n-k}(z,w)}\big) + (\norm{z}^2 + \norm{w}^2). \]
Then, in view of plurisubharmonicity of the functions $\varphi_1, \varphi_2, \ldots, \varphi_{n-k}$, one easily checks that $\Psi^\ast$ is strictly plurisubharmonic on $\C^n$ and continuous outside $\mathcal{E} = \{\Psi^\ast = -\infty\}$. Applying Richberg's smoothing procedure (see, for example, Theorem I.5.21 in \cite{DemaillyXX}), we will obtain a plurisubharmonic function $\Psi \colon \C^n \to [-\infty, \infty)$ such that $\Psi$ is smooth and strictly plurisubharmonic on $\C^n \setminus \mathcal{E}$ and $\mathcal{E} = \{\Psi = - \infty\}$. Fix a regular value $C^\ast$ of $\Psi$ and define
\[ \Omega \coloneqq \{(z,w) \in \C^n : \Psi(z,w) < C^\ast\}. \]
Then, after possibly replacing $\Omega$ by a suitable connected component, $\Omega$ is a strictly pseudoconvex domain with smooth boundary such that $\mathcal{E} \subset \Omega$. 

Now the general idea of the example is as follows: The set $\mathcal{E}$ is $k$-pseudoconcave but not $(k+1)$-pseudoconcave, since it is essentially a $k$-dimensional object. On the other hand, despite possibly large codimension of $\mathcal{E}$ in $\C^n$, for every $(p,q) \in \N_k \times \N_{n-k}$ there is an everywhere dense sequence of root branches along the $z_p$-axis originating in $w_q$-direction. This geometric property together with a Liouville type theorem for $\mathcal{E}$ will enforce that the Levi form of every smooth and bounded from above plurisubharmonic function on $\Omega$ has to vanish along all coordinate directions at every point of $\mathcal{E}$. The later assertion guarantees that $\mathfrak{c}_q(\Omega) = \mathcal{E}$ for every $q = 1, \ldots, n$. Letting $k$ vary between $1$ and $n-1$, this proves Theorem \ref{thm_coreconcavity}. The above considerations are made precise by the following three lemmas.

\begin{lemma} \label{thm_Ekpseudoconcave}
If $\{\varepsilon_l\}$ is decreasing fast enough, then $\mathcal{E}$ is $k$-pseudoconcave but not $(k+1)$-pseudoconcave.
\end{lemma}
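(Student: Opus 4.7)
The plan is to deduce both statements from the characterization of pseudoconcavity via the local maximum property given in Proposition \ref{thm_lmp}, using the Hausdorff approximation $E_\nu \to \mathcal{E}$ on compact subsets of $\C^n$ together with the classical fact that every closed pure $k$-dimensional analytic subset of $\C^n$, and in particular each $E_\nu$, is $k$-pseudoconcave in $\C^n$.

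For $k$-pseudoconcavity of $\mathcal{E}$, I would fix a compact set $K \subset \C^n$ and a $(k-1)$-plurisubharmonic function $\varphi$ on an open neighbourhood of $K$ (the paper's definition forces $\varphi$ to be $\mathcal{C}^2$-smooth, hence continuous), and show that $\max_{\mathcal{E} \cap K} \varphi \le \max_{\mathcal{E} \cap bK} \varphi$. Pick $q \in \mathcal{E} \cap K$ realizing the left-hand maximum; the case $q \in bK$ being trivial, assume $q \in \interior(K)$. Hausdorff convergence provides a sequence $q_\nu \in E_\nu$ with $q_\nu \to q$, and then $q_\nu \in K$ for all sufficiently large $\nu$. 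Since $E_\nu$ is $k$-pseudoconcave, the local maximum property yields points $r_\nu \in E_\nu \cap bK$ with $\varphi(r_\nu) \ge \varphi(q_\nu)$. Extract a convergent subsequence $r_\nu \to r \in bK$; because $r_\nu \in E_\nu$ and $E_\nu \to \mathcal{E}$ in the Hausdorff metric on the compact set $bK$, the limit $r$ belongs to $\mathcal{E}$. Continuity of $\varphi$ then gives $\varphi(q) = \lim \varphi(q_\nu) \le \lim \varphi(r_\nu) = \varphi(r) \le \max_{\mathcal{E} \cap bK} \varphi$, as required.

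For failure of $(k+1)$-pseudoconcavity, I would exhibit an explicit counterexample to the local maximum property of $k$-plurisubharmonic functions. The natural choice is $\varphi(z,w) \coloneqq -\norm{z}^2$, whose Levi form at every point has exactly $k$ negative and $n-k$ zero eigenvalues, and which is therefore $k$-plurisubharmonic. Fix some $w_0 \in \mathcal{E}_0$ (nonempty and compact by the construction of $\mathcal{E}$). The Hölder continuity of $\underline{\mathcal{E}}$ from Lemma \ref{thm_haushölder2} lets me choose $r, R > 0$ such that $\mathcal{E}_z \subset B^{n-k}(0,R)$ for every $\norm{z} \le r$. Setting $K \coloneqq \overline{B^k(0,r)} \times \overline{B^{n-k}(0,R)}$, one has $(0,w_0) \in \mathcal{E} \cap \interior(K)$ with $\varphi(0,w_0) = 0$, while every $(z,w) \in \mathcal{E} \cap bK$ must satisfy $\norm{z} = r$ (the face $\overline{B^k(0,r)} \times bB^{n-k}(0,R)$ of the polydisc is disjoint from $\mathcal{E}$ by the choice of $R$), giving $\varphi(z,w) = -r^2 < 0$. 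Thus $\max_{\mathcal{E} \cap K} \varphi > \max_{\mathcal{E} \cap bK} \varphi$, and Proposition \ref{thm_lmp} forces $\mathcal{E}$ not to be $(k+1)$-pseudoconcave (in the extreme case $k+1 = n$, where $n$-pseudoconcavity is not even formally defined, this step is vacuous and no further argument is needed).

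The main delicate point is the transfer argument in the first half: the identity $\varphi(q) = \lim \varphi(q_\nu)$ relies on continuity rather than mere upper semicontinuity of $\varphi$, and the conclusion $r \in \mathcal{E}$ from $r_\nu \in E_\nu$ relies on the Hausdorff convergence being on a fixed compact set containing $bK$. Both ingredients are directly available: the paper's $q$-plurisubharmonic functions are $\mathcal{C}^2$-smooth by definition, and $E_\nu \to \mathcal{E}$ converges in the Hausdorff metric on every compact subset of $\C^n$ once $\{\varepsilon_l\}$ is chosen to decrease sufficiently fast.
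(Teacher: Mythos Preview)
Your proof is correct. For $k$-pseudoconcavity, your argument and the paper's are essentially the same: both transfer the property from the approximating varieties $E_\nu$ to $\mathcal{E}$ via Hausdorff convergence and the local maximum property of $(k-1)$-plurisubharmonic functions on pure $k$-dimensional analytic sets (the paper cites Corollary~5.3 in \cite{Slodkowski86} for this fact). The only cosmetic difference is that you invoke Proposition~\ref{thm_lmp} to reduce to $(2')$ directly, whereas the paper argues by contradiction from the Hartogs-figure definition and then produces the specific function $-\log\norm{\eta}\circ\Phi^{-1}$; the underlying mechanism is identical.

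For the failure of $(k+1)$-pseudoconcavity, your route is genuinely different and somewhat more economical. The paper locates an extreme point of the convex hull of a fiber $\mathcal{E}_{z_0}$, passes a real supporting hyperplane through it, extracts an $(n-k-1)$-dimensional complex subspace inside that hyperplane, and builds an $(n-k-1,k+1)$ Hartogs figure there. You instead exhibit the global $k$-plurisubharmonic function $\varphi(z,w)=-\norm{z}^2$ and a product compact set $K=\overline{B^k(0,r)}\times\overline{B^{n-k}(0,R)}$ on which the local maximum property visibly fails, using only that the fibers $\mathcal{E}_z$ are uniformly bounded over $\norm{z}\le r$. This avoids the convex-geometric step entirely; the paper's construction, on the other hand, stays closer to the Rothstein definition and would adapt more readily to sets that are not graphs over a fixed coordinate subspace. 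Your remark on the edge case $k+1=n$ matches the paper's own (implicit) treatment.
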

\begin{proof}
Assume, to get a contradiction, that $\mathcal{E}$ is not $k$-pseudoconcave. Then there exists an $(n-k,k)$ Hartogs figure $H = \{(\zeta,\eta) \in \Delta^{n-k} \times \Delta^k : \norm{\zeta}_\infty > r_1 \text{ or } \norm{\eta}_\infty < r_2\}$ and an injective holomorphic mapping $\Phi \colon \hat{H} \to \C^n$ such that $\Phi(H) \subset \C^n \setminus \mathcal{E}$ but $\Phi(\hat{H}) \cap \mathcal{E} \neq \varnothing$. After possibly shrinking $H$, one can easily see that for $\nu \in \N$ large enough the pure $k$-dimensional varieties $E_\nu$ will also satisfy the conditions $\Phi(\overline{H}) \subset \C^n \setminus E_\nu$ and $\Phi(\hat{H}) \cap E_\nu \neq \varnothing$. Then $V \coloneqq \Phi(\hat{H})$ is a relatively compact subset of $\C^n$ such that the $(k-1)$-plurisubharmonic function $\varphi \coloneqq -\log\norm{\eta} \circ \Phi^{-1}$ satisfies $\max_{E_\nu \cap V} \varphi > \max_{E_\nu \cap bV} \varphi$. This contradicts the local maximum property of $(k-1)$-plurisubharmonic functions on $k$-dimensional analytic varieties, see Corollary 5.3 in \cite{Slodkowski86}. (An easy way to avoid the reference to \cite{Slodkowski86} is the following: For a fixed regular value $c \in \C^{n-k}$ of $P_\nu$ close enough to zero the complex $k$-dimensional manifold $\mathcal{M} \coloneqq \{P_\nu = c\}$ also satisfies $\Phi(\overline{H}) \subset \C^n \setminus \mathcal{M}$ and $\Phi(\hat{H}) \cap \mathcal{M} \neq \varnothing$. In particular, $\max_{\mathcal{M} \cap V} \varphi > \max_{\mathcal{M} \cap bV} \varphi$. Then for $\varepsilon > 0$ small enough the function $\widetilde{\varphi} \coloneqq (-\log\norm{\eta} +  \varepsilon(\norm{\zeta}^2 + \norm{\eta}^2)) \circ \Phi^{-1}$ attains a maximum along $\mathcal{M} \cap V$ which contradicts the fact that the Levi form of $\widetilde{\varphi}|_{\mathcal{M} \cap V}$ has at least one positive eigenvalue at every point of $\mathcal{M} \cap V$.)

To see that $\mathcal{E}$ is not $(k+1)$-pseudoconcave, let $z_0 \in \C^k$ be an arbitrary fixed point and let $\co \mathcal{E}_{z_0}$ denote the convex hull of $\mathcal{E}_{z_0}$. We claim that the set $A \coloneqq b(\co \mathcal{E}_{z_0}) \cap \mathcal{E}_{z_0}$ is nonempty. Indeed, by compactness of $\mathcal{E}_{z_0}$, we conclude that $\co \mathcal{E}_{z_0}$ is compact too, and thus it follows easily from Minkowski's theorem that A contains the nonempty set of extreme points of $\co \mathcal{E}_{z_0}$. Hence we can find a supporting real hyperplane $L \subset \C^{n-k}_w$ for $\mathcal{E}_{z_0}$ that contains at least one point $w_0 \in \mathcal{E}_{z_0}$. Since $L$ contains an $(n-(k+1))$-dimensional complex subspace, one now constructs easily an $(n-(k+1),k+1)$ Hartogs figure $H = \{(\zeta,\eta) \in \Delta^{n-(k+1)} \times \Delta^{k+1} : \norm{\zeta}_\infty > r_1 \text{ or } \norm{\eta}_\infty < r_2\}$ and an injective holomorphic mapping $\Phi \colon \hat{H} \to \C^n$ such that $\Phi(H) \subset \C^n \setminus \mathcal{E}$ but $\Phi(\hat{H}) \cap \mathcal{E} \neq \varnothing$.
\end{proof}

\begin{lemma} \label{thm_liouville2}
Let $\varphi$ be a continuous plurisubharmonic function defined on an open neighbourhood $U \subset \C^n$ of $\mathcal{E}$. If $\varphi$ is bounded from above, then $\varphi \equiv C$ on $\mathcal{E}$ for some $C \in \R$. 
\end{lemma}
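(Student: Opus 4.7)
The strategy mirrors the two-step proof of Theorem \ref{thm_liouville}. The first step is to establish a Liouville theorem for ``one-base-dimensional'' sub-Wermer sets sitting inside $\mathcal{E}$, and the second is to chain such sub-Wermers together so as to cover all of $\mathcal{E}$.

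For each $p \in \{1, \ldots, k\}$, each $\xi = (\xi_{p'})_{p' \neq p} \in \C^{k-1}$ and each sign sequence $\sigma = \{s_l\}_{l:\, [l] \neq p}$ for which the series $c_q(\xi,\sigma) \coloneqq \sum_{l:\, [l] \neq p,\, \langle l \rangle = q} s_l\, \varepsilon_l \sqrt{\xi_{[l]} - a_l}$ converges for every $q \in \{1, \ldots, n-k\}$, consider the sub-Wermer
\[
\tilde{\mathcal{E}}_{p,\xi,\sigma} \coloneqq \Big\{(z,w) \in \C^n : z_{p'} = \xi_{p'}\ (p' \neq p),\ w = c(\xi,\sigma) + \!\!\!\!\sum_{l:\, [l] = p}\!\!\!\! \pm \varepsilon_l\, \textbf{e}_{\langle l \rangle}\sqrt{z_p - a_l}\Big\}.
\]
Then $\tilde{\mathcal{E}}_{p,\xi,\sigma} \subset \mathcal{E}$ and, up to an affine biholomorphic embedding $\C_{z_p} \times \C^{n-k}_w \hookrightarrow \C^n$, it is exactly the Wermer type set of the paper with one-dimensional base and $(n-k)$-dimensional target. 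First I would establish the Liouville theorem for this family: every continuous plurisubharmonic function defined on a neighborhood of $\tilde{\mathcal{E}}_{p,\xi,\sigma}$ and bounded from above is constant on $\tilde{\mathcal{E}}_{p,\xi,\sigma}$. The proof is a direct adaptation of Step 1 of Theorem \ref{thm_liouville}: the approximating varieties are still one-dimensional in $z_p$, their desingularizations $F_\nu = \{w_l'^2 = \varepsilon_l^2(z_p - a_l),\ l = 1,\ldots,\nu\}$ are Riemann surfaces, and the harmonic-measure estimate on $W_\nu \subset F_\nu$ transfers line by line after replacing the scalar quantity $\abs{w - w_j^{(\nu)}}$ with the Euclidean norm of $w - w_j^{(\nu)} \in \C^{n-k}$ in the definitions of the sets $X \subset bW_{\nu_0}$ and $X_\nu \subset bW_\nu$.

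With the sub-Wermer Liouville in hand, the conclusion of the lemma follows by a finite chain argument. Given two points $(z^0, w^0), (z^1, w^1) \in \mathcal{E}$, fix sign representations of $w^0$ and $w^1$. For each $p = 1, \ldots, k$ in succession, change $z_p$ from $z^0_p$ to $z^1_p$ while keeping the other $z$-coordinates (already-updated ones for $p' < p$, original ones for $p' > p$) and the signs $s_l$ for $l$ with $[l] \neq p$ fixed; this moves inside a single sub-Wermer $\tilde{\mathcal{E}}_{p,\xi,\sigma}$, so $\varphi$ is unchanged. After $k$ such moves we arrive at some $(z^1, \tilde{w}^1) \in \mathcal{E}$. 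The points $(z^1, \tilde{w}^1)$ and $(z^1, w^1)$ differ only in sign choices, and they can be joined by a finite sequence of single-sign flips; each flip of a sign $s_{l_0}$ with $[l_0] = p_0$ keeps $(z,w)$ inside the sub-Wermer $\tilde{\mathcal{E}}_{p_0,\, (z^1_{p'})_{p' \neq p_0},\, \sigma^*}$, where $\sigma^*$ records the (unchanged) signs of $l$ with $[l] \neq p_0$, so the value of $\varphi$ is again preserved. Telescoping over the chain gives $\varphi(z^0, w^0) = \varphi(z^1, w^1)$.

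The main obstacle will be the first step: verifying that all technical ingredients of the harmonic-measure argument in Step 1 of Theorem \ref{thm_liouville} remain valid with $w \in \C^{n-k}$ instead of $w \in \C$. Specifically, one must check that the Riemann surfaces $F_\nu$ intersect $b(U \times \C^{n-k}_w)$ transversally for a generic relatively compact domain $U \subset \C_{z_p}$ with $bU \subset \C \setminus \{a_l\}_{l:\, [l]=p}$, that the projection $P_\nu \colon F_\nu \to F_{\nu_0}$ remains finite, and that the inclusion $P_\nu^{-1}(X) \subset X_\nu$ still holds once $X, X_\nu$ are defined via the Euclidean distance in $\C^n$. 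The first two facts follow from the choice of $bU$; the inclusion follows, as in the proof of Theorem \ref{thm_liouville}, by combining a tail estimate with the H\"older continuity of $\underline{\mathcal{E}}$ given by Lemma \ref{thm_haushölder2}. Once these are established, the key inequality $\omega_{W_\nu}(q_\nu, X_\nu) \ge \omega_{W_{\nu_0}}(q_{\nu_0}, X)$ transfers verbatim and the argument concludes as before.
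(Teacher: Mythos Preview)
Your overall two-step strategy---reduce to a one-dimensional base via chaining through sub-Wermer sets, then run the harmonic-measure argument---is the same as the paper's. The chaining is essentially the paper's Step~2 reduction; the harmonic-measure technicalities you list (transversality, finiteness of $P_\nu$, the inclusion $P_\nu^{-1}(X)\subset X_\nu$) do indeed carry over with $w\in\C^{n-k}$, since the desingularizations $F_\nu$ live over $\C_{z_p}$ regardless of the target dimension.

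However, you have misidentified the main obstacle. Step~1 of Theorem~\ref{thm_liouville} does not begin with the harmonic-measure estimate; it begins by invoking S{\l}odkowski's Theorem~II to show that $z\mapsto\sup_{w\in\mathcal{E}_z}\varphi(z,w)$ is subharmonic, and then Liouville gives the constant $C^\ast$ against which the harmonic-measure argument plays. That theorem of S{\l}odkowski is specific to fibers in $\C$; when $w\in\C^{n-k}$ with $n-k>1$ it does not apply (the higher-dimensional analogue is stated in \cite{Slodkowski83} but, as the paper remarks, without proof). This is precisely why Lemma~\ref{thm_liouville2} assumes $\varphi$ continuous: the paper replaces S{\l}odkowski's result by the approximation $\phi_\nu(z)=\max_{1\le j\le 2^\nu}\varphi(z,w_j^{(\nu)}(z))$, each of which is subharmonic (locally a finite maximum of $\varphi$ composed with holomorphic sections), and uses continuity of $\varphi$ together with Hausdorff convergence $E_\nu\to\mathcal{E}$ to get $\phi_\nu\to\phi$ locally uniformly, hence $\phi$ subharmonic. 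Your proposal never supplies this step, so the harmonic-measure contradiction has nothing to contradict.

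A smaller point: your ``finite sequence of single-sign flips'' cannot work as stated, since the two sign sequences may differ in infinitely many places. This is easily repaired (and in fact unnecessary): during the $p$-th $z$-move you are free to land on any point of $\tilde{\mathcal{E}}_{p,\xi,\sigma}$ over $z_p=z^1_p$, so you can simultaneously update all signs $s_l$ with $[l]=p$ to their target values. After the $k$ moves you are already at $(z^1,w^1)$.
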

\begin{proof}
Using the same argument as in Step 2 of the proof of Theorem \ref{thm_liouville}, we can restrict ourselves to the case $k = 1$.  Choose an increasing sequence $\{B_\nu\}_{\nu = \nu_0}^\infty$ of open sets $B_\nu \subset \C$ such that $\bigcup_{\nu = \nu_0}^\infty B_\nu = \C$ and such that $E_\nu \cap (B_\nu \times \C^{n-1}) \subset U$ for every $\nu \ge \nu_0$. Moreover, define functions $\phi_\nu \colon B_\nu \to \R$, $\nu \ge \nu_0$, as $\phi_\nu(z) \coloneqq \max_{1 \le j \le 2^\nu} \varphi(z, w_j^{(\nu)}(z))$ and let $\phi(z) \coloneqq \sup_{w \in \mathcal{E}(z)} \varphi(z,w)$. Since on compact subsets of $\C^n$ the sequence $\{E_\nu\}$ converges in the Hausdorff metric to $\mathcal{E}$, and since $\varphi$ is continuous, one easily sees that $\lim_{\nu \to \infty} \phi_\nu = \phi$ uniformly on compact subsets of $\C$. Moreover, every function $\varphi_\nu$ is subharmonic, since on each convex set in the complement of the polar set $\{a_1, \ldots, a_\nu\}$ the functions $w_1^{(\nu)}, \ldots, w_{2^\nu}^{(\nu)}$ can be chosen to be holomorphic. In particular, $\phi$ is a subharmonic function on $\C$ that is bounded from above, hence, in view of Liouville's theorem, $\phi \equiv C$ for some $C \in \R$. The proof can now be completed in the same way as in Step 1 of Theorem \ref{thm_liouville}.
\end{proof}

\noindent \textbf{Remark.} In the two-dimensional case of Step 1 in the proof of Theorem \ref{thm_liouville} the subharmonicity of the function $\phi$ was obtained by using Theorem II from \cite{Slodkowski81}. A more general version of this result, which also works for the case $n >2$, was claimed in Theorem 2.3 of \cite{Slodkowski83}, but since it does not have a proof, and since we were not able to find a reference with the proof, we have included the above argument. Observe that if we replace our argument by the result from \cite{Slodkowski83}, then we can drop the assumption on continuity of the function $\varphi$.

\begin{lemma} \label{thm_OmegacoresE}
If $\{\varepsilon_l\}$ is decreasing fast enough, then $\mathfrak{c}_q(\Omega) = \mathcal{E}$ for every $q =1, \ldots, n$.
\end{lemma}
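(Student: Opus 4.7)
We establish the two inclusions $\mathfrak{c}_1(\Omega) \subset \mathcal{E}$ and $\mathcal{E} \subset \mathfrak{c}_n(\Omega)$; combined with the chain $\mathfrak{c}_1(\Omega) \supset \mathfrak{c}_2(\Omega) \supset \cdots \supset \mathfrak{c}_n(\Omega)$, they force $\mathfrak{c}_q(\Omega) = \mathcal{E}$ for every $q = 1, \ldots, n$. The first inclusion is routine and follows the smoothing procedure used throughout the paper: for a sequence $\{\eta_j\}_{j=1}^\infty$ of positive numbers decreasing to zero fast enough, the function
\[
\Phi \coloneqq \sum_{j=1}^\infty \eta_j \, \widetilde{\max}_1\bigl(\Psi - C^\ast, -j\bigr)
\]
is smooth plurisubharmonic on $\C^n$, bounded above on $\Omega$, and strictly plurisubharmonic precisely on $\C^n \setminus \mathcal{E}$, so $\Phi|_\Omega \in \mathcal{B}^\infty_{psh}(\Omega)$ witnesses $\mathfrak{c}_1(\Omega) \subset \mathcal{E}$.

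For the reverse inclusion, fix $\varphi \in \mathcal{B}^\infty_{psh}(\Omega)$ and $p_0 = (z_0, w_0) \in \mathcal{E}$. By Lemma \ref{thm_liouville2} there is a constant $C \in \R$ with $\varphi \equiv C$ on $\mathcal{E}$. Since $\Lev(\varphi)(p_0, \cdot)$ is positive semidefinite and its kernel is a complex subspace, it suffices to verify its vanishing on every vector of the form $\xi = a\partial_{z_p} + b\partial_{w_q}$ with $(p, q) \in \N_k \times \N_{n-k}$ and $(a, b) \in \C^2 \setminus \{0\}$, because these span $T_{p_0}(\C^n)$. Assume, toward a contradiction, that $\Lev(\varphi)(p_0, \xi_0) > 0$ for some such $\xi_0$, and choose $c, \rho > 0$ together with an open cone $\mathcal{C} \subset \mathrm{span}_\C\{\partial_{z_p}, \partial_{w_q}\}$ about $\xi_0$ so that $\Lev(\varphi)(p, \xi) \ge c$ for every $p \in B^n(p_0, \rho)$ and every unit $\xi \in \mathcal{C}$. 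The density of $\{a_l^{p, q}\}_{l=1}^\infty$ in $\C$ produces, for each sufficiently large $\nu$, an index $l_\nu \in \N$ with $[l_\nu] = p$ and $\langle l_\nu\rangle = q$ such that $a_{l_\nu}$ is close to $z_{0, p}$, and then a genuine analytic disc $\Delta_\nu \subset E_\nu \cap B^n(p_0, \rho)$ through a base point $p_\nu$ whose local tangent to $E_\nu$ has $\partial_{z_p}$-$\partial_{w_q}$ part of the form $\partial_{z_p} + \varepsilon_{l_\nu}/(2\sqrt{z_{p, \nu} - a_{l_\nu}})\, \partial_{w_q}$, with the prefactor tunable to any prescribed value in $\C^\ast$ by selecting $z_{p, \nu}$ suitably near $a_{l_\nu}$. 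On $\Delta_\nu$ the function $\varphi$ is subharmonic with Laplacian bounded below by a positive multiple of $c$, whence a standard Taylor plus sub-mean-value argument yields $\sup_{\Delta_\nu} \varphi - \varphi(p_\nu) \ge (c/4)\, r_\nu^2$ for the radius $r_\nu$ of $\Delta_\nu$. On the other hand, Lemma \ref{thm_haushölder2} and the Hausdorff convergence $E_\nu \to \mathcal{E}$ on compact sets, together with uniform continuity of $\varphi$, force $\sup_{\Delta_\nu}|\varphi - C| \to 0$ as $\nu \to \infty$. Provided $\{\varepsilon_l\}_{l=1}^\infty$ is chosen to decrease fast enough that $d_H(\Delta_\nu, \mathcal{E})$ decays more rapidly than $r_\nu^2$, these two estimates are incompatible, giving the desired contradiction.

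The main obstacle is the consistent choice of base points $p_\nu$, disc radii $r_\nu$, and decay rate of $\{\varepsilon_l\}$ so that simultaneously: the tangent of $\Delta_\nu$ at $p_\nu$ stays inside the cone $\mathcal{C}$, $\Delta_\nu$ remains inside $B^n(p_0, \rho)$, $r_\nu$ is bounded below independently of $\nu$ (so the sub-mean lower bound is nontrivial in the limit), and $d_H(\Delta_\nu, \mathcal{E})$ decays fast enough to dominate $r_\nu^2$. Coordinating these four conditions is precisely where the rapid-decrease hypothesis on $\{\varepsilon_l\}$ is invoked, in the same spirit as in the earlier Wermer-type constructions of this paper (compare the decay conditions imposed in Lemma \ref{thm_propE} and in the proof of Theorem \ref{thm_analyticcore}).
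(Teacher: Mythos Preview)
Your first inclusion and your reduction to directions $\xi = a\,\partial_{z_p} + b\,\partial_{w_q}$ are both correct. The gap is in the contradiction argument. For the sub--mean--value estimate $\sup_{\Delta_\nu}\varphi - \varphi(p_\nu) \ge (c/4)\,r_\nu^2$ you need the Laplacian of $\varphi|_{\Delta_\nu}$ bounded below on the \emph{whole} disc, hence the tangent line to $E_\nu$ must stay in the cone $\mathcal{C}$ throughout $\Delta_\nu$, not only at $p_\nu$. But the aperture of $\mathcal{C}$ depends on the a-posteriori data $(\varphi,p_0,\xi_0)$, whereas $\{\varepsilon_l\}$ must be fixed in advance; and as $\nu$ grows, new branch points $a_{l'}$ with $l_0 < l' \le \nu$ and $[l'] = p$ land inside the $z_p$--projection of any fixed disc, each pulling the tangent toward $\partial_{w_{\langle l'\rangle}}$ and out of $\mathcal{C}$. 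Your ``main obstacle'' paragraph names this tension without resolving it, and the claim that $r_\nu$ can be kept bounded below while the tangent stays in $\mathcal{C}$ is exactly what needs proof.

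The paper handles this with two devices you are missing. First, it constructs \emph{a priori} (condition $(I_\nu)$) a nested family of cones $\Gamma^q(\nu)$ of shrinking aperture about the coordinate axes $\C_{w_q}$, together with sets $M'_\nu \subset \C_z$ of positive Lebesgue measure on which the tangents $\lambda_{\nu+\mu}$ stay in $\Gamma^{\langle\nu\rangle}(\nu)$ for \emph{all} $\mu \ge 1$; then, given any $\varphi$--dependent cone $\Gamma^q(\alpha)$, one just chooses $\nu \ge \alpha$ with $\langle\nu\rangle = q$. Second, instead of a single sheet (whose usable radius is tied to $\varepsilon_{l_0}$), the paper averages over all $2^{\nu+\mu}$ sheets, forming $\varphi_{\nu+\mu}(z) = 2^{-(\nu+\mu)}\sum_j \Phi(z, w_j^{(\nu+\mu)}(z))$, and applies Poisson--Jensen on a \emph{fixed} unit disc in $\C_z$: the left side tends to $0$ since $\varphi_{\nu+\mu}\to C$ uniformly, while $\Delta\varphi_{\nu+\mu} \ge \tilde L/2^\nu$ on the fixed positive--measure set $M_\nu$ keeps the right side bounded away from $0$. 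This disposes of the $\C_{w_q}$--directions; the $\C_z$--direction is then treated by a separate (easier) argument, since once $\Lev(\Phi)$ vanishes on $\C_w^{n-k}$ any positivity must be $\tilde c\,|\xi_z|^2$, and every sheet over a neighbourhood already gives $\Delta\varphi_\nu \ge L$ with no cone condition at all. Your unified single--disc scheme could in principle be rescued by imposing a super--geometric decay $\varepsilon_{l+1}/\varepsilon_l \to 0$ and showing the later branches perturb the tangent only on a set of small measure, but that is essentially the content of the paper's $(I_\nu)$ construction and cannot be left implicit.
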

\begin{proof}
If $\{\eta_j\}_{j=1}^\infty$ is a sequence of positive numbers that is converging to zero fast enough, then $\widetilde{\Psi}(z,w) \coloneqq \sum_{j=1}^\infty \eta_j \, \widetilde{\max}_1(\Psi-C^\ast, -j)$ is a smooth global defining function for $\Omega$ that is strictly plurisubharmonic outside $\mathcal{E}$. Hence we only have to show that the Levi form of every smooth and bounded from above plurisubharmonic function on $\Omega$ vanishes identically on $\mathcal{E}$. In order to do so, observe first that it suffices to prove the claim in the case $k = 1$. Indeed, for every $p \in \N_k$ and every $\xi = (\xi', \xi'') \in \C^{p-1} \times \C^{k-p}$ the set 
\[ \mathcal{E}_{p,\xi} \coloneqq \mathcal{E} \cap \big[(\{\xi'\} \times \C_{z_p} \times \{\xi''\})  \times \C^{n-k}_w \big] \]
is, up to inclusion of $\C_{z_p} \times \C^{n-k}_w$ into $\C^n$, of the form $\mathcal{E}_{p,\xi} = \bigcup_{w \in W} [\mathcal{F}_p + (0,w)]$ for a Wermer type set $\mathcal{F}_p \subset \C_{z_p} \times \C^{n-k}_w$ and a suitable set $W = W(p,\xi) \subset \C^{n-k}$. Thus it is enough to choose $\{\varepsilon_l\}$ in such a way that the assertion of the lemma holds true simultaneously for all sets $\mathcal{F}_1, \ldots, \mathcal{F}_k$.

Let $k = 1$ and fix $\Phi \in \mathcal{B}^\infty_{psh}(\Omega)$. By Lemma \ref{thm_liouville2}, we know that $\Phi \equiv C$ on $\mathcal{E}$ for some constant $C \in \R$. Now let $\{B_\nu\}_{\nu = \nu_0}^\infty$ be an exhaustion of $\C$ by open sets $B_\nu \subset \C$ such that $E_\nu \cap (B_\nu \times \C^{n-1}) \subset \Omega$ for every $\nu \ge \nu_0$, and consider the following sequence of functions $\varphi_\nu \colon B_\nu \to \R$,
\[ \varphi_\nu(z) \coloneqq \frac{1}{2^\nu} \sum_{j=1}^{2^\nu} \Phi\big(z, w_j^{(\nu)}(z)\big). \]
Since on compact subsets of $\C^n$ the sequence $\{E_\nu\}$ converges in the Hausdorff metric to $\mathcal{E}$, one can easily see that $\{\varphi_\nu\}$ converges locally uniformly to the function $\varphi \equiv C$. Now recall that for fixed $z_0 \in \C$ and for $\nu \ge \nu_0$ large enough the Poisson-Jensen formula  for $\varphi_\nu$ on $\Delta(z_0,1)$ states that
\[ \frac{1}{2\pi} \int_0^{2\pi} \varphi_\nu(z_0 + e^{i\theta}) \,d\theta - \varphi_\nu(z_0) = - \frac{1}{2\pi} \int_{\Delta(z_0,1)} \log\abs{z-z_0} \Delta\varphi_\nu(z) \,d\mu. \]
Assume, to get a contradiction, that we can find $\nu_0 \in \N$, a positive constant $L>0$ and a subset $M \subset \Delta(z_0,1)$ of positive Lebesgue measure such that $\Delta \varphi_\nu > L$ on $M$ for every $\nu \ge \nu_0$. Then from the above formula and locally uniform convergence of $\{\varphi_\nu\}$ we get that
\begin{equation} \label{equ_poissonjensen} \begin{split}
 0 = C - C &= \frac{1}{2\pi} \int_0^{2\pi} \varphi(z_0 + e^{i\theta}) \,d\theta - \varphi(z_0) \\& = \lim_{\nu \to \infty} - \frac{1}{2\pi} \int_{\Delta(z_0,1)} \log\abs{z-z_0} \Delta\varphi_\nu(z) \,d\mu > 0, 
\end{split} \end{equation}
which is a contradiction. We will use this observation to show that for a suitable choice of $\{\varepsilon_l\}$ the Levi form of every $\Phi \in \mathcal{B}^\infty_{psh}(\Omega)$ has to vanish identically on $\mathcal{E}$.

We first specify the choice of the sequence $\{\varepsilon_l\}$. For every $\nu \in \N$, let $\Reg E_\nu \subset \{(z,w) \in E_\nu : z \neq a_l \text{ for every } l \in \N_\nu\}$ denote the regular part of $E_\nu$ and for every $(z,w) \in \Reg E_\nu$ let $\lambda_\nu(z,w) \subset \C^n$ be the complex $1$-dimensional subspace that is tangent to $E_\nu$ at $(z,w)$. Further, for every $q \in \N_{n-1}$ and every $\alpha \ge 1$, let $\Gamma^q(\alpha) \subset \C^n$ denote the closed cone $\Gamma^q(\alpha) \coloneqq \{(z,w) \in \C^n : \abs{w_q} \ge (1-1/\alpha)\norm{(z,w)}\}$. Since $\lambda_\nu(z,w)$ converges in $\CP^{n-1}$ to the $w_{\langle l \rangle}$-axis for $z \to a_l$ (here we use the fact that $a_l \neq a_{l'}$ for $l \neq l'$), it is then easy to see that we can choose inductively the sequence $\{\varepsilon_l\}_{l=1}^\infty$ and a second sequence $\{\delta_l\}_{l=1}^\infty$ of positive numbers both converging to zero so fast that the following assertion is satisfied for every $\nu \in \N$:
\begin{equation*} 
\begin{array}{lc} 
\begin{split}& \textstyle \text{for each } l \in \N_\nu \text{ one has that } \lambda_\nu(z,w) \subset \Gamma^{\langle l \rangle}\big(l + 1 - \sum_{l'=l+1}^\nu 1/2^{l'}\big) \\ & \textstyle \text{for } (z,w) \in \big[\big(\Delta(a_l,\delta_l) \setminus \bigcup_{l'=l+1}^\nu \Delta(a_{l'}, \delta_{l}/2^{l'+1})\big) \times \C^{n-1}\big] \cap \Reg E_\nu.
\end{split} & \quad (I_\nu)
\end{array}
\end{equation*}
Indeed, for the case $\nu =1$ fix arbitrary $\varepsilon_1 > 0$ and then choose $\delta_1 > 0$ so small that $\lambda_1(z,w) \subset \Gamma^{\langle 1 \rangle}(2)$ for every $(z,w) \in [\Delta(a_1,\delta_1) \times \C^{n-1}] \cap \Reg E_1$. Assume now that $\varepsilon_1, \ldots, \varepsilon_\nu$ and $\delta_1, \ldots, \delta_\nu$ are already chosen in such a way that $(I_\nu)$ holds true. Since $E_\nu$ and $E_{\nu + 1}$, viewed as set-valued functions over $\C_z$, differ only by the term $\varepsilon_{\nu+1} \textbf{e}_{\nu+1} \sqrt{z-a_{\nu+1}}$, we can now choose $\varepsilon_{\nu+1} > 0$ so small that $\lambda_{\nu+1}(z,w) \subset \Gamma^{\langle l \rangle}\big(l + 1 - \sum_{l'=l+1}^{\nu+1} 1/2^{l'}\big)$ for every $l \in \N_\nu$ and $(z,w) \in \big[\big(\Delta(a_l,\delta_l) \setminus \bigcup_{l'=l+1}^{\nu+1} \Delta(a_{l'}, \delta_{l}/2^{l'+1})\big) \times \C^{n-1}\big] \cap \Reg E_{\nu+1}$ (observe that $\Reg E_{\nu+1} \subset \{(z,w) \in \C \times \C^{n-1} : \exists (z,w') \in \Reg E_\nu \text{ such that } w = w' + \varepsilon_{\nu+1} \textbf{e}_{\nu+1}\sqrt{z-a_{\nu+1}}\}$). With $\varepsilon_{\nu+1}$ now being fixed we can then choose $\delta_{\nu+1} > 0$ so small that $\lambda_{\nu+1}(z,w) \subset \Gamma^{\langle \nu+1 \rangle}(\nu+2)$ for every $(z,w) \in [\Delta(a_{\nu+1}, \delta_{\nu+1}) \times \C^{n-1}] \cap \Reg E_{\nu + 1}$. But then $(I_{\nu+1})$ is satisfied which completes our induction on $\nu$. Hence for ${M}_l' \coloneqq \Delta(a_l,\delta_l) \setminus \bigcup_{l'=l+1}^\infty \Delta(a_{l'}, \delta_{l}/2^{l'+1})$ we now have
\begin{equation} \label{equ_slopeE} \lambda_{\nu+\mu}(z,w)  \subset \Gamma^{\langle \nu \rangle}(\nu) \text{ for every } \mu,\nu \ge 1 \text{ and } (z,w) \in (M_\nu' \times \C^{n-1}) \cap \Reg E_{\nu+\mu}, \end{equation}
and $M_\nu' \subset \C$ has positive Lebesgue measure. Moreover, as above, we choose $\{\varepsilon_l\}$ in such a way that $\varepsilon_l\sqrt{\abs{z-a_l}} < 1/2^l$ on $\Delta(0,l)$ for every $l \in \N$.

We now want to show that with the above choice of the sequence  $\{\varepsilon_l\}$ the Levi form $\Lev(\Phi)((z_0,w_0),\,\cdot\,)$ vanishes on $\C_{w_q}$ for every $\Phi \in \mathcal{B}^\infty_{psh}(\Omega)$, $(z_0,w_0) \in \mathcal{E}$ and $q \in \N_{n-1}$. Indeed, assume, to get a contradiction, that $\Lev(\Phi)((z_0,w_0),\,\cdot\,) > 0$ on $\C_{w_q}$ for some fixed data $\Phi \in \mathcal{B}^\infty_{psh}(\Omega)$, $(z_0,w_0) \in \mathcal{E}$ and $q \in \N_{n-1}$. By smoothness of $\Phi$, we can then find positive constants $r, \alpha, \tilde{L} > 0$ such that $\Lev(\Phi)((z,w),\xi) \ge \tilde{L}\cdot\norm{\xi}^2$ for every $(z,w) \in B^n((z_0,w_0),r)$ and every $\xi \in \Gamma^q(\alpha)$. For every $\nu,\mu \in \N$, $j \in \N_{2^\nu}$ and $z \in \C$ let $\{w_1^{(\mu)}(\nu,j;z), \ldots, w_{2^\mu}^{(\mu)}(\nu,j;z)\} = (z, w_j^{(\nu)}(z)) + \sum_{l=\nu+1}^{\nu+\mu} \varepsilon_l \textbf{e}_{\langle l \rangle} \sqrt{z - a_l}$. Since $\{\varepsilon_l\}$ is converging to zero so fast that $(\ref{equ_slopeE})$ holds true, it is easy to see that we can find $\nu \in \N$, $\langle\nu\rangle = q$, and $j_0 \in \N_{2^\nu}$ such that the graphs of the functions $w_1^{(\mu)}(\nu,j_0; \,\cdot\,), \ldots, w_{2^\mu}^{(\mu)}(\nu,j_0; \,\cdot\,)$ over $M_\nu'$ are contained in $B^n((z_0,w_0),r)$ and such that $\lambda_{\nu+\mu}(z,w_k^{(\mu)}(\nu,j_0; z)) \subset \Gamma^q(\alpha)$ for every $\mu \in \N$, $k \in \N_{2^\mu}$ and $z \in M_\nu' \setminus \pi(\Sing E_{\nu+\mu})$, where $\pi \colon \C^n \to \C_z$ is the canonical projection. Now if we define the functions $\varphi_\nu$ as before, then we get
\begin{equation} \label{equ_Laplaceestimate}\begin{split}
\Delta \varphi_{\nu+\mu}(z) &= \frac{1}{2^{\nu+\mu}} \sum_{j=1}^{2^\nu}\sum_{k=1}^{2^\mu} \Delta_z\big[ \Phi(z,w_k^{(\mu)}(\nu,j;z))\big] \ge \frac{1}{2^{\nu+\mu}} \sum_{k=1}^{2^\mu} \Delta_z \big[\Phi(z,w_k^{(\mu)}(\nu,j_0;z))\big] \\ &\ge \frac{\tilde{L}}{2^\nu} \eqqcolon L
 \end{split} \end{equation}
for every $\mu \in \N$ and $z \in M_\nu' \setminus \pi(\Sing E_{\nu+\mu})$, since on every convex subset of $\C \setminus \pi(\Sing E_{\nu+\mu})$ we can assume the functions $w_k^{(\mu)}(\nu,j_0; \,\cdot\,)$ to be holomorphic. But it is clear from the construction of the sequence $\{E_{\nu+\mu}\}$ that each of the sets $\pi(\Sing E_{\nu+\mu}) \subset \C$ has Lebesgue measure zero. Hence the Lebesgue measure of $M_\nu \coloneqq M_\nu' \setminus \bigcup_{\mu=1}^\infty \pi(\Sing E_{\nu+\mu})$ is positive, and we have already seen in $(\ref{equ_poissonjensen})$ that this leads to a contradiction.

We already know that for every $\Phi \in \mathcal{B}^\infty_{psh}(\Omega)$ and $(z_0,w_0) \in \mathcal{E}$ the Levi form $\Lev(\Phi)((z_0,w_0),\,\cdot\,)$ vanishes on $\C^{n-1}_w$. Assume, to get a contradiction, that there exist $\Phi \in \mathcal{B}^\infty_{psh}(\Omega)$ and $(z_0,w_0) \in \mathcal{E}$ such that the Levi form $\Lev(\Phi)((z_0,w_0),\,\cdot\,)$ is not identically zero. Then $\Lev(\Phi)((z_0,w_0),\xi) = \tilde{c}\cdot\abs{\xi_z}^2$ for some constant $\tilde{c}$, where $\xi = (\xi_z,\xi_w) \in \C_z \times \C^{n-1}_w$. Hence, by smoothness of $\Phi$, we can find $r,c > 0$ such that $\Lev(\Phi)((z,w),\xi) \ge c\cdot\abs{\xi_z}^2$ for every $(z,w) \in B^n((z_0,w_0),r)$. Thus whenever $f$ is a holomorphic mapping from an open subset of $\Delta(z_0,r)$ to $\C^{n-1}_w$ such that its graph is completely contained in $B^n((z_0,w_0),r)$, we have $\Delta_z[\Phi(z,f(z))] \ge c$. Then we can argue as in $(\ref{equ_Laplaceestimate})$ to conclude that there exists $\nu \in \N$ such that $\Delta \varphi_{\nu+\mu}(z) \ge c/2^\nu \eqqcolon L$ for every $\mu \in \N$ and $z \in M_\nu$, where $\varphi_{\nu+\mu}(z) \coloneqq \frac{1}{2^{\nu+\mu}} \sum_{j=1}^{2^{\nu+\mu}} \Phi(z, w_j^{(\nu+\mu)}(z))$ as before. In view of $(\ref{equ_poissonjensen})$, this again leads to a contradiction.
\end{proof}

Observe that Lemma \ref{thm_Ekpseudoconcave} and Lemma \ref{thm_OmegacoresE} together prove Theorem \ref{thm_coreconcavity} in the case where $q' \in \{1, \ldots, n-1\}$. It only remains to show that the set $\mathcal{E}$ has the additional properties remarked after the formulation of the theorem.

\begin{lemma} If $\{\varepsilon_l\}$ is decreasing fast enough, then $\mathcal{E}$ contains no analytic variety of positive dimension. Moreover, $\widehat{bB^n(0,R) \cap \mathcal{E}} = \overline{B^n(0,R)} \cap \mathcal{E}$ for every $R > 0$, where $\widehat{bB^n(0,R) \cap \mathcal{E}}$ denotes the polynomial hull of $bB^n(0,R) \cap \mathcal{E}$.
\end{lemma}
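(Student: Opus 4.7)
The plan is to adapt the strategy of \cite{HarzShcherbinaTomassini12} to the present multi-parameter setting, combining the slope estimate $(I_\nu)$ already established in the proof of Lemma \ref{thm_OmegacoresE} with the plurisubharmonic functions $\varphi_q = \lim_\nu \tfrac{1}{2^\nu}\log|P_{\nu,q}|$ and Hartogs' lemma. A suitable further refinement of the inductive choice of $\{\varepsilon_l\}$ will be made, to ensure simultaneously the slope control at every $(p,q) \in \N_k \times \N_{n-k}$ and the other Hausdorff-convergence estimates needed earlier.

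For the absence of positive-dimensional analytic varieties in $\mathcal{E}$, suppose toward a contradiction that $V \subset \mathcal{E}$ is an irreducible analytic subvariety with $\dim V \ge 1$, and parametrize a piece of $V$ near some point $(z_0, w_0)$ by a non-constant holomorphic map $f \colon \Delta \to V$. Consider first the case $\pi_z \circ f \not\equiv \mathrm{const}$: then $(\pi_z \circ f)'(\zeta) \neq 0$ on a dense set of $\zeta \in \Delta$, and by the Hausdorff convergence $E_\nu \to \mathcal{E}$ on compacts the tangent direction $f'(\zeta)$ is approximable by tangent directions $\lambda_\nu$ to $\Reg E_\nu$ at points converging to $f(\zeta)$. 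Using density of $\{a_l^{p,q}\}_{l}$ in $\C$ for every $(p,q) \in \N_k \times \N_{n-k}$, one arranges that $f(\zeta)$ lies in neighbourhoods $M'_l \times \C^{n-k}$ with $l$ arbitrarily large and $\langle l\rangle$ taking any prescribed value; estimate $(I_\nu)$ then forces $f'(\zeta)$ to lie in $\bigcap_{q=1}^{n-k}\bigcap_{\alpha\ge 1}\Gamma^q(\alpha)$, which reduces to $\{0\}$ when $n-k\ge 2$ (and in the case $n-k=1$ the same estimate forces $f'(\zeta)$ onto the $w$-axis, contradicting $(\pi_z\circ f)'(\zeta) \neq 0$). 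In the remaining case $\pi_z \circ f \equiv z_0$, the image of $f$ lies in the fiber $\{z_0\} \times \mathcal{E}_{z_0}$, whose Cantor-type structure -- inherited from the $2^\infty$-valued branches $w_j^{(\nu)}$ once $\{\varepsilon_l\}$ is chosen to decrease fast enough -- excludes positive-dimensional analytic subvarieties.

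For the polynomial-hull identity, fix $R>0$ and set $K \coloneqq bB^n(0,R) \cap \mathcal{E}$. The inclusion $\widehat{K} \subset \overline{B^n(0,R)}$ is immediate from the plurisubharmonic function $\norm{\,\cdot\,}^2 - R^2$. To show $\widehat{K} \subset \mathcal{E}$, take $p \in \overline{B^n(0,R)} \setminus \mathcal{E}$; then $p \notin \mathcal{E}_q$ for some $q$, so $u_{\nu,q}(p) \coloneqq \tfrac{1}{2^\nu}\log|P_{\nu,q}(p)|$ converges to the finite value $\varphi_q(p)$. On the other hand, $\{u_{\nu,q}\}$ is plurisubharmonic on $\C^n$, locally uniformly bounded above, and converges pointwise to $-\infty$ on the compact set $K \subset \mathcal{E}_q$; Hartogs' lemma yields $\max_K u_{\nu,q} \to -\infty$. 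If $p$ lay in $\widehat{K}$, the inequality $u_{\nu,q}(p) \le \max_K u_{\nu,q}$ would give $\varphi_q(p) = -\infty$, a contradiction. For the reverse inclusion $\overline{B^n(0,R)} \cap \mathcal{E} \subset \widehat{K}$, one exploits that for a.e.\ $R > 0$ and every $\nu$ the compact $k$-dimensional analytic variety $\overline{B^n(0,R)} \cap E_\nu$ coincides with the polynomial hull of its smooth boundary $bB^n(0,R) \cap E_\nu$ (by the maximum principle on analytic varieties); passing to the Hausdorff limit along a cofinal subsequence chosen so that the defect in Hausdorff distance is dominated by the rate at which $\max_{bB^n(0,R)\cap E_\nu} u_{\nu,q}$ decays, one obtains the desired containment.

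The main obstacle is the cone-exclusion step in the analytic-variety part: it requires refining the inductive construction of $\{\varepsilon_l\}$ from the proof of Lemma \ref{thm_OmegacoresE} so that slope control at every pair $(p,q)$ is preserved alongside all earlier estimates, and combining this with a careful handling of the case where $V$ lies in a single fiber $\mathcal{E}_{z_0}$. A secondary technical difficulty is ensuring that the passage to Hausdorff limits of polynomial hulls (in the reverse inclusion) is not lossy, which is handled by the cofinal subsequence argument indicated above.
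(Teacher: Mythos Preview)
Your approach to the first assertion has a genuine gap. The slope estimate $(I_\nu)$ constrains only the tangent directions $\lambda_\nu(z,w)$ to the approximating varieties $\Reg E_\nu$, not the tangent direction $f'(\zeta)$ of a holomorphic disc $f$ mapping into $\mathcal{E}$. Your claim that Hausdorff convergence $E_\nu \to \mathcal{E}$ makes $f'(\zeta)$ approximable by tangents $\lambda_\nu$ at nearby points of $E_\nu$ is unjustified and in general false: Hausdorff convergence of sets gives no control on tangent cones, and a holomorphic graph inside $\mathcal{E}$ need not be close \emph{as a function} to any single branch $w_j^{(\nu)}$ even when it is pointwise close to $E_\nu$. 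Moreover, the density step is problematic: the regions $M_l'$ on which $(I_\nu)$ applies have radii $\delta_l \to 0$, so mere density of $\{a_l^{p,q}\}$ does not place a fixed $f(\zeta)$ inside $M_l' \times \C^{n-k}$ for arbitrarily large $l$ with prescribed $\langle l\rangle$.

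The paper bypasses all of this by a projection argument: for each $q \in \N_{n-k}$ the canonical projection $\pi_q \colon \C^n \to \C^k_z \times \C_{w_q}$ sends $\mathcal{E}$ onto a Wermer type set of the kind already treated in \cite{HarzShcherbinaTomassini12}, which by Lemma~3.7 there contains no analytic variety of positive dimension. Since a non-constant holomorphic disc $f \colon \Delta \to \mathcal{E}$ would yield $\pi_q \circ f$ constant for every $q$, the disc $f$ itself must be constant. This reduces the multi-$w$ case entirely to the known single-$w$ result, with no need to revisit tangent-cone estimates in the product setting. Your sketch for the polynomial-hull identity is along the right lines and essentially matches the argument in the proof of Theorem~1.1 of \cite{HarzShcherbinaTomassini12}, to which the paper simply refers.
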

\begin{proof}
By Lemma 3.7 in \cite{HarzShcherbinaTomassini12}, we can choose $\{\varepsilon_l\}$ such that $\pi_q(\mathcal{E}) = \mathcal{E}_q \cap (\C^k_z \times \C_{w_q})$ contains no analytic variety of positive dimension for every $q\in\N_{n-k}$, where $\pi_q \colon \C^n \to \C^k_z \times \C_{w_q}$ is the canonical projection. Thus for every analytic set $A \subset \mathcal{E}$ all projections $\pi_q(A)$, $q = 1, \ldots, n-k$, consist of only one point, i.e., $A=\{P\}$ for some $P \in \mathcal{E}$. The second assertion follows as in the proof of Theorem 1.1 in \cite{HarzShcherbinaTomassini12}.
\end{proof}

\section{Open questions} \label{sec_questions}
In this last section we state some open questions related to the content of the paper.

\noindent \textbf{1. Existence of global defining functions}

\begin{question}
Let $X$ be a complex space and let $\Omega \subset X$ be a smoothly strictly pseudoconvex domain. Does there exist a minimal global defining function for $\Omega$, i.e., does there exist a smoothly plurisubharmonic function $\varphi \colon U \to \R$ defined on an open neighbourhood $U \subset X$ of $\overline{\Omega}$ such that $\Omega = \{\varphi < 0\}$ and $\varphi$ is strictly smoothly plurisubharmonic outside $\mathfrak{c}(\Omega)$? (For the definition of $\mathfrak{c}(\Omega)$ in the setting of complex spaces see p.\ \pageref{def_corespaces}.)
\end{question}

\begin{question}
Let $X$ be a complex space and let $\Omega \subset X$ be a smoothly strictly $q$-pseudoconvex domain. Does there exist a smoothly $q$-plurisubharmonic function $\varphi \colon U \to \R$ defined on an open neighbourhood $U \subset X$ of $\overline{\Omega}$ such that $\Omega = \{\varphi < 0\}$ and $\varphi$ is  smoothly strictly $q$-plurisubharmonic near $b\Omega$? 
\end{question}

\noindent \textbf{2. The core of a domain}

\begin{question}
Let $\Omega \subset \C^n$ be a domain and let $\omega \subset \Omega$ be a domain such that $\mathfrak{c}(\Omega) \subset \omega$. Does it follow that $\mathfrak{c}(\omega) = \mathfrak{c}(\Omega)$?
\end{question}

\begin{question}
Let $\mathcal{M}$ be a complex manifold. Is it possible to characterize the core type subsets  $E \subset \mathcal{M}$? (For the definition of core type sets see p.\ \pageref{def_coretype}.)
\end{question}

\begin{question}
Let $\Omega \subset \C^n$ be a domain. Is it true that $\widehat{bB^n(0,R) \cap \mathfrak{c}(\Omega)} = \overline{B^n(0,R)} \cap \mathfrak{c}(\Omega)$ for every $R>0$? Here $\widehat{bB^n(0,R) \cap \mathfrak{c}(\Omega)}$ denotes the polynomial hull of the set $bB^n(0,R) \cap \mathfrak{c}(\Omega)$.
\end{question}

\begin{question}
Is it true that $\mathfrak{c}^{s_1}(\Omega) = \mathfrak{c}^{s_2}(\Omega)$ for every domain $\Omega \subset \C^n$ and every $s_1,s_2$ such that the corresponding cores $\mathfrak{c}^{s_1}(\Omega)$ and $\mathfrak{c}^{s_2}(\Omega)$ are defined? (For the definition of $\mathfrak{c}^s(\Omega)$ see p.\ \pageref{def_coreweaklysmooth}.)
\end{question}

\begin{question}
Let $\Omega \subset \C^2$ be a strictly pseudoconvex domain with smooth boundary. Is it true that every smooth and bounded from above plurisubharmonic function on $\Omega$ is constant on each connected component of $\mathfrak{c}(\Omega)$?
\end{question}

\begin{question}
Let $\mathcal{M}$ be a complex manifold and let $\Omega \subset \mathcal{M}$ be a domain. Can it happen that a maximal component of $\mathfrak{c}(\Omega)$ consists of only one point? Moreover, in the case when $\mathcal{M}$ is Stein, is it true that no maximal component of $\mathfrak{c}(\Omega)$ is relatively compact in $\mathcal{M}$? (For the definition of maximal components of $\mathfrak{c}(\Omega)$ see p.\ \pageref{def_maximalcomponent}.)
\end{question}

\begin{question}
Let $\Omega \subset \C^n$ be a strictly pseudoconvex domain with smooth boundary. Can it happen that the set $\mathfrak{c}(\Omega)$ is not pluripolar? Or, even more, can it happen that $\mathfrak{c}(\Omega)$ has a nonempty interior? And finally the strongest version of this question: Does there exist a strictly pseudoconvex domain $\Omega \subset \C^n$ containing a Fatou-Bieberbach domain?
\end{question}

\begin{question}
Let $\Omega \subset \C^n$ be a strictly pseudoconvex domain with smooth boundary. Is it always true that $\Omega \setminus \mathfrak{c}(\Omega)$ is connected?
\end{question}

%
%
 \vspace{1truecm}

%
%
%
{\sc T. Harz: Department of Mathematics, University of Wuppertal --- 42119 Wuppertal, Germany}
  
{\em e-mail address}: {\texttt harz@math.uni-wuppertal.de}\vspace{0.3cm}
  
{\sc N. Shcherbina: Department of Mathematics, University of Wuppertal --- 42119 Wuppertal, Germany}
  
{\em e-mail address}: {\texttt shcherbina@math.uni-wuppertal.de}\vspace{0.3cm}
  
{\sc G. Tomassini: Scuola Normale Superiore, Piazza dei Cavalieri, 7 --- 56126 Pisa, Italy}
  
{\em e-mail address}: {\texttt g.tomassini@sns.it}

\end{document}